\theoremstyle{definition}
\newtheorem{theorem}{Theorem}[section]
\newtheorem{definition}[theorem]{Definition}
\newtheorem{conjecture}[theorem]{Conjecture}
\newtheorem{proposition}[theorem]{Proposition}
\newtheorem{lemma}[theorem]{Lemma}
\newtheorem{remark}[theorem]{Remark}
\newtheorem{corollary}[theorem]{Corollary}
\newtheorem{question}[theorem]{Question}
\numberwithin{equation}{section}
\DeclareMathOperator*{\ind}{ind}
\DeclareMathOperator*{\nul}{null}
\DeclareMathOperator*{\spa}{span}
\newcommand{\tr}{\operatorname{tr}}
\newcommand{\wt}{\widetilde}
\newcommand{\pr}{\partial}
\newcommand{\Lap}{\Delta}
\newcommand{\Ric}{\operatorname{Ric}}
\DeclareMathOperator{\ct}{ct}
\newcommand{\pit}{\frac{\pi}{2}}
\def\MR#1{}
\title[Capillary minimal surfaces in spherical caps: Low genus]{Free boundary and capillary minimal surfaces in spherical caps I: Low genus}
\author{Keaton Naff}
\address{Lehigh University, Bethlehem, PA, USA}
\email{ken424@lehigh.edu}
\author{Jonathan J. Zhu}
\address{University of Washington, Seattle, WA, USA}
\email{jonozhu@uw.edu}
\begin{document}
\begin{abstract}
This is the first of two articles in which we investigate the geometry of free boundary and capillary minimal surfaces in balls $B_R\subset\mathbb{S}^3$. In this article, we extend our previous half-space intersection properties to warped products, and extend (non-)umbilicity of discs and annuli to capillary minimal surfaces in high codimension. We establish a dual operation relating free boundary and capillary minimal surfaces. These results are discussed in a continuous, unified framework, particularly in relation to uniqueness of minimal annuli. 
\end{abstract}
\date{\today}
\maketitle


\vspace{-2em}

\section{Introduction}

This article is the first of two papers (together with \cite{NZ25b}) in which we investigate the geometry of free boundary and capillary minimal surfaces $\Sigma$ in geodesic balls $B^3_R\subset \mathbb{S}^3$. We endeavour to study these surfaces under a unified framework, with key parameters given by the cap radius $R$ and the contact angle $\gamma$. In this article, we focus on developing this framework, as well as rigidity phenomena for low topology. 

Under this framework, different settings are related according to the continuous parameters $R,\gamma$. The best-understood case is $(R, \gamma) = (\pit,  \pit)$, i.e. free-boundary minimal surfaces (FBMS) in a hemisphere - equivalently, closed minimal surfaces in $\mathbb{S}^3$ with a reflection symmetry. This special case serves as a model for developing theory in less-understood cases. The more established setting of FBMS in a Euclidean ball $\mathbb{B}_1 \subset \mathbb{R}^3$ naturally arises as $R \to 0$ (fixing $\gamma = \pit$). The study of FBMS in spherical caps was recently initiated in \cite{LM23, Me23, dO24}.

To illustrate the relationship hinted at above, we consider the striking classification results for minimal surfaces in $\mathbb{S}^3$ of simplest topology: Almgren's result that the only minimal spheres in $\mathbb{S}^3$ are equators \cite{Alm66}, and the Lawson conjecture - proven by Brendle \cite{Br12} - that the only embedded minimal tori are Clifford tori. For the $(R,\pit)$ setting, the analogous questions become the classification of free boundary minimal discs, and of embedded free boundary minimal annuli. In particular, the analogous question as $R\to0$ is  Nitsche's conjecture on the uniqueness of the critical catenoid. 

A classification of free boundary minimal discs in space forms was proven by Fraser-Schoen \cite{FS15} using a Hopf differential method. For annuli, the situation is much richer, as evidenced by the constructions of \cite{FHM23, CFM25}, and the apparent difficulty of proving Nitsche's conjecture. Indeed, complementing our unified discussion, we will show that minimal annuli with parameters $(R,\gamma)$ admit \textit{polar dual} minimal immersions with dual parameters $(\tilde{R},\tilde{\gamma})$ - this polar dual is in fact given by the Gauss map. We find that this duality is most effective from embedded $(R,\pit)$-minimal annuli to embedded $(\pit, R)$-minimal annuli:

\begin{theorem}
\label{thm:dual-intro}
Let $R\in (0,\pit]$ and consider a ball $B_R\subset \mathbb{S}^3$. Let $x : (\Sigma , \partial \Sigma) \hookrightarrow (B_R, \partial B_R)$ be an embedded free boundary minimal annulus. Then its polar dual is an embedded capillary minimal annulus in the hemisphere $(\mathbb{S}^3_+, \pr \mathbb{S}^3_+)$ with contact angle $R$.
\end{theorem}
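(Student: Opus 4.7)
The plan is to take the polar dual to be the Gauss map $\tilde{x} := N: \Sigma \to \mathbb{S}^3$, where $N(p)$ is a unit vector in $\mathbb{R}^4$ perpendicular to both $T_p\Sigma$ and $x(p)$ (so $N(p)$ is the unit normal to $x(\Sigma)$ in $\mathbb{S}^3$), with orientation to be fixed. The properties to verify, in order, are: (i) $\tilde{x}$ is a minimal immersion; (ii) $\tilde{x}(\partial \Sigma) \subset \partial \mathbb{S}^3_+$, where $\mathbb{S}^3_+ := \{v \in \mathbb{S}^3 : v \cdot p_0 \ge 0\}$ and $p_0$ is the center of $B_R$; (iii) the contact angle equals $R$; (iv) $\tilde{x}(\Sigma) \subset \mathbb{S}^3_+$; and (v) $\tilde{x}$ is an embedding.

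For (i), the Gauss map of a minimal surface in $\mathbb{S}^3$ is classically minimal and conformal, branching only at umbilic points. Non-umbilicity for the annulus would follow from the Hopf differential $Q = A(\partial_z, \partial_z)\, dz^2$, which is holomorphic on $\Sigma$. The free boundary condition gives $\mu = (\cos R \cdot p - p_0)/\sin R$ on $\partial \Sigma$, and combined with the total umbilicity of $\partial B_R$ in $\mathbb{S}^3$ this yields $A(\mu, \tau) = 0$ on $\partial \Sigma$, so $Q$ is real on the boundary. Schwarz reflection then extends $Q$ to a holomorphic quadratic differential on the Schottky double $\widehat{\Sigma}$, which is a torus; any nonzero such differential on a torus is nowhere vanishing. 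Since an annulus cannot be totally umbilic (only pieces of equators are), $Q \not\equiv 0$ and hence has no zeros, so $\tilde{x}$ is a minimal immersion.

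Steps (ii) and (iii) are direct computations. At $p \in \partial \Sigma$, $N \perp \mu$ and $N \perp p$ together with $p_0 = \cos R \cdot p - \sin R \cdot \mu$ give $N \cdot p_0 = 0$, so $\tilde{x}(\partial \Sigma) \subset \partial \mathbb{S}^3_+$. The normal to $\tilde{x}(\Sigma)$ at $\tilde{x}(p)$ is $\pm x(p)$, and the outward conormal to $\partial \mathbb{S}^3_+$ in $\mathbb{S}^3$ at a point of the equator is $-p_0$; with the orientation $\tilde{N}(p) = -p$, we obtain $\tilde{N}(p) \cdot (-p_0) = p \cdot p_0 = \cos R$, giving the contact angle $R$.

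The main obstacles are (iv) and (v). For (iv), the function $\phi := N \cdot p_0$ satisfies the linear elliptic equation $\Delta_\Sigma \phi + |A|^2 \phi = 0$ (from the Weingarten formulas applied to a minimal surface in $\mathbb{S}^3$), with $\phi|_{\partial \Sigma} = 0$ by (ii). Showing $\phi$ does not change sign (after orienting $N$) is the delicate point: an interior sign change corresponds to $\tilde{x}(\Sigma)$ crossing the totally geodesic equator $E = \partial \mathbb{S}^3_+$ in the interior of $\Sigma$. Ruling this out should follow from the half-space intersection properties for warped products established earlier in the paper, which constrain how $\Sigma$ and its Gauss image sit relative to equatorial hemispheres. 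For (v), embeddedness of $\tilde{x}$ follows from its being a non-umbilic immersion of an embedded annulus combined with injectivity: a putative collision $N(p) = N(q)$ with $p \ne q$ would force the tangent 2-planes $T_p\Sigma, T_q\Sigma \subset \mathbb{R}^4$ to share their normal directions, which, together with the embeddedness of $\Sigma$ and the annulus topology, yields a contradiction.
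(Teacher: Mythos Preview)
Your steps (i)--(iii) are essentially correct and match the paper's approach. However, there is a subtlety you skip in (iii): the paper's definition of contact angle requires the \emph{outward} conormal $\tilde\eta$ to satisfy $\langle\tilde\eta,\bar\eta\rangle = \sin R > 0$ on \emph{both} boundary components. Since $\partial_{\tilde\eta}\tilde x$ is a positive multiple of $\varepsilon A(\eta,\eta)\,\partial_\eta x$, the sign of $A(\eta,\eta)$ must be the same on both components to make a single choice of orientation $\tilde x = \varepsilon\nu$ work. The paper proves this same-sign property (Corollary~\ref{lem:same-sign}) as a consequence of the Hopf differential analysis; you should include it.

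Your step (iv) is correct in outline but vague; the paper makes this precise via Proposition~\ref{prop:radial graph}, which uses the two-piece property to show that a genus-zero embedded FBMS is a radial graph, i.e.\ $\langle\nu,e_0\rangle$ does not vanish on the interior.

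The real gap is step (v). Your argument that $N(p)=N(q)$ ``would force the tangent 2-planes to share their normal directions, which together with embeddedness and the annulus topology yields a contradiction'' is not an argument at all. Two distinct points on an embedded surface can certainly have the same unit normal in $\mathbb{S}^3$; embeddedness of $x$ places no direct constraint on repetitions of $\nu$. Indeed, proving injectivity of the Gauss map is the substantial part of the theorem, and the paper devotes two nontrivial steps to it. First, injectivity of $\nu|_{\partial\Sigma}$ is established by a delicate topological argument: assuming $\nu(p_1)=\nu(p_2)$ on the boundary, one analyses the nodal set of $\langle x,\nu(p_1)\rangle$ using the two-piece property, passes to a doubled surface and its stereographic projection in $\mathbb{R}^3$, caps it off to a closed genus-one surface, and derives a contradiction by computing the degree of its Euclidean Gauss map over two antipodal regular values. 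Second, injectivity on the interior is obtained by introducing the projected map $\mathbf{n} = \pi_{e_0^\perp}(\nu)/|\pi_{e_0^\perp}(\nu)| : \Sigma \to \mathbb{S}^2$, showing $d\mathbf{n}$ is nondegenerate (this uses $R<\tfrac{\pi}{2}$), and applying a covering-space lemma (Lemma~\ref{lem:covering}) to conclude that a local diffeomorphism from a genus-zero surface with $b\geq 2$ boundary components into $\mathbb{S}^2$, injective on the boundary, must be globally injective. None of this is captured by your one-line sketch.
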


In this way, by studying these problems together, we find a new correspondence between capillary minimal surfaces in the hemisphere and free boundary minimal surfaces, which we expect will provide an interesting bridge and give each setting newfound significance. (Note that capillary minimal surfaces in the hemisphere model capillary minimal cones.) We also note that the limiting setting $\gamma\to 0$ (keeping $R=\pit$) in our framework corresponds to the overdetermined Serrin problem (equivalently, links of homogeneous solutions of the one-phase free boundary problem) studied, for instance, in the recent works \cite{EM23, EM24, CKM25}. 

In general, Lawson \cite{Law70} showed that the Gauss map $\nu: \Sigma \to \mathbb{S}^3$ of a minimal surface in $\mathbb{S}^3$ defines a (branched) minimal immersion. In addition to checking the boundary behaviour, Theorem \ref{thm:dual-intro} uses two key ingredients, which also explain why the dual operation is most effective for embedded annuli:

\begin{itemize}
    \item Non-umbilicity for minimal annuli, which ensures an absence of branch points (and a consistent contact direction along the boundary); 
    \item A two-piece property for embedded FBMS of genus 0, which is used to control the topology of the polar dual. 
\end{itemize}

Theorem \ref{thm:dual-intro} is an analogue of a classical result of Ros \cite{Ros95}, who showed that the polar dual of an embedded minimal torus in $\mathbb{S}^3$ is itself an embedded minimal torus. (Of course, Ros' result may now be considered a consequence of the Lawson conjecture.) An important tool in Ros' work \cite{Ros95} was his so-called two-piece property (for closed minimal surfaces in $\mathbb{S}^3)$. As we discussed in \cite{NZ24}, two-piece properties often follow from \textit{half-space intersection properties} for minimal surfaces; in that paper we also collected proofs of such properties in various settings: closed minimal hypersurfaces in $\mathbb{S}^n$, FBMS in geodesic balls $B_R \subset M \in \{\mathbb{H}^n,\mathbb{R}^n, \mathbb{S}^n\}$, and complete self-shrinkers in $\mathbb{R}^n$. In this paper, we take the opportunity to further generalise our techniques to a rather more general class of warped-product manifolds:

\begin{theorem}[Half-space Frankel in warped-products]
\label{thm:frankel-intro}
Let $(M^{n+1}, \bar{g}) = (\mathbb{B}^{n+1}_{\bar{r}}, e^{2\phi(r)} \delta)$ be a radially-symmetric, conformally Euclidean metric for some $\bar{r} \in (0, \infty]$, where $\mathbb{B}_{\bar{r}}$ denotes the Euclidean ball. Assume that the Ricci curvature $\Ric_{\bar{g}}$ is either (strictly) largest in the radial direction, or is constant, in which case $(M^{n+1}, \bar{g})$ is a geodesic ball in a space form.

Suppose $\Sigma_0, \Sigma_1$ are compact, properly embedded free boundary minimal hypersurfaces in $\mathbb{B}_{r_0}$ for some $r_0 \in (0, \bar{r}]$. Then $\Sigma_0$ and $\Sigma_1$ intersect in every closed half ball of $\overline{\mathbb{B}_{r_0}}$.
\end{theorem}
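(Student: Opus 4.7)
The plan is to argue by contradiction via a half-space Frankel argument, adapting the approach of our earlier paper~\cite{NZ24} to the warped-product setting. Suppose $\Sigma_0 \cap \Sigma_1 \cap \bar H = \emptyset$ for some closed half-ball $\bar H \subset \overline{\mathbb{B}_{r_0}}$ with bisecting hyperplane $\pi$. Since $\bar g$ is radially symmetric, $\pi$ is totally geodesic and reflection $\rho$ across $\pi$ is an ambient isometry. By compactness, the distance $d = d_M(\Sigma_0 \cap \bar H, \Sigma_1 \cap \bar H) > 0$ is realised by points $p_0 \in \Sigma_0 \cap \bar H$ and $p_1 \in \Sigma_1 \cap \bar H$ joined by a minimizing geodesic $\gamma$. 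If $\gamma$ exits $\bar H$, reflecting its exterior arcs across $\pi$ produces a broken curve in $\bar H$ from $p_0$ to $p_1$ of length $d$; since smoothing any genuine corner would strictly shorten it, contradicting minimality of $\gamma$ in $M$, we conclude $\gamma \subset \bar H$ (the degenerate possibility being $\gamma \subset \pi$, which occurs only when $\gamma$ meets $\pi$ tangentially).

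Taking an orthonormal parallel frame $\{V_1, \ldots, V_n\}$ along $\gamma$ perpendicular to $\gamma'$, I would sum the second variations of arc length. By the orthogonality of $\gamma$ with $\Sigma_i$, the frame forms an orthonormal basis of $T_{p_i}\Sigma_i$ at each endpoint, so the standard computation yields
\begin{equation*}
0 \;\leq\; \sum_{i=1}^n \delta^2 L(V_i) \;=\; -\int_\gamma \Ric_{\bar g}(\gamma', \gamma') \, \d s + \mathcal{B}.
\end{equation*}
Here minimality of $\Sigma_0, \Sigma_1$ kills the trace-of-second-fundamental-form contributions at interior endpoints; endpoints on $\pr \mathbb{B}_{r_0}$ contribute non-negatively via its inward convexity in the warped metric combined with the free-boundary condition $\Sigma_i \perp \pr \mathbb{B}_{r_0}$; and endpoints on $\pi$ contribute zero by total geodesy. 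Hence $\mathcal{B} \geq 0$.

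To conclude, one needs the Ricci integral to strictly exceed $\mathcal{B}$. In the constant-Ricci (space-form) case, this recovers the half-space Frankel of~\cite{NZ24}, combining the boundary term with the sign of the curvature. The novel content is the strictly-radial-dominance regime: the inequality $\Ric_{\bar g}(v,v) < \Ric_{\bar g}(\pr_r, \pr_r)$ for non-radial $v$ must be leveraged, probably by replacing the parallel frame by a frame adapted to the radial direction or by a careful rotation argument along $\gamma$. \textbf{The main obstacle} is handling the possibly degenerate configurations --- $\gamma \subset \pi$, or $\gamma$ lying near a level set $\{r = \text{const}\}$ --- where the Ricci integrand is smallest; in these cases the strict radial gap alone may not suffice, and one must combine it with additional variational input (e.g.\ non-parallel test fields, or a refined use of the free-boundary condition on $\pr \mathbb{B}_{r_0}$) to close the strict inequality. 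This interplay between the warped-product geometry and the FBMS boundary condition is the heart of the proof.
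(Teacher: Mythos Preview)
Your approach via a direct second variation of length is different from the paper's and, as you yourself flag, incomplete. But the gap is more serious than a missing endgame: the basic inequality you want cannot close under the hypotheses. Condition~(A) does \emph{not} imply $\Ric_{\bar g}\geq 0$ --- hyperbolic space is a space form covered by the theorem, and there $\Ric_{\bar g}<0$, so your integral $\int_\gamma \Ric_{\bar g}(\gamma',\gamma')$ is negative. Likewise, $\partial B_{r_0}$ need not be convex in the warped metric (e.g.\ spherical caps of radius $>\tfrac{\pi}{2}$), so your claim that ``endpoints on $\partial\mathbb{B}_{r_0}$ contribute non-negatively via its inward convexity'' is unjustified. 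In these regimes your boundary term $\mathcal{B}$ and your Ricci integral both have the wrong sign, and no choice of frame along $\gamma$ will repair this.

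The paper's proof proceeds by a \emph{conformal change} rather than a direct variation in $(M,\bar g)$. Writing $u_a=e^\phi x_a$ and $\tilde g=u_a^{-2}\bar g$, one finds (Lemma~\ref{lem:BER}) that $(N_a,\tilde g)$ is isometric to a hyperbolic half-space with totally geodesic boundary, and the minimal hypersurfaces become $f$-minimal for $f=-n\log u_a$. The key computation is that the $(-n)$-Bakry--\'Emery--Ricci tensor of this triple equals $-n\big(\phi''-\tfrac{\phi'}{r}-(\phi')^2\big)\,dr\otimes dr$, which is nonnegative exactly under condition~(A$'$). This is how the ``radial dominance'' hypothesis is converted into a usable sign; one then runs a distance-sum argument ($d_{\Sigma_0}+d_{\Sigma_1}$) in the hyperbolic model to force both surfaces to be totally geodesic, and finishes with the classification of totally geodesic hypersurfaces in $(M,\bar g)$. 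The missing idea in your attempt is precisely this conformal/weighted reformulation.
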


The resulting two-piece property (see Corollary \ref{cor:two-piece}) has geometric implications for genus-zero minimal surfaces. In addition to Ros' work, two-piece properties played an important role in Brendle's classification \cite{Bre16} of embedded genus-zero self-shrinkers (Gaussian minimal surfaces) in $\mathbb{R}^3$. In Brendle's work, the two-piece property was used to show genus-zero self-shrinkers are star-shaped, i.e. radial graphs. (Note that in the shrinker setting, star-shapedness is equivalent to mean convexity.) In Section \ref{sec:nodal}, we show that genus-zero minimal surfaces in any of the warped-product manifolds $(M^3, \bar{g})$ above are, likewise, radial graphs. 

Let us return to discussion of (non-)umbilicity. Classically, the Hopf differential technique was used to classify closed genus-zero minimal surfaces in $\mathbb{S}^n$ as totally geodesic, but also yields that a closed genus-one minimal surface in $\mathbb{S}^n$ has no umbilic points. (The latter was a key prerequisite to Brendle's proof \cite{Br12} of the Lawson conjecture.) The Hopf differential method has previously been extended to classify free boundary minimal discs in $\mathbb{B}^3$ by Nitsche \cite{Ni85}, free boundary minimal discs in space form balls by Fraser-Schoen \cite{FS15}, and capillary minimal discs in a hemisphere $\mathbb{S}^3_+$ by Chodosh-Edelen-Li \cite{CEL24}. 

In this article, we develop the Hopf differential method further, introducing a new class of surfaces in higher codimension that generalises the notion of capillary minimal surfaces. We extend the classification of discs to this new class, and also show that capillary minimal annuli in any space form ball (of any codimension) have no umbilic points. (See Section \ref{sec:hopf} for the precise statements.) This extension unifies all previous results within a single framework. 

With the above two ingredients in hand, we prove Theorem \ref{thm:dual-intro} following the general strategy of Ros \cite{Ros95}, with some key adaptations to the free boundary setting. We then return to our discussion of uniqueness problems for embedded minimal annuli. 

In the final portion of our note, we will describe the continuous connection between Lawson's conjecture and Nitsche's conjecture mentioned above. In particular, we outline how the former would imply the latter, if it held that all Jacobi fields of a minimal annulus arise via the action of ambient symmetries. (See Section \ref{sec:fR-surfaces} for the precise statements.) The idea, naturally, is to consider the set $I$ of radii $R \in [0, \pit]$ for which the uniqueness of embedded minimal annuli in $B_R \subset \mathbb{S}^3$ holds. Compactness theory for free boundary minimal surfaces (see Appendix \ref{sec:compactness}) and continuity properties of the (spectral) index suggest $I$ is open. A perturbation argument may show that $I$ is closed; this is the part that requires the above assumption on Jacobi fields.

Here and in our companion paper \cite{NZ25b}, we (generally) take a parametric approach towards minimal surfaces. Several choices of convention arise in this setting - for instance, whether we insist that our surfaces remain in the ball $B_R$ (which we term \textit{constrained}), or whether they may leave the ball, so long as the boundary contacts $\pr B_R$ appropriately. We have striven to give a precise account of these distinctions - we include several such remarks in Section \ref{sec:capillary-notation}, and also observe the equivalence of constraint and embeddedness for rotationally symmetric surfaces in Section \ref{sec:rot-sym-annuli}. 

The connections between settings described above motivate the unified approach we pursue throughout this work. It is natural to wonder whether other continuity frameworks can be developed. For instance, a folklore principle is that closed minimal surfaces in $\mathbb{S}^3$ and self-shrinkers in $\mathbb{R}^3$ share many geometric properties. We speculate that it may be possible to relate them through a continuous family of weighted minimal surfaces (in ellipsoids, for instance). We do not pursue such a theory here, but we nevertheless adopt a unified perspective wherever possible, with a view towards such generalisations.

\begin{remark}
Numerical simulations suggest that there should be embedded rotationally symmetric $(R,\gamma)$-minimal surfaces for any $R,\gamma$. Profile curves of some $(R,\pit)$-minimal surfaces, and their dual $(\pit, \gamma)$-minimal surfaces are shown in Figure \ref{fig:examples}. (Here $\gamma$ will be the smaller of $R$ and $\pi-R$.) These were plotted by taking the appropriate range of the rotationally symmetric solution with a varying parameter $a$ as in \cite{LM23, dO24} (up to sign; in our implementation, moving left to right, $a$ decreases from $\frac{1}{2}$ to $-\frac{1}{2}$.)

Initially, the cap radius increases from $0$ to $\pit$, and both surfaces are radial graphs. The central figures, where $R=\pit$, depict the profile curve of the Clifford torus, which is self-dual. As $a$ decreases further, $R$ reaches a maximum $\bar{R}>\pit$ and then returns to $\pit$; for these surfaces, it seems that the $(\pit,\gamma)$-minimal surfaces are no longer radial graphs, and will collapse to a geodesic segment (so the angle also returns to $\pit$). In particular, uniqueness appears to fail even for rotationally symmetric embedded $(R,\pit)$-minimal annuli, with $R\in (\pit, \bar{R})$, and for embedded $(\pit,\gamma)$-minimal annuli, $\gamma \in (\pi-\bar{R},\pit)$.

\begin{figure}
\begin{subfigure}[b]{0.2\textwidth}
        \includegraphics[scale=0.3]{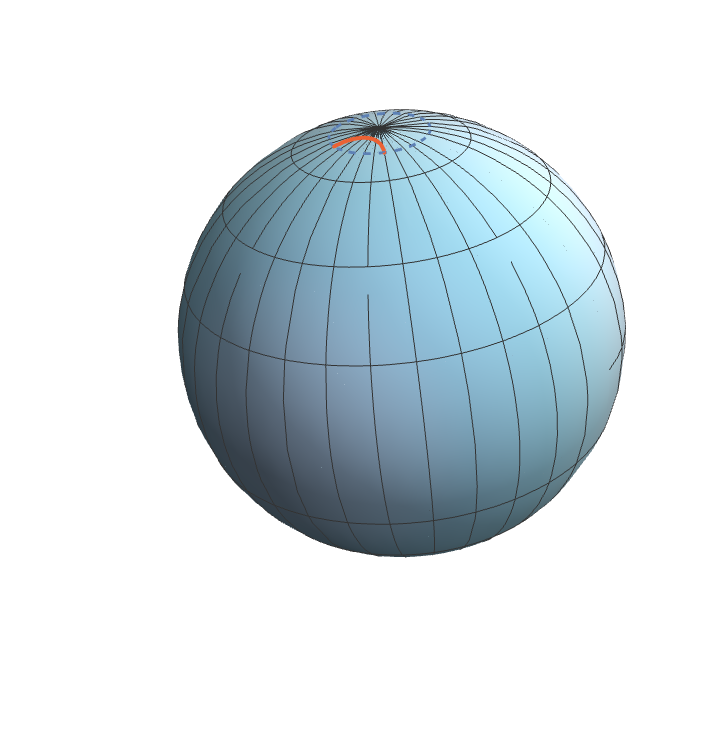}
    \end{subfigure}%
\begin{subfigure}[b]{0.2\textwidth}
        \includegraphics[scale=0.3]{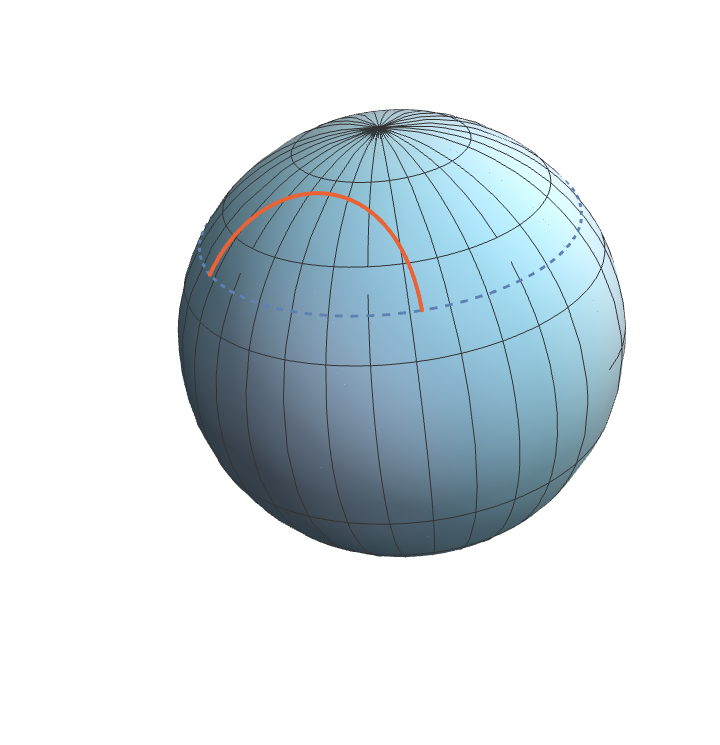}
    \end{subfigure}%
\begin{subfigure}[b]{0.2\textwidth}
        \includegraphics[scale=0.3]{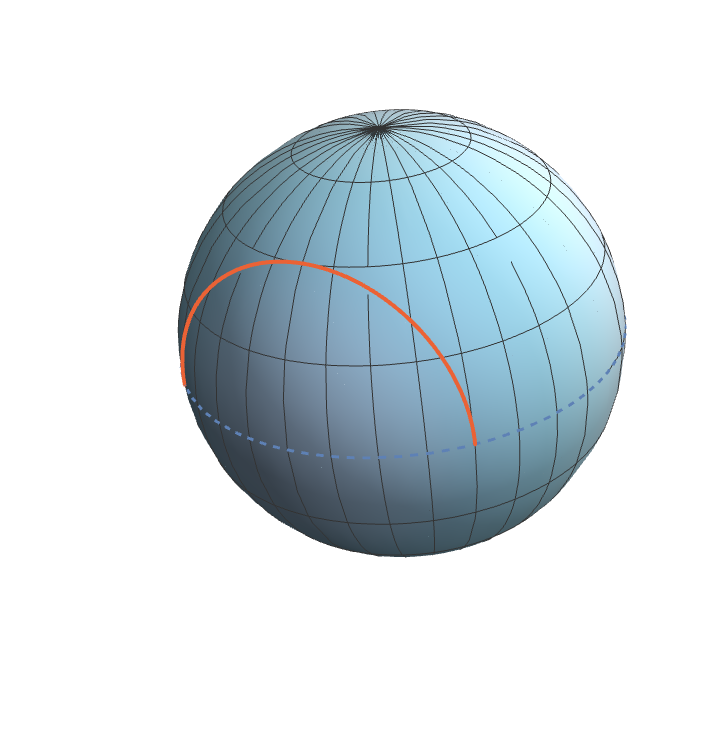}
    \end{subfigure}%
\begin{subfigure}[b]{0.2\textwidth}
        \includegraphics[scale=0.3]{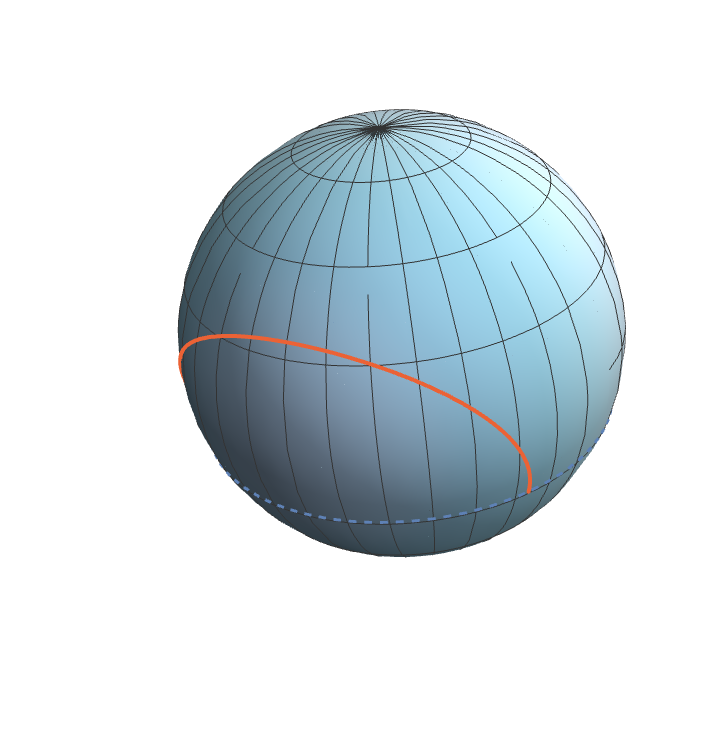}
    \end{subfigure}%
\begin{subfigure}[b]{0.2\textwidth}
        \includegraphics[scale=0.3]{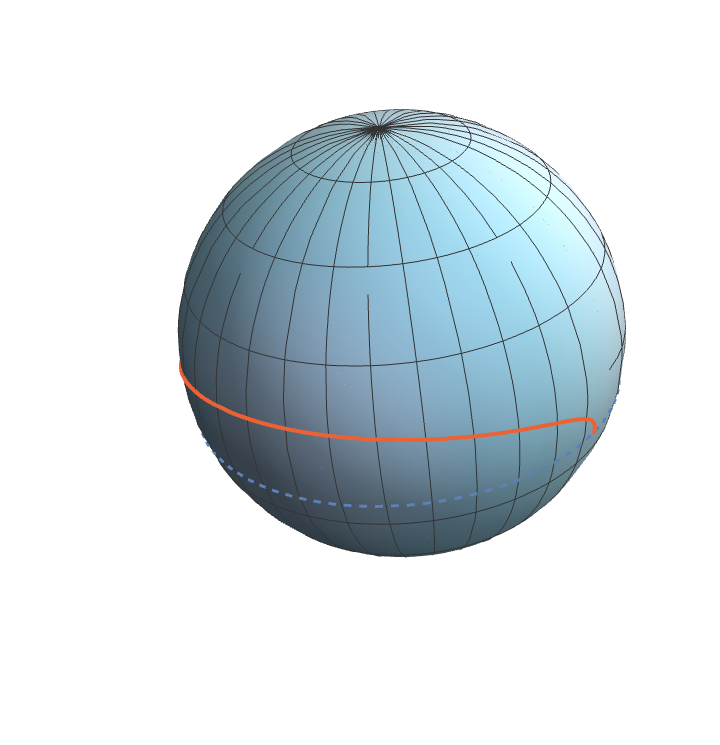}
    \end{subfigure}%
    \\
\begin{subfigure}[b]{0.2\textwidth}
        \includegraphics[scale=0.3]{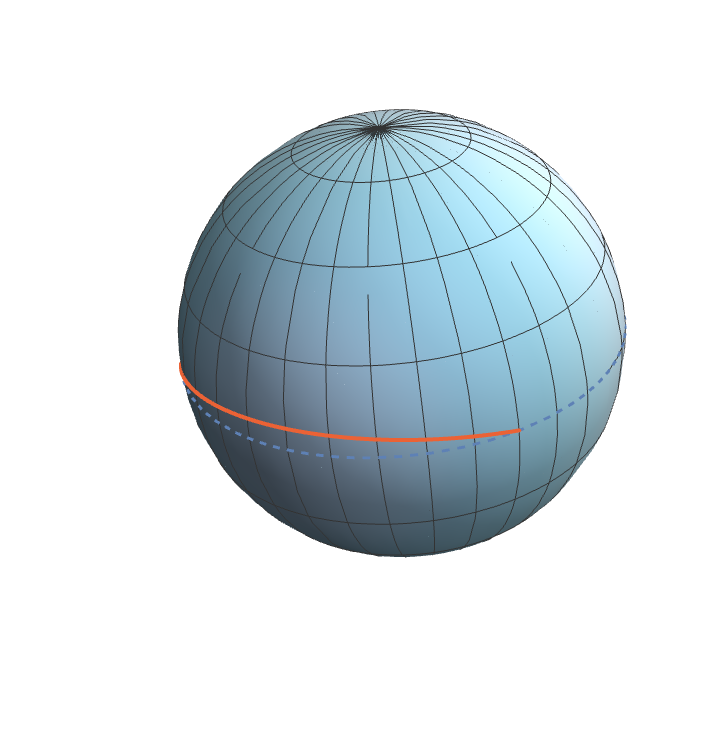}
    \end{subfigure}%
\begin{subfigure}[b]{0.2\textwidth}
        \includegraphics[scale=0.3]{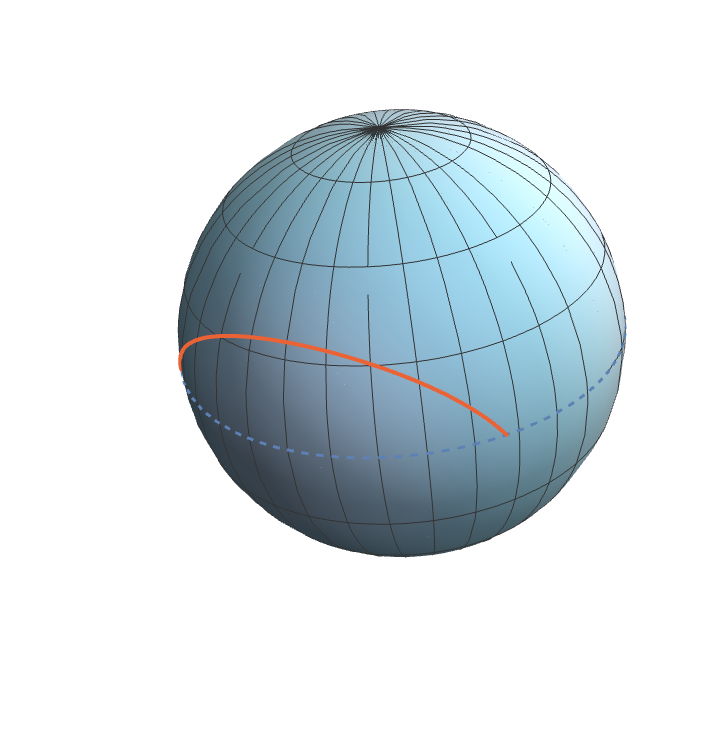}
    \end{subfigure}%
\begin{subfigure}[b]{0.2\textwidth}
        \includegraphics[scale=0.3]{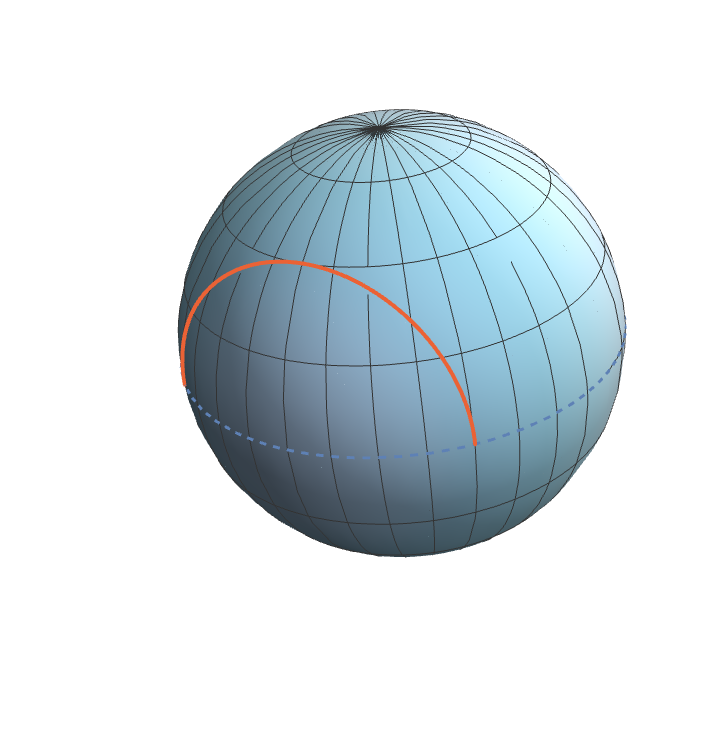}
    \end{subfigure}%
\begin{subfigure}[b]{0.2\textwidth}
        \includegraphics[scale=0.3]{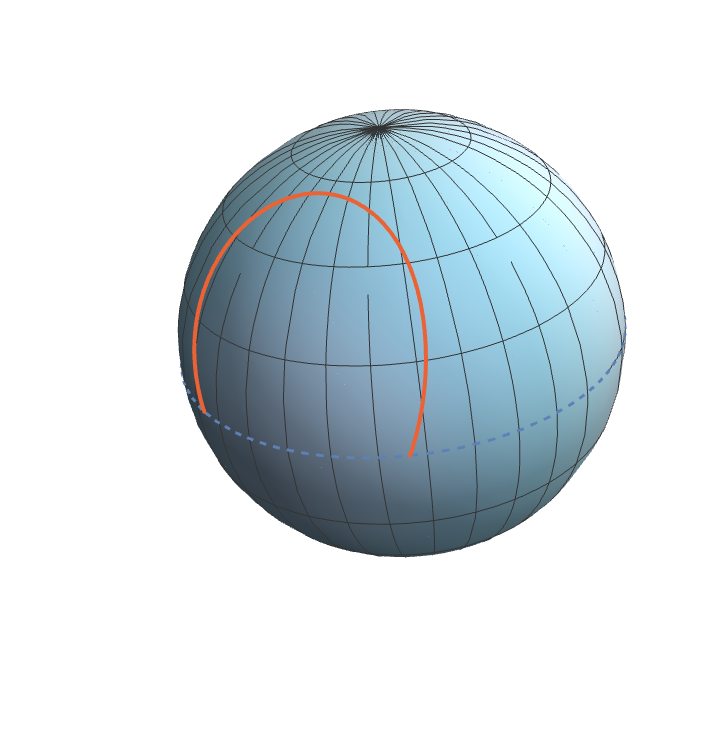}
    \end{subfigure}%
\begin{subfigure}[b]{0.2\textwidth}
        \includegraphics[scale=0.3]{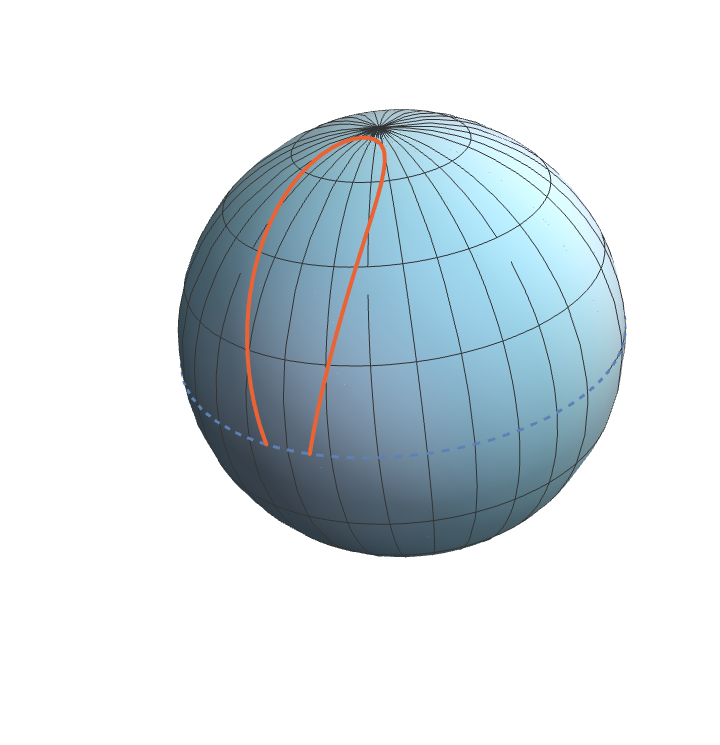}
    \end{subfigure}%
\caption{Profile curves of rotationally symmetric free boundary minimal annuli in spherical caps (top row), and their duals, which are capillary minimal annuli in the hemisphere (bottom row). The caps are all centred at the north pole, and their boundaries represented by dashed lines. From left to right, the radii $R$ are $0.22,0.98, \pit \simeq 1.57, 1.95,1.84$. }
\label{fig:examples}
\end{figure}
\end{remark}

\subsection{Overview of the paper}

We will now give a brief outline of the contents. Section \ref{sec:prelim} sets out our notation as well as some preliminary computations. 

Each main section is intended to be somewhat self-contained, including further background on each topic as well as the relevant proofs. In Section \ref{sec:radial}, we study half-space intersection properties, two-piece properties and topological consequences for free boundary minimal hypersurfaces in our class of warped products. The Hopf differential method is described in Section \ref{sec:hopf}, including our results for annuli, and in higher codimension. In Section \ref{sec:dual}, we recall Lawson's construction of polar dual surfaces, and prove our results for polar duals of capillary minimal annnuli. Finally, in Section \ref{sec:continuity}, we describe the ingredients for a continuity approach to uniqueness questions. 

Appendix \ref{sec:conformal-change} contains some useful formulae for geometric quantities under conformal change. Appendix \ref{sec:compactness} records some compactness results for free boundary minimal surfaces, including some smaller results that are, to the best of our knowledge, new to the literature. Appendices \ref{sec:stable-details}, \ref{sec:ros-details} and \ref{sec:ift-caps} respectively contain additional details on: the stability-based approach to the half-space intersection properties in Section \ref{sec:frankel}; topological observations related to the proof of Theorem \ref{thm:dual-intro}; the implicit function theorem argument in Section \ref{sec:I-closed}.

\subsection*{Acknowledgements}
JZ was supported in part by a Sloan Research Fellowship, and the National Science Foundation under grant DMS-2439945. The authors would like to thank Nick Edelen and Martin Li for insightful conversations about their respective works, as well as Jos\'{e} Espinar for his interest in our work.

\section{Preliminaries}
\label{sec:prelim}

\subsection{Notation}
\label{sec:notation}

In an ambient Riemannian manifold $M^N$, we will use $\rho$ for the distance from a fixed point $o$, and write $B^N_R(o)$ for the geodesic ball of radius $R$ and centre $o$. The decorations $N, o$ will be suppressed if clear from context. We will reserve $r$ for the radial coordinate in Euclidean space $\mathbb{R}^N$, and $\mathbb{B}^n_R$ for the Euclidean ball of radius $R$. If the radius of a Euclidean ball is not specified it will be taken to be 1, that is, $\mathbb{B}^n = \mathbb{B}^n_1$. 

\subsection{Capillary and other surfaces}
\label{sec:capillary-notation}
In this article, we will consider immersed compact submanifolds with boundary $(\Sigma^k,\pr\Sigma) \looparrowright (M^N,S)$, where $M$ is a Riemannian manifold, and $S$ is a complete embedded hypersurface in $M$ with a unit normal $\bar{\eta}$. We emphasise that the metric on $M$ and the normal $\bar{\eta}$ on $S$ are chosen as part of this setup, although their notation may be suppressed. If the barrier is written as the boundary of a region, $S=\pr \Omega$, then the barrier normal $\bar{\eta}$ should be understood to be the \textit{outer} unit normal. We denote by $\eta$ the outer unit conormal of $\pr\Sigma$ in $\Sigma$. We will often conflate $\eta$ with its pushforward $\pr_\eta x$, where $x$ is the immersion map. 

We will $\bar{g}$ to denote the metric on $M$ and $g$ to denote the induced metric on $\Sigma$. Similarly, we use $\bar{\nabla}$ for the ambient connection and $\nabla$ for the induced connection on $\Sigma$.

The (bundle-valued) second fundamental form of $\Sigma$ in $M$ is denoted $\mathbf{A}(X,Y) = (\bar{\nabla}_X Y)^\perp$, where $(\cdot)^\perp$ denotes the projection to the normal bundle of $\Sigma$. The submanifold $\Sigma$ is minimal if $\mathbf{H} = \tr \mathbf{A} =0$. 

If $\Sigma$ is a 2-sided hypersurface, then we denote by $\nu$ the unit normal of $\Sigma$ in $M$. In the hypersurface case, we will work with the scalar-valued second fundamental form, for which our convention is $A(X,Y) = \langle \bar\nabla_X \nu, Y\rangle$. The second fundamental form of the hypersurface $S$ is always $k_S(X,Y) = \langle \bar \nabla_X \bar{\eta}, Y\rangle$.

When $M = \mathbb{R}^{N+1}$ or $\mathbb{S}^N \subset \mathbb{R}^{N+1}$, we let $e_0, \dots, e_N$ denote an orthonormal basis of the Euclidean space and $x_i = \langle x, e_i\rangle$ corresponding coordinates. When we consider a geodesic ball $B_R \subset \mathbb{S}^N$, we assume $B_R$ has centre at $e_0$, unless otherwise indicated.

\begin{definition}
\label{}
Let $(\Sigma^k,\pr\Sigma) \looparrowright (M^{N},S)$ be an immersed submanifold with boundary. We say that $\Sigma$ contacts $S$ with angle $\gamma\in(0,\frac{\pi}{2}]$ along $\pr\Sigma\subset S$ if 
\begin{equation}\label{eq:contact-angle-def}\tag{$\dagger$} \langle \eta,\bar{\eta}\rangle = \sin\gamma>0.
\end{equation}

If $\Sigma$ is a 2-sided hypersurface, then the above condition is equivalent to the existence of a choice of unit normal $\nu$ for which $\langle \nu, \bar{\eta}\rangle = \cos\gamma$. Unless otherwise specified, we take $\nu$ to denote this choice of unit normal.

In the special case $\gamma =\pit$, the condition (\ref{eq:contact-angle-def}) is equivalent to $\eta = \bar{\eta}$ along $\pr\Sigma\subset S$, and we say that $\Sigma$ is a \textit{free boundary} submanifold, and we use the abbreviation `FBMS' for such minimal submanifolds.  

We say that an immersion is \textit{constrained} in $\Omega\subset M$ if the image of (the interior of) $\Sigma$ lies in (the interior of) $\Omega$. 
\end{definition}

\begin{remark}
    Particularly in higher codimension, the constant contact angle condition above may be regarded as analogous to constant \textit{scalar} mean curvature $|\mathbf{H}|$. Whilst most of our focus is on hypersurfaces, in Section \ref{sec:hopf} we discuss a different generalisation which is analogous to the possibly more natural notion of \textit{parallel} mean curvature $\mathbf{H}$.
\end{remark}

\begin{remark}
\label{rmk:constrained}
Elsewhere in the literature, the notion of `free boundary' submanifold often refers to a submanifold $(\Sigma, \pr\Sigma) \looparrowright (\Omega, \pr\Omega)$ that contacts at angle $\pit$ along $\pr\Sigma$ - that is, in our language, a free boundary submanifold \textit{constrained} in $\Omega$, where $S=\pr \Omega$. We emphasise that we do \textit{not} by default assume the constrained property above.

In the case $\Omega = \mathbb{B}^n \subset \mathbb{R}^n$, the unit ball, then there is no ambiguity, as any minimal submanifold $(\Sigma, \pr\Sigma) \looparrowright (\mathbb{R}^n, \pr\mathbb{B}^n)$ must be constrained in $\mathbb{B}^n$, as a simple application of the maximum principle to the function $|x|^2$ shows that it cannot have an interior maximum.

Similarly, consider a minimal submanifold $(\Sigma, \pr\Sigma)\looparrowright (\mathbb{S}^N, \pr B_R)$. If $R\leq \pit$ and $\Sigma$ is \textit{known to lie in the hemisphere} $\{x_0\geq 0\}$, then an application of the (strong) maximum principle to $x_0$ shows that $\Sigma$ must be constrained in $B_R$. 
\end{remark}

\begin{remark}
The term `proper' is often used in the literature as a condition on (what we have termed) \textit{constrained} submanifolds $x:(\Sigma, \pr\Sigma) \looparrowright (\Omega, \pr\Omega)$. This condition is that $x^{-1}(\pr\Omega) = \pr\Sigma$. We find it helpful to clarify that if $\Sigma$ is compact the map $x$ is always proper as a map (on the whole submanifold $\Sigma$). On the other hand, the condition $x^{-1}(\pr\Omega) = \pr\Sigma$ is equivalent to the condition that $x$ restricts to a proper map from the \textit{interior} of $\Sigma$ to the interior of $\Omega$. Note that if $\Sigma$ is constrained and $\Omega$ is mean convex, then by the strong maximum principle we indeed have $x^{-1}(\partial \Omega) = \partial \Sigma$. 

We also remark that if $\Sigma$ is compact, then the map on the compact surface $x :\Sigma \looparrowright M$ is an embedding if and only if it is an injective immersion.

Finally, we emphasise that our definition requires that all the components of $\pr\Sigma$ contact $S$ from the \textit{same} direction. 
(In particular, if $S=\pr\Omega$, then the conormal on each component always points out of $\Omega$.) 

By contrast, de Oliveira \cite{dO24} constructs `free boundary' minimal annuli in $\mathbb{S}^3$ which are allowed to contact the sphere of radius $R$ (orthogonally) from either side.
\end{remark}

The following lemma is a standard calculation but we include a proof to illustrate our conventions.

\begin{lemma}
\label{lem:2ff-diag}
Suppose $(\Sigma,\pr\Sigma)\looparrowright (M,S)$ is a hypersurface that contacts $S$ with angle $\gamma \in (0,\pit]$ along $\pr \Sigma$. Let $\bar{\nu}$ be a unit normal of $\pr\Sigma$ in $S$ for which $\sin\gamma \, \bar{\eta} = \eta + \cos\gamma \, \bar{\nu}$. Then on the boundary, for any vector $v$ tangent to $\pr \Sigma$, we have
\[
A(v,\eta) = -k_S(v,\bar{\nu}).
\]
In particular, if $S$ is totally umbilic then $A(v,\eta)=0$. 
\end{lemma}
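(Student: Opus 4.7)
The plan is to use the fact that the contact angle $\gamma$ is constant along $\partial\Sigma$ by differentiating the relation $\langle \nu,\bar\eta\rangle \equiv \cos\gamma$ along a boundary-tangent direction $v$. The key geometric observation is that along $\partial\Sigma$ (a codimension-two submanifold of $M$), the normal plane at each point admits two orthonormal bases, $\{\eta,\nu\}$ coming from $\Sigma$ and $\{\bar\nu,\bar\eta\}$ coming from $S$, which are related by rotation through angle $\gamma$. Concretely, the prescribed identity $\sin\gamma\,\bar\eta = \eta+\cos\gamma\,\bar\nu$ and orthonormality force
\[
\bar\eta = \sin\gamma\,\eta+\cos\gamma\,\nu, \qquad \bar\nu = -\cos\gamma\,\eta+\sin\gamma\,\nu, \qquad \nu = \cos\gamma\,\bar\eta + \sin\gamma\,\bar\nu,
\]
which I would verify as the starting point of the proof.

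Differentiating $\langle\nu,\bar\eta\rangle = \cos\gamma$ in the direction $v\in T\partial\Sigma$ gives
\[
0 = \langle\bar\nabla_v\nu,\bar\eta\rangle + \langle\nu,\bar\nabla_v\bar\eta\rangle.
\]
For the first term, substitute the expression for $\bar\eta$ and use $\langle\bar\nabla_v\nu,\nu\rangle=0$ to obtain $\langle\bar\nabla_v\nu,\bar\eta\rangle = \sin\gamma\,\langle\bar\nabla_v\nu,\eta\rangle = \sin\gamma\,A(v,\eta)$. For the second term, since $v\in TS$, the vector $\bar\nabla_v\bar\eta$ lies in $TS$, so only the tangential-to-$S$ part of $\nu$ contributes; by the third identity above that part is exactly $\sin\gamma\,\bar\nu$, yielding $\langle\nu,\bar\nabla_v\bar\eta\rangle = \sin\gamma\,\langle\bar\nu,\bar\nabla_v\bar\eta\rangle = \sin\gamma\,k_S(v,\bar\nu)$. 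Combining and dividing by $\sin\gamma>0$ gives $A(v,\eta) = -k_S(v,\bar\nu)$. The final claim for umbilic $S$ is immediate: $k_S = \lambda\,g_S$ for some function $\lambda$, so $k_S(v,\bar\nu) = \lambda\langle v,\bar\nu\rangle_S = 0$ because $v$ is tangent to $\partial\Sigma\subset S$ while $\bar\nu$ is the normal to $\partial\Sigma$ inside $S$.

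There is no real obstacle; the only care required is in bookkeeping the two orthonormal frames and confirming the tangential decomposition of $\nu$ with the correct sign, so that the two occurrences of $\sin\gamma$ appear with matching signs rather than cancelling. (As a sanity check, one may specialise to the free-boundary case $\gamma=\pit$, where $\bar\eta=\eta$, $\bar\nu=\nu$, and the identity reduces to the well-known $A(v,\eta)=-k_S(v,\nu)$.)
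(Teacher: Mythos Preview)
Your proof is correct and essentially the same as the paper's: both exploit the rotation between the frames $\{\eta,\nu\}$ and $\{\bar\eta,\bar\nu\}$ and reduce to evaluating $\langle\bar\nabla_v\bar\eta,\nu\rangle$. The paper computes $k_S(v,\bar\nu)$ directly by substituting $\bar\nu=\frac{1}{\sin\gamma}\nu-\cot\gamma\,\bar\eta$ and then $\bar\eta=\cos\gamma\,\nu+\sin\gamma\,\eta$, whereas you obtain the same identity by differentiating $\langle\nu,\bar\eta\rangle=\cos\gamma$; the two computations are interchangeable.
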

\begin{proof}
The capillary condition gives $\bar{\eta} = \cos \gamma \, \nu + \sin \gamma \, \eta$ on $\partial \Sigma$. By the choice of $\bar{\nu}$, it follows that $\bar{\nu} = \frac{1}{\sin \gamma} \,\nu -\cot\gamma \, \bar{\eta}$.
Then
 \[
 \begin{split}
 k_S(v,\bar{\nu}) &= \langle \bar\nabla_v \bar{\eta}\,,\,\bar{\nu}\rangle = \langle \bar\nabla_v \bar{\eta}\,,\, \frac{1}{\sin\gamma}\nu - \cot\gamma \, \bar{\eta}\rangle = \frac{1}{\sin\gamma}\langle \bar\nabla_v \bar{\eta}\, ,\, \nu\rangle
 \\&= \frac{1}{\sin\gamma} \langle \cos\gamma \,\bar \nabla_v \nu + \sin\gamma \,\bar \nabla_v \eta, \nu\rangle = \langle \bar\nabla_v \eta,\nu\rangle = -A(v,\eta). 
 \end{split}
 \]
\end{proof}

\subsection{Variational problems on capillary surfaces}

In this subsection, for the readers' convenience, we reproduce some variational notions for capillary surfaces. For a more thorough treatment, we refer the reader to our companion paper \cite{NZ25b}. 

\subsubsection{Stability}
\label{sec:prelim-stability}

Consider a minimal hypersurface $(\Sigma^n,\pr\Sigma) \looparrowright (M^{n+1},S)$ which contacts $S$ with angle $\gamma$. Let $q=\frac{1}{\sin\gamma}k_S(\bar{\nu},\bar{\nu}) - \cot\gamma\, A(\eta,\eta)$, where $\bar{\nu}$ is a normal of $\pr\Sigma$ in $S$. 

The second variation of wetting energy is given by the index form (cf. \cite{RS97})
\[
\begin{split}
    Q^{\mathrm{A}}(u) &:= \int_\Sigma \left(|\nabla u|^2 - (|A|^2 + \Ric(\nu,\nu))u^2\right) - \int_{\pr \Sigma} qu^2.
    \\& = -\int_\Sigma u\left(\Lap  + (|A|^2 + \Ric(\nu,\nu)\right)u + \int_{\pr \Sigma} u(\pr_\eta-q)u.
\end{split}
\]
The operator $L = L_{\Sigma} = \Lap + |A|^2 + \Ric(\nu,\nu)$ is often referred to as the \textit{stability operator}. 

\subsubsection{Modified Dirichlet form}

Consider a FBMS $(\Sigma^2,\pr\Sigma) \looparrowright (\mathbb{M}^3, \pr B_R)$, where $\mathbb{M}$ is a space form of constant curvature $\kappa$. We introduce the modified Dirichlet form 
\[
\begin{split}
    Q^{\mathrm{S}}(u) &:= \int_\Sigma \left(|\nabla u|^2 - 2 \kappa u^2\right) - \ct_\kappa(R) \int_{\pr \Sigma} u^2.
    \\& = -\int_\Sigma u\left(\Lap  +2\kappa\right)u + \int_{\pr \Sigma} u(\pr_\eta-\ct_\kappa(R))u,
\end{split}
\]
where \[\ct_\kappa(R) = \begin{cases} \sqrt{\kappa} \cot(R\sqrt{\kappa}), & \kappa >0 \\ \frac{1}{R}, &\kappa =0\\ \sqrt{|\kappa|}\coth(r\sqrt{|\kappa|}), &\kappa <0,\end{cases}\] 
is the (constant) curvature of $\partial B_R$ in $\mathbb{M}$. 

\subsubsection{Index and Nullity}

For the quadratic forms $Q= Q^{\mathrm{S}}, Q^{\mathrm{A}}$, there is an associated bilinear form which we continue to denote by $Q$; explicitly $Q(u,v) =\frac{1}{2}( Q(u+ v)-Q(u) -Q(v) )$. The index $\ind(Q)$ is the maximal dimension of a subspace of $C^\infty(\Sigma)$ on which $Q$ is negative definite. We define the nullity $\nul(Q)=\dim\ker Q$, where $\ker Q = \{ u : Q(u,\cdot)\equiv 0\}$. Finally, we let $\ind_0(Q) := \ind(Q)+\nul(Q)$.

\subsection{Conformal minimal surfaces and weighted minimal surfaces}
\label{sec:conf-weight-corres}

Fix an ambient manifold $(M^{n+1},\bar g)$ and a smooth function $\phi$ on $M$. We note that the $n$-dimensional area functional on an immersion $(\Sigma^n, g) \looparrowright (M^{n+1}, \bar{g})$ with respect to the induced conformal metric $\tilde{g}:= e^{2\phi}g$ is precisely the same as the weighted area functional with the weight $e^{-f}$ where
\[
f = -n \phi.
\]
That is, 
\[ 
\int_\Sigma d\mu_{\tilde{g}} = \int_{\Sigma} e^{n\phi} d\mu_g = \int_{\Sigma} e^{-f} d\mu_g.
\]

At various points in this article, we will switch between these perspectives to make certain arguments most efficient. In particular, as the conformal area and weighted area are equivalent, so too are their first and second variations.

For the convenience of the reader, we will make these equivalences explicit as follows. Recall that the undecorated $\nabla,\Lap$ are taken with respect to $g$ on $\Sigma$, while $\bar{\nabla}, \bar{\Lap}$ are taken on $M$. We will decorate operators with $\tilde{g}$ to indicate they are taken with respect to the induced conformal metric. 

We will use well-known formulae for conformal changes, see Appendix \ref{sec:conformal-change}. The weighted (scalar) mean curvature is given by $H^g_f = H^g-\pr_\nu f$. On the other hand, the conformal mean curvature is
\[ H^{\tilde{g}} = e^{-\phi}(H^g + n(\pr_\nu \phi)) = e^{-\phi} H^g_f.\]
In particular, $\Sigma$ is minimal with respect to $\tilde{g}$ (i.e. $H^{\tilde{g}}=0$) if and only if it is $f$-minimal with respect to $g$ (i.e. $H^g_f=0$). 

The stability operator for weighted area is  
\[L_f = \Lap - g(\nabla f, \nabla \cdot) + |A^g|_g^2 + \Ric_M(\nu,\nu) + \bar\nabla^2 f.\]

The lemma below says that $L^g_f$ is essentially equivalent to the stability operator for conformal area, up to conjugation by $e^\phi$. Consequently, 
\[
\int_\Sigma e^\phi u L^{\tilde{g}}(e^\phi u) d\mu_{\tilde{g}} = \int_\Sigma (uL^g_f u) e^{n\phi} d\mu_g = \int_\Sigma (uL^g_f u) e^{-f} d\mu_g,
\]
and one may recover the index form for weighted area from the index form for conformal area.

\begin{lemma}
\label{lem:conf-weight-operator}
Let $(\Sigma, g) \looparrowright (M, \bar{g})$ be a minimal hypersurface as above, $\tilde{g} = e^{2\phi}g$ denote the induced conformal metric, and $f=-n\phi$. Then
\begin{equation}
\label{eq:conf-weight-operator}
    e^\phi L^{\tilde{g}}(e^\phi u) = L^g_f(u).
\end{equation}
\end{lemma}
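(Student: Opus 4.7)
I would exploit the observation made earlier in this subsection: since $\int d\mu_{\tilde g} = \int e^{-f}d\mu_g$, the conformal area functional and the weighted area functional coincide, and so must their second variations. The operator identity \eqref{eq:conf-weight-operator} then drops out of matching the two second variation formulas, together with a standard self-adjointness argument to pass from the resulting integral identity to a pointwise one.

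Concretely, take a compactly supported smooth normal variation $x_t$ of $\Sigma$ with $\partial_t x|_{t=0} = u\nu$. Since $\tilde{\nu} = e^{-\phi}\nu$, this same variation has $\tilde g$-normal speed $\tilde u = e^\phi u$. Using the standard second variation at a minimal surface and the identity $d\mu_{\tilde g} = e^{n\phi}d\mu_g = e^{-f}d\mu_g$, one obtains
\[
\tfrac{d^2}{dt^2}\Big|_{t=0}\int d\mu_{\tilde g_t} \;=\; -\int_\Sigma u\cdot \bigl(e^\phi L^{\tilde g}(e^\phi u)\bigr)\,e^{-f}\,d\mu_g,
\]
\[
\tfrac{d^2}{dt^2}\Big|_{t=0}\int e^{-f}\,d\mu_{g_t} \;=\; -\int_\Sigma u\cdot L^g_f u\,e^{-f}\,d\mu_g.
\]
The two functionals are identically equal, so the two quadratic forms in $u$ agree.

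Now set $Pu := e^\phi L^{\tilde g}(e^\phi u)$ and $Qu := L^g_f u$, and consider the weighted pairing $\langle a,b\rangle_f := \int a\,b\,e^{-f}\,d\mu_g$. Both $P$ and $Q$ are symmetric with respect to $\langle\cdot,\cdot\rangle_f$: for $Q$ this is the well-known symmetry of the drift operator, while for $P$ it follows from
\[
\langle Pu, v\rangle_f \;=\; \int \bigl(L^{\tilde g}(e^\phi u)\bigr)(e^\phi v)\,d\mu_{\tilde g} \;=\; \int (e^\phi u)\bigl(L^{\tilde g}(e^\phi v)\bigr)\,d\mu_{\tilde g} \;=\; \langle u, Pv\rangle_f,
\]
using $e^{-f}d\mu_g = d\mu_{\tilde g}$ and the standard self-adjointness of $L^{\tilde g}$ in $L^2(d\mu_{\tilde g})$. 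Thus $P-Q$ is symmetric in $\langle\cdot,\cdot\rangle_f$ and satisfies $\langle u,(P-Q)u\rangle_f = 0$ for every $u\in C^\infty_c(\Sigma^\circ)$; polarization then yields $(P-Q)u \equiv 0$ pointwise on the interior, which is precisely \eqref{eq:conf-weight-operator}.

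The only mild obstacle is bookkeeping the conformal rescaling between the $\bar g$- and $\tilde{\bar g}$-normal speeds of a common variation. The alternative route of direct computation would unpack $\Delta^{\tilde g}(e^\phi u)$ via the conformal change of the Laplacian on $\Sigma$, expand $|A^{\tilde g}|^2_{\tilde g}$ using the tensorial conformal change of the second fundamental form from Appendix \ref{sec:conformal-change}, rewrite $\Ric_{\tilde{\bar g}}(\tilde\nu,\tilde\nu)$ in terms of $\Ric_{\bar g}(\nu,\nu)$, $\bar\nabla^2 \phi$, $|\bar\nabla\phi|^2$ and $\bar\Delta\phi$, and then collapse everything using $f=-n\phi$ together with the minimality hypothesis $H^g + n\partial_\nu\phi = 0$ (equivalently $H^{\tilde g}=0$). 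This is straightforward but tedious, and the variational argument above sidesteps it entirely.
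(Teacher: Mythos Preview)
Your argument is correct and takes a genuinely different route from the paper. The paper proves the identity by direct computation: it expands $e^{2\phi}\wt{\Ric}_M(\tilde\nu,\tilde\nu)$, $e^{2\phi}|A^{\tilde g}|^2_{\tilde g}$ and $e^{2\phi}\Lap^{\tilde g}(e^\phi u)$ using the conformal change formulae of Appendix~\ref{sec:conformal-change}, substitutes the minimality condition $H^g=-n\partial_\nu\phi$, and verifies that all quadratic terms in $\bar\nabla\phi$ cancel, leaving exactly $L^g_f u$. Your approach instead leverages the observation (already recorded just before the lemma) that the conformal and weighted area functionals are \emph{identically equal}, hence so are their second variations at a common critical point; you then pass from the resulting equality of quadratic forms to the pointwise operator identity by self-adjointness and polarization. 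Your route is cleaner and more conceptual, and makes the lemma feel inevitable rather than a computational coincidence; the paper's route, on the other hand, is constructive and shows exactly which geometric terms match up, which can be useful if one later needs to track individual pieces (as the authors implicitly do elsewhere when manipulating $L^g_f$).
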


\begin{proof}

The stability operator in the conformal metric is \[L^{\tilde{g}} = \Lap^{\tilde{g}} + |A^{\tilde{g}}|_{\tilde{g}}^2 + \wt{\Ric}_M( \tilde{\nu}, \tilde{\nu}),\]
where $\wt{\Ric}_M$ is the Ricci curvature of the conformal metric $e^{2\phi} \bar{g}$ on $M$ and $\tilde{\nu}$ is the unit normal of $\Sigma$ with respect to the conformal metric. 
Assuming $H^g = \pr_\nu f = -n\pr_\nu \phi$, and noting that $\tilde{\nu}=e^{-\phi}\nu$, we find
\[
\begin{split}
   e^{2\phi} \wt{\Ric}_M( \tilde{\nu}, \tilde{\nu}) &= \Ric_M(\nu,\nu)-(n-1)(\bar{\nabla}^2\phi(\nu,\nu) - (\pr_\nu \phi)^2) 
   \\& \qquad - (\Lap^g \phi -n(\pr_\nu\phi)^2  + \bar{\nabla}^2\phi(\nu,\nu) +(n-1)(|\nabla^{g}\phi|_g^2 + (\pr_\nu\phi)^2))
   \\& = \Ric_M(\nu,\nu)-\Lap^g \phi -n \bar{\nabla}^2\phi(\nu,\nu) +n(\pr_\nu \phi)^2  -(n-1)|\nabla^{g}\phi|_g^2,
\end{split}
\]
where we have expanded $\bar{\Lap} \phi = \Lap^g \phi -n(\pr_\nu\phi)^2  + \bar{\nabla}^2\phi(\nu,\nu) $ using a basis of $TM$ adapted to $\Sigma$. We also have
\[
e^{2\phi}|A^{\tilde{g}}|_{\tilde{g}}^2 = |A^g|_g^2 - n(\pr_\nu \phi)^2, 
\]
and
\[
\begin{split}
    e^{2\phi}\Lap^{\tilde{g}}(e^\phi u) &= \Lap^g(e^\phi u) + (n-2)g(\nabla^{g}\phi, \nabla^{g}(e^\phi u))
    \\&= e^\phi(\Lap^g u + 2g(\nabla^{g}u,\nabla^{g}\phi) + |\nabla^{g}\phi|_g^2 u + (\Lap^g \phi)u)
    \\&\qquad + (n-2)e^\phi g(\nabla^{g}\phi, \nabla^{g}u) +(n-2) |\nabla^{g}\phi|_g^2 e^{\phi} u
    \\&=e^{\phi} \big(  \Lap^g u + n g(\nabla^{g}u,\nabla^{g}\phi) + (n-1)|\nabla^{g}\phi|_g^2 u + (\Lap^g \phi)u \big).
\end{split}
\]
Combining everything, we see that the quadratic terms in $\bar\nabla \phi$ cancel exactly, leaving
\[
\begin{split}
e^{\phi}L^{\tilde{g}}(e^{\phi}u) &= \Lap^g u + ng(\nabla^{g}u,\nabla^{g}\phi) 
 + \big(|A^g|_g^2 +\Ric_M(\nu,\nu) -n \bar\nabla^2\phi(\nu,\nu)\big)u.
\end{split}
\]
Recalling that $f=-n\phi$ yields the desired result. 

\end{proof}

\subsection{Spherical caps}
\label{sec:cap-model}

Throughout this article, we denote the standard round metric (of constant curvature $1$) on $\mathbb{S}^N$ by $g_1$ and the standard flat metric on $\mathbb{R}^N$ by $\delta$. Recall that $B_R = B_R(o)\subset M$ denotes the geodesic ball in $M$, with $\mathbb{B}_R$ reserved for Euclidean balls. Unless otherwise indicated, for balls $B_R\subset \mathbb{S}^N\hookrightarrow\mathbb{R}^{N+1}$ we take the centre to be $o=e_0$, and denote by $\rho$ the (spherical) distance from $o$. Then, in the notation of Section \ref{sec:capillary-notation}, $(M^3, S) = (\mathbb{S}^3, \partial B_R)$ satisfies $\bar{\eta} = \pr_\rho$ on $\partial B_R$ and $k = k^{\partial B_R} = \cot R \, g_1 $.

This project was motivated, in part, by an attempt to study the free boundary minimal surface problem in $\mathbb{B}^3$ as part of a continuous family containing the minimal surface problem in $\mathbb{S}^3$. Let $\kappa_R := \sin^2 R$ and let $\mathbb{M}^N_{\kappa_R}$ denote the space form of curvature $\kappa_R$. Accordingly, for $R>0$ we may consider the pair \[(\mathbb{M}^N_{\kappa_R}, \pr B_{R\csc R}).\] This is precisely the rescaling of the pair $(\mathbb{S}^N, \pr B_R)$ such that $\partial B_{R \csc R}$ is isometric to the unit $(N-1)$-sphere. Note that $\kappa_R \to 0$ as $R \to 0$ and $R \csc R \to 1$ so that in the limit one obtains $(\mathbb{R}^N, \partial \mathbb{B}_1)$.

Each of the settings $(\mathbb{M}^N_{\kappa_R} , \pr B_{R\csc R})$ has a conformal model by the hemisphere. To be precise, for $R>0$, the sphere $\mathbb{M}^N_{\kappa_R}$ is isometric to a Riemannian manifold \[M^N_R:=(\mathbb{S}^N, h_R^2 g_1),\qquad h_R = \frac{1}{1+\cos R\cos \rho},\]
and under this identification $B_{R\csc R} \subset \mathbb{M}^N_{\kappa_R}$ is mapped to $B_\pit \subset \mathbb{S}^N$, where as usual $\rho$ is the spherical distance on $\mathbb{S}^N$. (The precise constructions are fairly standard, and deferred to our companion paper \cite{NZ25b} - particularly Appendix B - where the reader may find a more comprehensive review of the relevant conformal transformations.) For $R=0$, stereographic projection gives that $\mathbb{R}^N$ is isometric to $(\mathbb{S}^N \setminus \{-o\}, h_0^2 g_1)$, with $\mathbb{B}^N$ being mapped to $B_\pit$. 

As per Section \ref{sec:conf-weight-corres} above, surfaces are minimal with respect to $h_R^2 g_1$ if and only if they are weighted minimal with respect to $g_1$, with weight \[f_R := -n\log h_R = n\log(1+\cos R\cos \rho). \]

To summarise, we will have the following equivalent problems:

\begin{enumerate}[(A)]
\item FBMS in $(\mathbb{S}^N, \pr B_R)$ for $R>0$, or in $(\mathbb{R}^N, \pr\mathbb{B}_1)$ for $R=0$.
\item FBMS in $(\mathbb{M}^N_{\kappa_R} , \pr B_{R\csc R})$, where $\mathbb{M}^N_{\kappa_R}$ is the space form of curvature $\kappa_R= \sin^2 R$.
\item Weighted ($f_R=n\log(1+\cos R\cos \rho)$) FBMS in $(\mathbb{S}^N, \pr B_\pit)$.
\end{enumerate}

As the equivalences are all conformal, we note that the above problems give equivalent \textit{two}-parameter families if one expands the contact angle condition to $\gamma\in(0,\pit]$. 

For results at a given $R>0$, we find it most familiar to present our results in terms of non-rescaled setting (A). When studying these problems as part of a continuous family, however, we find it more thematic to work in either of the rescaled settings (B, C), particularly as they include the limiting case $R\to 0$ in a direct sense.

\subsection{$(R,\gamma)$-minimal surfaces}

We make the following definition for notational convenience:

\begin{definition}\label{def:Rgamma-cap}
Suppose $\Sigma^n$ is a compact, smooth manifold with boundary. Let $R \in (0, \pi)$ and  $\gamma \in (0,\frac{\pi}{2}]$.

 An $(R,\gamma)$-minimal immersion is a 2-sided proper minimal immersion $x : (\Sigma, \partial \Sigma) \looparrowright (\mathbb{S}^{n+1}, \partial B_R)$ which contacts $\partial B_R$ with constant angle $\gamma$. This means (as $\bar{\eta}=\pr_\rho$) that for the outward pointing unit conormal $\eta$ and some choice of unit normal $\nu$ along $\Sigma$, one has 
\begin{equation}
    \langle \nu, \partial_\rho \rangle = \cos \gamma, \qquad \qquad \langle \eta, \partial_\rho \rangle = \sin \gamma.
\end{equation}

An $(R, \gamma)$-minimal embedding is an $(R, \gamma)$-minimal immersion which is additionally an embedding. 
\end{definition}

\begin{remark}
For much of this article, we will focus on the $n=2$ case. For brevity, we will often refer to surfaces $\Sigma^2$ as above to be $(R,\gamma)$-minimal surfaces. 
\end{remark}

We compute some basic boundary relations for $(R,\gamma)$-minimal immersions. We assume $B_R\subset \mathbb{S}^3$ is centred at $o=e_0$. 

\begin{lemma}\label{lem:boundary-Rgamma-cap}
Suppose $x : (\Sigma^n, \partial \Sigma) \looparrowright (\mathbb{S}^{n+1}, \partial B_R)$ is an $(R, \gamma)$-minimal immersion. Then on $\partial \Sigma$, 
\begin{align}
\langle x, e_0\rangle &= \cos R, \\
\pr_\eta \langle x, e_0\rangle &= -\sin R \sin\gamma, \\
\langle \nu, e_0\rangle &= - \sin R \cos \gamma, \\
\partial_{\eta} \langle \nu, e_0 \rangle & = \tan \gamma \, A(\eta, \eta) \langle \nu, e_0\rangle. 
\end{align}
If $R = \frac{\pi}{2}$, and $a \in \mathbb{R}^{n+1}$ is a unit vector orthogonal to $e_0$, then 
\begin{equation}
\partial_{\eta} \langle \nu, a \rangle = -\cot \gamma \, A(\eta, \eta) \langle \nu, a \rangle.
\end{equation}
\end{lemma}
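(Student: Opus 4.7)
The lemma is a direct computation using the ambient embedding $\mathbb{S}^{n+1}\subset\mathbb{R}^{n+2}$ together with the totally umbilic geometry of $\partial B_R$. The key observation I want to exploit is that on a geodesic sphere $\partial B_R\subset\mathbb{S}^{n+1}$ centred at $e_0$, the outer unit normal $\bar\eta=\partial_\rho$ has the explicit form
\[
\bar\eta \;=\; \frac{\cos R\,x - e_0}{\sin R},\qquad\text{equivalently}\qquad e_0 \;=\; \cos R\,x - \sin R\,\bar\eta,
\]
valid at any $x\in\partial B_R$. Once this is in hand, (1) follows from $\partial\Sigma\subset\partial B_R$ (i.e., $\rho\equiv R$ along $\partial\Sigma$), and the two scalar products (2) and (3) are obtained by pairing the above decomposition with $\eta$ and with $\nu$ respectively, using that $\eta,\nu\in T_x\mathbb{S}^{n+1}$ are orthogonal to $x$, together with the capillary relations $\langle\eta,\bar\eta\rangle=\sin\gamma$ and $\langle\nu,\bar\eta\rangle=\cos\gamma$ from Definition~\ref{def:Rgamma-cap}.

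For (4), the plan is to compute $\partial_\eta\langle\nu,e_0\rangle=\langle\bar\nabla_\eta\nu,e_0\rangle$ (viewing $e_0$ as a parallel vector field in $\mathbb{R}^{n+2}$ and noting that $\bar\nabla_X\nu$ coincides with the Euclidean derivative since $\langle D_X\nu,x\rangle=0$). Expanding $\bar\nabla_\eta\nu=A(\eta,\eta)\eta+\sum_\alpha A(\eta,v_\alpha)v_\alpha$ in an orthonormal frame $\{\eta,v_1,\dots,v_{n-1}\}$ of $T\Sigma|_{\partial\Sigma}$, I invoke Lemma~\ref{lem:2ff-diag} with $S=\partial B_R$, which is totally umbilic in $\mathbb{S}^{n+1}$, to conclude $A(\eta,v_\alpha)=0$. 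Hence $\bar\nabla_\eta\nu=A(\eta,\eta)\eta$, and then (2), (3) give
\[
\partial_\eta\langle\nu,e_0\rangle \;=\; A(\eta,\eta)\langle\eta,e_0\rangle \;=\; -A(\eta,\eta)\sin R\sin\gamma \;=\; \tan\gamma\,A(\eta,\eta)\langle\nu,e_0\rangle.
\]

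For the final identity, specialising to $R=\pi/2$ gives $\cos R=0$, so the formula for $\bar\eta$ collapses to $\bar\eta=-e_0$ along $\partial B_{\pi/2}$. Thus the hypothesis $a\perp e_0$ is equivalent to $a\perp\bar\eta$ at boundary points. Decomposing $\bar\eta=\cos\gamma\,\nu+\sin\gamma\,\eta$ and taking the inner product with $a$ yields the relation $\langle a,\eta\rangle=-\cot\gamma\,\langle a,\nu\rangle$. Applying $\bar\nabla_\eta\nu=A(\eta,\eta)\eta$ once more produces the claimed formula
\[
\partial_\eta\langle\nu,a\rangle \;=\; A(\eta,\eta)\langle\eta,a\rangle \;=\; -\cot\gamma\,A(\eta,\eta)\langle\nu,a\rangle.
\]

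\textbf{Main obstacle.} There is no genuine difficulty here; the calculation is routine. The only thing requiring care is keeping the sign conventions straight: the outer conormal $\bar\eta$ of $\partial B_R$ points away from the centre $e_0$ (hence the minus sign in $e_0=\cos R\,x-\sin R\,\bar\eta$), and one must select the unit normal $\nu$ along $\Sigma$ consistent with the convention $\langle\nu,\partial_\rho\rangle=\cos\gamma$ fixed in Definition~\ref{def:Rgamma-cap}. The only nontrivial external input is the umbilic consequence $A(v,\eta)=0$ from Lemma~\ref{lem:2ff-diag}, which is what makes $\bar\nabla_\eta\nu$ point in the $\eta$-direction and ultimately yields the scalar prefactor $A(\eta,\eta)$.
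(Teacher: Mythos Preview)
Your proposal is correct and follows essentially the same route as the paper. The only organizational difference is in equation (4): you invoke Lemma~\ref{lem:2ff-diag} to reduce $\bar\nabla_\eta\nu$ to $A(\eta,\eta)\eta$ before pairing with $e_0$, whereas the paper instead projects $e_0$ to $T\Sigma$ (via $\partial_\rho^{\top,\Sigma}=\sin\gamma\,\eta$, which follows directly from $\partial_\rho=\cos\gamma\,\nu+\sin\gamma\,\eta$) and pairs with $A(\eta)$ --- this avoids any appeal to Lemma~\ref{lem:2ff-diag} and keeps the argument self-contained, but the two computations are otherwise identical.
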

\begin{proof}
The first equation is clear. Now, on $\partial B_R$, the tangential projection of $e_0$ to $T\mathbb{S}^n$ is given by $-\sin R \, \partial_\rho$. 

The contact angle condition (and our standard choice of normal) gives that $\partial_\rho = \cos \gamma\,  \nu +  \sin \gamma \, \eta $ on $\pr\Sigma$. 
This immediately implies 
\[ \pr_\eta \langle x,e_0\rangle = \langle \eta,e_0\rangle = -\sin R \langle \eta, \pr_\rho\rangle = -\sin R \sin \gamma\]
and 
\[
\langle \nu, e_0 \rangle = -\sin R \langle \nu, \partial_\rho \rangle = - \sin R \cos \gamma. 
\]
Moreover, the tangential projection of $\partial_\rho$ to $T\Sigma$ is given, along $\pr\Sigma$, by $\sin \gamma \, \eta$. Therefore 
\[
\partial_\eta \langle \nu, e_0 \rangle = \langle A(\eta), e_0 \rangle = -\sin R \langle A(\eta), \partial_\rho^{\top, \Sigma}\rangle = - \sin R \sin \gamma A(\eta, \eta)  = \tan \gamma A(\eta, \eta) \langle \nu, e_0 \rangle. 
\]

Finally, suppose that $a$ is an ambient unit vector orthogonal to $e_0$, and that $R = \frac{\pi}{2}$. When the boundary of $\Sigma$ lies in $\partial B_{\frac{\pi}{2}}$, we have $-e_0 = \partial_\rho =\cos \gamma \, \nu +  \sin \gamma \, \eta $. Hence, the orthogonality of $a$ to $e_0$ implies $\langle a, \eta \rangle = -\cot\gamma \langle a, \nu \rangle$. Thus
\[
\partial_\eta \langle \nu, a \rangle = \langle A(\eta), a \rangle = A(\eta, \eta) \langle \eta, a \rangle  = - \cot \gamma \, A(\eta, \eta) \langle \nu, a\rangle.
\]
\end{proof}

\section{Intersection and two-piece properties in radially conformal manifolds}
\label{sec:radial}

In this section, we generalise half-space intersection and two-piece properties for minimal hypersurfaces in certain manifolds which admit a radial symmetry. These methods have been pursued in the literature, including in \cite{Ros95,assimos2020intersection, LM21, NZ24}; our goals for this section are to present the results in a more comprehensive and unified manner. In particular, we generalise our results in \cite{NZ24} to a class of rotationally symmetric ambient manifolds.  

As the rest of this article is concerned with minimal surfaces in spherical caps, we summarise the key results in that setting as follows: 

\begin{proposition}
\label{prop:radial-sphere}
    Let $R\leq \pit$ and suppose $(\Sigma,\pr\Sigma)\hookrightarrow (B_R, \pr B_R)$ is an embedded free boundary minimal surface constrained in $B_R\subset \mathbb{S}^3$. Let $\rho$ be the distance from the centre $o$ of $B_R$ and consider any $a \in\mathbb{S}^3$ such that the boundary $\pr B_{\pit}(a)$ of the hemisphere $B_\pit(a)$ passes through $o$. 
    
    \begin{enumerate}[(a)]
    \item If $\Sigma'$ is another such surface, then $\Sigma,\Sigma'$ intersect in the closed hemisphere $\overline{B_\pit(a)}$; that is, $\Sigma\cap\Sigma' \cap \overline{B_\pit(a)}\neq \emptyset$. 
    \item $\Sigma \setminus \pr B_\pit(a)$ consists of two connected components. 
    \item Suppose that $\Sigma$ has genus 0 and $b$ boundary components. If $b=1$ then $\Sigma$ is totally geodesic. If $b\geq 2$, then $o\notin\Sigma$ and $g_1(\pr_\rho,\nu)$ does not vanish on the interior of $\Sigma$.
	\end{enumerate}
\end{proposition}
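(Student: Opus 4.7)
The proposition comprises three parts, which I would tackle in order, each building on the previous.

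For \textbf{part (a)}, I would apply Theorem~\ref{thm:frankel-intro} directly. The sphere $\mathbb{S}^3$ is a space form, hence a radially-symmetric conformally Euclidean manifold with constant Ricci curvature, so any geodesic ball $B_R \subset \mathbb{S}^3$ satisfies the hypotheses of the theorem. The closed hemisphere $\overline{B_\pit(a)}$ whose boundary equator passes through the centre $o$ corresponds to a closed half-ball of $\overline{B_R}$ cut by the totally geodesic hypersurface $\partial B_\pit(a)$ through $o$. Theorem~\ref{thm:frankel-intro} then gives the desired $\Sigma \cap \Sigma' \cap \overline{B_\pit(a)} \neq \emptyset$.

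For \textbf{part (b)}, I would use the classical reflection strategy of Ros \cite{Ros95}. Let $E := \partial B_\pit(a)$, and let $r : \mathbb{S}^3 \to \mathbb{S}^3$ denote reflection across $E$, which is an isometry preserving $B_R$ and sending FBMS to FBMS. First, $\Sigma \setminus E$ has at least two components: otherwise $\Sigma$ would lie in a single open half-ball $\Omega^\pm$, making $\Sigma$ and $r(\Sigma)$ disjoint FBMS in $B_R$, contradicting part (a) applied to them. For the upper bound, suppose $\Sigma \setminus E$ had at least three components. By pigeonhole, two lie in the same open half-ball $\Omega^+$. A standard Alexandrov-type exchange (gluing appropriate reflected pieces of $\Sigma$ and $r(\Sigma)$ along $E$) then produces two embedded FBMS in $B_R$ whose intersection in a suitable closed half-ball is empty, contradicting part (a).

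For \textbf{part (c)}, the two cases are treated separately. When $b = 1$, $\Sigma$ is a topological disc, and the Fraser--Schoen classification \cite{FS15} of free boundary minimal discs in space-form balls gives that $\Sigma$ is totally geodesic. When $b \geq 2$, I would consider the Jacobi function $u := \langle \nu, e_0 \rangle$. By Lemma~\ref{lem:boundary-Rgamma-cap} with $\gamma = \pit$, we have $u = 0$ on $\partial \Sigma$ and $Lu = 0$ in the interior. From the decomposition $e_0 = \cos\rho \, x - \sin\rho \, \partial_\rho$ valid on $\mathbb{S}^3 \setminus \{\pm e_0\}$, one computes $u = -\sin\rho \, g_1(\partial_\rho, \nu)$; moreover, if $o \in \Sigma$ then automatically $u(o) = 0$ since $\nu(o) \perp T_o \mathbb{S}^3 = e_0^\perp$. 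Hence both conclusions reduce to showing $u$ has no zero in the interior of $\Sigma$. Assuming for contradiction that $u$ vanishes at an interior point, I would combine the two-piece property from (b), applied to every $a \in e_0^\perp$ (controlling the nodal structure of ambient linear functions $\langle x, a\rangle$ on $\Sigma$), with an Euler characteristic / nodal domain argument using the hypotheses of genus zero and $b \geq 2$, together with the Jacobi identities $L\langle \nu, a\rangle = 0$ for all $a \in \mathbb{R}^4$.

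The \textbf{main obstacle} is in part (c) for the $b \geq 2$ case, namely translating the two-piece property for ambient linear functions $\langle x, a\rangle$ (with $a \perp e_0$) into sign-definiteness of the Jacobi field $u = \langle \nu, e_0\rangle$. I expect this step will parallel Brendle's \cite{Bre16} proof of star-shapedness for genus-zero self-shrinkers in $\mathbb{R}^3$, with the Gauss-map duality developed elsewhere in this paper providing a natural bridge between position and normal vector data. A delicate nodal analysis on a genus-zero surface with $b \geq 2$ boundary components, tracking the topology of nodal domains against the Euler characteristic $\chi(\Sigma) = 2 - b$, will be the key technical step.
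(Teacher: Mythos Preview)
Your treatment of part (a) is fine and matches the paper.

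For part (b), your reflection approach diverges from the paper and has a gap. The ``at least two'' direction is essentially right. For ``at most two'', however, the ``Alexandrov-type exchange'' you invoke is not standard here and is left unspecified: if $\Sigma_1,\Sigma_2$ are two components in the same open half-ball, gluing pieces of $\Sigma$ and $r(\Sigma)$ along $E$ will not in general yield smooth embedded FBMS. The paper avoids this entirely. It applies the half-space Frankel property not to $\Sigma$ and $r(\Sigma)$, but directly to $\Sigma_1$ and $\Sigma_2$ as minimal hypersurfaces in the half-ball $\overline{N_a}$, each with free boundary on $\partial B_R$ and fixed boundary on $D_a$ --- this is the full strength of Theorem~\ref{thm:frankel} (the version allowing $\partial\Sigma_i \subset D_a \cup S$), which is stronger than the formulation in Theorem~\ref{thm:frankel-intro}. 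Since $\Sigma_1,\Sigma_2$ come from an embedded $\Sigma$, their interiors are disjoint, so the forced intersection lies on $D_a$ and must be tangential; the strong maximum principle then gives $\Sigma\subset D_a$, a contradiction.

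For part (c), $b\geq 2$, your plan misses the mechanism linking the two-piece property to the vanishing of $\langle\nu,\partial_\rho\rangle$. The paper does \emph{not} analyse the Jacobi field $\langle\nu,e_0\rangle$ directly, nor does it use the polar duality of Section~\ref{sec:dual}. Instead it works with the position functions $u_a = x_a$ for $a\perp e_0$: these satisfy $(\Delta+2)x_a=0$ with the Robin condition $\partial_\eta x_a = \cot R\, x_a$, and the two-piece property combined with a genus-zero nodal lemma (Lemma~\ref{lem:nodal-genus-0}) forces each nodal set $\{x_a=0\}$ to have no interior crossing. The bridge you are looking for is then a one-line trick: if $\langle\nu,\partial_\rho\rangle(y)=0$ at an interior point $y$ (or $y=o$), set $a:=\nu(y)$. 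One checks $a\perp e_0$, $x_a(y)=\langle x(y),\nu(y)\rangle=0$, and $\nabla x_a(y)=a^{\top\Sigma}(y)=0$, so $x_a$ has a second-order zero at $y$, forcing an interior crossing of its nodal set --- the desired contradiction. Your proposed route via $\langle\nu,e_0\rangle$ and Euler-characteristic counting would require a separate nodal analysis for that function, which the paper does not carry out.
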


\subsection{A class of radially conformal manifolds}
\label{sec:radial-setup}

We now describe the more general setup for this section. We consider compact ambient manifolds $(M^{n+1},\bar{g})$ which are conformal to either the Euclidean unit ball $\mathbb{B}^{n+1}$ or the round sphere $\mathbb{S}^{n+1}$ by a radial conformal factor (about $o\in M$). As usual let $B_R=B_R(o)$ be the geodesic ball in $M$ (with respect to $\bar{g}$).

We also assume that 

\begin{enumerate}[(A)]
\item $\Ric_M$ is either (strictly) largest in the radial direction, or is constant. 
\end{enumerate}

In the present setting, a constant Ricci tensor implies $(M^{n+1}, \bar{g})$ is a isometric to a geodesic ball in a space form. 

There are two convenient models for such manifolds $M$: a conformal model and a warped product model. In the first model, there is $\bar{\rho}$ so that the (closure of) the geodesic ball $B_{\bar{\rho}}$ is precisely $M$. Then $(B_{\bar{\rho}}, \bar g) \simeq (\mathbb{B}_{\bar{r}}^{n+1}, e^{2\phi(r)}\delta)$, where $r$ is the radial coordinate on $\mathbb{B}_{\bar{r}}^{n+1}$. In terms of $\phi$, the structure assumption translates (see Remark \ref{rem:ricci}) to 

\begin{enumerate}[(A$'$)]
\item $\left(\frac{\phi'}{r}\right)' - r\left(\frac{\phi'}{r}\right)^2 < 0,$ or $\left(\frac{\phi'}{r}\right)' - r\left(\frac{\phi'}{r}\right)^2 \equiv 0$.
\end{enumerate}

By the standard transformation, $d\rho = e^{\phi(r)}dr$ and $h(\rho) = re^{\phi(r)}$, we may see that $M$ has the warped product structure $\bar{g}=d\rho^2 + h(\rho)^2 g_1$, where by slight abuse of notation $g_1$ denotes the round metric on $\mathbb{S}^n$. In terms of $h$, the structure assumption translates to

\begin{enumerate}[(A$''$)]
\item $-\frac{h''}{h} > \frac{1-(h')^2}{h^2},$ or $-\frac{h''}{h} \equiv \frac{1-(h')^2}{h^2}$.
\end{enumerate}
Note that (A$''$) has the opposite sign to assumption (H4) in \cite{Bre13}. 

Also fix
\[
R\in(0,\bar{\rho}], \qquad S := \pr B_R \subset M.
\]

We will be interested in 2-sided embedded minimal hypersurfaces $\Sigma^n$ in $M$, which contact $S$ at angle $\pit$ along $\pr\Sigma \subset S$. If $\Sigma$ is constrained in $B_R$, it is equivalent to simply take $M=B_R$, so that $S=\pr M$ and $R=\bar{\rho}$. 

\begin{remark}
Our aim in this section is to present a reasonably general setting for which the intersection and two-piece properties hold. For instance, the equality case of structure condition (A) precisely corresponds to space forms of constant curvature, which are represented (up to scaling) by warping factors $h = \sin \rho, \rho, \sinh\rho$, respectively. Another setting covered by condition (A) is the case of Gaussian minimal hypersurfaces, or equivalently self-shrinkers in $\mathbb{R}^{n+1}$. In this case, we have $\phi(r) = -\frac{r^2}{4n}$, whence $\frac{\phi'}{r} = -\frac{1}{2n}$ and so $\left(\frac{\phi'}{r}\right)' - r\left(\frac{\phi'}{r}\right)^2 =- \frac{r}{4n^2}<0$.

We expect our result to hold for still more general settings - for instance, if the strictness in condition (A) is relaxed, but $(M,\bar g)$ is real analytic - but we have neglected these edge cases in favour of a relatively unified approach. 
\end{remark}

\subsection{Intersection properties}

\label{sec:frankel}

We next describe how the methods of \cite{NZ24} extend to embedded minimal hypersurfaces $\Sigma$, where $\Sigma, M$ and $S=\pr B_R$ are as in Section \ref{sec:radial-setup}. In \cite{NZ24}, we extended two of the classical proofs of the Frankel intersection property: one based on the classification of stable minimal hypersurfaces via the stability inequality, and another using a length-variation argument that relies on the non-negativity of the Bakry-\'{E}mery-Ricci curvature after a certain conformal change. The former method is more geometric, while the latter more analytic. In what follows, we pursue the length-variation argument, but the reader interested in the stability approach will find some details at the end of this section and in Appendix \ref{sec:stable-details}.

It is helpful to recall that the normal on $S$ (with respect to $\bar{g}$) is $\bar{\eta}=\pr_\rho$.

In this subsection, we will use the conformal model $(M, \bar{g}) \simeq (\mathbb{B}_{\bar{r}}^{n+1}, e^{2\phi(r)}\delta)$. We remark that when $\pr M=\emptyset$, correspondingly $\bar{r}=\infty$, our computations will still be justified; alternatively, one may instead perform the analogous computations using $\mathbb{S}^{n+1}$ as the conformal model. These alternative arguments are omitted for the sake of brevity.

For $a\in \mathbb{S}^n\subset\mathbb{R}^{n+1}$, we define $x_a:= \langle x,a\rangle$, and subsets 
\begin{align*}
    \mathcal{H}_a &:= \{x_a > 0\} \subset \mathbb{R}^{n+1}, \\
    D_a&:= \pr \mathcal{H}_a, \\
    S_a &: = \partial M \cap \mathcal{H}_a
\end{align*}
By abuse of notation, we will identify each of these with their lifts to $M$. Further, let 
\[
N_a := M \cap \mathcal{H}_a.
\]
Then $M = N_{-a} \cup D_a \cup N_a$, for any $a \in \mathbb{S}^n$. 

We now analyse certain test functions. Consider $u_a: M\to \mathbb{R}$ defined (in terms of the conformal model) by
\begin{equation} \label{eq:ua}
u_a:=  e^{\phi}x_a.
\end{equation} 
Likewise, denote $\nu_{\delta} = e^\phi\nu_{\bar{g}}$ the unit normal of $\Sigma$ with respect to $\delta$, and denote $\pr_r$ the radial unit vector on $\mathbb{R}^{n+1}$. In this section we will write $\langle \cdot,\cdot\rangle$ for the pairing with respect to $\delta$.

\begin{lemma}\label{lem:BER}
Let $(M^{n+1},\bar{g})$ be as in Section \ref{sec:radial-setup}, and consider $u_a$ as above. On $N_a$ define a metric $\tilde{g} := u_a^{-2} \bar{g}$ and $f = -n \log u_a $. Then 
\begin{enumerate}[(i)]
\item $(N_a, \tilde{g})$ is isometric to a hyperbolic half-space with totally geodesic boundary given by $S_a$.
\item The $(-n)$-Bakry-\'Emery-Ricci tensor of $(N_a, \tilde{g}, f)$ is given by 
\[
\Ric^{-n}_{\tilde{g}, f} = - n \Big(\phi'' - \frac{\phi'}{r} - (\phi')^2\Big) dr \otimes dr. 
\]
\item Suppose $\Sigma$ is a compact, properly embedded minimal hypersurface contained in $\overline{N_a}$, such that $\pr \Sigma \subset D_a \cup S_a$, and that $\Sigma$ has contact angle $\pit$ along $\pr\Sigma \cap S_a$. 

Then in $(N_a, \tilde{g})$, after the conformal change, $\Sigma$ is a complete, properly embedded $f$-minimal hypersurface with contact angle $\pit$ along $\partial \Sigma \cap S_a$. 
\end{enumerate}
\end{lemma}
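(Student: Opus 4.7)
The plan is to identify $\tilde{g} = u_a^{-2}\bar{g} = x_a^{-2}\delta$ with the Poincar\'e half-space model of hyperbolic space, so that (i) becomes a structural observation, (ii) becomes a direct curvature computation, and (iii) follows from conformal invariance of the weighted-area functional.

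For part (i), the direct substitution $\tilde{g} = (e^\phi x_a)^{-2}\cdot e^{2\phi}\delta = x_a^{-2}\delta$ identifies $\tilde g$ as the restriction to $N_a$ of the hyperbolic half-space metric on $\{x_a > 0\}$. I would then invoke the classical description of totally geodesic hypersurfaces in this model: either vertical hyperplanes, or round Euclidean spheres orthogonal to $\{x_a = 0\}$. Since $S_a$ lies on $\partial\mathbb{B}^{n+1}_{\bar r}$, a Euclidean sphere centred at the origin, and $\{x_a = 0\}$ is a hyperplane through the origin, these meet orthogonally. Thus $S_a$ is totally geodesic in $(N_a, \tilde g)$, and since $N_a$ lies on one side of $S_a$ (its other boundary piece $D_a$ lying at the ideal boundary at infinity), it is isometric to a hyperbolic half-space with finite boundary $S_a$.

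For part (ii), I would split $f = f_1 + f_2$ with $f_1 = -n\log x_a$ and $f_2 = -n\phi(r)$, which gives an additive decomposition of $\Ric^{-n}_{\tilde g, f} = \Ric_{\tilde g} + \nabla^{\tilde g, 2}f + \frac{1}{n}df\otimes df$ into a ``base'' contribution plus the $f_2$ contribution plus the cross term $\frac{2}{n}df_1\odot df_2$. The base contribution $\Ric_{\tilde g} + \nabla^{\tilde g, 2}f_1 + \frac{1}{n}(df_1)^{\otimes 2}$ vanishes identically, reflecting that the hyperbolic metric with its canonical weight $-n\log x_a$ is $(-n)$-Bakry-\'Emery-Ricci flat (consistent with it being a conformal deformation of flat $\delta$). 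For the remaining terms, I would use the conformal-change formulas from Appendix \ref{sec:conformal-change} to expand $\nabla^{\tilde g, 2} f_2$ in terms of Euclidean derivatives; this introduces a mixed $dr$--$dx_a$ tensor that is cancelled exactly by the cross term $\frac{2}{n}df_1\odot df_2$, leaving only radial contributions. Combining the radial Euclidean Hessian $\nabla^{\delta, 2}\phi = \phi''\,\hat x\otimes\hat x + (\phi'/r)(\delta - \hat x\otimes\hat x)$ with $\frac{1}{n}(df_2)^{\otimes 2} = n(\phi')^2\,dr\otimes dr$ produces the stated formula.

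For part (iii), the central observation is that the conformal factor and weight are perfectly balanced on $n$-dimensional hypersurfaces: $d\mu_{\Sigma, \tilde g} = u_a^{-n}\,d\mu_{\Sigma, \bar g}$ and $e^{-f} = u_a^n$, so $e^{-f}\,d\mu_{\Sigma, \tilde g} = d\mu_{\Sigma, \bar g}$. Hence the $f$-weighted area with respect to $\tilde g$ coincides with the unweighted area with respect to $\bar g$, and since conformal changes preserve angles, the capillary/wetting term is likewise unchanged; the two variational problems agree, and $\Sigma$ being minimal with contact angle $\pit$ in $\bar g$ is equivalent to being $f$-minimal with contact angle $\pit$ in $(\tilde g, f)$. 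Properness in $\tilde g$ follows from $\bar g$-compactness of $\Sigma$ together with the fact that $\tilde g$-compact subsets of $N_a\cup S_a$ are precisely the $\bar g$-compact subsets bounded away from $D_a$; the portion of $\Sigma$ approaching $D_a$ recedes to $\tilde g$-infinity, yielding the required completeness. The main obstacle will be the algebraic bookkeeping in (ii): the splitting $f = f_1 + f_2$ cleanly isolates the radial behaviour, but the cancellation of mixed $dr$--$dx_a$ terms between $\nabla^{\tilde g, 2}f_2$ and $\frac{2}{n}df_1\odot df_2$ requires careful tracking of signs and symmetrisations to confirm the answer is purely radial.
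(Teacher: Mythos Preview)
Your proposal is correct and follows essentially the same approach as the paper: the identification $\tilde g = x_a^{-2}\delta$ for (i), the conformal/weighted-area correspondence for (iii), and a direct curvature computation for (ii). The only organisational difference is that in (ii) you split $f=f_1+f_2$ and first observe that the hyperbolic ``base'' contribution $\Ric_{\tilde g}+\nabla^{\tilde g,2}f_1+\tfrac{1}{n}(df_1)^{\otimes 2}$ vanishes, whereas the paper computes $\nabla^{\tilde g,2}f$ and $\tfrac{1}{n}df\otimes df$ for the full $f$ and cancels terms at the end; these are the same calculation regrouped, and your version has the mild conceptual advantage of explaining why the answer depends only on $\phi$.
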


\begin{proof}

Identify $(M, \bar{g})$ with its conformal model. For (i), we simply observe that 
\[
\tilde{g} := u_a^{-2} \bar{g} = x_a^{-2} \delta.
\]
It follows that $(N_a, \tilde{g}) = (\mathbb{B}^{n+1}_{\bar{r}} \cap \{x_a > 0\}, x_a^{-2} \delta)$ is isometric to a hyperbolic half-space with totally geodesic boundary. In particular, $(N_a, \tilde{g})$ is geodesically convex. 

Let $\psi = - \log x_a$ so that $\tilde{g} = e^{2\psi} \delta$. Note from $e^{-f} = u_a^n = e^{n\phi} x_a^n$, we obtain $f = n \psi - n\phi$. For (ii), we begin by noting that $\Ric_{\tilde{g}} = -n \tilde{g}$.  On the other hand, using Appendix \ref{sec:conformal-change}, 
\[
\nabla^2_{\tilde{g}} f = \nabla_\delta^2 f - df \otimes d\psi  -d\psi \otimes df + \delta (df, d\psi) \delta. 
\]
Now $d\psi = -\frac{1}{x_a} a$ and $\nabla^2_{\delta} \psi =  \frac{1}{x_a^2} a \otimes a $. Similarly, $d\phi = \phi' \partial_r$ and $\nabla^2_{\delta} \phi = \big(\phi'' - \frac{\phi'}{r}\big)dr \otimes dr + \frac{\phi'}{r} \delta$. One may compute that
\begin{align*}
    \nabla^2_\delta f &= \frac{n}{x_a^2} a \otimes a -n \big(\phi'' - \frac{\phi'}{r}\big)dr \otimes dr -n \frac{\phi'}{r} \delta,
\end{align*}
and 
\begin{align*}
    \delta(df, d\psi)\delta  &= n |d\psi|^2_{\delta} \,\delta + \frac{n}{x_a} \phi' \delta(\partial_r,  a) \delta \\
    &= \frac{n}{x_a^2} \delta  + n \frac{\phi'}{r}\delta \\
    &= n \tilde{g} + n \frac{\phi'}{r} \delta, 
\end{align*}
Finally, we have 
\begin{align*}
\frac{1}{n} df \otimes df 
& = -\frac{n}{x_a^2} a \otimes a + df \otimes d\psi + d\psi \otimes df + n(\phi')^2 dr \otimes dr. 
\end{align*}
Putting everything together and cancelling terms yields (ii):
\begin{align*}
\Ric_{\tilde{g}, f}^{-n} =  \Ric_{\tilde g}+  \nabla^2_{\tilde{g}} f  + \frac{1}{n} df \otimes df  
 & = - n \Big( \phi'' - \frac{\phi'}{r} - (\phi')^2 \Big) dr \otimes dr. 
\end{align*}
Finally, for (iii) observe that the area measures induced on $\Sigma$ satisfy
\[
d\mu_{\Sigma}(\bar{g}) = d\mu_{\Sigma}(u_a^{-2} \tilde{g}) = u_a^{-n} d\mu_{\Sigma}(\tilde{g})= e^{-f} d\mu_{\Sigma} (\tilde{g}). 
\]
Hence, minimality with respect to the metric $\bar{g}$ corresponds to $f$-minimality with respect to the metric $\tilde{g}$. Under the conformal change, $\Sigma$ continues to contact $S_a$ with angle $\pit$. Moreover, as the boundary $D_a$ is infinitely far (in the $\tilde{g}$-metric) from any interior point of $N_a$, the hypersurface $\Sigma$ (strictly, $\Sigma \setminus D_a$ is complete.)
\end{proof}

For the Frankel property, we need a classification of totally geodesic hypersurfaces in $M$. 

\begin{lemma}\label{lem:geodesic-classify}
Let $(M^{n+1},\bar g)$ and $S=\pr M = \pr B_R$ be as in Section \ref{sec:radial-setup}, satisfying condition (A). If $\Sigma$ is totally geodesic, then $\Sigma$ is either an equator, that is $\Sigma = D_b \cap \overline{N_a}$ for some $b \in \mathbb{S}^n$, or else $\Sigma = \partial B_{\hat{r}}$ for the unique value of $\hat{r} \in (0, \bar{r}]$  such that $\phi'(\hat{r}) + \frac{1}{\hat{r}} = 0$. 
\end{lemma}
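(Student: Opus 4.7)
The plan is to pass to the conformal model $(\mathbb{B}^{n+1}_{\bar r}, e^{2\phi(r)}\delta)$ and reduce the classification to the rigidity of umbilic hypersurfaces in Euclidean space. By the conformal transformation of the second fundamental form (Appendix \ref{sec:conformal-change}), for a hypersurface $\Sigma$ with Euclidean unit normal $\nu^\delta$, one has
\[
A^{\bar g} = e^\phi\bigl(A^\delta + (\partial_{\nu^\delta}\phi)\,g^\delta\bigr),
\]
so $\Sigma$ is totally geodesic in $(M,\bar g)$ if and only if it is umbilic in $(\mathbb{B}^{n+1}_{\bar r},\delta)$ with principal curvature $\kappa = -\partial_{\nu^\delta}\phi = -\phi'(r)\langle\partial_r,\nu^\delta\rangle_\delta$. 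By the classical Euclidean rigidity of umbilic hypersurfaces, $\Sigma$ is then contained in either a Euclidean hyperplane or a Euclidean sphere.

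In the hyperplane case, the Euclidean normal $\nu^\delta\equiv\mathbf{n}$ is constant and the condition $\kappa\equiv 0$ becomes $\phi'(r)\langle\partial_r,\mathbf{n}\rangle\equiv 0$ on $\Sigma$. If the hyperplane $P$ does not pass through $o$, then $\langle\partial_r,\mathbf{n}\rangle$ is non-zero on an open subset of $\Sigma$, forcing $\phi'$ to vanish on an interval of radii; by analyticity and condition (A$'$), this forces $\phi\equiv\text{const}$ (the Euclidean case), in which case we may re-centre $o$ so that $P$ passes through the origin. In all other cases $P$ passes through $o$, yielding $\Sigma\subset D_b$ for some $b\in\mathbb{S}^n$ and hence $\Sigma = D_b\cap\overline{N_a}$. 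For the sphere case, it is cleanest to switch to warped-product coordinates $\bar g=d\rho^2+h(\rho)^2 g_1$, where $h(\rho)=re^{\phi(r)}$ and $d\rho/dr=e^\phi$. A direct computation gives $\bar\nabla_{e_i}\partial_\rho=\tfrac{h'}{h}e_i$, so any geodesic sphere $\{\rho=\rho_0\}$ has umbilic second fundamental form with principal curvatures $\tfrac{h'(\rho_0)}{h(\rho_0)}$ and is totally geodesic iff $h'(\rho_0)=0$, which translates to $\phi'(\hat r)+1/\hat r=0$. To exclude off-centred Euclidean spheres $S_{r_0}(c)$, I parametrise $p=c+r_0\omega$ and use that on such a sphere $\langle c,\omega\rangle$ is determined by $|p|$, reducing the constancy $\phi'(|p|)\langle p,\omega\rangle/|p|=\mp 1/r_0$ to an identity in $r$ alone; differentiating and combining with (A$'$) forces $c=0$.

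Uniqueness of $\hat r$ comes from the structural condition (A$''$): $hh''+(h')^2\leq 1$ with strict inequality (or equality for space forms). At any zero of $h'$, the strict case gives $h''<-1/h<0$, so $h'$ strictly decreases through any zero; since $h'(0)=1$, this permits at most one zero. In the equality case, the conclusion follows by inspection of $h\in\{\sin\rho,\rho,\sinh\rho\}$. The main obstacle is ruling out the off-centred Euclidean spheres in the sphere case: in the Poincar\'e ball model of $\mathbb{H}^{n+1}$, off-centred spheres orthogonal to the boundary of the Euclidean ball genuinely are totally geodesic, so the argument must make essential use of the structural assumption (A$'$) or (equivalently) the boundedness of $\Sigma$ within $M$.
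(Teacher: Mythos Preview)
Your approach is the same as the paper's: pass to the conformal model, observe that totally geodesic for $\bar g$ is equivalent to umbilic for $\delta$, and then classify. The paper stays in conformal coordinates throughout and handles uniqueness of $\hat r$ via $y(r)=r\phi'(r)$, using (A$'$) to get $y'<y(y+2)/r$, so that $y'<0$ at any point where $y=-1$; your warped-product version (zeros of $h'$) is equivalent. Note however that you have mis-transcribed (A$''$): the correct form is $-hh''>1-(h')^2$, i.e.\ $hh''<(h')^2-1$, not $hh''+(h')^2\le1$. Your conclusion $h''<-1/h$ at zeros of $h'$ is right, but it follows from the correct inequality, not the one you wrote.

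Two points in the hyperplane case need fixing. First, analyticity is neither assumed nor needed: if $\phi'$ vanished on an open interval of radii then $(\phi'/r)'-r(\phi'/r)^2=0$ there, directly contradicting the strict case of (A$'$). Second, the ``re-centre $o$'' move in the flat case is not permitted --- the centre is fixed by the setup. In the equality (space-form) case, off-centred hyperplanes in $\mathbb{R}^{n+1}$ and off-centred geodesic spheres in $\mathbb{H}^{n+1}$ (your Poincar\'e-ball observation) are genuinely totally geodesic; what excludes them is the free-boundary condition on $\partial B_R$ imposed in the surrounding theorem. The paper's proof is equally terse on this point, and the lemma should be read in that context.

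Your sketch for excluding off-centred Euclidean spheres in the strict case is correct and worth writing out: the constraint reduces to $\phi'(r)(r^2-|c|^2+r_0^2)=-2r$ for all $r$ in the range; differentiating and eliminating the constant gives precisely $\phi''-\phi'/r-(\phi')^2=0$, which is the equality case of (A$'$) and hence contradicts the strict inequality. Your closing remark that this computation must fail for hyperbolic space is exactly right and shows you see where the structural hypothesis does the work.
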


\begin{proof}
    As the metric on $\Sigma$ is $g = e^{2\phi} \delta |_{\Sigma}$, the conformal change formulas in Appendix \ref{sec:conformal-change} imply
    \[
    A^g = e^\phi(A^\delta + (\partial_\nu \phi) \delta), \qquad H^g = e^{-\phi}(H^\delta + n (\partial_\nu \phi)).
    \]
    Thus $A^g \equiv 0$ if and only if $A^\delta = \frac{1}{n} H^\delta \delta$. In other words,  $\Sigma$ is totally geodesic with respect to $\bar{g}$ if and only if $\Sigma$ is totally umbilic with respect to $\delta$. In fact, as one additionally has $H^\delta= -n \phi' \langle \partial_r, \nu \rangle$, we conclude that $\Sigma$ must either be a hyperplane through the origin or else a centred sphere. 

    The curvature $\partial B_r \subset M$ is constant and given by $e^{\phi(r)} (\phi'(r) + r^{-1})$. Consequently, $\partial B_r$ is totally geodesic if and only if $y(r) := r\phi'(r)  = -1$. Note that $y(0) = 0$ and  $y' =r \phi'' + \phi'$.  Expanding condition $(\mathrm{A}')$ gives 
    \[
    0 >  \frac{1}{r^2} \big(r\phi'' - \phi' - r (\phi')^2) = \frac{1}{r^2} \Big(y' - \frac{y(y+2)}{r}\Big). 
    \]
    It follows from the differential inequality that if for some $\hat{r}$ one has $y(\hat{r})= -1$, then $y'(r) < 0$ for all $r \in [\hat{r}, \bar{r}]$. Consequently, $\partial B_r$ is totally geodesic for at most one value of $r \in (0, \bar{r}]$. 
\end{proof}

We remark that following seems to be the only argument in this section that requires the minimal hypersurfaces to be \textit{constrained} in the (half-)ball. 

\begin{theorem}[Half-space Frankel]
\label{thm:frankel}
Let $(M^{n+1},\bar g)$ and $S=\pr M = \pr B_R$ be as in Section \ref{sec:radial-setup}, satisfying condition (A).

Let $\Sigma_i$, $i=0,1$, be compact, properly embedded minimal hypersurfaces contained in the half-ball $B_R\cap \overline{N_a}$, such that $\pr \Sigma_i \subset D_a \cup (S\cap \mathcal{H}_a)$, and that $\Sigma_i$ has contact angle $\pit$ along $\pr\Sigma_i \cap S$. 

If the boundaries do not intersect, $\pr\Sigma_0 \cap \pr \Sigma_1=\emptyset$, then the minimal hypersurfaces must intersect in the interior, $\Sigma_0 \cap \Sigma_1\neq \emptyset$. 
\end{theorem}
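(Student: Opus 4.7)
The plan is to combine the conformal change of Lemma \ref{lem:BER} with a weighted Frankel-type length-variation argument, in the spirit of the second approach in \cite{NZ24}. First, apply Lemma \ref{lem:BER} to pass to $(N_a, \tilde g, f)$, where $\tilde g = u_a^{-2} \bar g$ and $f = -n\log u_a$: this realises $(N_a, \tilde g)$ as a geodesically convex region of hyperbolic half-space with totally geodesic boundary $S_a$ (while $D_a$ sits at infinity), the $\Sigma_i$ as properly embedded $f$-minimal hypersurfaces meeting $S_a$ at angle $\pi/2$, and the Bakry--\'Emery--Ricci tensor $\Ric^{-n}_{\tilde g, f}\geq 0$, strictly in the radial direction whenever condition (A) is strict.

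Assume for contradiction that $\Sigma_0 \cap \Sigma_1 = \emptyset$. Disjointness of the boundaries and $\bar g$-compactness of the $\Sigma_i$ give $d_{\tilde g}(\Sigma_0, \Sigma_1)>0$, and since $\tilde g$-distances diverge near $D_a$ this infimum $d_0$ is attained at some pair $(p_0, p_1)$ with $p_i\in \Sigma_i\setminus D_a$, connected by a minimizing $\tilde g$-geodesic $\gamma\subset\overline{N_a}$ (using geodesic convexity of $N_a$ and total geodesy of $S_a$). The minimizing first-order conditions combined with the perpendicular contact angle at $S_a$ force $\gamma$ to meet each $\Sigma_i$ orthogonally at $p_i$.

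Choose a parallel orthonormal frame $\{e_j\}_{j=1}^n$ along $\gamma$ orthogonal to $\gamma'$, and take the admissible variation fields $V_j(t) = e^{f(\gamma(t))/n}\, e_j(t)$. Expanding the second variation of $\tilde g$-length, using the $f$-minimal boundary relation $H^{\mathrm{out}}_{\Sigma_i} = -\partial_{\mathrm{out}} f$, integrating the $f$-Hessian by parts along $\gamma$, and completing the square to absorb the $\tfrac{1}{n}(df)^{\otimes 2}$ correction appropriate to synthetic dimension $-n$, one obtains
\[
0 \;\le\; \sum_{j=1}^n \delta^2 L(V_j) \;=\; -\int_0^{d_0} e^{2f/n}\,\Ric^{-n}_{\tilde g,f}(\gamma',\gamma')\,dt \;\le\; 0,
\]
forcing $\Ric^{-n}_{\tilde g,f}(\gamma',\gamma')\equiv 0$ along $\gamma$. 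In the strict case of (A) this requires $\gamma'\perp\partial_r$ everywhere, meaning $\gamma$ is confined to some Euclidean sphere $\partial\mathbb{B}_{\hat r}$ centred at the origin; by the geometry of hyperbolic half-space geodesics, $\gamma$ must then be a great-circle arc on that sphere, and a rigidity analysis (supplemented by a second-order perturbation in the radial direction exploiting the strict curvature condition) rules this out. The equality case of (A), in which $(M,\bar g)$ is a space-form ball, follows from the classical Frankel property established in \cite{NZ24}.

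The principal technical obstacle is the weighted second variation for synthetic dimension $-n$: one must select the scaling $c(t)=e^{f/n}$ rather than parallel variation fields in order to absorb the $\tfrac{1}{n}(df)^{\otimes 2}$ correction, and then rule out the residual rigid configurations in which $\gamma$ is radially trivial. A secondary subtlety arises when $p_i\in\partial\Sigma_i\cap S_a$, since admissible variations must then stay tangent to $S_a$; this is accommodated by the total geodesy of $S_a$, for instance by doubling $N_a$ across $S_a$ so that the above argument applies to the doubled hypersurfaces.
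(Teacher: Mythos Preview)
Your setup via Lemma~\ref{lem:BER} is exactly the paper's first move, and the weighted Frankel/second-variation framework you describe is in the same spirit. The paper, however, packages the length-variation step differently: rather than analysing a single minimising geodesic, it considers the sum $u=d_{\Sigma_0}+d_{\Sigma_1}$ on the region between the two hypersurfaces (in the hyperbolic metric $\tilde g$), observes that $u\to\infty$ toward $D_a$ so that a positive minimum is attained, and then invokes a maximum-principle result (Proposition~25 of \cite{NZ25a}) to conclude that $u$ is \emph{constant} and hence that \emph{both} $\Sigma_i$ are totally geodesic with respect to $\bar g$. The contradiction then comes cleanly from the classification in Lemma~\ref{lem:geodesic-classify}: two totally geodesic hypersurfaces in $(M,\bar g)$ must intersect.

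The substantive gap in your version is the endgame. From equality in the summed second variation you only obtain $\Ric^{-n}_{\tilde g,f}(\gamma',\gamma')\equiv 0$ along the one geodesic $\gamma$, and in the strict case of (A) this is genuinely compatible with $\gamma$ lying on a centred sphere $\partial\mathbb{B}_{\hat r}$ (such arcs \emph{are} hyperbolic geodesics when the semicircle is centred at the origin). Your ``second-order perturbation in the radial direction'' is asserted rather than carried out, and it is not clear what admissible variation field you would use or why its second variation is strictly negative; you would also need to control the boundary terms coming from the second fundamental forms of the $\Sigma_i$ at $p_0,p_1$, about which you have no information at this stage. The paper's route avoids this entirely: the sum-of-distances argument upgrades the local rigidity along $\gamma$ to the global conclusion that the $\Sigma_i$ are totally geodesic, after which Lemma~\ref{lem:geodesic-classify} finishes the job without any ad hoc perturbation. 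If you want to salvage your approach, the natural fix is to push the equality case of the second variation further (each $V_j$ is a Jacobi field, the endpoints are umbilic, etc.) until you can conclude total geodesy of the $\Sigma_i$ in $\bar g$, and then quote Lemma~\ref{lem:geodesic-classify}.
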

\begin{proof}
The proof is essentially the same as the one given in Section 6.4 of \cite{NZ25a}, which proves the theorem in the special case that the condition in ($\mathrm{A}'$) holds with equality. 

Under the hypotheses of the theorem, and in view of Lemma \ref{lem:BER}, after conformal change, we may regard $\Sigma_0, \Sigma_1$ as complete, properly embedded, $f$-minimal hypersurfaces in hyperbolic half-space $(\mathbb{H}^{n+1}_+, g_{\mathrm{hyp}})$, each meeting the totally geodesic boundary $\partial \mathbb{H}^{n+1}_+$ of the half-space at angle $\pit$. Let $\Omega$ denote the region between $\Sigma_0$ and $\Sigma_1$, and consider the sum of the distance functions 
\[
u = d_{\Sigma_0} + d_{\Sigma_1}
\]
taken with respect to the hyperbolic metric. The assumption that $\partial \Sigma_0$ and $\partial \Sigma_1$ were disjoint before the conformal change implies that $u(x) \to \infty$ as $x \to \infty$ after the conformal change. As $\Sigma_0$ and $\Sigma_1$ do not intersect in the interior nor along their free boundaries, $u$ must attain a positive minimum somewhere in $\overline{\Omega}$. 

Now, Proposition 25 of \cite{NZ25a} (applied to $(\mathbb{H}^{n+1}, g_{\mathrm{hyp}}, f)$ with $N = \mathbb{H}^{n+1}_+$) asserts that $u$ must be constant, and hence both $\Sigma_0, \Sigma_1$ are totally geodesic with respect to the conformal metric $e^{-\frac{2}{n}f} g_{\mathrm{hyp}} \cong \bar{g}$, as in Lemma \ref{lem:BER}.  But if $\Sigma_0, \Sigma_1$ are totally geodesic in $(M^{n+1}, \bar{g})$, then they must intersect by the classification in Lemma \ref{lem:geodesic-classify}.  This is a contradiction and completes the proof.  
\end{proof}

From Theorem \ref{thm:frankel} we may deduce the following two-piece property:

\begin{corollary}[Two-piece property]
\label{cor:two-piece}
Let $(M,\bar{g})$ be as in Section \ref{sec:radial-setup}, satisfying condition (A). Let $(\Sigma,\pr\Sigma) \hookrightarrow (B_R, \pr B_R)$ be a connected, compact, embedded free boundary minimal hypersurface constrained in $B_R\subset M$.

Fix any $a\in \mathbb{R}^{n+1}$. Then either $\Sigma$ is the totally geodesic disk $D_a \cap M$, or $\Sigma\setminus D_a$ consists of precisely two connected components. 
\end{corollary}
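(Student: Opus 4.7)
The plan is to combine the half-space Frankel property (Theorem \ref{thm:frankel}) with the maximum principle. Assume $\Sigma \neq D_a \cap M$; I will show $\Sigma \setminus D_a$ has exactly two connected components by separately ruling out both fewer and more than two.

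For the \emph{at most two} direction, suppose for contradiction $\Sigma \setminus D_a$ has $k \geq 3$ components $\Omega_1, \ldots, \Omega_k$. Each is contained in either $N_a$ or $N_{-a}$, so by pigeonhole at least two, say $\Omega_1, \Omega_2$, lie in the same open half, WLOG $N_a$. Their closures $\overline{\Omega_i} \subset \overline{N_a} \cap B_R$ are compact, properly embedded minimal hypersurfaces with piecewise smooth boundary satisfying the hypotheses of Theorem \ref{thm:frankel}: boundary in $D_a \cup (S \cap \mathcal{H}_a)$, with contact angle $\pit$ along $\partial \Omega_i \cap S$ inherited from $\Sigma$. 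The crucial point is that $\partial \overline{\Omega_1} \cap \partial \overline{\Omega_2} = \emptyset$. Indeed, since $\Sigma$ is embedded, the $\Omega_i$ are disjoint in $M$, so $\overline{\Omega_1} \cap \overline{\Omega_2} \subset \Sigma \cap D_a$. At a transverse point of $\Sigma \cap D_a$, the local nodal curve borders exactly one local component of $\Sigma \setminus D_a$ on each side, so distinct components $\Omega_1, \Omega_2 \subset N_a$ abut disjoint portions of $\Sigma \cap D_a$; non-transverse tangencies are handled either by perturbing $a$ slightly (via Sard) or by appealing to the real-analytic structure of $\Sigma \cap D_a$. Theorem \ref{thm:frankel} then forces $\Omega_1 \cap \Omega_2 \neq \emptyset$, contradicting their disjointness.

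For the \emph{at least two} direction, suppose $\Sigma \setminus D_a$ has only one component, so $\Sigma$ lies in a closed half, WLOG $\Sigma \subset \overline{N_a}$. If $\Sigma$ meets $D_a$ at an interior point, the strong maximum principle applied to the two minimal hypersurfaces $\Sigma$ and the totally geodesic $D_a \cap M$ (recall Lemma \ref{lem:geodesic-classify}) forces $\Sigma = D_a \cap M$, contradicting our assumption. If $\Sigma$ meets $D_a$ only along $\partial \Sigma$, the boundary Hopf-type maximum principle for free boundary minimal hypersurfaces applies, since both $\Sigma$ and $D_a \cap M$ are FBMS meeting $S$ orthogonally at such a contact point with $\Sigma$ lying on one side of $D_a \cap M$; again $\Sigma = D_a \cap M$. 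Finally, if $\Sigma \cap D_a = \emptyset$, then $\Sigma_0 = \Sigma$ and $\Sigma_1 = D_a \cap M$ are two compact properly embedded FBMS in $\overline{N_a} \cap B_R$ satisfying the hypotheses of Theorem \ref{thm:frankel}, with $\partial \Sigma_0 \cap \partial \Sigma_1 \subset \partial \Sigma \cap D_a \subset \Sigma \cap D_a = \emptyset$; the theorem yields $\Sigma \cap D_a \neq \emptyset$, a contradiction.

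The main obstacle is the boundary-disjointness verification in the first step: one must carefully argue that different connected components of $\Sigma \setminus D_a$ on the same side abut disjoint pieces of the nodal set $\Sigma \cap D_a$. Away from points where $\Sigma$ is tangent to $D_a$ this is immediate from transversality, but the possibility of tangential contacts (where the local nodal set could cross itself) must be addressed. Everything else reduces cleanly to Theorem \ref{thm:frankel}, the classification of totally geodesic hypersurfaces in Lemma \ref{lem:geodesic-classify}, and standard interior/boundary maximum principle arguments for free boundary minimal hypersurfaces.
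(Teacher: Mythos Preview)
Your ``at least two'' direction is fine, if somewhat more elaborate than needed. The gap is in your ``at most two'' direction, precisely where you acknowledge the main obstacle lies.

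You try to show $\partial\overline{\Omega_1} \cap \partial\overline{\Omega_2} = \emptyset$, so that Theorem \ref{thm:frankel} forces the interiors to intersect, a contradiction. But this boundary-disjointness claim is \emph{not} generally true: at a point $p \in D_a$ where $\Sigma$ is tangent to $D_a$, several components of $\Sigma \cap N_a$ can share $p$ in their closures. Your Sard fix only gives the result for generic $a$, and you provide no argument for passing to the limit (the number of components of $\Sigma \setminus D_a$ is neither upper- nor lower-semicontinuous in $a$ in any obvious way). The real-analyticity route is also incomplete: the ambient metric in condition (A) need not be real-analytic, and even when it is, you would still need to explain why the analytic structure of $\Sigma \cap D_a$ prevents shared boundary points.

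The paper's approach inverts your logic. Since the interiors $\Omega_1, \Omega_2$ are disjoint, the contrapositive of Theorem \ref{thm:frankel} forces their \emph{boundaries} to intersect at some $p$. Embeddedness of $\Sigma$ then implies $p \in D_a$ and that $\Sigma$ is tangent to $D_a$ at $p$ (a transverse crossing would give only one local component in $N_a$). Now $\Sigma$ and the totally geodesic $D_a$ are two minimal hypersurfaces tangent at $p$ with $\Sigma$ locally on one side, so the strong maximum principle yields $\Sigma \subset D_a$. The key idea you are missing is not to avoid the tangent case, but to use it: the tangential boundary intersection is exactly what triggers the maximum principle and forces $\Sigma = D_a \cap M$.
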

\begin{proof}
We claim that $\Sigma \cap \overline{N_a}$ is nonempty. This follows trivially from the half-space Frankel Theorem \ref{thm:frankel}, but one can also apply the maximum principle using the mean-convex foliation provided by Lemma \ref{lem:mc-foliation} as barriers.

Now $\Sigma \cap \overline{N_a}$ has connected components $\Sigma^{(i)}$, $i=1,\cdots k$. If $k>1$, then by Theorem \ref{thm:frankel}, $\pr\Sigma^{(1)}, \pr\Sigma^{(2)}$ must intersect, say at $p$. Since $\Sigma$ was embedded, the intersection must be smooth and tangential to $D_a$ at $p$. But this is only possible if $\Sigma$ lies to one side of $D_a$ near $p$, and it follows from the strong maximum principle that $\Sigma$ must be contained in $D_a$. 
\end{proof}

One may alternatively use a classification of stable hypersurfaces as in \cite{NZ24} to deduce the half-space intersection property. This argument, which uses a solution of a Plateau-type problem, seems to require the additional assumption that $S=\pr B_R$ is convex in $M$. In the remainder of this subsection we will discuss this classification of stable hypersurfaces, and then we will include the alternative half-space intersection argument in Appendix \ref{sec:stable-details} as it may be more geometrically familiar than the proof given above.

The following is the relevant notion of stability adapted to the half-space setting. Suppose that $\Sigma$ is a compact, embedded minimal hypersurface contained in the half-space $\overline{N_a}$, so that $\pr \Sigma \subset \pr \overline{N_a} = D_a \cup (S\cap \mathcal{H}_a)$, and further suppose that $\Sigma$ has contact angle $\pit$ along $\pr \Sigma\cap S$. One may read this as $\Sigma$ having free boundary along the round portion $S$ and fixed boundary along the flat portion $D_a$. As in Section \ref{sec:prelim-stability}, the index form is

\[ Q^{\mathrm{A}}(u) := -\int_\Sigma uL u\, d\mu_g + \int_{\pr\Sigma \cap S} u(\pr_\eta - k_S(\bar \nu,\bar \nu)) u\, d\sigma_g,\]
where recall $L = \Lap + |A_\Sigma|^2 + \Ric_M(\nu,\nu)$, $\nu = \nu_{\bar{g}}$ is a unit normal on $\Sigma$ in $M$, $\bar{\nu} = \bar{\nu}_{\bar{g}}$ is the unit normal of $\partial \Sigma$ in $S$, and $k_S$ is the second fundamental form of $S$. We will drop the subscripts where clear from context. As in \cite{NZ24}, we say that $\Sigma$ is \textit{stable for one-sided variations fixing $D_a$} if $Q^{\mathrm{A}}(u) \geq 0$ for all $C^2$ functions $u$ such that $u\geq 0$ and $u|_{D_a}=0$. 

Very similar computations to Lemma \ref{lem:BER} yield the following:

\begin{lemma}
\label{lem:stability-test}
Let $(M,\bar{g})$ be as in Section \ref{sec:radial-setup}. Fix $a\in\mathbb{R}^{n+1}$, and consider $u_a$ as above. If $\Sigma$ is minimal in $M$ then
\begin{equation}
\label{eq:L-test-fn} L u_a =   -n e^{ -2\phi}  \left(\phi''-\frac{\phi'}{r} - (\phi')^2 \right) \langle\nu_\delta, \pr_r \rangle^2 u_a + |A^g_\Sigma|_g^2 u_a
\end{equation}

Noting that $S=\pr B_R$ is totally umbilic, we have 
\begin{equation}
    \label{eq:N-test-fn}
    \pr_{\bar{\eta}} u_a = \kappa u_a, \qquad \text{where }k_S=\kappa g_S.
\end{equation}

In particular, if $\Sigma$ contacts $S$ at angle $\pit$ along $\pr \Sigma$, then the conormal derivative satisfies $(\pr_{\eta_g} - k_S(\nu_{\bar g},\nu_{\bar{g}}))u_a =0$.
\end{lemma}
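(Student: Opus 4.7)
The plan is to reduce the computation to Euclidean space. By Lemma \ref{lem:conf-weight-operator} applied with $g = \delta$ and $\tilde{g} = \bar{g} = e^{2\phi}\delta$, we have the conjugation $L^{\bar{g}}(e^\phi v) = e^{-\phi}L^\delta_f(v)$ where $f = -n\phi$. Taking $v = x_a$ reduces \eqref{eq:L-test-fn} to computing $L^\delta_f(x_a)$, treating $\Sigma$ as an $f$-minimal hypersurface of $(\mathbb{R}^{n+1}, \delta)$, which is a standard Euclidean calculation.

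For this calculation, I would expand
\[ L^\delta_f(x_a) = \Delta^{g_\delta}x_a - \delta(\nabla^\Sigma f,\nabla^\Sigma x_a) + \bigl(|A^\delta|^2 + \bar{\nabla}^2 f(\nu_\delta,\nu_\delta)\bigr)x_a \]
term by term. Since $x_a$ is affine, $\Delta^{g_\delta}x_a = -H^\delta\langle a,\nu_\delta\rangle$; using $\bar{\nabla} f = -n\phi'\partial_r$ and $\delta(\partial_r, a) = x_a/r$ gives $\delta(\nabla^\Sigma f,\nabla^\Sigma x_a) = -n\phi' x_a/r + n\phi'\langle\partial_r,\nu_\delta\rangle\langle a,\nu_\delta\rangle$; and the Euclidean Hessian of $-n\phi(r)$ produces $\bar{\nabla}^2 f(\nu_\delta,\nu_\delta) = -n(\phi''-\phi'/r)\langle\partial_r,\nu_\delta\rangle^2 - n\phi'/r$. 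Invoking the $f$-minimality relation $H^\delta = -n\phi'\langle\partial_r,\nu_\delta\rangle$ (from Section \ref{sec:conf-weight-corres}), the cross-terms in $\langle a,\nu_\delta\rangle$ and the two $\phi'/r$ terms cancel cleanly, leaving only $(|A^\delta|^2 - n(\phi''-\phi'/r)\langle\partial_r,\nu_\delta\rangle^2)x_a$. The conformal change of the second fundamental form (Appendix \ref{sec:conformal-change}) together with minimality gives $|A^\delta|^2 = e^{2\phi}|A^{\bar{g}}|^2 + n(\phi')^2\langle\partial_r,\nu_\delta\rangle^2$; substitution and the overall $e^{-\phi}$ factor from the conjugation then produce exactly \eqref{eq:L-test-fn}.

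For \eqref{eq:N-test-fn}, I would again use the conformal model. With $\bar{\eta} = \partial_\rho = e^{-\phi}\partial_r$ on $\partial B_R$, direct differentiation of $u_a = e^\phi x_a$ together with the identity $\partial_r x_a = x_a/r$ gives $\partial_{\bar{\eta}}u_a = e^{-\phi}(\phi' + 1/r)u_a$. The conformal change of the Euclidean second fundamental form $A^\delta = r^{-1}\delta$ on $\partial B_r$ yields $k_S = e^{-\phi}(\phi' + 1/r)g_S$ in the $\bar{g}$ metric, so $\kappa = e^{-\phi}(\phi' + 1/r)$ and \eqref{eq:N-test-fn} follows. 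Finally, at contact angle $\pit$ one has $\eta_g = \bar{\eta}$ on $\partial\Sigma$ and $\nu_{\bar{g}} \in TS$; total umbilicity of $S$ then gives $k_S(\nu_{\bar{g}},\nu_{\bar{g}}) = \kappa$, so $(\partial_{\eta_g} - k_S(\nu_{\bar{g}},\nu_{\bar{g}}))u_a = (\partial_{\bar{\eta}} - \kappa)u_a = 0$.

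The computations are essentially linear-algebraic, with no true conceptual obstacle beyond Lemma \ref{lem:conf-weight-operator}; the main delicacy is careful bookkeeping of the conformal factors and of the distinction between intrinsic operators on $\Sigma$ and ambient ones on $M$, which is what allows the several non-trivial cancellations to go through to yield the clean formulas \eqref{eq:L-test-fn} and \eqref{eq:N-test-fn}.
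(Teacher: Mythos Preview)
Your proposal is correct. The paper does not give a detailed proof but only remarks that the lemma follows from ``very similar computations to Lemma~\ref{lem:BER}''; your argument via Lemma~\ref{lem:conf-weight-operator} and the Euclidean $f$-minimal computation is precisely the natural way to unpack those computations, and all the cancellations and conformal conversions you describe go through as stated.
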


These properties of $u_a$ allow one to deduce a classification of stable minimal hypersurfaces in the half-space $\overline{N_a}$, whose proof is also deferred to Appendix \ref{sec:stable-details}:

\begin{proposition}
\label{prop:half-space-bernstein}
Let $(M,\bar{g})$ and $S=\pr B_R$ be as in Section \ref{sec:radial-setup}, satisfying condition (A). Let $\Sigma$ be a compact, embedded minimal hypersurface contained in the half-space $\overline{N_a}$, such that $\pr \Sigma \subset D_a \cup (S\cap \mathcal{H}_a)$, and that $\Sigma$ has contact angle $\pit$ along $\pr \Sigma\cap S$. 

If $\Sigma$ is stable for one-sided variations which fix $D_a$, then $\Sigma$ must be totally geodesic. In fact, except for the case when $(M,g)$ has constant positive sectional curvature and $S$ is strictly concave, $\Sigma$ must be an equator, that is, $\Sigma = D_b \cap \overline{N_a}$ for some $b\in\mathbb{S}^n$. 
\end{proposition}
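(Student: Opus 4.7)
The plan is to apply the stability inequality to the distinguished test function $u_a = e^\phi x_a$ supplied by Lemma \ref{lem:stability-test}. Since $x_a \geq 0$ on $\overline{N_a}$ with equality precisely on $D_a$, the function $u_a$ is nonnegative on $\Sigma$ and vanishes on $\partial\Sigma \cap D_a$, so it is admissible for one-sided variations fixing $D_a$.

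First I would compute $Q^{\mathrm{A}}(u_a)$ directly. By \eqref{eq:N-test-fn}, the conormal identity $(\partial_\eta - k_S(\bar\nu, \bar\nu))u_a = 0$ kills the boundary integral, and substituting \eqref{eq:L-test-fn} yields
\[
Q^{\mathrm{A}}(u_a) \;=\; n \int_\Sigma e^{-2\phi}\Bigl(\phi'' - \tfrac{\phi'}{r} - (\phi')^2\Bigr)\langle\nu_\delta, \partial_r\rangle^2\, u_a^2\, d\mu_g \;-\; \int_\Sigma |A|^2 u_a^2\, d\mu_g.
\]
The algebraic identity $(\phi'/r)' - r(\phi'/r)^2 = r^{-1}(\phi'' - \phi'/r - (\phi')^2)$ recasts condition (A$'$) as the pointwise sign $\phi'' - \phi'/r - (\phi')^2 \leq 0$, strict except in the space-form case. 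Hence both integrands are nonpositive and $Q^{\mathrm{A}}(u_a) \leq 0$.

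Next, one-sided stability gives $Q^{\mathrm{A}}(u_a) \geq 0$, so the two inequalities force equality in both integrals. In particular $|A|^2 u_a^2 \equiv 0$: either $\Sigma \subset D_a$ (in which case $\Sigma = D_b \cap M$ is already an equator), or $\{u_a > 0\}$ is open and dense on $\Sigma$, and continuity gives $A \equiv 0$. Lemma \ref{lem:geodesic-classify} then presents two alternatives for a totally geodesic $\Sigma$: it is a piece of an equator $D_b \cap \overline{N_a}$, or a piece of the centred sphere $\partial B_{\hat r}$ determined by $\phi'(\hat r) + 1/\hat r = 0$.

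Finally I would exclude the spherical alternative. In the strict case of (A$'$), equality in the first integral additionally forces $\langle\nu_\delta, \partial_r\rangle \equiv 0$ on $\{u_a > 0\}$, contradicting the radial normal of a centred sphere. In the equality case of (A$'$), $(M, \bar g)$ is isometric to a space-form ball, and inspecting each constant-curvature model one sees that $\hat r \leq R$ exists only in positive curvature with $R \geq \pi/2$; at $R = \pi/2$ the totally geodesic sphere coincides with $S$ itself (and $k_S \equiv 0$), which is not an admissible interior hypersurface, so the spherical option genuinely survives only when $R > \pi/2$, i.e., when $S$ is strictly concave. The main subtlety is this last case distinction; once Lemmas \ref{lem:stability-test} and \ref{lem:geodesic-classify} are in place, everything else is essentially mechanical.
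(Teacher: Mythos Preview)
Your proof is correct and follows essentially the same approach as the paper: apply the stability inequality to $u_a$, use Lemma \ref{lem:stability-test} to compute $Q^{\mathrm{A}}(u_a)$ with vanishing boundary term, invoke condition (A$'$) to force $A\equiv 0$ (and $\langle\nu_\delta,\partial_r\rangle\equiv 0$ in the strict case), and then classify the totally geodesic possibilities. The only cosmetic difference is that you route the classification through Lemma \ref{lem:geodesic-classify}, whereas the paper argues directly that $\langle\nu_\delta,\partial_r\rangle\equiv 0$ means $\Sigma$ is a cone through the origin, hence a flat disc, and then handles the space-form equality case by appealing to the known list of totally geodesic hypersurfaces; the content of the final case distinction is the same.
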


\subsection{Topological implications}
\label{sec:nodal}

In this section, we consider minimal surfaces $\Sigma^2$ in $M^3$, where $(M,\bar{g})$ and $S=\pr B_R$ follow the same setup as in Section \ref{sec:radial-setup} (with $n=2$). We will assume that $\Sigma$ contacts $S$ at angle $\pit$ along $\pr \Sigma$. 

The main result of this section is that if such a surface $\Sigma$ is embedded and has genus 0, then it is either a totally geodesic disc, or must be a radial graph. This fact has already been observed in Euclidean space in \cite[Corollary 4.2]{KM22}. An analogous result was crucial in classifying genus-zero shrinkers in Gaussian space \cite{Bre16}. The proof of such a result relies on two important ingredients: the two-piece property and Courant's nodal domain theorem.

\begin{definition}
Let $\Sigma$ be a compact Riemann surface. We say that $Z\subset \Sigma$ is a network with boundary if it is the union of finitely many smooth curves $\gamma_i$, such that:
\begin{itemize}
\item if $p$ is an endpoint of some $\gamma_i$ that is not a closed loop, then either $\gamma_i$ meets $\pr\Sigma$ tranversely at $p$, or $\gamma_i$ meets some other $\gamma_j$ at $p$;
\item for any pair $\gamma_i,\gamma_j$, the interiors are disjoint (in particular each $\gamma_i$ is embedded);
\item at any intersection point $p$, the curves that meet at $p$ do so at equal angles. 
\end{itemize} 

We may also consider the surface $\tilde{\Sigma}$ obtained by collapsing each boundary component to a point. The image $\tilde{Z}$ under this map is an embedded network in $\tilde{\Sigma}$ with nodes given by the intersection points and the (collapsed) boundary components (that contained a point of $Z$). This is the \textit{reduced nodal graph} considered by Karpukhin et. al. \cite{KKP}.

We say that $Z$ is regular if the degree of each interior node is even, and the degree of each boundary node in $\tilde{Z}$ is at least 2. 
\end{definition}

The following local description of the nodal set is rather well-known, but we include the details and a general statement for completeness:

\begin{lemma}
\label{lem:nodal-local}
Let $\Sigma$ be a connected, compact Riemann surface. Suppose that $u:\Sigma\to \mathbb{R}$ is not identically zero, and satisfies $\Lap u =  -V u$ and $\pr_\eta u = c u$ on $\pr \Sigma$, where $V$ is a smooth function on $\Sigma$ (up to the boundary) and $c\in\mathbb{R}$. 

Consider any $p\in Z:= u^{-1}(0)$. There is a neighbourhood $U$ of $p$ so that $Z\cap U$ consists of $m$ arcs, where $m$ is the vanishing order of $u$ at $p$, and the angle between any adjacent pair of arcs is $\alpha=\frac{\pi}{m}$. If $p\in\pr\Sigma$ then each arc intersects $\pr\Sigma$ transversely at a half-integer multiple of $\alpha$.

In particular, $Z$ is a regular network with boundary. 
\end{lemma}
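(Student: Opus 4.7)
\textbf{Interior case.} The plan is to pick isothermal coordinates $(x,y)$ centered at $p$, in which $g=\lambda^2(dx^2+dy^2)$ and the PDE reads $u_{xx}+u_{yy}+\lambda^2 V u=0$. Since $V$ is smooth, elliptic regularity gives $u\in C^\infty$, so I can Taylor expand $u$ at $p$. Writing $u=P_m+O(r^{m+1})$ for the leading nonzero homogeneous polynomial $P_m$ of degree $m$ (the vanishing order), I match terms of degree $m-2$ in the PDE to deduce $\Lap P_m=0$. Hence $P_m=cr^m\cos(m(\theta-\theta_0))$ for some $c\neq 0$, whose zero set consists of $m$ lines through the origin at equal angular spacings $\alpha=\pi/m$. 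A standard implicit function argument applied to $r^{-m}u(r\cos\theta,r\sin\theta)$, which by the expansion extends continuously to $r=0$ with value $c\cos(m(\theta-\theta_0))$, gives that $Z\cap U$ consists of $m$ smooth embedded arcs tangent to these lines, meeting at angles $\alpha$.

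\textbf{Boundary case.} At $p\in\pr\Sigma$, I use boundary-adapted isothermal coordinates $(x,y)$ with $p=0$, $\pr\Sigma=\{y=0\}$, and $\Sigma=\{y\geq 0\}$, so that $\pr_\eta=-\lambda^{-1}\pr_y$ and the Robin condition becomes $u_y(x,0)=-c\lambda(x,0)u(x,0)$. Elliptic boundary regularity for the Robin problem gives $u\in C^\infty(\overline{\Sigma})$, so I Taylor expand again and write the leading polynomial as $P_m=A\,\mathrm{Re}(x+iy)^m+B\,\mathrm{Im}(x+iy)^m$. Matching the Robin condition at lowest order, namely $(P_m)_y(x,0)=Bmx^{m-1}$ against $-c\lambda(0)Ax^m+O(x^{m+1})$, forces $Bm=0$; non-vanishing of $P_m$ then gives $A\neq 0$. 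Hence $P_m$ is a nonzero multiple of $r^m\cos(m\theta)$, whose zero set in $\{y\geq 0\}$ is the $m$ rays $\theta=(2k+1)\pi/(2m)$, $k=0,\ldots,m-1$. These are transverse to $\pr\Sigma$ and meet it at half-integer multiples of $\alpha=\pi/m$, and the same implicit function argument produces the claimed $m$ smooth arcs.

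\textbf{Regularity of the network.} Combining the two cases, $Z$ is a network with boundary of the stated local form. At an interior node of vanishing order $m$, the $m$ arcs passing through contribute $2m$ incident edges, an even degree. For a boundary component $C\subset\pr\Sigma$, I verify that the degree of the collapsed node in $\tilde Z$ is at least $2$. Unique continuation applied to the Cauchy data $(u|_C,(\pr_\eta u-cu)|_C)=(u|_C,0)$ prevents $u\equiv 0$ on $C$, so the zeros of $u|_C$ are isolated; moreover by the boundary expansion $P_m(x,0)=Ax^m$ with $A\neq 0$, the vanishing order of $u$ at a boundary zero on $\Sigma$ coincides with its vanishing order on $C$. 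Since the total number of sign changes of $u|_C$ around $C$ is even, boundary zeros of odd order come in pairs; in particular a lone boundary zero on $C$ must have even (hence $\geq 2$) order. Either way the degree at the collapsed node is at least $2$. The main obstacle is the smoothness of $u$ up to $\pr\Sigma$, which relies on standard elliptic boundary regularity for the Robin problem; once this is in hand the rest is formal manipulation of leading Taylor polynomials.
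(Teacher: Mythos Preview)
Your proof is correct, but handles the boundary case by a different route from the paper. You invoke elliptic Robin regularity to obtain $u\in C^\infty(\overline{\Sigma})$ and then match Taylor coefficients of the Robin condition to force the leading harmonic polynomial into the form $Ar^m\cos(m\theta)$. The paper instead makes the substitution $v=e^{cG(x)y}u$ (with $G(x)=\lambda(x,0)$), which converts the Robin condition to a homogeneous Neumann condition $\partial_y v=0$; the even reflection of $v$ across $\{y=0\}$ is then a $C^{1,1}$ solution of an elliptic equation on a full disc, and the interior theory (Bers) applies directly, with the half-integer angle condition coming for free from reflection symmetry. Your direct approach is more self-contained; the paper's approach has the advantage of reducing everything to the interior case, so that finiteness of the vanishing order (strong unique continuation at a boundary point) and the requisite $C^1$ regularity of $r^{-m}u$ for the implicit function theorem can simply be inherited from the interior result rather than re-argued. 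You implicitly use both of these facts without comment---writing ``the leading nonzero polynomial $P_m$'' at a boundary point presupposes finite vanishing order there, and ``extends continuously'' is not enough for the implicit function theorem---though both are standard (the former by reflection or boundary unique continuation, the latter by Hadamard's lemma applied to the Taylor remainder). Your verification that the network is regular is essentially equivalent to the paper's: you count sign changes of $u|_C$ around a boundary component $C$ to show a lone boundary zero has even order, while the paper notes directly that if $u$ does not change sign on $C$ then the number of incident arcs at the single zero is even.
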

\begin{proof}
If $p$ in the interior, this follows from the classical theorem of Bers. So consider $p\in\pr\Sigma$. Following \cite[Proof of Theorem 2.3]{FS16}, we may take local conformal coordinates centred at $p$, so that $\Sigma$ corresponds to the upper half-disk and $p$ corresponds to the origin. Write $g = F^2\delta$, where $\delta$ is the Euclidean metric.

Then $u$ satisfies $\Lap_\delta u = - V F^2 u$ on the upper half-disc, and $\pr_y u = -c G u$ on the $x$-axis, where $G(x)=F(x,0)$. Then as $G$ only depends on $x$, we compute that $v:= e^{c Gy}u$ satisfies 
\[ \nabla^\delta v = e^{c Gy}(\nabla u + \tilde{b}), \qquad \tilde{b}=c(G'ye_x +Ge_y),\]
\[\Lap_\delta v = - V F^2 v + \tilde{b} \cdot \nabla^\delta v + e^{c Gy}G'' y  ,\]
\[ \pr_y v = e^{c Gy}(-c Gu + c Gu)=0.\]
Thus $v$ satisfies an elliptic equation, whose coefficients are smooth up to the boundary, with zero Neumann boundary conditions. The even reflection of $v$ is at least $C^{1,1}$, so using classical unique continuation as in \cite[Proof of Theorem 2.3]{FS16} (i.e. similar to the interior case) gives the stated local description of $Z$. The angle condition at boundary nodes follows from the reflection symmetry. In particular, $Z$ is a network with boundary. 

Note that any interior crossing is the junction of $k$ smooth curves, hence has degree $2k$. If there is a boundary component $\Gamma$ with just a single zero $p\in Z\cap \Gamma$, then as $\Gamma$ is connected $u$ cannot change sign on $\Gamma$. This implies that an even number of arcs of $Z$ must be incident at $p$. Thus, $Z$ is regular. 
\end{proof}

Again, we spell out the (somewhat well-known) consequence that for genus 0 surfaces, a two-piece property places restrictions on the (reduced) nodal graph:

\begin{lemma}
\label{lem:nodal-genus-0}
    Let $\Sigma$ be a compact Riemannian surface of genus 0. Let $u$ be as in Lemma \ref{lem:nodal-local} and $Z:=u^{-1}(0)$. Suppose that $\Sigma\setminus Z$ consists of exactly two connected components. Then $Z$ does not have any interior nodes, and each vertex of $\tilde{Z}$ has degree 0 or 2. 
\end{lemma}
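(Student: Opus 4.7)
The plan is to apply an Euler-characteristic-plus-degree count to $\tilde{Z}$ as a finite graph in $\tilde\Sigma$. Since $\Sigma$ has genus $0$, the closed surface $\tilde\Sigma$ obtained by collapsing each boundary circle of $\Sigma$ to a point is homeomorphic to $S^2$, so $\chi(\tilde\Sigma)=2$. Let $V, E$ denote the numbers of vertices and edges of $\tilde Z$, and let $F$ be the number of connected components (``faces'') of $\tilde\Sigma\setminus\tilde Z$.

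First I would verify $F=2$. Because $u\neq 0$ on $\partial\Sigma\setminus Z$, each arc of $\partial\Sigma\setminus Z$ admits a one-sided neighbourhood in $\Sigma$ that is free of $Z$, hence lies in a unique component of $\Sigma^\circ\setminus Z$; so the components of $\Sigma\setminus Z$ and of $\Sigma^\circ\setminus Z$ are in bijection. The quotient map $\pi\colon\Sigma\to\tilde\Sigma$ is injective on $\Sigma^\circ$ with $\pi^{-1}(\tilde Z)=Z\cup\bigcup_{\Gamma\cap Z\neq\emptyset}\Gamma$, so $\tilde\Sigma\setminus\tilde Z$ is $\pi(\Sigma^\circ\setminus Z)$ with finitely many interior points adjoined (the collapsed boundary circles disjoint from $Z$), each attached to a single existing component. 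Thus the two-piece hypothesis yields $F=2$.

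The heart of the argument is additivity of the Euler characteristic. Each face $f$ is a connected open subset of $S^2$; since $\tilde Z\neq\emptyset$ it is a proper subset of $S^2$, hence homotopy equivalent to a $1$-dimensional CW complex, and therefore $\chi(f)\leq 1$ with equality iff $f$ is a disk. Decomposing $\tilde\Sigma=\tilde Z\sqcup(\tilde\Sigma\setminus\tilde Z)$, additivity gives
\[
2 \;=\; \chi(\tilde\Sigma) \;=\; (V-E) + \sum_{f}\chi(f) \;\leq\; (V-E) + F \;=\; (V-E)+2,
\]
so $V\geq E$. Meanwhile Lemma \ref{lem:nodal-local} implies every interior node has even degree $\geq 4$ (locally $m\geq 2$ smooth arcs cross transversally), and every boundary vertex of $\tilde Z$ has degree $\geq 2$: multiple zeros on a boundary circle contribute $\geq 2$ incident edges automatically, while a single zero contributes an even positive number of incident arcs, by the single-zero parity argument recorded at the end of Lemma \ref{lem:nodal-local}. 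The handshake identity $2E=\sum_v d(v)\geq 2V$ then gives $V\leq E$.

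Combining the two inequalities forces equality throughout: $V=E$, every face is a disk, and $d(v)=2$ at every vertex. In particular no vertex has degree $\geq 4$, so $\tilde Z$ has no interior nodes, and every vertex of $\tilde Z$ has degree exactly $2$. The main obstacle is the first step, where one must carefully match the component count of $\Sigma\setminus Z$ to the face count of $\tilde Z\subset\tilde\Sigma$ across the boundary collapse; the subsequent Euler and handshake count is then clean.
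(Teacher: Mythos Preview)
Your proof is correct and takes a genuinely different route from the paper's. The paper gives two arguments. Its main proof is hands-on: assuming an interior node $p$, it builds a loop $\gamma\subset\{u>0\}\cup\{p\}$ through two of the positive sectors near $p$; since $\Sigma$ has genus $0$, $\gamma$ separates, and the alternating sign pattern then places a piece of $\{u<0\}$ on each side, contradicting connectedness of $\{u<0\}$. A parallel argument handles boundary components with $\geq 4$ incident arcs. The paper's alternative proof (in its ``Extra topological remarks'') uses Cheng's lemma that $k$ cycles in $\tilde Z$ force at least $k-2g+1$ components of $\Sigma\setminus Z$, so for $g=0$ there is at most one cycle, and then shows a degree-$\geq 3$ vertex would produce two disjoint cycles. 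Your approach instead runs a global Euler--handshake count on $\tilde Z\subset S^2$: the two-piece hypothesis gives $F=2$, so $2=(V-E)+\sum_f\chi(f)\leq (V-E)+2$ forces $V\geq E$, while regularity plus the handshake forces $V\leq E$; equality then pins every degree at $2$. This is cleaner and avoids the case analysis of the paper's first proof, at the cost of invoking additivity of Euler characteristic (and a little care with free loops in $\tilde Z$, which you can handle by inserting a degree-$2$ dummy vertex). The paper's first proof is more elementary and self-contained; your method is more systematic and would adapt more readily to higher genus bounds.
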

\begin{proof}
Note that if $\gamma$ is a closed loop, or a curve with both endpoints on the same boundary component, then $\Sigma\setminus \gamma$ has two boundary components. (Glue a disk to each boundary component, and use that a closed loop separates $S^2$.) 

By the two-piece assumption, $\{u>0\}$ and $\{u<0\}$ are connected. Suppose for the sake of contradiction that $Z$ has an interior crossing point $p$. Then in particular, there is a neighbourhood $U$ of $p$ so that $U\setminus Z$ consists of disjoint components on which $u$ alternates in sign. In particular, there are two disjoint components on which $u>0$. Since $\{u>0\}$ is connected, we can connect these by a loop $\gamma \subset \{u>0\}\cup\{p\}$ based at $p$. As $\Sigma$ has genus 0, $\Sigma\setminus \gamma$ must consist of two connected components. But by the alternating signs, there is a component of $\{u<0\}$ in both components of $\Sigma\setminus \gamma$, which contradicts connectedness of $\{u<0\}$. 

Similarly, consider a boundary component $\Gamma$ of $\Sigma$. Then there is a neighbourhood $U$ of $\Gamma$ so that $U\setminus Z$ consists of $k$ disjoint components on which $u$ alternates in sign. In particular, $k$ must be even. If $k\geq 4$, then at least two components of $U\setminus Z$ have $u>0$. These must be connected by a curve $\gamma$ (with endpoints on $\Gamma$)  through $\{u>0\}$. But then $\Sigma\setminus \gamma$ must consist of two connected components, and by the alternating signs, there is a component of $\{u<0\}$ on either side of $\Sigma\setminus \gamma$. This contradicts connectedness of $\{u<0\}$. We conclude that either 0 or 2 arcs of $Z$ have endpoints on $\Gamma$. 
\end{proof}

We now construct `support functions' on the ambient manifolds $(M^3,\bar{g})$ in Section \ref{sec:radial-setup}. Consider $u: M\to \mathbb{R}$ defined (in terms of the conformal model $\bar{g} = e^{2\phi}\delta$) by
\[ u:=  e^{\phi} \langle x,\nu_\delta\rangle,\] where again $\nu_\delta = e^\phi\nu_{\bar{g}}$ is the unit normal of $\Sigma$ with respect to $\delta$.

In the following proposition, we say that $\Sigma$ has the two-piece property if it satisfies the conclusions of Corollary \ref{cor:two-piece}. Recall, in particular, that Corollary \ref{cor:two-piece} established the two-piece property for \textit{constrained} FBMS in settings where the pair $(M, S=\pr M)$ satisfies condition (A).

\begin{proposition}
\label{prop:radial graph}
Let $(M^3,\bar{g})$ and $S=\pr B_R$ be as in Section \ref{sec:radial-setup}. Let $(\Sigma^2, \partial \Sigma) \hookrightarrow (M, S)$ be a connected, compact, minimal embedding that contacts $S$ with angle $\pit$ along $\partial \Sigma \subset S$. 
Assume that $\Sigma$ has the two-piece property. 

Suppose that $\Sigma$ has genus 0 and $b$ boundary components. If $\Sigma$ is not a totally geodesic disc, then $o \not\in \Sigma$ and $\bar{g}(\pr_\rho,\nu_{\bar{g}})$ does not vanish on the interior of $\Sigma$.
\end{proposition}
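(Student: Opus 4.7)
My plan is to argue by contradiction, combining the two-piece property of Corollary~\ref{cor:two-piece} with the nodal-set analysis of Lemmas~\ref{lem:nodal-local} and~\ref{lem:nodal-genus-0}. Working in the conformal model, I will first rewrite the support function as $u = e^\phi\langle x, \nu_\delta\rangle = h(\rho)\,\bar g(\partial_\rho, \nu_{\bar g})$, so a vanishing of $u$ at an interior point $p_0 \in \Sigma$ forces either $p_0 = o$ or $\partial_\rho(p_0) \in T_{p_0}\Sigma$. Suppose for contradiction that such a $p_0$ exists. The key move is to set $a := \nu_\delta(p_0) \in \mathbb{S}^n$; then in either case $\langle p_0, a\rangle = 0$ (trivially when $p_0 = o$; via $\partial_\rho \perp \nu_\delta$ otherwise), so $p_0 \in D_a$ and the hyperplane $D_a$ is tangent to $\Sigma$ at $p_0$.

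Next, I will analyze the height function $u_a := e^\phi\langle x, a\rangle$, whose zero set is $Z_a = \Sigma \cap D_a$. By Lemma~\ref{lem:stability-test}, $u_a$ satisfies $\Delta u_a + V u_a = 0$ on $\Sigma$ for a smooth potential $V$, together with a Robin condition $\partial_\eta u_a = \kappa\, u_a$ on $\partial\Sigma$, where the constant $\kappa$ is the principal curvature of the umbilical boundary $S$. As $\Sigma$ is not a totally geodesic disc, $u_a \not\equiv 0$: otherwise $\Sigma \subseteq D_a$ and a dimension count forces $\Sigma = D_a \cap M$. Lemma~\ref{lem:nodal-local} thus describes $Z_a$ as a regular network with boundary, and Corollary~\ref{cor:two-piece} gives that $\Sigma \setminus Z_a$ has exactly two connected components. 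Invoking Lemma~\ref{lem:nodal-genus-0} with the genus-zero hypothesis then yields that $Z_a$ has no interior nodes.

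To produce the contradiction, I will verify that $p_0$ is nonetheless an interior node of $Z_a$. A short computation gives $\nabla u_a = e^\phi\bigl(\tfrac{\phi'(r)}{r}\langle x, a\rangle\, x^T + a^T\bigr)$, where $(\cdot)^T$ denotes tangential projection onto $T\Sigma$. At $p_0$ both summands vanish: the first because $\langle p_0, a\rangle = 0$, and the second because $a = \nu_\delta(p_0)$ is normal to $T_{p_0}\Sigma$ (an $\bar g$-fact which persists in $\delta$ by conformal equivalence). Hence $u_a$ vanishes to order $m \geq 2$ at $p_0$, so by Lemma~\ref{lem:nodal-local} at least two smooth arcs of $Z_a$ meet at $p_0$, i.e., $p_0$ is an interior node of $Z_a$. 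This contradicts the conclusion of the previous paragraph and rules out any interior zero of $u$; since $u = h(\rho)\,\bar g(\partial_\rho, \nu_{\bar g})$ and $h(\rho)=0$ only at $o$, this is precisely the desired conclusion.

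I expect the most important conceptual ingredient (though not technically difficult) to be the adapted choice $a = \nu_\delta(p_0)$: a first-order zero of the support function $u$ at $p_0$ automatically forces $D_a$ to be tangent to $\Sigma$ there, and hence a higher-order zero of $u_a$ at $p_0$, where the two-piece/nodal machinery bites. Beyond this, the only routine verification is that the boundary condition in Lemma~\ref{lem:stability-test} is genuinely Robin with constant coefficient, which follows from umbilicity of $S$ and the free boundary angle.
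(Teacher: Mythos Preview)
Your proposal is correct and follows essentially the same approach as the paper's proof: choose $a=\nu_\delta(p_0)$, verify via the gradient formula (your computation matches the paper's equation for $e^{2\phi}\nabla^g u_a$) that $u_a$ has a zero of order $\geq 2$ at $p_0$, and then derive a contradiction from Lemmas~\ref{lem:nodal-local} and~\ref{lem:nodal-genus-0} together with the assumed two-piece property. The only cosmetic difference is that you package the two cases ($o\in\Sigma$ versus $\bar g(\partial_\rho,\nu_{\bar g})=0$) via the single support function $u=h(\rho)\,\bar g(\partial_\rho,\nu_{\bar g})$, whereas the paper treats them separately; also note that the two-piece property is a \emph{hypothesis} here rather than an invocation of Corollary~\ref{cor:two-piece}.
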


\begin{proof}
Let $u_a=e^\phi x_a$ be as in Lemma \ref{lem:stability-test}. By the result of that lemma, we have $(\Lap^g_\Sigma + V)u_a=0$ and that $(\pr_{\eta_g}-c)u_a =0$, where 
\begin{align*}
V &= \Ric_M(\nu_{\bar{g}}, \nu_{\bar{g}}) +n e^{-2\phi}(\phi'' -r^{-1} \phi' - (\phi')^2) \langle\nu_\delta, \partial_r\rangle^2, \\
c & = k_S(\bar{\nu}_{\bar{g}},\bar{\nu}_{\bar{g}})=e^{-\phi(\bar{r})}\left(\phi'(\bar{r}) + \frac{1}{\bar{r}}\right). 
\end{align*}
In particular, $u$ satisfies the assumptions of Lemma \ref{lem:nodal-local}. So for any $a$, we have the following dichotomy - either:

\begin{enumerate}[(i)]
\item $u_a\equiv 0$, so $\Sigma$ is a piece of the hyperplane $\Gamma_a:= \{x_a\}=0$ in the conformal model, and is a totally geodesic disc;
\item $u_a\not \equiv 0$, so by Lemma \ref{lem:nodal-local}, the two-piece property and Lemma \ref{lem:nodal-genus-0}, we have that $Z_a:= u_a^{-1}(0)$ is a regular network with boundary, and cannot have an interior crossing point.
\end{enumerate}

By supposition $\Sigma$ is not totally geodesic, so item (ii) must hold for any $a$. We claim that if either $o\in \Sigma$ or $\bar{g}(\pr_\rho, \nu_{\bar{g}})=0$ at an interior point, then there is an $a\neq 0$ for which $u_a$ has a second order zero at some $y \in \Sigma$. Given this claim, Lemma \ref{lem:nodal-local} implies that $Z_a$ must contain at least 2 arcs crossing at $y$. This contradicts item (ii), so we would conclude that $o\notin\Sigma$ and that $\bar{g}(\pr_\rho,\nu_{\bar{g}})$ does not vanish on the interior, as desired. 

To prove the claim, first suppose $o \in \Sigma$. In the conformal model $o$ maps to $0$, so we let $a=\nu_\delta(o)$, and certainly $x_a=\langle 0,a\rangle=0$ at $o$. Furthermore, \begin{equation}
\label{eq:u-a-grad} e^{2\phi}\nabla^g u_a = x_a \nabla^{\delta} e^\phi + e^\phi \nabla^{\delta}x_a= x_a \nabla^{\delta} e^\phi + e^\phi a^{\top\Sigma}.\end{equation} As $x_a=0$ and $a$ is normal at $o$, in particular we have $\nabla^g u_a(o)=0$. 

Suppose that $\bar{g}(\pr_\rho,\nu_{\bar{g}})=0$ at some $y \in \Sigma$. In terms of the conformal model, this means that at $y$ we have $0= \langle\pr_r, \nu_\delta\rangle = \frac{x_a}{r}$, where we have set $a=\nu_\delta(y)$. In particular, $u_a(y)=0$. Again by (\ref{eq:u-a-grad}), as $x_a=0$ and $a$ is normal at $y$, in particular we have $\nabla^g u_a(y)=0$. 

\end{proof}

\begin{remark}
In fact, the above method allows us to classify embedded free boundary minimal discs, at least when $\Ric_M \leq 0$ or if $M=\mathbb{S}^3$. (Of course, for space forms, even immersed free boundary minimal discs are classified; see Section \ref{sec:hopf}.)

Indeed, consider $b=1$. Since $\Sigma$ contacts $S$ at angle $\pit$ along $\pr\Sigma$, we have that $g(\pr_\rho,\nu_{\bar{g}})=0$ at \textit{any} $y$ \textit{on the boundary} $\pr \Sigma$. Choose such a $y$ and set $a=\nu_\delta(y)$ as above. Then Lemma \ref{lem:nodal-local} again implies that $Z_a$ must contain at least 2 arcs crossing at $y$. But by Lemma \ref{lem:nodal-genus-0}, the only boundary component $\pr \Sigma$ must have degree exactly 2 in the reduced graph $\tilde{Z}_a$. So the nodal set $Z_a$ must consist of a single loop, which intersects $\pr\Sigma$ tranversely at the single point $y$. 

From the local description, it follows that $u_a$ does not change sign on $\pr \Sigma$, that is, $\pr \Sigma$ is contained in the half-space $\overline{N_a}$. By either Lemma \ref{lem:sph-one-side} or Lemma \ref{lem:conf-one-side} below, it follows that $\Sigma$ is contained in the half-space boundary $D_a$ and is thus a totally geodesic disc. 
\end{remark} 

\begin{lemma}
\label{lem:sph-one-side}
Let $(\Sigma^k,\pr\Sigma)$ be a FBMS in $(\mathbb{S}^{N}, \pr B_R)$. Fix $a \perp e_0$, where $o=e_0$ is the centre of the ball $B_R$. If $u_a \geq 0$ on $\pr\Sigma$, then $u_a\equiv 0$ on $\Sigma$. 
\end{lemma}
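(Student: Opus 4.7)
The plan is to combine a Green's identity argument with unique continuation, using the other ambient coordinate $x_0 = \langle x, e_0\rangle$ as a companion test function.

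First, I would record the PDE and boundary conditions satisfied by $u_a$. Since $\Sigma^k$ is minimal in $\mathbb{S}^N$ and the ambient Hessian satisfies $\bar\nabla^2 x_a = -x_a\, g_1$, tracing over $\Sigma$ yields $\Delta_\Sigma u_a = -k u_a$. Parametrising a neighbourhood of $\partial B_R$ by $x = \cos\rho\, e_0 + \sin\rho\, \omega$, the hypothesis $a \perp e_0$ gives $u_a = \sin\rho \, \langle \omega, a\rangle$, so $\partial_{\partial_\rho} u_a = \cot\rho \, u_a$. Combined with the free boundary condition $\eta = \bar\eta = \partial_\rho$, this furnishes the Robin-type condition $\partial_\eta u_a = \cot R \, u_a$ on $\partial\Sigma$.

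Next, I would observe that $x_0$ is a canonical companion: $\Delta_\Sigma x_0 = -k x_0$ on $\Sigma$, with boundary data $x_0 \equiv \cos R$ and $\partial_\eta x_0 \equiv -\sin R$. Applying Green's identity to the pair $(u_a, x_0)$, the bulk integrals cancel and after substituting boundary data one finds
\[
0 \,=\, \int_{\partial\Sigma}\! \bigl( u_a\, \partial_\eta x_0 - x_0\, \partial_\eta u_a\bigr)\, d\sigma \,=\, -\Bigl(\sin R + \tfrac{\cos^2 R}{\sin R}\Bigr)\int_{\partial\Sigma} u_a \, d\sigma \,=\, -\csc R \int_{\partial\Sigma} u_a\, d\sigma.
\]
Since $R \in (0,\pi)$, we conclude $\int_{\partial\Sigma} u_a = 0$; combined with $u_a \geq 0$ on $\partial\Sigma$, continuity forces $u_a \equiv 0$ on $\partial\Sigma$.

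Finally, the Robin condition now gives $\partial_\eta u_a = \cot R \cdot 0 = 0$ on $\partial\Sigma$ as well. So $u_a$ satisfies the elliptic equation $\Delta u_a + k u_a = 0$ on $\Sigma$ with zero Cauchy data along $\partial\Sigma$, whence unique continuation (or directly, real-analyticity of minimal $\Sigma \subset \mathbb{S}^N$ together with the linearity of $u_a$) yields $u_a \equiv 0$ throughout $\Sigma$. I do not anticipate a substantial obstacle: the key observation is simply that $x_0$ supplies a second solution of the same eigenvalue equation on $\Sigma$, whose boundary data pair perfectly with the Robin coefficient $\cot R$ arising from the geometry of $\partial B_R$. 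The Neumann case $R = \pit$ is subsumed automatically since $\cot(\pit) = 0$ and $\sin(\pit) = 1$.
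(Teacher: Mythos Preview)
Your proposal is correct and follows essentially the same route as the paper: pair $u_a$ with the companion coordinate $x_0$ via Green's identity to force $\int_{\partial\Sigma} u_a = 0$, deduce $u_a|_{\partial\Sigma}\equiv 0$ from nonnegativity, and conclude by unique continuation from zero Cauchy data. The paper's write-up is slightly terser (it works directly with the ambient coordinate $x_a$ rather than re-deriving the Robin relation), but the argument is the same.
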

\begin{proof}
We may work directly with the Euclidean coordinate $x_a =e^{-\phi} u_a$. By the Green's identity
\[ 0= \int_\Sigma (x_0(\Lap+k)x_a - x_a(\Lap+k)x_0) = \int_{\pr\Sigma} (x_0 \pr_\eta x_a - x_a \pr_\eta x_0) = -\frac{1}{\sin R} \int_{\pr\Sigma}  x_a.\]
As $x_a\geq0$ on $\pr\Sigma$, the only way equality can hold is if $x_a|_{\pr\Sigma}\equiv 0$. 
But then $x_a$ satisfies $(\Lap+k)x_a=0$ on $\Sigma$, with $x_a = \pr_\eta x_a=0$ on $\pr\Sigma$.  Thus $x_a$, hence $u_a$, must vanish identically. 
\end{proof}

\begin{lemma}
\label{lem:conf-one-side}
Let $(M^{n+1},\bar g)$ be as in Section \ref{sec:radial-setup}, satisfying conditions (A).

Assume that $\Ric_M \leq 0$. Let $x : (\Sigma^n, \partial\Sigma) \hookrightarrow (M, \partial B_R)$ be a connected, compact, minimal embedding that contacts $S = \partial B_R$ with angle $\pit$ along $\partial \Sigma \subset S$.

In conformal coordinates, if $u_a \geq 0$ on $\pr\Sigma$, then $u_a  \equiv 0$ on $\Sigma$. 
\end{lemma}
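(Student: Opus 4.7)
The plan is to mimic the Green's identity argument of Lemma \ref{lem:sph-one-side}, replacing the auxiliary function $x_0$ with the warping function $h(\rho)$ in the warped-product model $\bar{g} = d\rho^2 + h(\rho)^2 g_1$. First I would upgrade the boundary assumption to an interior one. By Lemma \ref{lem:stability-test}, $u_a$ satisfies $\Lap u_a = V u_a$ on $\Sigma$ with
\[
V = -\Ric_M(\nu,\nu) - n e^{-2\phi} W \langle \nu_\delta, \pr_r\rangle^2 \geq 0,
\]
where $W := \phi'' - \phi'/r - (\phi')^2 \leq 0$ by $(\mathrm{A}')$ and we use $\Ric_M \leq 0$. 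On $\Sigma^- := \{u_a < 0\}$, $u_a$ is therefore superharmonic with $u_a = 0$ on the topological boundary of $\Sigma^-$ (using the boundary assumption together with continuity), so the Dirichlet minimum principle forces $\Sigma^- = \emptyset$, i.e. $u_a \geq 0$ on all of $\Sigma$.

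If the centre $o \in \Sigma$, then $u_a(o) = e^{\phi(0)} \langle o, a\rangle = 0$ is an interior zero of the nonnegative function $u_a$, and the strong minimum principle applied to $\Lap u_a - V u_a = 0$ (whose coefficient $-V$ is nonpositive) immediately forces $u_a \equiv 0$. Otherwise $o \notin \Sigma$, so $h(\rho) > 0$ on $\Sigma$, and I may apply Green's identity with $v := h(\rho)$. On the boundary $\pr\Sigma \subset \{\rho = R\}$, both $v = h(R)$ and $\pr_\eta v = h'(R)$ are constants (as $\eta = \pr_\rho$ for an FBMS), and combined with the boundary condition $\pr_\eta u_a = (h'(R)/h(R))\, u_a$ from Lemma \ref{lem:stability-test}, the boundary integrand $v\,\pr_\eta u_a - u_a\,\pr_\eta v$ vanishes pointwise.

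On the interior, the minimal-surface identity $\Lap_\Sigma v = \Lap_M v - \nabla^2_M v(\nu,\nu)$ applied to $v = h(\rho)$ in the warped product yields
\[
\Lap_\Sigma v = h''(1-s^2) + \frac{(h')^2}{h}(n-1+s^2), \qquad s := \langle \nu, \pr_\rho\rangle,
\]
so $v$ satisfies $\Lap v = V_1 v$ for $V_1 := \Lap_\Sigma v / v$. Using the relations $h = r e^\phi$ and $h' = 1 + r\phi'$ to pass between the warped and conformal models, direct expansion gives
\[
V - V_1 = -\frac{e^{-2\phi}(n-1+s^2)}{r^2},
\]
which is strictly negative everywhere for $n \geq 2$. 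Green's identity then reads
\[
0 = \int_\Sigma \big(v\,\Lap u_a - u_a\,\Lap v\big)\, d\mu_g = \int_\Sigma h\, u_a (V - V_1)\, d\mu_g \leq 0,
\]
and since the integrand is nonpositive with $h > 0$ and $V - V_1 < 0$, equality forces $u_a \equiv 0$ on $\Sigma$.

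The main obstacle is identifying the correct auxiliary function: the identity $V - V_1 = -e^{-2\phi}(n-1+s^2)/r^2$ is an algebraic cancellation specific to $v = h$, which moreover fails to be strict when $n = 1$ and $s = 0$. The other subtlety is the centre-point case, where $v = h$ vanishes and the Green identity would require regularisation; fortunately the strong minimum principle bypasses this case entirely.
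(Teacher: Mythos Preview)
Your proof is correct and takes a genuinely different route from the paper's. After establishing $u_a \geq 0$ on all of $\Sigma$ (which both proofs do, via essentially the same maximum principle argument), the paper concludes by invoking the two-piece property (Corollary~\ref{cor:two-piece}): having shown $u_a > 0$ in the interior, $\Sigma \setminus D_a$ is connected, so Corollary~\ref{cor:two-piece} forces $\Sigma$ to be the totally geodesic disc $D_a$. This requires the paper to first establish that $\Sigma$ is \emph{constrained} in $B_R$ (via a subharmonicity argument for $r^2$), and crucially uses embeddedness of $\Sigma$.

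Your Green's identity argument with the auxiliary function $v = h(\rho)$ sidesteps all of this. The identity $V - V_1 = -(n-1+s^2)/h^2$ is a clean cancellation (and your derivation checks out in the warped-product model), and it directly forces $u_a \equiv 0$ without appeal to the two-piece machinery. In particular, your argument would apply equally well to \emph{immersions}, and does not need the constrained hypothesis; it is closer in spirit to the integral argument of Lemma~\ref{lem:sph-one-side}, with $h(\rho)$ playing the role of $x_0$. The paper's approach, by contrast, reinforces the central role of the two-piece property in the section, at the cost of the extra hypotheses. Your honest flag about the $n=1$ edge case (where $V - V_1 = -s^2/h^2$ may vanish) is accurate, but is irrelevant for the paper's applications, which all have $n=2$.
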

\begin{proof}
Consider the conformal model $(M^{n+1}, \bar g) \simeq (\mathbb{B}^{n+1}_{\bar{r}}, e^{2\phi} \delta)$. Let us begin by showing that $\Sigma$ must be constrained in $B_R$. By assumption $\Ric_M(\partial_r, \partial_r) \leq 0$. So - recalling that $g = e^{2\phi} \delta|_{\Sigma}$ - Remark \ref{rem:ricci} implies $r\phi'' + \phi' = (r\phi')' \geq 0$. Since $r\phi'\big|_{r = 0} = 0$, we must have $r\phi'\geq 0$ for all $r$. In particular, $\phi'(r) + r^{-1} \geq r^{-1}> 0$ for all $r \in (0, \bar{r}]$, i.e. $\partial \mathbb{B}^{n+1}_r$ is convex with respect to $e^{2\phi} \delta$ for all $r \in (0, \bar{r}]$. 

Applying the conformal change formula from Appendix \ref{sec:conformal-change}, we obtain 
\[
\Delta r^2 =e^{-2\phi}(\Lap^\delta r^2 + (n-2) \langle \nabla^\delta \phi, \nabla^\delta r^2\rangle\big) = e^{-2\phi}(2n+2(n-2)r\phi' |\pr_r^{\top\Sigma}|^2)>0.
\]
By the maximum principle, $r^2$ cannot attain an interior maximum and hence $r^2 \leq \max_{\partial \Sigma} r^2$. In terms of the original metric $\bar{g}$, this implies $\rho \leq R$, i.e. $\Sigma$ is constrained. 

Continuing, via Lemma \ref{lem:stability-test}, we compute
\begin{align}\label{eq:brdy_lemma}
    \big(\Lap + \mathrm{Ric}_M(\nu_{\bar{g}}, \nu_{\bar{g}})\big)u_a & = Lu_a - |A^g|^2_g u_a 
    = -2 e^{-2\phi}r\Big(\Big(\frac{\phi'}{r}\Big)'  - r\Big(\frac{\phi'}{r}\Big)^2\Big) \langle\nu_\delta, \partial_r\rangle^2 u_a. 
\end{align}
If we are in the equality case of ($\mathrm{A}'$), then $(M, \bar{g})$ is a geodesic ball in either $\mathbb{R}^n$ or $\mathbb{H}^n$ and we have $(\Delta + \Ric_M(\nu_{\bar g}, \nu_{\bar g}))u_a = 0$. In this case, the (strong) maximum principle implies $u_a \equiv 0$ on $\Sigma$. So let us suppose for the remainder of the proof that we have the strict inequality $\big(\frac{\phi'}{r}\big)'  - r\big(\frac{\phi'}{r}\big)^2 < 0$ in ($\mathrm{A}'$).

Now if $u_a$ is not everywhere nonnegative on $\Sigma$, then it attains a negative minimum at an interior point $y \in \Sigma$. Since $u_a(y) < 0$, we have $r > 0$. Moreover, we claim $\langle\nu_\delta, \partial_r\rangle|_y \neq 0$. Indeed, since $\nabla^g u_a(y) = 0$, we must have that $\nu_\delta$ is parallel to $\nabla^g u_a$ at $y$. Consequently, if $\langle\nu_\delta, \partial_r\rangle|_y = 0$, then $\langle\nabla^g u_a, \partial_r\rangle|_y = 0$. By  \eqref{eq:u-a-grad}, we have $e^{2\phi} \nabla^g u_a = u_a \phi' \partial_r + e^\phi a^{\top\Sigma}$. Thus 
\[
e^{2\phi} \langle\nabla^g u_a, \partial_r\rangle =  (u_a \phi' + e^\phi \langle a^{\top \Sigma}, \partial_r\rangle) =  u_a(\phi' + r^{-1}). 
\]
As we have seen above, $\Ric_M \leq 0$ implies $\phi' + r^{-1} \neq 0$. 

Thus, at the minimum $y$, the right hand side of \eqref{eq:brdy_lemma} is strictly negative, but as $\Ric_M \leq 0$, the left hand side is nonnegative. This is a contradiction, so we must have $u_a \geq 0$ on all of $\Sigma$. The strong maximum principle implies $u_a > 0$ on the interior of $\Sigma$. 

It follows that $x : (\Sigma, \partial \Sigma) \hookrightarrow (B_R, \partial B_R)$ is an embedding such that $\Sigma \setminus D_a$ is connected. Now the two-piece property, Corollary \ref{cor:two-piece}, implies that $\Sigma$ must be the totally geodesic disk $D_a \cap B_R$.
\end{proof}

\subsubsection{Extra topological remarks}

To close this section, we record some general topological ideas that may be of interest. The key is Cheng's work, particularly \cite[Lemma 3.1]{Ch76}, which we reproduce as follows:

\begin{lemma}[\cite{Ch76}]
Let $\Sigma$ be a compact Riemann surface of genus $g$. Suppose that $Z= \bigcup_{i=1}^k \gamma_i$, where the $\gamma_i:S^1\to \Sigma$ are injective, piecewise $C^1$ curves, and that $\gamma_i \cap \gamma_j$ consists of a finite number of points for each $i\neq j$. If $k\geq 2g+1$, then $\Sigma\setminus Z$ has at least $k-2g+1$ connected components. 
\end{lemma}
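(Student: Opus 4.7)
The plan is to work with $\mathbb{F}_2$ coefficients throughout, which suffices since we only care about unoriented cycles and regions. Write $F_1, \dots, F_N$ for the connected components of $\Sigma \setminus Z$; the goal is then to show $N \geq k - 2g + 1$. The strategy is a two-step pigeonhole argument: first, I would prove that the loop classes $[\gamma_1], \dots, [\gamma_k]$ are linearly independent in $H_1(Z; \mathbb{F}_2)$, so that $b_1(Z; \mathbb{F}_2) \geq k$; then I would use the bound $\dim H_1(\Sigma; \mathbb{F}_2) = 2g$ to produce many cycles in $Z$ that bound in $\Sigma$, each of which separates off an extra component of $\Sigma \setminus Z$.

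For the independence step, give $Z$ a graph structure by subdividing each $\gamma_i$ at its intersection points with the other $\gamma_j$. Since the $\gamma_i$ are embedded and pairwise meet in only finitely many points, no two of them can share an open subarc, so each edge of $Z$ lies on exactly one $\gamma_i$. Hence for any nonempty $I \subseteq \{1, \dots, k\}$, the $1$-chain $\sum_{i \in I} \gamma_i \in C_1(Z; \mathbb{F}_2)$ has nonzero support. Because $Z$ is $1$-dimensional, $B_1(Z; \mathbb{F}_2) = 0$ and this chain represents a nonzero class in $H_1(Z; \mathbb{F}_2)$.

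Next, the inclusion-induced map $i_\ast \colon H_1(Z; \mathbb{F}_2) \to H_1(\Sigma; \mathbb{F}_2)$ has target of dimension $2g$, so its restriction to the $k$-dimensional subspace spanned by the $[\gamma_i]$ has kernel of dimension at least $k - 2g$. I would then exhibit a linear injection $\ker i_\ast \hookrightarrow \mathbb{F}_2^N / \langle (1, \dots, 1) \rangle$ as follows: given a nonzero $C_I = \sum_{i \in I} \gamma_i$ with $[C_I] \in \ker i_\ast$, one has $C_I = \partial D_I$ for some $D_I \in C_2(\Sigma; \mathbb{F}_2)$. Working in a CW structure on $\Sigma$ whose $2$-cells refine the partition $\{F_j\}$ by disks, the condition $\partial D_I \subseteq Z$ forces every $2$-cell lying in the interior of a given $F_j$ to appear with the same coefficient, so $D_I = \sum_{j \in J_I} F_j$ for some $J_I \subseteq \{1, \dots, N\}$. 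Distinct classes in $\ker i_\ast$ yield distinct subsets modulo $J \sim \{1, \dots, N\}\setminus J$, because the difference of two representatives would be a nontrivial $2$-cycle in $\Sigma$ other than $[\Sigma]$. Thus $N - 1 \geq \dim \ker i_\ast \geq k - 2g$, which is the claim.

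The main obstacle is this last correspondence between $\ker i_\ast$ and subsets of components: one must commit to a CW refinement of $\{F_j\}$ when the components are not already disks, and then check that each $F_j$ really appears with a constant coefficient in any $2$-chain whose boundary is contained in $Z$. Equivalently, one can establish the cleaner identity $N = 1 + \dim \ker i_\ast$ directly from the long exact sequence of the pair $(\Sigma, Z)$ combined with Lefschetz duality $H_1(\Sigma, Z; \mathbb{F}_2) \cong H^1(\Sigma \setminus Z; \mathbb{F}_2)$, at the cost of handling the noncompactness of $\Sigma \setminus Z$.
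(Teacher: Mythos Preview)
The paper does not give its own proof of this lemma; it is quoted verbatim from Cheng \cite{Ch76} (his Lemma 3.1) and used only as input to Corollary \ref{cor:cheng}. So there is no in-paper argument to compare against.

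Your $\mathbb{F}_2$-homology argument is correct. The independence of the $[\gamma_i]$ in $H_1(Z;\mathbb{F}_2)$ is exactly as you say: after subdividing at intersection points, no edge lies on two distinct $\gamma_i$, so any nontrivial $\mathbb{F}_2$-combination has nonempty support and hence is nonzero (there being no $2$-cells in $Z$). For the bounding step, once you triangulate $\Sigma$ with $Z$ a subcomplex, any $2$-chain $D$ with $\partial D$ supported in $Z$ must be constant on each $F_j$ (adjacent triangles sharing an interior edge carry equal coefficients, then use connectedness of $F_j$); the resulting linear map $\ker i_\ast \to \mathbb{F}_2^N/\langle(1,\dots,1)\rangle$ is well-defined since $H_2(\Sigma;\mathbb{F}_2)\cong\mathbb{F}_2$, and injective since $\partial[\Sigma]=0$. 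Combining $k-2g \le \dim\ker i_\ast \le N-1$ gives the result. The long exact sequence / Lefschetz duality alternative you mention is also fine and avoids choosing a triangulation.

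For comparison, Cheng's original argument is more hands-on: he proceeds essentially by induction on $k$, using that cutting a closed surface along an embedded circle either increases the number of components by one or drops the genus by one, with a local surgery to reduce to disjoint curves. Your homological proof is shorter and more structural; Cheng's is more elementary but involves some case analysis at the intersection points.
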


Cheng's work also applies to surfaces with boundary. Explicitly, we can record:

\begin{corollary}
\label{cor:cheng}
Let $\Sigma$ be a compact Riemann surface of genus $g$ and $Z$ a network with boundary. Suppose that the reduced nodal graph $\tilde{Z}$ contains $k$ distinct cycles. If $k\geq 2g+1$, then $\Sigma\setminus Z$ has at least $k-2g+1$ connected components.
\end{corollary}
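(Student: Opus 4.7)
The plan is to pass to the closed surface $\tilde{\Sigma}$ obtained from $\Sigma$ by collapsing each boundary component to a point, and apply Cheng's lemma to $\tilde{Z}\subset\tilde{\Sigma}$. Since collapsing an $S^1$ boundary component to a point is homeomorphic to capping off by a disc, $\tilde{\Sigma}$ is a closed orientable surface of the same genus $g$, and $\tilde{Z}$ is a graph embedded in $\tilde{\Sigma}$ whose vertices are the interior nodes of $Z$ together with the collapsed boundary components that met $Z$.

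First, I will extract $k$ distinct simple cycles $\tilde{\gamma}_1,\dots,\tilde{\gamma}_k$ from the graph $\tilde{Z}$. Each $\tilde{\gamma}_j$ is an injective piecewise-$C^1$ loop $S^1\to\tilde{\Sigma}$, being a concatenation of finitely many smooth arcs of $Z$ joined at vertices of $\tilde{Z}$. Any two distinct simple cycles meet in only finitely many points --- shared vertices and transverse arc crossings, of which there are only finitely many since $Z$ is built from finitely many smooth curves. Thus the hypotheses of the cited Cheng lemma are satisfied, and applying it to $\bigl(\tilde{\Sigma},\bigcup_j\tilde{\gamma}_j\bigr)$ yields that $\tilde{\Sigma}\setminus\bigcup_j\tilde{\gamma}_j$ has at least $k-2g+1$ connected components. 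Since $\bigcup_j\tilde{\gamma}_j\subseteq\tilde{Z}$, removing the (possibly) additional arcs of $\tilde{Z}\setminus\bigcup_j\tilde{\gamma}_j$ from a surface can only preserve or increase the number of components, so $\tilde{\Sigma}\setminus\tilde{Z}$ also has at least $k-2g+1$ components.

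Finally, I will identify the components of $\tilde{\Sigma}\setminus\tilde{Z}$ with those of $\Sigma\setminus Z$ via the quotient map $q:\Sigma\to\tilde{\Sigma}$. One has $q^{-1}(\tilde{Z}) = Z\cup\{\text{boundary components meeting }Z\}$; the remaining boundary components each lie in a single component of $\Sigma\setminus Z$ and are collapsed by $q$ to interior (non-vertex) points of a component of $\tilde{\Sigma}\setminus\tilde{Z}$. The restriction of $q$ to $\Sigma\setminus(Z\cup\partial\Sigma)\to\tilde{\Sigma}\setminus\tilde{Z}$ is a continuous surjection with connected fibres, and each boundary component of $\Sigma$ sits in a single component of $\Sigma\setminus Z$, so $q$ induces a bijection between the components of $\Sigma\setminus Z$ and those of $\tilde{\Sigma}\setminus\tilde{Z}$. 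This transfers the lower bound and completes the argument.

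The only real subtlety --- rather than a genuine obstacle --- lies in the last paragraph: one must verify that no two distinct components of $\Sigma\setminus Z$ are inadvertently identified by $q$, which reduces to the observation that each boundary circle of $\Sigma$ is entirely contained in a single component of $\Sigma\setminus Z$. The remaining content is a direct application of Cheng's lemma on the closed surface.
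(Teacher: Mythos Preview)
Your approach---passing to the closed surface $\tilde{\Sigma}$ and invoking Cheng's lemma---is exactly what the paper intends; the corollary is recorded without proof immediately after Cheng's lemma, as its restatement for surfaces with boundary.

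There is, however, a real gap. You assert that ``any two distinct simple cycles meet in only finitely many points,'' but two distinct cycles in the graph $\tilde{Z}$ may well share entire edges, in which case their intersection contains whole arcs and Cheng's hypothesis fails. In fact the corollary is false under the literal reading of ``distinct'': take $\tilde{Z}$ to be a theta graph (two vertices joined by three edges) embedded in $S^2$; there are three distinct simple cycles but only three complementary regions, and $3\geq 3-0+1$ fails. The intended hypothesis---and the only case actually used in the paper's application, where two \emph{disjoint} cycles are exhibited---is that the $k$ cycles meet pairwise only at vertices (for instance, are edge-disjoint). Under that reading your reduction to Cheng's lemma goes through, but you should flag the ambiguity rather than assert finite intersection for arbitrary distinct cycles. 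Your final component-counting step is essentially correct; the cleanest route is to note that $q$ restricts to a homeomorphism from the interior $\mathring{\Sigma}$ onto $\tilde{\Sigma}$ minus finitely many points, and that deleting finitely many points from an open surface does not change the number of components.
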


This gives an quick proof of Lemma 4.3, modulo the following easy lemma on graphs:

\begin{lemma}
Let $G$ be a finite graph and $v_1\in G$ such that $\deg v_1\geq 1$ and every other vertex has degree at least 2. Then $G$ contains a cycle.
\end{lemma}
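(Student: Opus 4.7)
The plan is to extract a cycle from a maximal simple path that starts at $v_1$. I would consider a simple path $P : v_1 = u_0, u_1, \ldots, u_k$ in $G$ (so the $u_i$ are distinct and $u_{i-1} u_i$ is an edge for each $i$), chosen to be maximal in the sense that $u_k$ has no neighbour outside $\{u_0, u_1, \ldots, u_k\}$. Such a path exists since $G$ is finite: any simple path has length at most $|V(G)| - 1$, so a greedy extension process must terminate.

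Next I would observe that $k \geq 1$: indeed, because $\deg v_1 \geq 1$, we may take $u_1$ to be any neighbour of $v_1$, showing that the trivial path consisting of $v_1$ alone is not maximal. In particular, since the $u_i$ are distinct, $u_k \neq v_1$. The hypothesis then gives $\deg u_k \geq 2$, so $u_k$ admits a neighbour $w \neq u_{k-1}$. By the maximality of $P$, the vertex $w$ must already lie on the path, and since $w \neq u_{k-1}$ and (being a neighbour of $u_k$ via a non-loop edge, which we may assume by the cycle convention) $w \neq u_k$, we conclude $w = u_j$ for some $j \leq k-2$. The closed walk $u_j, u_{j+1}, \ldots, u_k, u_j$ is then a cycle in $G$.

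There is no substantial obstacle here; the argument is the classical maximal-path trick. The two hypotheses play complementary roles: $\deg v_1 \geq 1$ ensures that a nontrivial path starting at $v_1$ exists, while the degree $\geq 2$ condition at every other vertex guarantees an "escape edge" at the far endpoint $u_k$ that must close back to the interior of the path. If loops or multiple edges are allowed in the definition of graph used by the authors, a loop at any vertex (or a pair of parallel edges) already gives a cycle directly, and the argument above handles the remaining simple-graph case.
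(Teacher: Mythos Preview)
Your proof is correct and follows essentially the same approach as the paper: both arguments greedily extend a path from $v_1$, using the degree condition to guarantee an outgoing edge at each step and finiteness to force a repeat. The paper phrases it as a walk that terminates upon revisiting a vertex, while you phrase it as a maximal simple path whose endpoint must have a neighbour back on the path; these are the same classical trick.
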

\begin{proof}
Starting at $v_1$, traverse along any edge to some $v_2$. Either $v_2$ is a new vertex, or we have found a cycle. Since $v_2$ has degree at least 2, we may traverse along a new edge, and continue this process. As there are finitely many vertices this process must terminate, upon which we have found a cycle. 
\end{proof}

Following the work of Seo \cite{Seo23}, we then have:

\begin{proof}[Alternative proof of Lemma \ref{lem:nodal-genus-0}]
We argue as in \cite{Seo23}:

By Corollary \ref{cor:cheng}, since $g=0$ the graph $\tilde{Z}$ must contain at most 1 cycle. 

Now suppose that $\tilde{Z}$ contains a node $v_1$ of degree at least 3. As $\tilde{Z}$ is regular, each node has degree at least 2, so in particular we may find a shortest cycle $C$ containing $v_1$. Then there is an edge $e=v_1v_2$ not contained in $C$. 

Let $G$ be the graph obtained by deleting $e$, and $G'$ the connected component containing $v_2$. Note that $G'$ cannot contain any vertices of $C$, otherwise there would be a cycle containing $e$ (hence distinct from $C$). But the degree of $v_2$ in $G'$ is at least 1, and the degree of every other vertex is at least 2, so $G_{v_2}$ contains a cycle which is disjoint from $C$. This is also a contradiction, so we conclude that each node in $\tilde{Z}$ has degree exactly 2. 

In particular, this rules out any interior crossings (which have degree at least 4), and also shows that $\tilde{Z}$ is a cycle graph. 
\end{proof}

\section{Hopf differential}
\label{sec:hopf}

In this section, we review the Hopf differential technique and its consequences for minimal surfaces in a space form that contact a geodesic sphere at a constant angle. For compact surfaces with simple topology, the Hopf differential gives useful information about the surface geometry. This method was previously used by Nitsche \cite{Ni85} (for free boundary minimal surfaces in $\mathbb{B}^3$), Fraser-Schoen \cite{FS15} (for free boundary minimal discs in $\mathbb{B}^n$) and Chodosh-Edelen-Li \cite{CEL24} (for capillary minimal discs in $\mathbb{S}^3_+$). We emphasise that our method is precisely the same, but we will point out that it extends to more general assumptions, and gives useful information about annuli. 

Specifically, suppose $(\Sigma^k, \partial \Sigma) \looparrowright (M^N, S)$ is an immersion. This yields standard orthogonal decompositions (suppressing the pullback over $\Sigma,\pr\Sigma,S$ respectively for ease of notation): 
\begin{align*}
TM &= T\Sigma \oplus N_M \Sigma, &  TS &= T\partial \Sigma \oplus N_S \partial \Sigma,\\T\Sigma &= T\partial \Sigma\oplus C^{\infty}(\Sigma) \eta, & TM &= TS \oplus C^{\infty}(M) \bar{\eta},
\end{align*}
where we have decorated the normal bundles $N_M \Sigma$ and $N_S \partial \Sigma$ to indicate the ambient space in which they sit. Let $\nabla^{\perp\Sigma}$, $\nabla^{\perp\partial \Sigma}$, and $\bar \nabla^S$ denote the connections induced on $N_M \Sigma$, $N_S \partial \Sigma$, and $TS$ respectively. (Note that $\bar{\nabla}$ induces $\nabla^{\perp\Sigma}$ and $\bar \nabla^S$ induces $\nabla^{\perp\partial \Sigma}$.) Let $\eta^S := \eta - \bar{g}(\eta, \bar{\eta}) \bar{\eta}$ denote the projection of the conormal $\eta$ to $TS$. Because $\eta$ is normal to $T\partial \Sigma$ and $\bar{\eta}$ is normal to $TS$, we may regard $\eta^S$ as a section of $N_S\partial \Sigma$. 

In what follows, we consider immersions $(\Sigma^k, \partial \Sigma) \looparrowright (M^N, S)$ that have both: 
\begin{enumerate}
    \item parallel mean curvature vector, $\nabla^{\perp\Sigma} \mathbf{H} = 0$; and
    \item parallel projected conormal; $\nabla^{\perp\partial \Sigma}  \eta^S = 0$.
\end{enumerate}

There are satisfying parallels between these conditions and familiar conditions on the conormal $\eta$ and the mean curvature $\mathbf{H}$. In particular, in any codimension, just as minimality ($\mathbf{H} = 0$) implies (1), the free boundary condition ($\eta^S = 0$) implies (2).  Moreover, these conditions generalise the notions of constant mean curvature and constant contact angle in codimension one, respectively:
Indeed, in codimension one $\nabla^{\perp\Sigma} \mathbf{H} = 0$ if and only if $\mathbf{H} = - H \nu$ for some constant $H$ (which may differ on each connected component of $\Sigma$).  Similarly, in codimension one, we find that $\nabla^{\perp\partial \Sigma} \eta^S = 0$ if and only if $\eta^S$ is a constant multiple of $\bar{\nu}$ on each component of $\pr\Sigma$, where $\bar{\nu}$ is a choice of unit normal of $\pr\Sigma$ in $S$.

In what follows, we will only consider surfaces ($k=2$); let $\mathbf{A}$ denote the vector-valued second fundamental form and $\mathring{\mathbf{A}} =\mathbf{A}-\frac{1}{2}\mathbf{H}g$ its traceless component. The Hopf differential proof is intrinsic and does not require the immersion to be constrained to the geodesic ball. 

\begin{proposition}
\label{prop:hopf}

Let $\mathbb{M}^N\in \{\mathbb{S}^N, \mathbb{R}^N, \mathbb{H}^N\}$. Suppose $(\Sigma^2, \partial \Sigma) \hookrightarrow (\mathbb{M}^N, \partial B_R)$ is a 2-sided proper immersion with parallel mean curvature vector, $\nabla^{\perp\Sigma} \mathbf{H} = 0$ and parallel projected conormal $\nabla^{\perp\partial \Sigma} \eta^S = 0$, where $S = \partial B_R$. Suppose that $\Sigma$ is diffeomorphic to a compact Riemann surface with genus 0 and $b$ boundary components.

If $b=1$, then $\Sigma$ is totally geodesic.

If $b=2$, then $|\mathring{\mathbf{A}}|$ does not vanish on $\Sigma$. 
\end{proposition}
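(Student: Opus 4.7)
The plan is to adapt the classical Hopf differential method of Nitsche, Fraser--Schoen, and Chodosh--Edelen--Li to the bundle-valued setting of higher codimension under the generalized boundary hypothesis. I would work in local isothermal coordinates $z = u + iv$ on $\Sigma$, so that $g = e^{2\phi}|dz|^2$, and define the normal-bundle-valued Hopf differential
\[
\Phi := \mathring{\mathbf{A}}(\partial_z, \partial_z) = \mathbf{A}(\partial_z, \partial_z),
\]
a section of $(N_M\Sigma)_{\mathbb{C}}$ transforming as a quadratic differential; a direct expansion gives $|\Phi|^2 = \tfrac{1}{8} e^{4\phi} |\mathring{\mathbf{A}}|_g^2$. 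Applying the Codazzi equation $(\nabla^\perp_X \mathbf{A})(Y, Z) = (\nabla^\perp_Y \mathbf{A})(X, Z)$ in the space form $\mathbb{M}^N$ with $X = \partial_{\bar z}$, $Y = Z = \partial_z$, together with $\nabla^{\perp\Sigma}\mathbf{H} = 0$ and the fact that the only nonvanishing Christoffel symbols in isothermal coordinates are $\Gamma^z_{zz} = 2\partial_z\phi$ and its conjugate, will yield $\nabla^{\perp\Sigma}_{\partial_{\bar z}}\Phi = 0$; that is, $\Phi$ is ``holomorphic'' with respect to the normal connection. By Aronszajn-type unique continuation for the associated first-order elliptic system, either $\Phi \equiv 0$ or the zero set of $\Phi$ is discrete with finite-order vanishing.

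Next I would establish the key boundary identity $\mathbf{A}(\tau, \eta)|_{\partial\Sigma} = 0$ for $\tau$ tangent and $\eta$ the outer conormal along $\partial\Sigma$, the higher-codimension analogue of Lemma \ref{lem:2ff-diag}. Decomposing $\eta = \eta^S + \sin\gamma\,\bar\eta$ along $\partial\Sigma$ and expanding $\bar\nabla_\tau\eta$: the term $\sin\gamma\,\bar\nabla_\tau\bar\eta$ is tangent to $\Sigma$ by umbilicity of $S = \partial B_R$; meanwhile $\bar\nabla_\tau\eta^S = \bar\nabla^S_\tau\eta^S$ lies in $TS$ (the $\bar\eta$-component vanishes since $\eta^S\perp\tau$ in $S$ combined with umbilicity), and splits as $\nabla^{\perp\partial\Sigma}_\tau\eta^S$ (which vanishes by the parallel projected conormal hypothesis) plus a part tangent to $\partial\Sigma$. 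Hence $\bar\nabla_\tau\eta \in T\Sigma$, whence $\mathbf{A}(\tau, \eta)|_{\partial\Sigma} = 0$. Consequently $\Phi|_{\partial\Sigma} = \tfrac{e^{2\phi}}{4}\bigl(\mathbf{A}(\tau, \tau) - \mathbf{A}(\eta, \eta)\bigr)$ lies in the real subbundle $N_M\Sigma \subset (N_M\Sigma)_{\mathbb{C}}$, supplying a reality condition which enables Schwarz reflection of $\Phi$ across $\partial\Sigma$ in a parallel frame of the normal bundle.

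For $b = 1$, I would uniformize $\Sigma$ as the unit disk $\mathbb{D}$; its simple connectivity yields a global parallel trivialization of $(N_M\Sigma)_{\mathbb{C}}$, and Schwarz reflection across $\partial\mathbb{D}$ will extend $\Phi\,dz^{\otimes 2}$ to a holomorphic quadratic differential on the Riemann sphere $\mathbb{CP}^1$. Since $K_{\mathbb{CP}^1}^{\otimes 2}\cong \mathcal{O}(-4)$ admits no nonzero holomorphic sections, componentwise $\Phi \equiv 0$, so $\mathring{\mathbf{A}} \equiv 0$ on $\Sigma$. Together with the parallel $\mathbf{H}$ condition and the capillary compatibility, this will force $\Sigma$ to be totally geodesic. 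For $b = 2$, I would uniformize $\Sigma$ as the flat cylinder $\{w = s + it : 0 \leq s \leq L,\ t \in \mathbb{R}/2\pi\mathbb{Z}\}$; successive Schwarz reflections across $\{s = 0\}$ and $\{s = L\}$ extend $\Phi$ to a $(2\pi i,\, 2L)$-doubly periodic holomorphic section on $\mathbb{C}$, which by Liouville is parallel and hence of constant norm. If this norm vanishes, then $\mathring{\mathbf{A}} \equiv 0$ would force $\Sigma$ to lie in a round $2$-sphere of $\mathbb{M}^N$; but two $2$-spheres in a space form meet in at most a single circle, contradicting the existence of two disjoint boundary circles on $\partial B_R$. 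Hence $|\Phi|$ is a positive constant and $|\mathring{\mathbf{A}}|$ is nowhere zero on $\Sigma$.

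The main obstacle will be the boundary identity $\mathbf{A}(\tau, \eta)|_{\partial\Sigma} = 0$ in higher codimension, which is precisely where the parallel projected conormal hypothesis does essential work: only this parallelism kills the component of $\bar\nabla^S_\tau\eta^S$ normal to $\partial\Sigma$ inside $S$. A secondary subtlety is the bundle-valued Schwarz reflection; for the disk the normal bundle is trivial and this is immediate, but for the annulus one must verify that parallel trivializations along each boundary component are compatible so that the reflected section remains single-valued on the extended cylinder.
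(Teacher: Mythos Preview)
Your overall strategy---Codazzi plus parallel mean curvature to get $\nabla^{\perp}_{\bar z}\mathbf{A}_{zz}=0$, then the boundary identity $\mathbf{A}(\tau,\eta)|_{\partial\Sigma}=0$ from umbilicity of $\partial B_R$ and the parallel projected conormal hypothesis---matches the paper. The gap is in your Schwarz reflection step. You assert a ``global parallel trivialization'' of $N_M\Sigma$ on the disk, and parallel frames along the boundary circles for the annulus. But the normal connection $\nabla^{\perp\Sigma}$ is \emph{not flat} in general: by the Ricci equation in a space form, $\langle R^{\perp}(X,Y)\xi,\zeta\rangle = \langle [A_\xi,A_\zeta]X,Y\rangle$, which vanishes only when all shape operators commute. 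So no parallel frame exists, even on the simply connected disk, and hence the components of $\Phi$ in any global frame are not classically holomorphic---they satisfy $\partial_{\bar z}f_j + \sum_k a_{kj}f_k=0$ with nonzero connection terms. Schwarz reflection, Liouville, and the vanishing of $H^0(\mathbb{CP}^1,K^{\otimes 2})$ do not apply directly to such a system.

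The paper circumvents this by passing to the \emph{scalar} function $\phi := 16 z^4\,\mathbf{A}_{zz}\cdot\mathbf{A}_{zz}$, where ``$\cdot$'' is the complex-bilinear extension of the real inner product on $N_M\Sigma$. Then $\partial_{\bar z}\phi = 32 z^4\,(\nabla^{\perp}_{\bar z}\mathbf{A}_{zz})\cdot\mathbf{A}_{zz}=0$ is genuinely holomorphic as a $\mathbb{C}$-valued function, and the boundary identity $\mathbf{A}_{r\theta}=0$ gives $\operatorname{Im}\phi|_{\partial\Omega}=0$, so $\phi\equiv c\in\mathbb{R}_{\ge 0}$ on the annulus model $\{r_0\le |z|\le 1\}$. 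One checks $\phi = 2r^4\lambda^4|\mathring{\mathbf{A}}|^2$. For $b=1$ one has $c=\phi(0)=0$, so $\mathring{\mathbf{A}}_{zz}$ vanishes on $\partial\Omega$; only then does the paper invoke a (non-parallel) global frame and Carleman-type unique continuation for the first-order elliptic system to conclude $\mathring{\mathbf{A}}\equiv 0$. For $b=2$, $c=0$ would again force $\Sigma$ umbilic, which is ruled out topologically as you say. A minor point: in your boundary computation you wrote $\eta = \eta^S + \sin\gamma\,\bar\eta$ as if $u=\langle\eta,\bar\eta\rangle$ were already constant; the paper derives $\partial_\tau u=0$ as a consequence of $\nabla^{\perp\partial\Sigma}\eta^S=0$ via $|\eta^S|^2+u^2=1$, and this is needed to kill the $(\partial_\tau u)\bar\eta^{\perp\Sigma}$ term.
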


\begin{proof}
We include a brief proof for the convenience of the reader. 

Since $b \leq 2$, the uniformization theorem implies we can find a conformal parametrization of $\Sigma$ into $\mathbb{M}$ given by 
\[
\Phi : \Omega \to \Sigma \looparrowright  \mathbb M,
\]
 where $\Omega=\{ z  \in \mathbb{C} : r_0 \leq |z| \leq 1 \}$ for some $r_0 \in [0, 1)$.

Introduce polar coordinates $z = re^{i\theta}$ on the complex plane. By assumption, the metric $g$ on $\Sigma$ is given in our (globally defined) polar coordinates by $g = \lambda^2 (dr^2 + r^2 d\theta^2)$ where $\lambda> 0$ is the conformal factor. Let $\partial_z :=\frac{1}{2} e^{-i\theta}(\partial_r - \frac{i}{r} \partial_{\theta}) $ and $\partial_{\bar{z}} := \frac{1}{2} e^{i\theta}(\partial_r + \frac{i}{r} \partial_\theta) $ denote the Wirtinger vector fields, $dz = e^{i\theta}(dr + i r\, d\theta)$ and $d\bar{z} = e^{-i\theta}(dr - i r\, d\theta)$ the corresponding forms, and extend the second fundamental form $\mathbf{A} : T\Sigma \otimes \mathbb{C} \times T\Sigma\otimes \mathbb{C} \to N_M \Sigma\otimes \mathbb{C}$ complex linearly. Similarly extend the metric $g$, the connections, and the Christoffel symbols. 

We begin by noting that $g = \lambda^2\, dz \otimes d\bar z$. Because the metric is conformal to (flat) $\Omega\subset \mathbb{C}$, it follows that $\nabla_{\partial_{\bar{z}}} \partial_z = \nabla_{\partial_z} \partial_{\bar z} = 0$. In particular, $\Gamma_{z\bar{z}}^z = \Gamma_{z\bar{z}}^{\bar{z}} = 0$. Differentiating the metric, we find 
\[
0 = \nabla_z g_{z\bar z} = \partial_z g_{z \bar z} -\Gamma_{zz}^z g_{z\bar{z}} = 2 \lambda (\partial_z \lambda) - \lambda^2 \Gamma_{zz}^z .
\]
On the other hand, for the second fundamental form we have
\begin{align*}
4z^2 \mathbf{A}_{zz} & = r^2 \mathbf{A}_{rr} - \mathbf{A}_{\theta\theta} + 2 i \, r \mathbf{A}_{r\theta}, \\
4\mathbf{A}_{z\bar{z}} &=  \mathbf{A}_{rr} + \frac{1}{r^2} \mathbf{A}_{\theta \theta} = \lambda^2 \mathbf{H}. 
\end{align*}
As a consequence, by the Codazzi equation, 
\begin{align*}
 {\nabla}^{\perp\Sigma}_{\bar{z}} \mathbf{A}_{zz} &= {\nabla}^{\perp\Sigma}_z \mathbf{A}_{z \bar{z}}\\
& = (\partial_z A_{z\bar z} )^{\perp\Sigma} - \Gamma_{zz}^z A_{z\bar z}\\
& = \frac{1}{4}\Big(2\lambda(\partial_z\lambda) \mathbf{H} + \lambda^2 \nabla^{\perp\Sigma}_z \mathbf{H} -\lambda^2\Gamma^z_{zz} \mathbf{H}\Big)=0,
\end{align*}
by our assumption of parallel mean curvature. 

Next, we consider the boundary condition. On $\partial \Sigma$, conflating $\partial_r, \partial_\theta$ with their pushforwards $\Phi_\ast \partial_r, \Phi_\ast \partial_\theta$, we note that $\partial_\theta$ is tangent to $S$ and $\partial_r = \pm \lambda \eta$ (with sign depending upon the component of $\partial \Sigma$). Now we observe 
\[
\bar{\nabla}_{\partial_\theta} \partial_r = \pm \bar{\nabla}_{\partial_\theta} (\lambda\, \eta)  = \pm \big((\partial_\theta\lambda) \eta + \lambda \big( \bar \nabla_{\partial_\theta}\eta)\big).
\]
Let us write $u = \bar{g}(\eta, \bar \eta)$, so we have the orthogonal decomposition $\eta = \eta^S + u \bar{\eta}$. We calculate 
\begin{align*}
    \bar \nabla_{\partial_\theta}\eta &= \bar{\nabla}_{\partial_{\theta}} \eta^S + (\partial_\theta u)\bar{\eta} + u \nabla_{\partial_\theta} \bar{\eta} \\
    & = \bar \nabla^S_{\partial_\theta} \eta^S  - k_S(\partial_\theta, \eta^S)\bar{\eta} + (\partial_\theta u) \bar{\eta} +u \,k_S(\partial_\theta)\\
    & = \bar \nabla^S_{\partial_\theta} \eta^S  + (\partial_\theta u) \bar{\eta} +u (\ct R) \partial_\theta,
\end{align*}
where we have used the umbilicity of $S=\pr B_R$ to obtain $k_S(\partial_\theta, \eta^S) = - (\ct R)\bar{g}( \partial_\theta, \eta^S) = 0$. 
Taking the projection to $N_M \Sigma$, we find along $\pr\Sigma$ that
\begin{align*}
    \mathbf{A}_{r\theta} &= (\bar{\nabla}_{\partial_\theta} \partial_r)^{\perp\Sigma} = \nabla^{\perp\partial \Sigma}_{\partial_\theta} \eta^S  + (\partial_\theta u) (\bar\eta)^{\perp\Sigma} = (\partial_\theta u) (\bar\eta)^{\perp\Sigma},
\end{align*}
by the assumption of parallel projected conormal. Let us show that parallel projected conormal also implies that $\pr_\theta u = 0$. Indeed, as $|\eta^S|^2 +u^2=|\eta|^2=1$, we have
\[
-2 u(\partial_\theta u) = \partial_\theta |\eta^S|^2 =\langle \nabla^S_{\partial_\theta} \eta^S, \eta^S \rangle =  \langle \nabla^{\perp\partial \Sigma}_{\partial_\theta} \eta^S, \eta^S \rangle = 0,
\]
in view of the fact that $\eta^S$ is a section of in $N_S\partial \Sigma$. 
That is, $\pr_\theta(u^2)=0$, so by continuity $u$ is constant on each component of $\pr\Sigma$ (and in particular $\pr_\theta u=0$). We have thus shown that

\[
\mathbf{A}_{r\theta} |_{\partial \Sigma} = 0.
\]

Now, as in Fraser-Schoen \cite{FS15}, let us consider the complex-valued function 
\[
\phi = 16 z^4 \mathbf{A}_{zz} \cdot \mathbf{A}_{zz} =  \big(|r^2\mathbf{A}_{rr} - \mathbf{A}_{\theta\theta}|^2 -4 |\mathbf{A}_{r\theta}|^2\big) + 4 i  \langle r^2 \mathbf{A}_{rr} - \mathbf{A}_{\theta \theta}, r\mathbf{A}_{r\theta} \rangle.
\]
Here ``$\cdot$" indicates the complex-linear extension of the real inner product on $N_M \Sigma \times \mathbb{C}$. The observations above imply $\partial_{\bar{z}} \phi = 0$ on $\Omega$ and $\mathrm{Im}(\phi) = 0$ on $\partial \Omega$. It follows that $\phi \equiv c \geq 0$ and $\mathbf{A}_{r\theta} = 0$ on $\Omega$. With these conclusions, we note that (using that $e_1 = \lambda^{-1} \partial_r$, $e_2 = \lambda^{-1} r^{-1} \partial_\theta$ are orthonormal)
Note that 
\begin{align*}
|\mathring{\mathbf{A}}|^2 &= \big|\mathbf{A}_{11} - \frac{1}{2} \mathbf{H}\big|^2+ \big|\mathbf{A}_{22} - \frac{1}{2} \mathbf{H}\big|^2 + |\mathbf{A}_{12}|^2= \frac{1}{2}\lambda^{-4} \big|\mathbf{A}_{rr}- \frac{1}{r^2} \mathbf{A}_{\theta\theta}\big|^2 + \lambda^{-2} |\mathbf{A}_{r\theta}|^2, 
\end{align*}
and consequently 
\[
\phi = |r^2 \mathbf{A}_{rr} - \mathbf{A}_{\theta \theta}|^2 = 2r^4\lambda^4|\mathring{\mathbf{A}}|^2.
\]

\textbf{Claim:} If $s$ is a holomorphic section of $N_M\Sigma\otimes \mathbb{C}$ which vanishes on $\pr\Sigma$, then $s\equiv 0$.

\textit{Proof of claim:} It is known that any complex vector bundle over a disk or an annulus\footnote{Indeed, an annulus is homotopy equivalent to $\mathbb{S}^1$, any orientable bundle over $\mathbb{S}^1$ is trivial, and any complex vector bundle induces an orientation on its underlying real vector bundle.} is trivial. In particular, $\Phi^*(N_M\Sigma\otimes \mathbb{C})$ admits a global frame $s_1,\cdots,s_{n-2}$. As in Fraser-Schoen \cite{FS16}, we may write $s=\sum_j f_j s_j$ for some functions $f_j$ satisfying \[\begin{cases}
    \pr_{\bar{z}}f_j  + \sum_k a_{kj} f_j =0, & \Omega \\
    f_j =0,& \pr \Omega
\end{cases}\]
This implies $f_j\equiv 0$ for all $j$, hence $s\equiv 0.$

If $\Sigma$ is a disc, then we must have $\phi \equiv \phi(0)=0$. In particular, $\mathring{\mathbf{A}}_{zz}=0$ on $\pr\Omega$. Applying the claim to $z^2 \mathring{\mathbf{A}}_{zz}$ gives that $\mathring{\mathbf{A}}_{zz}\equiv 0$ on $\Omega$. The remainder of the proof of \cite[Theorem 2.2]{FS16} shows that $\Sigma$ is a totally umbilic surface in a 3-dimensional, totally geodesic submanifold of $M$. 

In particular, by the classification of such surfaces, $\Sigma$ must be (a piece of):
\begin{itemize}
    \item A spherical cap in $\mathbb{S}^3\subset \mathbb{M}=\mathbb{S}^n$;
    \item A plane or spherical cap in $\mathbb{R}^3\subset \mathbb{M}=\mathbb{R}^n$;
    \item A totally geodesic disc, horosphere or geodesic sphere in  $\mathbb{H}^3\subset \mathbb{M}=\mathbb{H}^n$.
\end{itemize}

If $\Sigma$ is an annulus, then we still have $\phi\equiv c \in \mathbb{R}_{\geq 0}$. If $c=0$, then as in the disk case, $\Sigma$ will be a piece of one of the above surfaces. None of those surfaces contain an immersed annulus $(\Sigma,\pr\Sigma)\looparrowright(B_R,\pr B_R)$, so this is a contradiction. Hence $c > 0$, which (as $r> 0$ on the annulus) implies that $|\mathring{\mathbf{A}}| > 0$.
\end{proof}

Recall that, in the codimension 1 setting, we may work with the scalar-valued second fundamental form. In this case, we have the following additional result:

\begin{corollary}
\label{lem:same-sign}
Suppose that $\Sigma$ is diffeomorphic to a compact annulus with boundary and that $x: (\Sigma, \partial \Sigma) \looparrowright  (\mathbb{S}^3, \partial B_R)$ is an $(R, \gamma)$-minimal immersion. Then either $A(\eta, \eta) > 0$ on both components of $\partial \Sigma$ or else $A(\eta, \eta) < 0$ on both components of $\partial \Sigma$. 
\end{corollary}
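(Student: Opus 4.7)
The plan is to revisit the argument of Proposition \ref{prop:hopf} in the codimension-one, minimal setting, and extract additional sign information from the fact established there that the Hopf differential is a nonzero real constant. First I note that $(R,\gamma)$-minimality falls within the hypotheses of Proposition \ref{prop:hopf}: minimality gives parallel mean curvature, and since $N_S\partial\Sigma$ is a real line bundle, the unit vector $\bar\nu$ (hence any constant multiple $\eta^S=\cos\gamma\,\bar\nu$) is automatically parallel. Thus Proposition \ref{prop:hopf} applies and produces a conformal parametrization $\Phi:\Omega = \{r_0\leq|z|\leq 1\}\to \Sigma$, with $g=\lambda^2\,dz\otimes d\bar z$ and all of the conclusions of the proposition available on $\Omega$.

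Next, I would specialise to codimension one by writing $\mathbf{A}=-A\nu$, so that $\mathbf{A}_{zz}\cdot\mathbf{A}_{zz}=A_{zz}^2$ and the Hopf differential becomes $\phi = (4z^2 A_{zz})^2$. Minimality forces $A$ to be traceless, and so $A_{\theta\theta}=-r^2 A_{rr}$; substituting into the formula for $4z^2 A_{zz}$ from the proof of Proposition \ref{prop:hopf} gives
\[
4z^2 A_{zz} \;=\; 2r^2 A_{rr}\;-\;2ir\,A_{r\theta}.
\]
The annulus case of Proposition \ref{prop:hopf} shows $\phi\equiv c$ for some $c>0$; since $z^2A_{zz}$ is holomorphic on the connected annulus $\Omega$ with $(z^2 A_{zz})^2\equiv c/16$, it must itself be a real constant $\epsilon\sqrt{c}/4$ with $\epsilon\in\{+1,-1\}$. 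Matching real and imaginary parts gives $A_{r\theta}\equiv 0$ and the key identity
\[
r^2 A_{rr} \;=\; \epsilon\,\frac{\sqrt c}{2} \qquad \text{on all of }\Omega,
\]
so that $r^2 A_{rr}$ is a nonzero constant of definite sign $\epsilon$ independent of the boundary component.

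Finally, I would translate this back to $A(\eta,\eta)$. On each component of $\partial\Sigma$, $\partial_r$ is proportional to the outward conormal, $\partial_r = \pm \lambda\,\eta$, and squaring this gives $A_{rr}=\lambda^2 A(\eta,\eta)$ on both components. Therefore
\[
A(\eta,\eta)\;=\;\frac{\epsilon\sqrt c}{2 r^2\lambda^2}
\]
on $\partial\Sigma$, which is everywhere nonzero and has the same sign $\epsilon$ on both boundary circles, as claimed. The only step that requires any real care is the orientation bookkeeping linking $\partial_r$ on the inner boundary $|z|=r_0$ versus the outer boundary $|z|=1$ to the outward conormal $\eta$; this is benign because only $\lambda^2$ appears in the translation. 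Everything else is a direct specialisation of Proposition \ref{prop:hopf}.
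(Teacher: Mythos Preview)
Your proposal is correct and follows essentially the same approach as the paper's proof: both deduce from the Hopf differential analysis in Proposition \ref{prop:hopf} that $A_{rr}$ has a definite sign on the entire annulus $\Omega$, then convert to $A(\eta,\eta)$ via $\partial_r=\pm\lambda\,\eta$ on $\partial\Sigma$. The only cosmetic difference is that you extract an explicit square root $4z^2 A_{zz}\equiv\epsilon\sqrt c$ (giving $r^2 A_{rr}$ constant), whereas the paper simply notes $|A|>0$ together with $A_{r\theta}\equiv 0$ forces $A_{rr}\neq 0$ and invokes continuity; also, in your displayed formula the sign of the imaginary part should read $+2ir\,A_{r\theta}$, but since that term is shown to vanish this is immaterial.
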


\begin{proof}
As in the proof of Proposition \ref{prop:hopf} above, we have that $|A|$, and hence $A_{rr}$, are nowhere vanishing. In particular, we either have $A_{rr} > 0$ or else $A_{rr} < 0$ on all of $\Omega$. On the other hand, because the parametrization is conformal, $\eta$ is a nonzero multiple of $\pr_r$ at every point on $\partial \Sigma$, so $A(\eta,\eta)$ is a positive multiple of $A_{rr}$ at every such point and thus has the same sign as $A_{rr}$, which is itself the same on each boundary component. 
\end{proof}

\section{Polar surfaces of constant contact angle}
\label{sec:dual}

\subsection{Recalling Lawson's polar varieties}
 Let us recall Lawson's polar varieties \cite{Law70}. Suppose $\Sigma$ is a compact surface (with or without boundary). Consider a two-sided minimal immersion in $\mathbb{S}^3$ and a choice of Gauss map $\tilde{x} \in \{\nu, - \nu\}$ 
\[
x : \Sigma \to \mathbb{S}^3, \qquad \qquad \tilde{x} : \Sigma \to \mathbb{S}^3.
\]
Since $\Sigma$ is a minimal surface, in any local coordinate system, one has $d\nu(\partial_i) = A_i^j dx(\partial_j)$ and so the pullback $\tilde{g} := \tilde{x}^\ast g_1$ satisfies $\tilde{g}_{ij} = A^2_{ij} = \Psi g_{ij}$ where $\Psi=\frac{1}{2} |A|_g^2$. In particular, away from umbilic (hence flat) points,  $\tilde{x}$ defines a new immersion of $\Sigma$. In fact, since the umbilic points of a minimal surface in $\mathbb{S}^3$ are isolated whenever $\Sigma$ is not totally geodesic (as, for instance, umbilic points are zeroes of the holomorphic Hopf differential) and since away from umbilic points 
\[
(\tilde{\Delta} + 2) \tilde{x} = \pm \frac{2}{|A|^2}(\Delta + |A|^2) \nu  = 0, 
\]
we see that $\tilde{x} : \Sigma \to \mathbb{S}^3$ defines a new \textit{branched} minimal immersion. Lawson called this new immersion the polar variety to $\Sigma$. We will refer to it as the \textit{polar dual}. 

The polar dual of the polar dual of $\Sigma$ is just the original immersion up to composition with the antipodal map. 
Indeed, at a non-flat point $p$, the shape operator $d\nu$ gives an isomorphism of the tangent space $T_p\Sigma$. In particular, the vectors $x(p)$ and $\nu(p)$ still span the normal space to the dual, so as the double dual $\tilde{\nu}$ must be orthogonal to $\nu$, it must be $\tilde{\nu}(p) = \pm x(p)$.

In the construction to this point, we are free to choose either sign - for now, let us choose $ \tilde{\nu}(p)=x(p)$. With this choice of normal to the polar dual, it is easy to see that
\[
\tilde{A}_{ij} = \langle d \tilde{\nu}(\partial_i), d\tilde{x}(\partial_j)\rangle = \langle dx(\partial_i), d\nu(\partial_j)\rangle = A_{ij}. 
\] 
Hence
\[
\tilde{\Psi} = \frac{1}{2} |\tilde{A}|^2_{\tilde{g}} = \frac{1}{2} \tilde{g}^{ij} \tilde{g}^{kl}\tilde{A}_{ik}\tilde{A}_{jl} = \Psi^{-1},
\]

Lawson's polar duals are most degenerate on pieces of a totally geodesic $2$-spheres $\mathbb{S}^2$ in $\mathbb{S}^3$ - taking the polar dual maps these surfaces to singleton points. On the other hand, when $\Sigma$ is a closed minimal torus, or when $\Sigma$ a free boundary minimal annulus, the Hopf differential argument implies that there are no branch points, so Lawson's polar dual gives a genuine minimal immersion. 

\subsection{Polar duals of $(R, \gamma)$-minimal surfaces}

In this subsection, we examine Lawson's polar dual on $(R, \gamma)$-minimal surfaces. We fix coordinates so that $o=e_0$ is the centre of the ball $B_R \subset \mathbb{S}^3\subset \mathbb{R}^4$, and let $\rho$ be the (spherical) distance from $o$, so that $\bar{\eta}=\pr_\rho$. Having fixed $n=2$, recall from Definition \ref{def:Rgamma-cap} that an $(R, \gamma)$-minimal surface is a 2-sided minimal immersion $x : (\Sigma, \partial \Sigma) \looparrowright (\mathbb{S}^3, \partial B_R)$ that satisfies, on the boundary $\partial \Sigma$,
\[
\langle \nu, \partial_\rho \rangle = \cos \gamma, \qquad \langle \eta, \partial_\rho \rangle = \sin \gamma
\]
where $\gamma \in (0, \frac{\pi}{2}]$ (with our standard choice of $\nu$). 

Recall that, as $\pr B_R$ is totally umbilic, by Lemma \ref{lem:2ff-diag} we have $A(\eta,v)=0$ for any $v$ tangent to $\pr \Sigma$, and hence (along $\pr \Sigma$) \[A(\eta) = A(\eta,\eta)\eta.\] 

We now observe that polar duality preserves the class of immersed $(R, \gamma)$-minimal annuli. Recall that, by Corollary \ref{lem:same-sign}, either $A(\eta, \eta) > 0$ on both components of $\partial \Sigma$ or else $A(\eta, \eta) < 0$ on both components of $\partial \Sigma$.

\begin{lemma}
\label{lem:dual-imm-ann}
Suppose that $\Sigma$ is diffeomorphic to a compact annulus with boundary. Suppose that $x: (\Sigma, \partial \Sigma) \looparrowright  (\mathbb{S}^3, \partial B_R)$ is an $(R, \gamma)$-minimal immersion.  Then there exists $\varepsilon \in \{\pm1\}$ so that the \textit{polar dual} $\tilde{x} = \varepsilon \nu$ defines a $(\tilde{R}, \tilde{\gamma})$-minimal immersion $(\Sigma, \partial \Sigma) \looparrowright (\mathbb{S}^3, \partial B_{\tilde{R}})$, where
\begin{equation}\label{eq:dualRgamma}
(\tilde{R}, \tilde{\gamma}) = \left(\cos^{-1}(-\varepsilon\sin R \cos \gamma),\sin^{-1}\left(\frac{\sin R\sin\gamma}{\sin\tilde{R}}\right) \right).
\end{equation}

\end{lemma}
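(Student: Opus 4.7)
The plan is to verify three things in sequence: first, that $\tilde{x}$ is a genuine (unbranched) minimal immersion; second, that $\tilde{x}(\partial \Sigma) \subset \partial B_{\tilde{R}}$ with $\tilde{R}$ as stated; and third, that the contact angle along $\partial \Sigma$ is a constant $\tilde{\gamma}$ as claimed, provided $\varepsilon$ is chosen correctly.

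For the first step, Lawson's construction recalled in Section 5.1 already produces a branched minimal immersion with $\tilde{g} = \Psi g$ and $\Psi = \tfrac{1}{2}|A|^2$. Since $\Sigma$ is two-sided codimension one and minimal, $\mathbf{H} = 0$ and therefore $|\mathring{\mathbf{A}}|^2 = |A|^2 = 2\Psi$. The hypotheses of Proposition 4.1 are satisfied (parallel mean curvature is automatic, and constant contact angle $\gamma$ is precisely the codimension-one version of parallel projected conormal), so $|\mathring{\mathbf{A}}|>0$ on all of $\Sigma$. Hence $\Psi>0$ everywhere, and $\tilde{x}$ has no branch points.

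For the second step, Lemma 2.4 gives $\langle \nu, e_0\rangle = -\sin R \cos \gamma$ on $\partial \Sigma$, so $\langle \tilde{x}, e_0 \rangle = -\varepsilon \sin R \cos \gamma$, which equals $\cos\tilde{R}$ by the stated formula; thus $\tilde{x}(\partial \Sigma) \subset \partial B_{\tilde{R}}$. For the third step, I will compute $\langle d\tilde{x}(\tilde{\eta}), \partial_{\tilde{\rho}}\rangle$ where $\tilde{\eta} = \Psi^{-1/2}\eta$ is the $\tilde{g}$-unit outer conormal and $\partial_{\tilde{\rho}}$ is the outward radial direction of $B_{\tilde{R}}$ at $\tilde{x}(p)$. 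Because $\partial B_R$ is umbilic, Lemma 2.3 shows $\eta$ is an eigendirection of the Weingarten map along $\partial\Sigma$ with eigenvalue $A(\eta,\eta)$; minimality forces the other principal curvature to be $-A(\eta,\eta)$, so $\sqrt{\Psi} = |A(\eta,\eta)|$ on $\partial \Sigma$ and therefore $d\tilde{x}(\tilde{\eta}) = \varepsilon\,\mathrm{sgn}(A(\eta,\eta))\,dx(\eta)$. Using $\partial_{\tilde{\rho}}|_{\tilde{x}(p)} = \sin\tilde{R}^{-1}(\cos\tilde{R}\cdot\tilde{x}(p) - e_0)$ together with $\langle dx(\eta),\nu\rangle = 0$ and $\langle dx(\eta), e_0\rangle = -\sin R \sin\gamma$ from Lemma 2.4, one finds
\[
\langle d\tilde{x}(\tilde{\eta}), \partial_{\tilde{\rho}}\rangle = \frac{\varepsilon\,\mathrm{sgn}(A(\eta,\eta))\sin R \sin \gamma}{\sin \tilde{R}}.
\]

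The main obstacle is choosing $\varepsilon$ \emph{globally} so that this expression is positive and equal to $\sin\tilde{\gamma}$ on both boundary components simultaneously. This is exactly where Corollary 4.2 is essential: it guarantees $\mathrm{sgn}(A(\eta,\eta))$ is the same constant on both components of $\partial \Sigma$, so the choice $\varepsilon := \mathrm{sgn}(A(\eta,\eta))$ is consistent. With this choice, $\sin\tilde{\gamma} = \sin R \sin \gamma / \sin\tilde{R}$, and $\tilde{\gamma} \in (0,\tfrac{\pi}{2}]$ follows from the identity $\sin^2\tilde{R} = 1 - \sin^2 R \cos^2 \gamma \geq \sin^2 R \sin^2 \gamma$. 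The remaining verification that $\tilde{x}$ is 2-sided and the chosen ambient unit normal $\tilde{\nu} = \pm x$ realises $\cos\tilde{\gamma}$ is a routine check using $\cos^2\tilde{\gamma} = \cos^2 R/\sin^2\tilde{R}$.
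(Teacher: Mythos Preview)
Your proof is correct and follows essentially the same approach as the paper's: Lawson's construction plus Proposition~\ref{prop:hopf} to rule out branch points, Lemma~\ref{lem:boundary-Rgamma-cap} to place $\tilde{x}(\partial\Sigma)$ in $\partial B_{\tilde{R}}$, the eigendirection observation from Lemma~\ref{lem:2ff-diag} to compute the pushforward of the dual conormal, and Corollary~\ref{lem:same-sign} to make the choice of $\varepsilon$ consistent across both boundary components. The only cosmetic difference is ordering---the paper fixes $\varepsilon$ with $\varepsilon A(\eta,\eta)>0$ before computing the conormal, whereas you carry the sign $\varepsilon\,\mathrm{sgn}(A(\eta,\eta))$ through the computation and then set it to $+1$ at the end.
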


\begin{proof}
We have already seen in general that $\tilde{x}$ is a branched minimal immersion. Since by assumption $\Sigma$ has genus-zero and two boundary components, Proposition \ref{prop:hopf} implies $|A|$ is non-vanishing and hence $\tilde{x}$ has no branch points. 

We now show that $\tilde{x} : \Sigma \to \mathbb{S}^3$ satisfies $\tilde{x}(\partial \Sigma)\subset \partial B_{\tilde{R}}$ and that $\tilde{x}(\partial \Sigma)$ contacts $\partial B_{\tilde{R}}$ at constant angle $\tilde{\gamma}$ for suitable $\tilde{R}, \tilde{\gamma}$. 

By Lemma \ref{lem:boundary-Rgamma-cap} above, we have (along $\pr\Sigma$) that

\[
    \langle \tilde{x}, e_0 \rangle = \varepsilon\langle \nu,e_0\rangle = -\varepsilon \sin R \cos \gamma,
\]
so taking $\tilde{R} = \cos^{-1}(-\varepsilon \sin R \cos \gamma)$ gives $\tilde{x}(\pr\Sigma)\subset \pr B_{\tilde{R}}$. To ensure contact in the correct orientation, we choose $\varepsilon \in \{\pm1\}$ so that
\[
\varepsilon A(\eta,\eta) > 0, \qquad \text{ on } \pr \Sigma.
\]
This choice is possible because, by Corollary \ref{lem:same-sign}, the sign of $A(\eta, \eta)$ is constant on all of $\partial \Sigma$. 

To check the contact angle condition, we will consider the corresponding conormals. For this portion, it is prudent to distinguish between the intrinsic conormal $\eta$ and its pushforward $\pr_\eta x$, and similarly for the polar dual. 
By the discussion above, we have $\pr_\eta \tilde{x} = \varepsilon \pr_\eta \nu = \varepsilon A(\eta,\eta)\pr_\eta x$. As $\pr_{\tilde{\eta}}\tilde{x}$ and $\pr_\eta x$ are the (extrinsic) unit conormals, it follows that the intrinsic conormal of the polar dual is $\tilde{\eta} = \frac{1}{\varepsilon A(\eta,\eta)} \eta$, so that $\pr_{\tilde{\eta}}\tilde{x}=\pr_\eta x$. 

The outer normal $\tilde{\bar{\eta}}$ to $B_{\tilde{R}}$ at $\tilde{x}$ will be $\pr_\rho(\tilde{x}) = -\frac{1}{\sin \tilde{R}} e_0^{\top,\mathbb{S}^3}(\tilde{x})$, so again by Lemma \ref{lem:boundary-Rgamma-cap} we can compute the contact angle condition along $\pr\Sigma$:

\[ \langle \pr_{\tilde{\eta}}\tilde{x}, \pr_\rho(\tilde{x}) \rangle = -\frac{1}{\sin \tilde{R}}\langle \pr_\eta x, e_0\rangle = \frac{\sin R \sin \gamma}{\sin \tilde{R}}.\]

Note that, by definition of $\tilde{R}$, we have  

\[\frac{\sin^2 R \sin^2 \gamma}{\sin^2 \tilde{R}} = \frac{\sin^2 R \sin^2 \gamma}{1-\sin^2 R \cos^2 \gamma} = \frac{\sin^2 R \sin^2 \gamma}{\cos^2 R + \sin^2 R\sin^2 \gamma} \leq 1. \]

In particular, there exists $\tilde{\gamma} \in (0,\pit]$ so that $\sin\tilde{\gamma} \sin \tilde{R}= \sin R \sin \gamma$, which completes the proof. \end{proof}

We note the following special cases:

\begin{itemize}
\item For an $(R,\pit)$-minimal surface, we have $(\tilde{R},\tilde{\gamma})=(\pit, R)$.
\item For a $(\pit, \gamma)$-minimal surface, we have $(\tilde{R},\tilde{\gamma})$ is either $(\pi-\gamma, \pit)$ or $(\gamma,\pit)$ depending on $\varepsilon$ (that is, the sign of $A(\eta,\eta)$.)
\end{itemize}

\subsection{Duals of embedded annuli}

Finally, we observe that for \textit{constrained and embedded} free boundary minimal \textit{annuli}, the polar dual is also constrained and embedded. This is based on Ros' observation \cite[Theorem 5]{Ros95} that embedded minimal tori have embedded polar duals. (Of course, by Brendle's resolution \cite{Br12} of the Lawson conjecture, the only such torus is the Clifford torus - which is self-dual, up to ambient isometry.)

\begin{proposition}
\label{prop:emb-dual}
Let $R\in (0,\pit]$. Suppose $\Sigma$ is diffeomorphic to an annulus and that  $x : (\Sigma , \partial \Sigma)\hookrightarrow (B_R, \partial B_R)$ is an $(R, \pit)$-minimal embedding constrained in $B_R\subset\mathbb{S}^3$. Then its polar dual is a $(\pit,R)$-minimal embedding constrained in $B_\pit$.
\end{proposition}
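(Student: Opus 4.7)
The plan is to invoke Lemma \ref{lem:dual-imm-ann} to produce the polar dual as an unbranched $(\pit, R)$-minimal immersion, and then to use the two-piece and radial-graph consequences of Proposition \ref{prop:radial-sphere} to upgrade it to a constrained embedding.

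Applied with $(R,\gamma) = (R,\pit)$, Lemma \ref{lem:dual-imm-ann} yields $\tilde{x} = \varepsilon\nu$ as a $(\pit, R)$-minimal immersion, where $\varepsilon \in \{\pm 1\}$ is fixed so that $\varepsilon A(\eta,\eta) > 0$ on $\partial\Sigma$. For the constrained property, consider $\psi := \langle \tilde{x}, e_0\rangle = -\varepsilon\sin\rho \, \langle \nu, \partial_\rho\rangle$ on $\Sigma$. On $\partial \Sigma$, Lemma \ref{lem:boundary-Rgamma-cap} gives $\psi \equiv 0$; since $\Sigma$ is an embedded annulus and therefore not a totally geodesic disc, Proposition \ref{prop:radial-sphere}(c) yields $o \notin \Sigma$ together with $\langle \nu, \partial_\rho\rangle$ nonvanishing on the interior, so $\psi$ has strict constant sign there. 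A direct computation via Lemma \ref{lem:2ff-diag} yields $\partial_\eta \psi = -\varepsilon\sin R\, A(\eta,\eta) < 0$, which fixes the sign as $\psi > 0$ on the interior; equivalently, $\tilde{x}(\Sigma) \subset \overline{B_\pit}$ with $\tilde{x}^{-1}(\partial B_\pit) = \partial\Sigma$.

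For embeddedness, suppose toward a contradiction that $\tilde{x}(p) = \tilde{x}(q)$ for some distinct $p, q \in \Sigma$, so $\nu(p) = \nu(q) =: n$, and $\Sigma$ is tangent to the equatorial $2$-sphere $\Pi_n := \{v\in\mathbb{S}^3 : \langle v, n\rangle = 0\}$ at both points. The function $u_n := \langle x, n\rangle$ then satisfies $\Delta u_n + 2u_n = 0$ on $\Sigma$ and has a critical zero of order at least two at each of $p, q$. If $n \perp e_0$ --- which occurs in particular whenever either $p$ or $q$ lies on $\partial\Sigma$, by Lemma \ref{lem:boundary-Rgamma-cap} --- then $\Pi_n$ passes through $o$, and Proposition \ref{prop:radial-sphere}(b) applies to the nodal set $u_n^{-1}(0)$. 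In that case, the nodal-set analysis of Lemmas \ref{lem:nodal-local} and \ref{lem:nodal-genus-0} is incompatible with the higher-order critical zeros at $p$ and $q$, yielding the desired contradiction (with a little extra bookkeeping of the reduced boundary graph when both $p, q$ lie on $\partial\Sigma$).

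The main obstacle is the remaining case in which $p, q$ both lie in the interior and $n \not\perp e_0$; indeed, by Proposition \ref{prop:radial-sphere}(c) the latter is automatic once $p, q$ are interior, so Proposition \ref{prop:radial-sphere}(b) does not directly apply to $\Pi_n$. This case will be resolved via the topological argument developed in Appendix \ref{sec:ros-details}, which controls the multiplicity of the Gauss map $\tilde{x}$ by combining the annulus topology with the boundary behaviour and radial-graph property established above.
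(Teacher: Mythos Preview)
Your setup and the constrained-in-$B_{\pit}$ portion are correct and match the paper's argument. The embeddedness argument, however, has two substantial gaps.

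\textbf{The boundary case.} When $p,q$ both lie on $\partial\Sigma$ (on different boundary components) with $\nu(p)=\nu(q)=n\perp e_0$, the nodal analysis of Lemmas \ref{lem:nodal-local}--\ref{lem:nodal-genus-0} does \emph{not} by itself yield a contradiction. The constraints force $Z=u_n^{-1}(0)$ to consist of exactly two smooth arcs joining $p$ to $q$, meeting each other orthogonally and meeting $\partial\Sigma$ at angle $\pi/4$; this configuration is fully consistent with the two-piece property and the degree-$\leq 2$ condition on the reduced graph. The paper's Part~1 uses this nodal description only as a starting point, and then proceeds with a genuinely different and much heavier argument: doubling $\Sigma$ across $\partial B_{\pit}$ after a conformal dilation, stereographically projecting to obtain a closed surface in $\mathbb{R}^3$, capping off a slab neighbourhood of the nodal plane by disks, and finally comparing $\deg(\nu'',w)$ with $\deg(\nu'',-w)$ for the resulting Gauss map (using that $\det d\nu'<0$ at $p_i'$ and $\det d\nu''>0$ at the cap extrema). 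Appendix \ref{sec:ros-details} supplies only the Euler-characteristic step showing $U'_\pm$ are disks within this argument. So ``a little extra bookkeeping'' significantly understates what is needed here.

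\textbf{The interior case.} Your deferral to Appendix \ref{sec:ros-details} is a misattribution: that appendix supports Part~1, not the interior case. The paper's actual mechanism (Part~2) introduces the projected Gauss map $\mathbf{n}=\frac{\nu-\nu_0 e_0}{\sqrt{1-\nu_0^2}}:\Sigma\to\mathbb{S}^2$, checks that $d\mathbf{n}$ is nondegenerate (this uses $x_0>0$, hence $R<\pit$), notes that $\mathbf{n}|_{\partial\Sigma}=\nu|_{\partial\Sigma}$ is injective by Part~1, and then applies the covering Lemma \ref{lem:covering} to conclude $\mathbf{n}$ is globally injective; since $\mathbf{n}$ factors through $\nu$, this gives injectivity of $\nu$. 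Note in particular that Part~2 \emph{depends} on Part~1 --- the interior and boundary cases are not independent in the paper's structure. Your sketch does not mention $\mathbf{n}$ or Lemma \ref{lem:covering} at all, and the case $R=\pit$ (handled separately by doubling to a closed torus and invoking \cite{Ros95} or \cite{Br12}) is also missing.
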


\begin{proof}
In the case $R=\pit$, we may consider the surface $\tilde{\Sigma}$ consisting of $\Sigma$ and its reflection through the equator $\pr B_\pit$. The doubled surface $\tilde{\Sigma}$ will be a $C^2$ minimal (hence smooth) torus in $\mathbb{S}^3$, and the result follows from \cite[Theorem 5]{Ros95}, or from the fact that $\tilde{\Sigma}$ must be the Clifford torus. 

Henceforth, we assume that $R<\pit$.

In view of Lemma \ref{lem:dual-imm-ann}, as $\Sigma$ is compact, we need only to show that $\tilde{x}:=\varepsilon\nu$ is injective and that $\tilde{x}$ maps the interior of $\Sigma$ to the interior of the hemisphere $B_\pit$. 

By Proposition \ref{prop:radial graph}, $\Sigma$ is a radial graph, so $\langle \nu, e_0\rangle$ does not vanish on the interior of $\Sigma$ (and $\langle \nu,e_0\rangle \equiv 0$ on $\pr \Sigma$). As $\tilde{x}= \varepsilon \nu$ was chosen so that $\pr_\eta \langle\tilde{x} , e_0\rangle <0$, it follows that $\langle \tilde{x}, e_0\rangle$ is positive on the interior of $\Sigma$ and vanishes on $\pr\Sigma$, that is, $\tilde{x}=\varepsilon\nu$ is constrained in $B_\pit$. 

It remains to show that $\nu$ is injective. We split this into two main parts, each involving a different auxiliary map with codomain of the same dimension as $\Sigma$:
\begin{enumerate}
    \item In the first part, we show that $\nu|_{\pr\Sigma}$ is injective. To do so, we adapt an argument of Ros \cite{Ros95} by constructing a doubled and projected surface in $\mathbb{R}^3$, and considering its Gauss map $\nu''$ (which has values in $\mathbb{S}^2$). 
    \item In the second part, we define a projected Gauss map $\mathbf{n} : \Sigma \to \mathbb{S}^2$, show that $\mathbf{n}$ agrees with $\nu$ on $\partial \Sigma$, that $\mathbf{n}$ is injective, and finally that this implies injectivity of $\nu$.  
\end{enumerate}
The details of these arguments are given in the following subsections.
\end{proof}

\subsubsection{Part 1}

Denote by $\Xi:\mathbb{R}^3\to\mathbb{S}^3\setminus {-e_0}$ the standard (inverse) stereographic projection, and let $\Phi_R: \mathbb{S}^3 \to\mathbb{S}^3$ be the conformal dilation about $e_0$, taking $B_R$ to $B_\pit$. Note that conformal dilation is conjugate to a Euclidean dilation, namely $\Phi_R =   \Xi \circ \mathcal{D}_\lambda \circ\Xi^{-1}$
for a particular $\lambda$ depending on $R$.

The surface $\Sigma_R := (\Phi_R)_*\Sigma$ is a free boundary (but not minimal) surface in $B_\pit$ and let $\tilde{\Sigma}_R$ denote its even reflection across $\{x_0 =0\}$. Then the doubled surface $\tilde{\Sigma} = \Sigma_R \cup \tilde \Sigma_R$ is a closed ($C^2$) surface of $\mathbb{S}^3$. Finally, (recall that $e_0\notin \Sigma$), we let:
\begin{itemize}
    \item  $\Sigma' = (\Xi^{-1})_* \tilde{\Sigma} \subset \mathbb{R}^3$ be the stereographic projection of the reflected surface, 
    \item $\nu' : \Sigma' \to \mathbb{S}^2$ the Gauss map of $\Sigma'$, and 
    \item $\Gamma' = (\Xi^{-1} \circ \Phi_R)_\ast \partial \Sigma \subset \partial \mathbb{B}_1$ denote the image of the boundary of $\Sigma$ under the mapping.
\end{itemize}

Note that, by the conjugation relation above, the `undoubled' portion of $\Sigma'$ is also given by $(\mathcal{D}_\lambda)_* (\Xi^{-1})_*\Sigma$, that is, by dilating the stereographic projection of $\Sigma$. It is contained in $\mathbb{B}_1$ and has boundary $\Gamma'$.

We begin by proving two claims. \\

\noindent \textbf{Claim 1:} \textit{If $\nu$ is noninjective on $\partial \Sigma$, then $\nu'$ is noninjective on $\Gamma'$. }\\

\noindent \textit{Proof of Claim 1:} Note that Euclidean dilation leaves the Gauss map invariant. Therefore, along $\Gamma'$, $\nu'$ may be computed as the Gauss map of $(\Xi^{-1})_*\Sigma$. In particular, as stereographic projection is conformal, $\nu'$ will be the unit vector in the direction of the pushforward of $\nu$ by $\Xi^{-1}$. 

For $x\in \mathbb{S}^3\setminus\{ -e_0\}$ the stereographic projection is given by $\Xi^{-1}(x) = \frac{x-x_0e_0}{1+x_0}$. For $v\perp x$, the differential is given by
\[
(d\Xi^{-1})_x(v) = \frac{1}{1+x_0}v - \frac{\langle v,e_0\rangle}{(1+x_0)^2}(x+e_0).
\] 
Along $\pr\Sigma$, the free boundary condition gives $\nu \perp \pr_\rho$ and hence $\langle \nu,e_0\rangle=0$. Thus under stereographic projection, the normal $\nu$ pushes forward to the vector $\frac{\nu}{1+x_0}$. By the above discussion, it follows that if $\nu$ is noninjective on $\partial \Sigma$, then $\nu'$ must be noninjective on $\Gamma'$. \qed \\

\noindent \textbf{Claim 2:} \textit{Both $\det d\nu' < 0$ on $\Gamma'$ and $\det d\nu < 0$ on $\partial \Sigma$. }\\

\noindent \textit{Proof of Claim 2:} The determinants $\det d\nu'$ and $\det d\nu$ are the determinants of the second fundamental forms at corresponding points of $\Sigma'$ and $\Sigma$. The surface $\Sigma' \cap \overline{\mathbb{B}_1}$ is the image of $\Sigma$ under the radially symmetric conformal map $\Xi^{-1} \circ \Phi_R$. In particular, the second fundamental form $A_{\Sigma'}$ at $x' \in \Sigma' \cap \overline{\mathbb{B}_1}$ is related to the second fundamental form $A_{\Sigma}$ at the corresponding $x \in \Sigma$ by the conformal change transformation formula \eqref{eq:2ff-conf-change} with a radial conformal factor $e^{\phi(\rho)}$. As above, the free boundary condition gives $\nu\perp\pr_\rho$ along $\pr\Sigma$, so that $\pr_\nu \phi(\rho)=0$. It follows that $\det A_{\Sigma'}$ and $\det A_\Sigma$ have the same signs along $\Gamma'$ and $\pr\Sigma$. In particular, the signs are negative as $\det A_\Sigma = - |A_\Sigma|^2/2 <0$ by minimality of $\Sigma$. \qed\\

We now continue with the proof of Part 1. 

Suppose that $p_1,p_2\in\pr\Sigma$ have the same normal $\nu(p_1)= \nu(p_2):=v$. Let $x_v := \langle x,v\rangle$ and let $\mathbb{S}^2_v := \{x_v = 0\} \subset \mathbb{S}^3$ be the  equator defined by $v$. By the free boundary condition, $\langle v,e_0\rangle=0$, so as in Section \ref{sec:nodal}, $u:= x_v|_{\Sigma}$ defines a Steklov eigenfunction on $\Sigma$ and has a critical zero at each $p_i$. Note also that $-e_0 \in \mathbb{S}^2_v$. By Lemma \ref{lem:nodal-genus-0}, $Z:=u^{-1}(0) = \Sigma \cap \mathbb{S}^2_v$ consists of arcs that are smooth on the interior, and meet the boundary transversely. Moreover, the degree of each boundary component in the reduced graph $\tilde{Z}$ is at most 2. Let $\Gamma_i$ be the boundary component containing $p_i$. Then, as $p_i$ is a critical zero (hence contributes 2 to the degree), it follows that $Z\cap \Gamma_i$ can only contain the single point $p_i$. As there are only 2 boundary components, we deduce from Lemma \ref{lem:nodal-genus-0} that $Z$ consists of precisely two smooth arcs intersecting each other orthogonally, and $\pr\Sigma$ transversely at angle $\pi/4$, at each $p_i$. 

Note that, by the two-piece property (see Proposition \ref{prop:radial-sphere}), $\Sigma \setminus Z$ has two connected components that we denote $U_\pm$. As $u|_{\pr\Sigma}$ is certainly not identically 0, Lemma \ref{lem:sph-one-side} implies that $u$ must change sign on $\pr \Sigma$. It follows that $Z$ must be a non-contractible loop in $\Sigma$, and $u$ have opposite signs on each boundary component of $\pr\Sigma$ (and opposite signs on $U_\pm$). In particular, after a relabeling, we must have $\partial U_+ = Z \cup \Gamma_1$ and $\partial U_- = Z \cup \Gamma_2$. 

Let $W = \tilde{\Sigma} \cap \mathbb{S}^2_v$ be the even reflection of $(\Phi_R)_\ast Z$. As $Z$ consists of two smooth arcs that meet orthogonally (and with angle $\pi/4$ with respect to each $\Gamma_i$) at each $p_i$, it follows that $W$ is a union of two smooth arcs that meet at the points $\tilde{p}_i := \Phi_R(p_i) \in \partial B_{\pit}$ orthogonally. In  particular, the unit normal of $\tilde{\Sigma}$ only takes the value $\pm v$  at the points $\tilde{p}_i$ ($v$ is only tangent to $\mathbb{S}^3$ along $\mathbb{S}^2_v$ and $\tilde \Sigma$ is tangent to $\mathbb{S}^2_v$ only at the points $\tilde{p}_i$). 

Next, note that $\tilde{\Sigma} \setminus W$ consist of two connected pieces. Indeed, $\tilde{\Sigma} \setminus W$ is the union of the sets $\tilde{U}_+ := \tilde{\Sigma} \cap \{x_v > 0\}$ and $\tilde{U}_- := \tilde{\Sigma} \cap \{x_v < 0\}$. Evidently, $\tilde{U}_{+}$ and $\tilde{U}_-$ are disjoint. To see that $\tilde{U}_+$ is connected, observe that $\tilde{U}_+$ is the union of $(\Phi_R)_\ast U_+$ and its reflection (both connected), which are connected through the boundary component $(\Phi_R)_\ast\Gamma_1 \subset \partial B_{\pit}$, fixed by the reflection. Similarly $\tilde{U}_-$ is connected. 

In summary, we have that $\tilde{\Sigma}$ is a disjoint union $\tilde{\Sigma} = \tilde{U}_+ \cup \tilde{U}_- \cup W$ with $\tilde{U}_\pm$ connected, $\partial \tilde{U}_{\pm} = W$, and $W$ the union of two smooth arcs meeting orthogonally at two points $\tilde{p}_i \in \partial B_{\pit}$. The unit normal of $\tilde \Sigma$ only takes the values $\pm v$ at these points. 

We now finally consider the stereographic projection mapping $\tilde{\Sigma}$ to $\Sigma'$. Under stereographic projection from $-e_0$, the sphere $\mathbb{S}^2_v$ maps to a plane $\mathbb{R}^2_w$ with normal $w \in \mathbb{S}^2$ (the normalized image of $v$ under $\Xi^{-1}$). The points $\tilde p_i$ map to points $p_i' \in \Gamma' \cap \partial \mathbb{B}_1$ and, by Claim 1, $\nu'(p_i') = w$.  It follows from our discussion that 
\[
Z' := \Sigma' \cap \mathbb{R}^2_w
\]
consists of two smooth arcs meeting orthogonally at a pair of points $p_i' \in \Gamma' \cap \partial \mathbb{B}_1$, $i = 1, 2$ and that 
\[
\Sigma' \setminus Z' = U_+' \cup U_-'
\]
is the union of two open connected subsets. The unit normal $\nu'$ of $\Sigma'$ must attain the value $\pm w$ at more than just the points $p_i'$ after projection. 
However, by Claim 2 above, non-vanishing of $\det d \nu'$ at the points $p_i'$ and the fact that $\mathbb{R}^2_w$ intersects $\Sigma'$ transversally elsewhere ensures that there exists an open neighborhood $\Sigma' \cap \mathbb{R}^2_w$ such that $\nu'$ attains $w$ only at $p_i'$ in this neighborhood. 

The situation we have obtained is impossible. To see this, for $\varepsilon > 0$, consider the closed slab $\mathbb{R}^2_w(\varepsilon)$ of points with distance $\varepsilon$ to $\mathbb{R}^2_w$. If $\varepsilon > 0$ is sufficiently small, then the surface $\Sigma'(\varepsilon) := \Sigma' \cap \mathbb{R}^2_w$ satisfies:
\begin{itemize}
    \item The only points in $\Sigma'(\varepsilon)$ where $\nu'$ attains the values $\pm w$ are at the points $p_1'$ and $p_2'$.
    \item For a suitable choice of $\varepsilon > 0$ small, $\partial \Sigma'(\varepsilon)$ is a smooth curve meeting $\partial \mathbb{R}^2_w(\varepsilon)$ transversally in a smooth Jordan curve. 
\end{itemize}
We addressed the former point above. The latter point is a consequence of the fact that each $U_\pm'$ is a disk (and Sard's theorem) - see Appendix \ref{sec:ros-details} for details. In particular, $\Sigma'(\varepsilon)$ has genus-one (by Gauss-Bonnet). 

Using the second point above, we finally consider a \textit{closed} surface $\Sigma'' \subset \mathbb{R}^3$ obtained by (smoothly) capping off $\Sigma'(\varepsilon)$ along $\partial \Sigma'(\varepsilon)$ with a pair of disks in $\mathbb{R}^3 \setminus \mathbb{R}^2_w(\varepsilon)$. These caps can be glued in such a way that $\nu''$, the Gauss map of $\Sigma''$, attains the values $\{\pm w\}$ at exactly four points: $w$ at each point $p_1'$ and $p_2'$; $w$ at the maximum of the $w$-coordinate $x_w$, which occurs on one cap; and $-w$ at the minimum of $x_w$, which occurs on the other cap. 

We have already noted that $\det d\nu'' |_{p_i'} = \det d\nu' |_{p_i'} < 0$ at each point $p_i'$. On the other hand, the extreme points of $x_w$ on each cap are convex points, so we must have $\det d \nu'' >0$ at those cap points. Thus $w$ and $-w$ are regular values of $\nu'' : \Sigma'' \to \mathbb{S}^2$, and so the degree of $\nu''$ may be calculated above either point. By the discussion above, both points $p_i'$ contribute $-1$ to the degree $\mathrm{deg}(\nu'', w)$, while the cap points contribute degree $1$ to $\mathrm{deg}(\nu'', w)$ and $\mathrm{deg}(\nu'', -w)$ respectively. Our accounting, however, now gives that $\mathrm{deg}(\nu'', w) =1-2= -1$ while $\mathrm{deg}(\nu'', -w) = 1$. This is the desired contradiction, and we conclude that $\nu$ is injective on $\partial \Sigma$, as was to be shown.

\subsubsection{Part 2}

Let $\pi_{e_0^\perp}(v) = v - \langle v,e_0\rangle e_0$ be the orthogonal projection away from $e_0$. Let 
\[
\mathbf{n} := \frac{\pi_{e_0^\perp}(\nu)}{|\pi_{e_0^\perp}(\nu)|} = \frac{\nu - \nu_0 e_0}{\sqrt{1 - \nu_0^2}},
\]
where $\nu_0 = \langle \nu,e_0\rangle$. In this section, we will prove that $\nu$ is injective using that $\nu|_{\partial \Sigma}$ is injective and the map $\mathbf{n}$. 

We claim that $\nu_0^2<1$, so that $\mathbf{n}$ is a well-defined map $\Sigma \to \mathbb{S}^2$. Indeed, if $\nu_0^2=1$ then we must have $\langle x,e_0\rangle=0$ and $e_0^{\top\Sigma}=0$. The former can only happen if $R=\pit$, and $x\in \pr\Sigma$, but in this case the free boundary condition gives that $e_0 = -\eta$ is tangent to $\Sigma$, which contradicts the second condition. In fact, since $\langle \nu, e_0 \rangle = 0$ on $\partial \Sigma$, we have $\mathbf{n}|_{\pr\Sigma} = \nu|_{\pr\Sigma}$.

We next compute the differential of $\mathbf{n}$. Fix any $p \in \Sigma$ and consider a normal coordinate frame $\partial_i$, orthonormal at $p$. We find that 
\[
(1-\nu_0^2)^{\frac{1}{2}}\langle \pr_i \mathbf{n}, \pr_j\rangle = A_{ij} - \frac{A(\pr_i, e_0^\top)\langle \pr_j , e_0^\top\rangle}{1-\nu_0^2} = A_{ij} - A_{ik} U^k_j = A_{ik}(\delta^k_j - U^k_j),
\]
where $U$ is the matrix at $p$ with components $U^k_j = \delta^{kl}\langle \partial_l, e_0^\top \rangle \langle \partial_j,e_0^\top\rangle(1-\nu_0^2)^{-1}$. As $\mathrm{tr}_g A = 0$ and $|A| > 0$ (see Proposition \ref{prop:hopf}), the eigenvalues of $A$ are $\pm \Psi$ with $\Psi = \frac{1}{\sqrt{2}}|A| > 0$ and $A$ is nondegenerate. On the other hand, $U$ is bounded in operator norm by $\frac{|e_0^\top|^2}{1-\nu_0^2} = \frac{1 - \nu_0^2 - x_0^2}{1-\nu_0^2}$. As $R < \pit$, we have $x_0 > 0$ on $\Sigma$. So it follows $\delta^k_j - U^k_j > 0$ and hence that $d\mathbf{n}$ is nondegenerate.

Finally, we put injectivity $\mathbf{n}|_{\partial \Sigma} = \nu|_{\partial \Sigma}$ and nondegeneracy of $d\mathbf{n}$ on the interior of $\Sigma$ together to get that $\mathbf{n}$ is injective.

\begin{lemma}
\label{lem:covering}
Let $\Sigma$ be a connected Riemann surface with genus 0 and $b\geq 2$ boundary components. Suppose that $F: \Sigma\to \mathbb{S}^2$ is a smooth map whose differential is nondegenerate and whose restriction to the boundary $F|_{\pr \Sigma}$ is injective. Then $F$ is injective.
\end{lemma}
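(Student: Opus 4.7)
The plan is to extend $F$ across the boundary to a map $\hat F$ defined on a closed source sphere $\hat\Sigma\cong\mathbb{S}^2$, then argue that this extension is a local homeomorphism $\mathbb{S}^2 \to \mathbb{S}^2$, hence a homeomorphism, which forces injectivity of $F$. Since $\Sigma$ has genus $0$ with $b$ boundary components, there is a smooth embedding of $\Sigma$ into a sphere $\hat\Sigma$ so that $\hat\Sigma\setminus\Sigma$ consists of $b$ disjoint open disks $D_1^s,\dots,D_b^s$ with $\pr D_i^s=\Gamma_i$. Injectivity of $F|_{\pr\Sigma}$ guarantees that the images $C_i:=F(\Gamma_i)$ are $b$ pairwise disjoint smooth Jordan curves in $\mathbb{S}^2$; by the Jordan--Schoenflies theorem each $C_i$ bounds two closed topological disks. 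Because $F$ is a local diffeomorphism near the connected curve $\Gamma_i$, the image of a thin collar of $\Gamma_i$ lies entirely on one side of $C_i$; denote the closed disk on this side by $D_i^+$ and the opposite closed disk by $D_i^-$.

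Next, I would define $\hat F:\hat\Sigma\to\mathbb{S}^2$ by gluing $F$ to homeomorphisms $F_i^{\mathrm{ext}}:D_i^s\to D_i^-$ that restrict to $F|_{\Gamma_i}$ on the boundary; such extensions exist since any homeomorphism of circles extends to a homeomorphism of the bounding disks (for example, by coning). The resulting $\hat F$ is continuous, and is automatically a local homeomorphism on $\Sigma^\circ$ and on the interior of each $D_i^s$. The crucial step is verifying that $\hat F$ is a local homeomorphism at each $p\in \Gamma_i$. A small open disk $U\ni p$ in $\hat\Sigma$ splits along the arc $U\cap\Gamma_i$ into a half-disk $U\cap\Sigma$, which $F$ maps diffeomorphically into $D_i^+$, and a half-disk $U\cap D_i^s$, which $F_i^{\mathrm{ext}}$ maps homeomorphically into $D_i^-$. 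Shrinking $U$ if necessary, both restrictions are homeomorphisms onto their image half-disks, and those images intersect only along the common arc $F(U\cap \Gamma_i)\subset C_i$. Hence $\hat F|_U$ is a homeomorphism onto an open neighbourhood of $F(p)$.

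Finally, $\hat F:\hat\Sigma\to\mathbb{S}^2$ is a proper local homeomorphism between compact connected surfaces, hence a covering map; since the target $\mathbb{S}^2$ is simply connected, the cover must be single-sheeted and $\hat F$ is a homeomorphism. Restricting to $\Sigma\subset\hat\Sigma$ yields the injectivity of $F$. I expect the step most at risk of subtle error to be the local injectivity check at points of $\Gamma_i$: it depends on the consistency of the $D_i^+/D_i^-$ labelling along each $\Gamma_i$ (which follows from $F$ being a local diffeomorphism on a connected collar of $\Gamma_i$) and on the Jordan--Schoenflies structure ensuring that $D_i^+$ and $D_i^-$ meet only along $C_i$.
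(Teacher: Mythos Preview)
Your argument is correct, and it takes a genuinely different route from the paper. The paper works intrinsically with the image: it analyses the components $\Omega_i$ of $\mathbb{S}^2\setminus F(\partial\Sigma)$, uses an open--closed argument to show that $S:=F(\mathring\Sigma)$ is a union of such components, and then invokes the homotopy lifting property to rule out any contractible $\Omega_i$ lying in $S$ (this is precisely where the hypothesis $b\geq 2$ enters, since a boundary circle of $\Sigma$ is then homotopically non-trivial). Having pinned down $S$, the paper shows that $F$ is a covering of $\overline S$ and reads off degree $1$ from injectivity on $\partial\Sigma$.

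Your approach instead caps off the source to a sphere and extends $F$ across the caps via Jordan--Schoenflies, reducing the problem to the elementary fact that a local homeomorphism $\mathbb{S}^2\to\mathbb{S}^2$ is a covering of a simply connected space, hence a homeomorphism. This is cleaner in that it avoids any bookkeeping of the complementary regions and, notably, never uses $b\geq 2$; your argument in fact proves the lemma for all $b\geq 1$. The paper's approach, on the other hand, yields the extra information that $F(\mathring\Sigma)$ is exactly the unique complementary region homeomorphic to $\Sigma$, which may be useful elsewhere. Your identification of the delicate point is accurate: the local homeomorphism check at $p\in\Gamma_i$ works because the half-disk images lie in the two closed sides $D_i^\pm$ of $C_i$, which meet only along $C_i$, so the glued map is injective on small $U$ and (by the pasting lemma applied to the inverse on the two closed pieces) a homeomorphism onto an open neighbourhood of $F(p)$.
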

\begin{proof}
As $dF$ is nondegenerate, it follows that the restriction $F|_{\mathring{\Sigma}}$ to the interior is a local diffeomorphism, and that $F|_{\pr \Sigma}$ smoothly embeds $\pr \Sigma$ as a disjoint union of $b$ circles $F(\pr \Sigma) = \bigsqcup_{i=1}^b \Gamma_i$ in $\mathbb{S}^2$. The complement $\mathbb{S}^2\setminus F(\pr \Sigma) = \bigsqcup_{i=0}^b \Omega_i$, where $\Omega_i$ is a disk for $i>0$, and $\Omega_0$ is genus 0 with $b$ boundary components. 

Let $S = F(\mathring{\Sigma})$ be the image of the interior. As $F|_{\mathring{\Sigma}}$ is a local diffeomorphism (in particular, an open map), $S$ is open in $\mathbb{S}^2$ and $\pr S \subset F(\pr \Sigma)$. 

For each $\Omega_i$, we claim that either $\Omega_i \cap S=\emptyset$ or $\Omega_i \subset S$. Indeed, $\Omega_i\cap S$ is clearly open. As $F(\pr\Sigma) \cap \Omega_i =\emptyset$, we have $S\cap \Omega_i = F(\Sigma)\cap \Omega_i$, but the latter is relatively closed in $\Omega_i$ as $\Sigma$ is compact. Thus $\Omega_i\cap S$ is open and closed in the connected region $\Omega_i$, which establishes the claim. 

Now suppose that $\Omega_i \subset S$ for $i>0$. Then $\Omega_i$ is contractible, and it follows from the homotopy lifting property that the boundary component $\Gamma_i = \pr \Omega_i$ must be contractible in $\Sigma$. This contradicts the assumed topology of $\Sigma$, so we conclude that $\Omega_i \cap S = \emptyset$ for $i>0$. As $S$ is open, it follows that $F(\pr \Sigma) \cap S = \emptyset$ and hence $S=\Omega_0$. 

It follows that the preimage of a (sufficiently small) neighbourhood of any boundary point $p\in F(\pr \Sigma)$ must consist of components diffeomorphic to half-discs (not discs), and so $F$ is a covering map on the whole compact surface $\Sigma$. As $\overline{S}$ is connected, the degree of the covering is constant. As $dF$ is nondegenerate, we may assume without loss of generality that $\det dF>0$, and so we may compute the degree at any value by simply counting pre-images. In particular, the degree must be 1 on $\pr \Sigma$, so (as $\mathbb{S}^2$ is connected) it must be 1 everywhere, and we conclude that $F$ is injective. 
\end{proof}

The lemma implies $\mathbf{n} : \Sigma 
\to \mathbb{S}^2$ is injective. As $\mathbf{n}$ factors through $\nu$, this completes the proof of Proposition \ref{prop:emb-dual}.

\section{Uniqueness problems for minimal annuli}
\label{sec:continuity}

In this section, we explore a continuous family of uniqueness problems for minimal surfaces. Specifically, this family will include free boundary minimal surfaces in geodesic balls $B_R \subset \mathbb{S}^3$, with the limiting cases being free boundary minimal surfaces in $B_{\frac{\pi}{2}} \subset \mathbb{S}^3$ (equivalently, closed minimal surfaces in $\mathbb{S}^3$ with a reflection symmetry) and free boundary minimal surfaces in $\mathbb{B}_1 \subset \mathbb{R}^3$. Our goal is to provide a clear, unified presentation for readers interested in these settings, and to suggest potential avenues for which this unified approach may be illuminating.

Recall that, in our terminology, `FBMS' refers to minimal immersions $(\Sigma,\pr\Sigma) \looparrowright (M,S)$ which contact $S$ with angle $\pit$ along $\pr\Sigma \subset S$. 

Before continuing, we briefly remind the reader of some well-known rigidity problems. The setting of closed minimal surfaces in $\mathbb{S}^3$ is the most well-studied. Here we highlight the Lawson conjecture \cite{Law70b}, which concerns the uniqueness of embedded minimal tori, and was proven by Brendle \cite{Br12}:

\begin{theorem}[Lawson conjecture; proven in \cite{Br12}]
The unique embedded minimal torus in $\mathbb{S}^3$ is the Clifford torus. 
\end{theorem}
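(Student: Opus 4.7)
The plan is to follow Brendle's maximum principle argument \cite{Br12}, which reduces the global rigidity problem to a two-point comparison estimate. First I would note that the Hopf differential method (Proposition \ref{prop:hopf}, applied in the closed setting, which is classical) implies that any minimal torus $\Sigma^2 \hookrightarrow \mathbb{S}^3$ has no umbilic points. Consequently the positive principal curvature
\[
\Psi := \tfrac{1}{\sqrt{2}}|A|
\]
is a strictly positive smooth function on $\Sigma$. This nondegeneracy is essential because $\Psi$ will appear in the denominator of a two-point barrier.

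Next I would introduce, on $\Sigma \times \Sigma$ minus the diagonal, a family of two-point functions of the shape
\[
Z_\kappa(x,y) \;=\; \kappa\, \Psi(x)\, \bigl(1 - \langle x,y\rangle^2\bigr) \;+\; 2\, \langle \nu(x), y \rangle\, \sqrt{1 - \langle x,y\rangle^2},
\]
(or the analogous expression adapted to $\mathbb{S}^3$ with the appropriate signs for the chosen normal). Geometrically, $Z_\kappa$ compares the extrinsic separation of two points on $\Sigma$ with the curvature at one of them; its positivity is a quantitative non-collapsing condition. The key preliminary observation is that embeddedness of $\Sigma$ forces $Z_\kappa \geq 0$ for all sufficiently large $\kappa$: roughly, if $Z_\kappa$ were negative somewhere then one could slide the tangent sphere of curvature $\kappa\Psi(x_0)$ and obtain a secondary intersection with $\Sigma$, contradicting embeddedness. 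Let $\kappa_\ast$ denote the infimum of such $\kappa$; by compactness of $\Sigma\times\Sigma$ and the boundary behaviour of $Z_\kappa$ near the diagonal, $Z_{\kappa_\ast}$ attains the value zero at some off-diagonal $(x_0,y_0)$.

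The main obstacle, and the heart of Brendle's proof, is the maximum principle at $(x_0,y_0)$. The first-order conditions $\partial_x Z_{\kappa_\ast} = \partial_y Z_{\kappa_\ast} = 0$ encode a specific relation between $\nu(x_0)$, $\nu(y_0)$, and the chord $y_0 - \langle x_0, y_0\rangle x_0$. Substituting these into the second-order condition that the Hessian of $Z_{\kappa_\ast}$ is positive semidefinite on $T_{x_0}\Sigma \oplus T_{y_0}\Sigma$, and invoking the minimal surface Simons identity
\[
\tfrac{1}{2}\Delta |A|^2 \;=\; |\nabla A|^2 + |A|^2(2 - |A|^2),
\]
together with Kato-type inequalities $|\nabla A|^2 \geq \tfrac{3}{2}|\nabla |A||^2$ valid for minimal surfaces in three-manifolds, produces a differential inequality in $\kappa_\ast$. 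A delicate algebraic manipulation (this is where all the work lives) shows that the Hessian cannot in fact be nonnegative unless $\kappa_\ast \leq 1$; any strict inequality $\kappa_\ast > 1$ yields a contradiction. I expect this combinatorial-algebraic verification, balancing Simons with the precise structure of $Z_\kappa$, to be the hardest single step to reconstruct.

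Finally, once $\kappa_\ast \leq 1$ is established, letting $y \to x$ along appropriate tangent directions in the bound $Z_1(x,y) \geq 0$ and expanding to second order forces $\Psi \leq 1$, i.e.\ $|A|^2 \leq 2$. Integrating against Simons and using $\int_\Sigma |A|^2(|A|^2 - 2) = -\int_\Sigma |\nabla A|^2 \leq 0$ (combined with closedness of $\Sigma$) gives $|A|^2 \equiv 2$, so $\Sigma$ is a closed minimal surface in $\mathbb{S}^3$ with constant $|A|^2 = 2$. By the classical classification (Chern--do Carmo--Kobayashi, or directly from $\nabla A \equiv 0$), $\Sigma$ must be the Clifford torus, completing the proof.
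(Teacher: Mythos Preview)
The paper does not supply a proof of this theorem: it is stated purely as a cited result of Brendle \cite{Br12} and then used as a black box (for instance in Lemma \ref{lem:fb-lawson} and in showing $\tfrac{\pi}{2}\in I_{\mathrm{rot}}$). There is therefore no proof in the paper to compare against; your proposal is an attempted reconstruction of Brendle's original argument, not of anything the authors do.

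Your outline does follow the broad architecture of \cite{Br12} --- non-umbilicity via the Hopf differential, a two-point barrier encoding an inscribed-sphere comparison, and a maximum principle driven by Simons' identity --- but several details are inaccurate. The barrier in \cite{Br12} is built from $1-\langle x,y\rangle$, not $1-\langle x,y\rangle^2$; this changes the diagonal asymptotics. The refined Kato identity for minimal surfaces in a $3$-manifold is $|\nabla A|^2 = 2|\nabla|A||^2$ (an equality coming from total symmetry of $\nabla A$), not a $\tfrac{3}{2}$ inequality. Finally, the endgame is not as you describe: the diagonal limit of $Z_1\geq 0$ yields $\kappa_\ast \geq 1$, not $\Psi\leq 1$, and Brendle closes by applying the strong maximum principle in the equality case to force $\nabla\Psi\equiv 0$ (hence $|A|^2\equiv 2$) directly, rather than via an integrated Simons argument. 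These are fixable, but as written the sketch would not go through.
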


Given a closed minimal surface $\Sigma \looparrowright \mathbb{S}^3$, let $Q^{\mathrm{A}}$ denote its index form (the reader may consult the formula in Section \ref{sec:prelim-stability}, noting that the boundary is empty). We denote by $\ker Q^{\mathrm{A}}$ the kernel of the index form $Q^{\mathrm{A}}$, and we refer to this as the space of \textit{Jacobi fields}. The nullity is $\nul(\Sigma) = \dim \ker Q^{\mathrm{A}}$. Let $\mathcal{K}_0$ denote the space of Killing fields on $\mathbb{S}^3$, that is, the space of rotational vector fields. As is well-known, if $K\in\mathcal{K}_0$ then the normal projection $\langle K, \nu \rangle$ must be a Jacobi field since $K$ generates a 1-parameter subgroup of ambient isometries.

In \cite[Page 128]{yau1986nonlinear}, \cite[Problem 31]{yau1990open} \cite[Section I.G]{yau2000review}, Yau asked whether there are continuous families of embedded minimal surfaces in $\mathbb{S}^3$ that are nontrivial (are not related by ambient isometries). The following is the infinitesimal version of Yau's question:

\begin{question}
\label{conj:nullity-sph}
For any closed embedded minimal surface $\Sigma\subset\mathbb{S}^3$, are all Jacobi fields induced by Killing fields?
\end{question} 

For FBMS in the Euclidean ball $\mathbb{B}^3$, the analogue of the Lawson conjecture is the following conjecture of Nitsche and Fraser-Li, which concerns uniqueness of embedded minimal \textit{annuli}: 

\begin{conjecture}[\cite{Ni85,FL14}]
The unique embedded free boundary minimal annulus in $\mathbb{B}^3$ is the critical catenoid.
\end{conjecture}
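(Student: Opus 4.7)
The plan is to deduce the conjecture by a continuity argument, starting from Brendle's resolution of the Lawson conjecture and conditional on an affirmative answer to Question \ref{conj:nullity-sph}. The key idea, as suggested in Section \ref{sec:fR-surfaces}, is to embed the Nitsche conjecture into the one-parameter family of uniqueness problems for embedded $(R,\pit)$-minimal annuli in $B_R\subset \mathbb{S}^3$, $R\in[0,\pit]$, whose endpoints $R=\pit$ and $R=0$ recover (after reflection doubling, and after rescaling to the pair $(\mathbb{M}^3_{\kappa_R}, \pr B_{R\csc R})$ from Section \ref{sec:cap-model}) the Lawson and Nitsche settings respectively. Set
\[ I := \{R\in[0,\pit] : \text{every embedded } (R,\pit)\text{-minimal annulus in } B_R \text{ is rotationally symmetric}\}. \]
The goal is to show $\pit\in I$ and that $I$ is both open and closed in $[0,\pit]$, whence $I=[0,\pit]$ and in particular $0\in I$.

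To show $\pit\in I$, reflect any embedded FBMS annulus $\Sigma\hookrightarrow (B_\pit, \pr B_\pit)$ across the equator to obtain a $C^2$ (hence smooth) embedded minimal torus $\tilde{\Sigma}\subset \mathbb{S}^3$; by Brendle's theorem $\tilde{\Sigma}$ is a Clifford torus, so $\Sigma$ is rotationally symmetric. For openness of $I$, suppose $R_0\in I$ and $R_j\to R_0$ admits non-rotationally-symmetric embedded $(R_j,\pit)$-minimal annuli $\Sigma_j$. Using the compactness results of Appendix \ref{sec:compactness}, uniform area and topology bounds, together with the radial-graph structure from Proposition \ref{prop:radial-sphere}(c), one extracts a smooth subsequential limit, which by hypothesis is the (unique) rotationally symmetric annulus $\Sigma_{R_0}$ at parameter $R_0$. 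Continuity of the spectral index and nullity along such limits then places $\Sigma_j$ as a small normal graph over $\Sigma_{R_0}$, and modding out by the residual rotational symmetry group of $\Sigma_{R_0}$ the implicit function theorem forces $\Sigma_j$ itself to be rotationally symmetric for large $j$, a contradiction.

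Closedness of $I$ is the delicate step, and is precisely where the Jacobi-field hypothesis enters. Given $R_j\in I$ with $R_j\to R_0$, one must preclude non-rotationally-symmetric embedded competitors bifurcating from the rotationally symmetric family at $R_0$. This is carried out via the implicit function theorem argument of Appendix \ref{sec:ift-caps} at $\Sigma_{R_0}$: provided the Jacobi operator with capillary/Robin boundary condition on $\Sigma_{R_0}$ has trivial kernel modulo the ambient $\mathrm{SO}(3)$-action fixing the ball center, the only nearby embedded $(R,\pit)$-minimal annuli for $R$ close to $R_0$ are rotationally symmetric. In the limiting case $R_0=\pit$, this non-degeneracy amounts via the reflection doubling to the statement that all Jacobi fields of the Clifford torus arise from ambient Killing fields, namely Question \ref{conj:nullity-sph}; for $R<\pit$ one needs the analogous statement along the family, which via the polar duality of Theorem \ref{thm:dual-intro} is linked to analogous Jacobi-field questions for capillary minimal annuli in the hemisphere.

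The principal obstacle is thus Question \ref{conj:nullity-sph}, which remains open even for the Clifford torus itself; absent this, the continuity approach only yields uniqueness along the maximal connected sub-interval of $I$ containing $\pit$. Secondary technical issues — the degeneration of ambient geometry as $R\to 0$, preserving embeddedness under smooth convergence, and handling possible loss of genus or pinching — are addressed by working in the rescaled model $(\mathbb{M}^3_{\kappa_R}, \pr B_{R\csc R})$ from Section \ref{sec:cap-model} (which converges smoothly to $(\mathbb{R}^3, \pr\mathbb{B}_1)$) together with the half-space intersection and two-piece properties of Corollary \ref{cor:two-piece} that exclude degenerate limits. Granting these ingredients and the Jacobi-field hypothesis, $I$ is nonempty, open and closed in the connected interval $[0,\pit]$, so $0\in I$ and the critical catenoid is the unique embedded free boundary minimal annulus in $\mathbb{B}^3$.
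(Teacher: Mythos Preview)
The statement is an \emph{open conjecture}; the paper does not prove it, and your proposal is explicitly conditional on an unresolved nullity question, so it is at best a conditional outline rather than a proof.

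Even as an outline of the strategy in Section~\ref{sec:continuity}, you have inverted the logic in two places. The paper proves openness of $I_{\mathrm{rot}}$ \emph{unconditionally}, via upper semicontinuity of $\ind_0(Q^{\mathrm{S}})$ together with the Lima--Menezes result (Proposition~\ref{prop:FB-first-efns}): if $\Sigma_j\to\Sigma$ with $\Sigma$ rotationally symmetric, then $\ind_0(Q^{\mathrm{S}}_\Sigma)=4$, whence $\ind(Q^{\mathrm{S}}_{\Sigma_j})\le 1$ for large $j$, forcing rotational symmetry. Your openness argument instead invokes the implicit function theorem at the limit, which already requires the nullity hypothesis you meant to reserve for closedness; it also assumes uniqueness \emph{within} the rotational class at each $R$, which the paper explicitly notes is not known for $R\in(0,\pit)$. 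For closedness, the paper's argument (Proposition~\ref{prop:ift-caps} and its corollary) applies the IFT at a hypothetical \emph{non-rotational} annulus $\Sigma$ at some $R_0\notin I_{\mathrm{rot}}$, perturbing it to non-rotational annuli at nearby $\tilde R$ and thereby showing the complement of $I_{\mathrm{rot}}$ is open. You instead apply the IFT at the \emph{rotational} surface $\Sigma_{R_0}$; even if that linearisation were nondegenerate, it would only give local uniqueness near $\Sigma_{R_0}$ and says nothing about a non-rotational competitor at $R_0$ that is far from $\Sigma_{R_0}$. Finally, the hypothesis actually needed is Question~\ref{conj:nullity} (Jacobi fields of the FBMA in the cap, applied to the non-rotational $\Sigma$), not Question~\ref{conj:nullity-sph} about closed surfaces in $\mathbb{S}^3$; reflection-doubling relates the two only at the single value $R=\pit$.
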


Henceforth, we use the abbreviation `FBMA' for `free boundary minimal annulus'. We remark that the critical catenoid is known to be unique amongst rotationally symmetric embedded minimal annuli (see \cite[Section 3]{FS11}, \cite[Corollary 3.9]{KL21}).

\subsection{Families of rigidity problems} 
\label{sec:fR-surfaces}

To study FBMS in $(\mathbb{S}^3, \pr B_R)$ as $R$ varies, it is convenient to recall the various equivalent formulations from Section \ref{sec:cap-model}:

\begin{enumerate}[(A)]
\item FBMS in $(\mathbb{S}^3, \pr B_R)$ for $R>0$, or in $(\mathbb{R}^N, \pr\mathbb{B}_1)$ for $R=0$.
\item FBMS in $(\mathbb{M}^3_{\kappa_R} , \pr B_{R\csc R})$, where $\mathbb{M}^3_{\kappa_R}$ is the space form of curvature $\kappa_R= \sin^2 R$.
\item Weighted ($f_R=2\log(1+\cos R\cos \rho)$) FBMS in $(\mathbb{S}^3, \pr B_\pit)$.
\end{enumerate}

We emphasise that by default $\mathbb{S}^3$ will carry the standard metric $g_1$ (as in (C)).  

In this section we will refer to the pairs $(\mathbb{M}^3_{\kappa_R} , \pr B_{R\csc R})$ in (B) as \textit{rescaled caps}. 

The equivalent problems above are each invariant under a 3-dimensional group of ambient isometries. Analogous to the closed case, in each setting above (by abuse of notation) we denote the Lie algebra of this 3-dimensional group as $\mathcal{K}_0$. Concretely, in setting (C), this group is precisely the subgroup $\mathrm{SO}(4)^o$ of rotations that fix $o=e_0$ (and consequently preserve the hemisphere $B_\pit$, as well as the radial functions $h_R, f_R$); the Lie algebra $\mathcal{K}_0$ is the space of rotational vector fields in 2-planes orthogonal to $e_0$.

We say that $\Sigma$ is \textit{rotationally symmetric} if it is invariant under the flow of a nontrivial element of $\mathcal{K}_0$. Equivalently, there is a nonzero vector field $K\in\mathcal{K}_0$ for which $\langle K,\nu\rangle \equiv0$ on $\Sigma$. 

The following uniqueness problem is a natural analogue of the Lawson and Nitsche conjectures, stated in terms of formulation (B):

\begin{conjecture}
\label{conj:lawson-R}
Let $R\in[0,\pit]$ and consider the space form $\mathbb{M}^3_{\kappa_R}$ of curvature $\kappa_R= \sin^2 R$. Suppose $(\Sigma,\pr\Sigma)\hookrightarrow(B_{R\csc R}, \pr B_{R\csc R})$ is an embedded FBMA constrained in $B_{R\csc R}\subset \mathbb{M}^3_{\kappa_R}$. Then $\Sigma$ is rotationally symmetric. 
\end{conjecture}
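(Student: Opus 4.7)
The plan is a continuity argument in the parameter $R \in [0,\pit]$. It is most convenient to work in formulation (C) from Section \ref{sec:cap-model}, where the ambient manifold is the fixed hemisphere $B_\pit \subset \mathbb{S}^3$ and only the weight $f_R = 2\log(1+\cos R\cos\rho)$ varies continuously in $R$. Let $I \subset [0,\pit]$ be the set of radii at which every embedded FBMA constrained in the corresponding ball is rotationally symmetric. The endpoint $R = \pit$ lies in $I$: there $f_R$ is constant, and reflecting the free boundary condition across the equator $\pr B_\pit$ yields a closed embedded minimal torus in $\mathbb{S}^3$, which by Brendle's resolution \cite{Br12} of the Lawson conjecture is the Clifford torus, and in particular is rotationally symmetric. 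It then suffices to show that $I$ is both open and closed in $[0,\pit]$.

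For openness, suppose $R_0 \in I$ and take $R_i \to R_0$ with embedded FBMAs $\Sigma_i$ at parameter $R_i$. Non-umbilicity of annuli (Proposition \ref{prop:hopf}) together with the two-piece property and the resulting radial-graph structure (Proposition \ref{prop:radial graph}) provide the area and curvature control needed for the compactness theory of Appendix \ref{sec:compactness}. A subsequence converges smoothly to an embedded FBMA $\Sigma_\infty$ at $R_0$, which by hypothesis is rotationally symmetric. Continuity of the spectral index and nullity along this convergence should then force $\Sigma_i$ itself to be symmetric for large $i$: the symmetric family near $\Sigma_\infty$ forms a smooth arc in the moduli space of FBMAs parametrised by $R$, and since no extra kernel directions can open up in the limit, the $\Sigma_i$ must lie in this arc modulo ambient rotation.

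For closedness, suppose $R_i \in I$ with $R_i \to R_\infty$, and suppose for contradiction there is an embedded FBMA $\Sigma_\infty$ at $R_\infty$ which is \emph{not} rotationally symmetric. The aim is to perturb $\Sigma_\infty$ to an embedded FBMA at every nearby parameter $R$, producing non-symmetric FBMAs at $R_i$ for $i$ large and contradicting $R_i \in I$. This is an implicit function theorem argument of the type outlined in Appendix \ref{sec:ift-caps}, applied to the map sending a graphical normal deformation to its mean curvature and contact angle. The linearisation at $\Sigma_\infty$ is Fredholm and self-adjoint, so its cokernel is identified with its kernel; one needs the $R$-derivative of the defining equation to project surjectively onto this kernel, modulo the action of the ambient symmetry group $\mathcal{K}_0$. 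Under the hypothesis that \emph{every} Jacobi field on an embedded FBMA is induced by an element of $\mathcal{K}_0$ (the annulus analogue of Question \ref{conj:nullity-sph}), this quotient cokernel is trivial and the perturbation goes through.

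The main obstacle is precisely this Jacobi field hypothesis. There is no known reason that an embedded FBMA in a spherical cap should admit only Killing-induced Jacobi fields, and verifying it appears to be genuinely hard: it is the infinitesimal version of Yau's rigidity question transplanted to the free boundary setting. A secondary difficulty is the degenerate limit $R \to 0$, where the ambient isometries continuously rescale to Euclidean rotations and one must verify that the compactness, index continuity, and IFT arguments extend uniformly up to this boundary of parameter space. In effect the strategy reduces the entire continuous family of Nitsche-type conjectures for $R \in [0,\pit]$ to a single infinitesimal rigidity statement about the nullity of embedded FBMAs.
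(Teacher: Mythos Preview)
This statement is labelled a \emph{conjecture} in the paper and is not proven there. What the paper does in Section~\ref{sec:continuity} is precisely the continuity strategy you outline: define the set $I_{\mathrm{rot}}$ of radii at which uniqueness holds, observe $\pit \in I_{\mathrm{rot}}$ via Brendle's theorem, prove that $I_{\mathrm{rot}}$ is open unconditionally, and show that $I_{\mathrm{rot}}$ is closed \emph{conditionally} on an affirmative answer to Question~\ref{conj:nullity} for constrained annuli. You have correctly identified that the scheme is incomplete without this Jacobi field hypothesis, and the paper makes the same point explicitly.

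There is one substantive difference in your openness argument, and it introduces a gap. The paper's proof (Section~\ref{sec:I-open}) does \emph{not} use any nullity control or moduli-arc structure near the limit $\Sigma_\infty$. Instead it uses that rotationally symmetric FBMAs satisfy $\ind_0(Q^{\mathrm{S}}) = 4$ exactly, combined with upper semicontinuity of $\ind_0(Q^{\mathrm{S}})$ under smooth convergence, to conclude $\ind(Q^{\mathrm{S}}_{\Sigma_i}) \leq 1$ for large $i$; then Proposition~\ref{prop:FB-first-efns} (the Lima--Menezes characterisation via the Steklov spectrum) forces rotational symmetry directly. Your argument instead appeals to ``no extra kernel directions can open up in the limit'' and a local uniqueness of the moduli arc, which implicitly assumes nullity information about $\Sigma_\infty$ --- exactly the unproven hypothesis obstructing closedness. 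The paper's route is sharper here because it cleanly separates openness (unconditional, via the \emph{spectral} form $Q^{\mathrm{S}}$) from closedness (conditional, via the \emph{stability} form $Q^{\mathrm{A}}$ and the IFT). A minor aside: the compactness input in the paper (Lemma~\ref{lemma:compactness}) comes from a Choi--Wang type eigenvalue estimate, not from the radial-graph structure you cite.
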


There is a subtle point as to whether one should include the hypothesis that $\Sigma$ is \textit{constrained} in $B_{R\csc R}$, or if it should be a conclusion. For $R=0$ any such FBMS must be constrained, but this is not clear for $R>0$. However, it is not difficult to show that embedded rotationally symmetric examples must be constrained (see Corollary \ref{cor:rot-constrained} below), so we find it plausible that Conjecture \ref{conj:lawson-R} may be true without the constraint hypothesis. It would be interesting to know, for any genus, if any embedded minimal surface must be constrained.

The special case $R=\pit$ corresponds to the Lawson conjecture and follows quite readily from Brendle's work (see Lemma \ref{lem:fb-lawson}). On the other hand, the special case $R=0$ is precisely the Nitsche conjecture. For $R\in (0,\pit)$, Lima-Menezes \cite{LM23} were able to establish uniqueness of free boundary minimal annuli under an additional `immersed by first eigenfunctions' assumption. In \cite{Me23} (see also \cite{NZ25b} for the unconstrained setting), it was observed that the latter condition is essentially a condition on the spectral index $\ind(Q^{\mathrm{S}})$ (recall the definitions in Section \ref{sec:prelim-stability}). As a result, one has the following result, which is essentially a restatement of \cite[Proposition 7.4]{NZ25b} (and \cite[Theorem 6.6]{FS16}) in terms of formulation (B):

\begin{proposition}[\cite{LM23,Me23,NZ25b}]
\label{prop:FB-first-efns}
Let $R\in[0,\pit]$ and suppose $(\Sigma,\pr\Sigma)$ is an embedded free boundary minimal annulus in the rescaled cap $(\mathbb{M}^N_{\kappa_R} , \pr B_{R\csc R})$. If $\ind(Q^{\mathrm{S}})\leq 1$ then $\Sigma$ is rotationally symmetric. 
\end{proposition}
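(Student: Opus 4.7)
The plan is to reduce the statement to the ``immersed by first eigenfunctions'' uniqueness results of Lima--Menezes \cite{LM23}, Medvedev \cite{Me23}, and \cite{NZ25b}, which are naturally formulated in setting (A) of Section \ref{sec:cap-model}. The substantive content is twofold: the spectral hypothesis $\ind(Q^{\mathrm{S}}) \leq 1$ should force the ambient coordinate functions to be first eigenfunctions of an associated modified Steklov-type problem on $\Sigma$, and this spectral rigidity combined with the annular topology should then force rotational symmetry.

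First, I would transfer $\Sigma$ via the isometric rescaling between $(\mathbb{M}^3_{\kappa_R}, \pr B_{R\csc R})$ and $(\mathbb{S}^3, \pr B_R)$ (or $(\mathbb{R}^3, \pr \mathbb{B}_1)$ at the endpoint $R = 0$) from Section \ref{sec:cap-model}. Both $2\kappa_R$ and $\ct_{\kappa_R}(R\csc R)$ transform correctly under this rescaling so that $Q^{\mathrm{S}}$ matches, up to a positive conformal factor, the corresponding quadratic form in setting (A); in particular, the spectral index $\ind(Q^{\mathrm{S}})$ is preserved, and rotational symmetry is clearly intrinsic to the symmetry group of the cap. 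After this reduction one may work entirely in setting (A).

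Next I would observe that in setting (A), the ambient coordinate functions $\langle x, e_i\rangle$ for $i = 1, 2, 3$ (the directions orthogonal to the centre $o = e_0$) furnish a three-dimensional subspace of null directions for $Q^{\mathrm{S}}$: minimality of $\Sigma \hookrightarrow \mathbb{S}^3$ implies $(\Delta + 2)\langle x, e_i\rangle = 0$ on the interior, while a direct computation (of the type carried out in Lemma \ref{lem:boundary-Rgamma-cap} with $\gamma = \pit$) yields the boundary identity $(\pr_\eta - \cot R)\langle x, e_i\rangle = 0$ on $\pr\Sigma$. Combined with the hypothesis $\ind(Q^{\mathrm{S}}) \leq 1$, these coordinate functions must therefore sit as first eigenfunctions of the modified Steklov-type problem associated to $Q^{\mathrm{S}}$, with the 1-dimensional negative direction accounted for by a trivial or topological mode.

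Finally, I would invoke the classification of embedded free boundary minimal annuli that are immersed by first eigenfunctions (\cite[Proposition 7.4]{NZ25b}, building on \cite{LM23, Me23} and in the spirit of \cite[Theorem 6.6]{FS16}): any such annulus is rotationally symmetric. The main obstacle I anticipate is verifying that the spectral data $\ind(Q^{\mathrm{S}})$ matches cleanly between the rescaled and unrescaled formulations, especially in the limit $R \to 0$, where the setting degenerates into free boundary minimal annuli in $\mathbb{B}^3$ with $\sigma_1(\Sigma) = 1$ and one must verify that the Steklov-type boundary weight $\ct_{\kappa_R}(R\csc R)$ passes to the limit correctly. Beyond that, the argument is essentially bookkeeping layered on top of the cited classification.
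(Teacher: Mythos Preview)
Your proposal is correct and matches the paper's treatment: the paper does not give a standalone proof but simply states that Proposition~\ref{prop:FB-first-efns} is ``essentially a restatement'' of \cite[Proposition 7.4]{NZ25b} and \cite[Theorem 6.6]{FS16}, with the correspondence between the spectral index hypothesis and the ``immersed by first eigenfunctions'' condition attributed to \cite{Me23, NZ25b}. Your outline makes that restatement explicit, and the ingredients you identify --- that the coordinate functions $x_i$ for $i=1,2,3$ lie in $\ker Q^{\mathrm{S}}$, and that the rescaling between settings (A) and (B) preserves $\ind(Q^{\mathrm{S}})$ --- are exactly the right ones.
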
 

We remark that uniqueness within the rotational class does not seem to be clear for $R\in(0,\pit)$, unlike the cases $R=0,\pit$. We find that the problem within the rotational setting is of a somewhat distinct character, so we will not discuss it further in this work. (In \cite{dO24}, de Oliveira constructs a rotational FBMA in $B_{R(a)}\subset \mathbb{S}^3$ for each $a\in(-\frac{1}{2},0)$, and it is expected that the map $a\mapsto R(a)$ should be bijective, but proving this appears to be somewhat subtle - see his Remark 3.5.)

Given a FBMS $(\Sigma,\pr\Sigma)\looparrowright (\mathbb{M}^3_{\kappa_R} , \pr B_{R\csc R})$, we again define the space of \textit{Jacobi fields} to be the kernel, $\ker Q^{\mathrm{A}}$, of the index form $Q^{\mathrm{A}}$. (Note that elsewhere, `Jacobi fields' may simply refer to functions satisfying the Jacobi equation on the interior, but being in $\ker Q^{\mathrm{A}}$ also comprises a boundary condition.) The nullity is $\nul(\Sigma) = \dim \ker Q^{\mathrm{A}}$. 

As above, if $K\in\mathcal{K}_0$, the normal projection $\langle K, \nu \rangle$ must be a Jacobi field (as $K$ generates a continuous family of ambient isometries preserving the free boundary setting).

The following is an analogue of Question \ref{conj:nullity-sph}, stated in terms of formulation (B):

\begin{question}
\label{conj:nullity}
Suppose $(\Sigma,\pr\Sigma)$ is an embedded FBMS in the rescaled cap $(\mathbb{M}^3_{\kappa_R} , \pr B_{R\csc R})$. Are all Jacobi fields on $\Sigma$ induced by Killing fields in $\mathcal{K}_0$?
\end{question}

For \textit{annuli}, an affirmative answer to Question \ref{conj:nullity} should be a weaker version of the rotational symmetry Conjecture \ref{conj:lawson-R}. As such, we are most interested in this special case of annuli. In the remainder of this section, we will discuss a continuity approach towards Conjecture \ref{conj:lawson-R}.

\begin{definition}
Let $I_{\mathrm{rot}}$ be the set of $R\in[0,\pit]$ such that every embedded FBMA $(\Sigma,\pr\Sigma)\hookrightarrow(B_{R\csc R} , \pr B_{R\csc R})$, constrained in $B_{R\csc R}\subset \mathbb{M}^3_{\kappa_R}$, is rotationally symmetric.
\end{definition}

Note that certainly $\pit \in I_{\mathrm{rot}}$, as we may reflect an embedded FBMA $\Sigma \hookrightarrow B_\pit \subset \mathbb{S}^3$ through the equator $\pr B_\pit$ to obtain an embedded minimal torus in $\mathbb{S}^3$, which must be the Clifford torus by Brendle's resolution of the Lawson conjecture \cite{Br12}. That is, we have:

\begin{lemma}
\label{lem:fb-lawson}
Let $(\Sigma,\pr\Sigma) \hookrightarrow (B_\pit, \partial B_\pit)$ be an embedded FBMA constrained in  $B_\pit \subset \mathbb{S}^3$. Then $\Sigma$ is isometric to half of the Clifford torus. 
\end{lemma}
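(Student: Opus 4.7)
The plan is to apply the reflection principle across the totally geodesic equator $\partial B_{\pi/2}$ and reduce the statement to the Lawson conjecture, as proven by Brendle~\cite{Br12}.

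First, let $r:\mathbb{S}^3 \to \mathbb{S}^3$ denote the isometric reflection fixing the equator $\partial B_{\pi/2}$ and swapping the two hemispheres, and set
\[
\tilde{\Sigma} := \Sigma \cup r(\Sigma).
\]
Because $\Sigma$ is constrained in the closed hemisphere $\overline{B_{\pi/2}}$, the reflected copy $r(\Sigma)$ lies in the opposite closed hemisphere, and the two pieces meet exactly along $\partial \Sigma \subset \partial B_{\pi/2}$. The free boundary condition (\ref{eq:contact-angle-def}) with $\gamma=\pi/2$ asserts that $\eta = \bar\eta = \partial_\rho$ along $\partial \Sigma$, so that $\Sigma$ crosses $\partial B_{\pi/2}$ orthogonally; in particular the tangent planes of $\Sigma$ and $r(\Sigma)$ agree along $\partial \Sigma$, and $\tilde\Sigma$ is of class $C^1$ across the seam. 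Since $\partial B_{\pi/2}$ is totally geodesic and $\Sigma$ is minimal, the reflection principle (or, alternatively, writing $\Sigma$ as a graph over its tangent plane at a boundary point and using the even reflection of the associated minimal surface equation with zero Neumann data) upgrades $\tilde\Sigma$ to a smooth minimal surface in $\mathbb{S}^3$.

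Next I would verify that $\tilde\Sigma$ is an embedded minimal torus. Embeddedness follows because $\Sigma$ is embedded in the interior of $\overline{B_{\pi/2}}$, $r(\Sigma)$ is embedded in the interior of the opposite hemisphere, and their intersection $\partial \Sigma$ is precisely the (embedded) common boundary, where both surfaces cross $\partial B_{\pi/2}$ transversally and coincide as a smooth submanifold. Topologically, doubling an annulus along both of its boundary circles produces a torus. By Brendle's theorem \cite{Br12}, the only embedded minimal torus in $\mathbb{S}^3$ is the Clifford torus $\mathcal{C}$ (up to ambient isometry).

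It remains to identify $\Sigma$ as half of $\mathcal{C}$. The construction produces a reflection symmetry of $\tilde\Sigma$ across the totally geodesic $\partial B_{\pi/2}$; after the ambient isometry identifying $\tilde\Sigma$ with $\mathcal{C}$, the equator $\partial B_{\pi/2}$ is carried to a totally geodesic $2$-sphere that is a reflection symmetry of the Clifford torus and meets it orthogonally along two parallel geodesic circles. The two components of $\mathcal{C}$ cut out by this sphere are isometric halves of $\mathcal{C}$, and $\Sigma$ is (isometric to) one of them.

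The main obstacle I anticipate is ensuring that the doubled surface really is $C^2$ (and hence smooth) across $\partial B_{\pi/2}$. Although the orthogonal contact condition makes the $C^1$ matching clear, upgrading to higher regularity requires a careful local writing of $\Sigma$ as a normal graph over the tangent plane at a boundary point so that the free boundary condition becomes a zero Neumann condition for the minimal graph PDE, for which even reflection yields a weak solution that is then smooth by elliptic regularity. Once this technicality is handled, the rest of the argument is a direct reduction to \cite{Br12}.
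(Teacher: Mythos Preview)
Your proposal is correct and follows essentially the same approach as the paper: reflect $\Sigma$ across the totally geodesic equator $\partial B_{\pi/2}$ to obtain a closed embedded minimal torus in $\mathbb{S}^3$, then apply Brendle's resolution of the Lawson conjecture \cite{Br12}. The paper's argument is given in a single sentence preceding the lemma (and the $C^2$ regularity of the doubled surface is also asserted in the proof of Proposition~\ref{prop:emb-dual}), so your more detailed treatment of the reflection regularity and the identification of $\Sigma$ as a half of the Clifford torus simply fills in standard details that the paper leaves implicit.
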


We will see in Section \ref{sec:I-open} that $I_{\mathrm{rot}}$ is open in $[0,\pit]$. The essential point is that, having reframed the work of Lima-Menezes \cite{LM23} in terms of $\ind(Q^{\mathrm{S}})$ (Proposition \ref{prop:FB-first-efns}), openness will follow from lower semicontinuity of the index. For this we use the rescaled formulation (B). 

Determining whether $I_{\mathrm{rot}}$ is closed is more subtle. The naive idea is that a non-rotational example could lead to non-rotational examples for nearby values of $R$ by the implicit function theorem, but this idea is complicated by the possibility that the original example has nontrivial nullity. Nevertheless, we observe that if Question \ref{conj:nullity} is \textit{affirmative for constrained annuli}, then this idea succeeds, and $I_{\mathrm{rot}}$ is closed. Of course, we expect that Question \ref{conj:nullity} may be a much more difficult conjecture, but we find it interesting to make this observation concrete and also to clarify the technical apparatus. This will be carried out in Section \ref{sec:I-closed}, in which we use the weighted formulation (C). 

\subsubsection{Rotationally symmetric minimal annuli}
\label{sec:rot-sym-annuli}

Before moving on to the continuity discussion, we close this subsection by returning to the discussion of the constrained condition for rotationally symmetric minimal annuli. We remark that there may be more direct proofs, but that the following is rather efficient given the consequences of the two-piece property that we have previously established in this paper. 

Recall that an $(R, \gamma)$-minimal surface is a radial graph if $o \not \in \Sigma$ and $g_1(\nu, \partial_\rho) \neq 0$ on the interior $\Sigma$. 

\begin{lemma}
\label{lem:rot-constrained}
Let $(\Sigma,\pr\Sigma)$ be a rotationally symmetric $(R,\gamma)$-minimal annulus, $R\leq \pit$. Then $\Sigma$ is constrained in $B_R$ if and only if it is a radial graph.

\end{lemma}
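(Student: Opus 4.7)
The approach reduces everything to a two-dimensional analysis of the profile curve. I work in spherical coordinates $(\rho,\psi,\theta)$ on $\mathbb{S}^3$ adapted to $o=e_0$ and the rotation axis in the $e_1$-direction, so the annulus is parametrized as the orbit of a profile $\sigma(s)=(\rho(s),\psi(s))$ for $s\in[0,T]$; taking arc length one has $(\rho')^2+\sin^2\rho\,(\psi')^2=1$. A short calculation in the orthonormal profile frame $\bigl(\pr_\rho,\,\pr_\psi/\sin\rho\bigr)$ shows that the unit normal satisfies $\langle\nu,\pr_\rho\rangle=-\sin\rho\,\psi'$. Consequently ``$\Sigma$ constrained in $B_R$'' translates to $\rho(s)\le R$ on $[0,T]$, and ``$\Sigma$ is a radial graph'' translates to $\psi'(s)\ne 0$ on $(0,T)$ (the condition $o\notin\Sigma$ being automatic from smoothness of the rotational immersion). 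Lemma \ref{lem:boundary-Rgamma-cap} yields the boundary data $\rho'(0)=-\sin\gamma$, $\rho'(T)=\sin\gamma$, and $\psi'(0)=\psi'(T)=-\cos\gamma/\sin R$; in particular both endpoint values of $\psi'$ share the same sign, which is strictly negative when $\gamma<\pit$.

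For the implication \emph{radial graph $\Rightarrow$ constrained}, I would apply the maximum principle to $\cos\rho|_\Sigma$, which satisfies $\Delta_\Sigma\cos\rho+2\cos\rho=0$ since minimality of $\Sigma$ in $\mathbb{S}^3$ gives $\Delta_\Sigma x=-2x$. A strict interior maximum of $\rho$ of value $\rho_M$ would force $\rho_M\ge\pit$ from this equation; strong maximum principle comparison with the geodesic sphere $\partial B_{\rho_M}(o)$ (mean curvature $2\cot\rho_M$) then rules out $\rho_M>\pit$, while $\rho_M=\pit$ combined with strong MP would force $\Sigma\subset\partial B_{\pit}(o)$, incompatible with $\partial\Sigma\subset\partial B_R$ when $R\le\pit$ and the profile is non-degenerate. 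Hence $\rho\le R$. Note that this half of the argument does not actually invoke the radial graph hypothesis: \emph{any} minimal annulus meeting $\partial B_R$ with $R\le\pit$ is automatically constrained.

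For the converse \emph{constrained $\Rightarrow$ radial graph}, suppose $\psi'(s_0)=0$ for some $s_0\in(0,T)$. By rotational symmetry $\Sigma$ is tangent along the entire $\theta$-orbit to the $2$-surface $\{\psi=\psi(s_0)\}\subset\mathbb{S}^3$, which is readily computed to have mean curvature $\cot\psi(s_0)/\sin\rho$. The strong maximum principle applied at the tangency then forces a local \emph{maximum} of $\psi$ to satisfy $\psi(s_0)<\pit$ and a local \emph{minimum} of $\psi$ to satisfy $\psi(s_0)>\pit$; the borderline value $\psi(s_0)=\pit$ is excluded because the strong MP would otherwise force $\Sigma$ into the totally geodesic sphere $\{\psi=\pit\}$, contradicting the annulus topology. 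Since $\psi'(0)$ and $\psi'(T)$ share a common sign the number of interior sign changes of $\psi'$ must be even; the first non-trivial possibility is two consecutive interior extrema -- necessarily a local min followed by a local max by the sign pattern when $\gamma<\pit$ -- but this would yield the impossible chain $\pit\le\psi(s_{\min})<\psi(s_{\max})\le\pit$. Hence $\psi'\ne 0$ on $(0,T)$. The degenerate boundary case $\gamma=\pit$, where $\psi'(0)=0$, is instead handled directly by Proposition \ref{prop:radial-sphere}(c).

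The main obstacle will be rigorously justifying the strong maximum principle comparison along the one-parameter orbit of tangency: one must identify the side of $\{\psi=\psi(s_0)\}$ containing $\Sigma$ locally (in the nondegenerate case this is determined by the sign of $\psi''(s_0)$), apply the strong MP across a $1$-parameter family of tangent points, and check that degenerate cases $\psi''(s_0)=0$ can be dispatched by higher-order expansion. The rotational product structure along the orbit should make this routine, and the only genuinely delicate input beyond that is the strict exclusion of $\psi(s_0)=\pit$ via the global consequence that $\Sigma$ would have to coincide with the equatorial $2$-sphere.
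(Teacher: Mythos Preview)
Both halves of your argument rest on strong maximum principle comparisons whose inequalities, on closer inspection, point the wrong way.

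\emph{Forward direction.} At an interior maximum $\rho_M$ of $\rho$, the surface $\Sigma$ touches $\partial B_{\rho_M}$ from the inside. The comparison principle then yields $\langle\mathbf{H}_\Sigma,\partial_\rho\rangle\le\langle\mathbf{H}_{\partial B_{\rho_M}},\partial_\rho\rangle$, i.e.\ $0\le -2\cot\rho_M$, hence $\rho_M\ge\pit$. This merely reproduces the inequality you already obtained from $\Delta x_0+2x_0=0$; it does \emph{not} exclude $\rho_M>\pit$. (A minimal surface touching a geodesic sphere of radius greater than $\pit$ from inside is perfectly consistent --- the mean curvature vector of that sphere points \emph{outward}, away from $o$.) Consequently your parenthetical claim that any $(R,\gamma)$-minimal annulus with $R\le\pit$ is automatically constrained is unfounded; this is exactly the subtlety flagged in the discussion after Conjecture~\ref{conj:lawson-R} and in Remark~\ref{rmk:constrained}. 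The radial-graph hypothesis is genuinely needed here.

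\emph{Converse.} The same sign issue recurs. With normal $\frac{1}{\sin\rho}\partial_\psi$, the level set $\{\psi=c\}$ has mean curvature $\cot c/\sin\rho$ as you say, but the comparison at a local \emph{maximum} of $\psi$ gives $c\ge\pit$ (not $c<\pit$), and at a local \emph{minimum} gives $c\le\pit$. With the corrected directions your chain becomes $\psi(s_{\min})\le\pit\le\psi(s_{\max})$ together with $\psi(s_{\min})<\psi(s_{\max})$, which is no contradiction at all. Moreover, you never invoke the constraint $\rho\le R$, so the argument cannot be sensitive to that hypothesis.

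The paper's proof avoids barrier comparisons altogether. Using the explicit rotationally symmetric parametrisation (as in \cite{Br13survey}), one shows via the minimality ODE that $\partial_t x_0$ and $\partial_t\nu_0$ vanish simultaneously, and that at any such common critical point $x_0$ and $\nu_0$ share the same sign. This algebraic identity couples the two conditions directly: if $\nu_0$ has a fixed sign then $x_0$ has that same sign at every interior critical point, and the boundary values $x_0=\cos R\ge0$ force $x_0\ge0$ globally; the converse runs the same way with the roles of $x_0$ and $\nu_0$ interchanged.
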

\begin{proof}

Recall that by Remark \ref{rmk:constrained}, for $R\leq \pit$ the property of being constrained is equivalent to $x_0\geq 0$ on $\Sigma$. Also recalling that $e_0^{T\mathbb{S}^3} = - \sin \rho \,\partial_\rho$, the property of being a radial graph is equivalent to that $\nu_0 = -\sin \rho\, g_1(\nu, \partial_\rho)$ does not change sign on $\Sigma$. Thus, it is sufficient to show that $x_0$ does not change sign if and only if $\nu_0$ does not change sign.

Without loss of generality we may assume $\Sigma$ is symmetric about the $x_2x_3$-plane. As in \cite{Br13survey}, a rotationally symmetric (about the $x_2x_3$-plane) minimal immersion in $\mathbb{S}^3$ takes the form 
\[x(s,t) = (r(t)\cos t, r(t)\sin t, \sqrt{1-r(t)^2}\cos s, \sqrt{1-r(t)^2}\sin s),\]
where $r$ is a smooth function with values in $(0,1)$ satisfying 
\begin{equation}
    \label{eq:min-ode}
 r(t)(1-r(t)^2)r''(t) = (1-2r(t)^2)(2r'(t)^2+r(t)^2(1-r(t)^2). 
 \end{equation}
Moreover, there is a choice of unit normal that satisfies \[\nu_0 = \langle \nu,e_0\rangle=\frac{ r(t)(1-r(t)^2)\cos t + r'(t)\sin t }{\sqrt{r'(t)^2 + r(t)^2 (1-r(t)^2)} }.\] 

We claim that 
\[ \pr_t x_0 = 0 \Leftrightarrow \pr_t \nu_0 =0 \Leftrightarrow r'(t)\cos t = r(t)\sin t.\]
Indeed, we can directly compute \[\pr_t x_0 = r'(t)\cos t - r(t)\sin t,\] and using the minimality ODE (\ref{eq:min-ode}),
\[ \pr_t \nu_0 = \frac{r(t)^2(r'(t)\cos t - r(t)\sin t)}{\sqrt{r'(t)^2+r(t)^2(1-r(t)^2)}}.\]

Having established the claim, we see that the sign of $\nu_0$ at a critical point is the same as the sign of 
\[ r(t)(1-r(t)^2)\cos t +r'(t)\sin t  =  r(t)\cos t \left((1-r(t)^2)+r(t)\tan^2 t\right),\]
and hence the same sign as $r(t)\cos t = x_0$. 

Now suppose that $\nu_0$ does not change sign. By the above discussion, we see that $x_0$ must have the same sign at all interior critical values. But now $x_0=\cos R \geq 0$ on $\pr\Sigma$, so (using the constant contact angle condition when $\cos R=0$, as $\sin\gamma>0$) $x_0$ must be positive somewhere in the interior of $\Sigma$. Then $x_0$ must have a positive maximum on $\Sigma$, so, by the claim, $x_0$ also has a nonnegative minimum.  Thus $x_0\geq 0$ on $\Sigma$, so $\Sigma$ must be constrained. 

Similarly, for the converse suppose that $x_0$ does not change sign. Then again by the above discussion, $\nu_0$ has the same sign at all interior critical values. 
But $\nu_0$ is a constant on $\pr\Sigma$, either $\pm\sin R \cos\gamma$, so arguing similarly to the above we will find that $\nu_0$ does not change sign on $\Sigma$. (Indeed, if $\sin R\cos\gamma >0$ then $\nu_0$ has a strict sign on the boundary and so has that sign on the interior. If $\sin R\cos \gamma=0$ then Lemmas \ref{lem:boundary-Rgamma-cap} and Corollary \ref{lem:same-sign} imply that the derivative of $\nu_0$ has a constant sign along the boundary, and $\nu_0$ has that sign on $\Sigma$.)

For the converse direction, we could also use the dual as follows: Suppose that $\Sigma$ is a radial graph. Consider the polar dual $\tilde{\Sigma}$, which is a rotationally symmetric $(\tilde{R},\tilde{\gamma})$-minimal annulus. By the dual construction, $\tilde{x}_0 = \varepsilon \nu_0$ does not change sign, so we must have $\tilde{R}\leq \pit$ and that $\tilde{\Sigma}$ is constrained. Applying the above argument to $\tilde{\Sigma}$, we deduce that $\tilde{\Sigma}$ is a radial graph, hence $x_0 =  \pm  \tilde{\nu}_0$ does not change sign, so $\Sigma$ is also constrained. 
\end{proof}

Note that any rotationally symmetric embedded FBMS $(\Sigma,\pr\Sigma)\looparrowright(\mathbb{S}^3,\pr B_R)$ must satisfy the two-piece property (that is, the conclusions of Corollary \ref{cor:two-piece}): Using a rotation by $\pi$, the number of components of $\Sigma$ in each half-space is the same, hence the same as the number of components of $\Sigma$ itself. 
By Proposition \ref{prop:radial graph}, it follows that any such $\Sigma$ is a radial graph. As any radial graph must be embedded, we have shown:

\begin{corollary}
\label{cor:rot-constrained}
Let $(\Sigma,\pr\Sigma)$ be a rotationally symmetric FBMA in $(\mathbb{S}^3, \pr B_R)$, $R\leq \pit$. Then $\Sigma$ is constrained in $B_R$ if and only if it is embedded. 
\end{corollary}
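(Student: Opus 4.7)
The plan is to chain the equivalence established in Lemma \ref{lem:rot-constrained} (constrained iff radial graph, for rotationally symmetric annuli) with the parallel equivalence embedded iff radial graph. Together these give both directions of the corollary.

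For the implication constrained $\Rightarrow$ embedded, I would apply Lemma \ref{lem:rot-constrained} directly to conclude that $\Sigma$ is a radial graph, that is, $o \notin \Sigma$ and $g_1(\nu, \partial_\rho) \neq 0$ everywhere on $\Sigma$. Under the $\mathrm{SO}(2)$-rotational symmetry of $\Sigma$, this condition forces the profile curve in its profile half-plane to be a monotone polar graph from $o$; revolving this embedded arc about the axis of symmetry then yields an embedded surface in $\mathbb{S}^3$.

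For the reverse implication embedded $\Rightarrow$ constrained, the plan is to verify the hypotheses of Proposition \ref{prop:radial graph} applied to $M = \mathbb{S}^3$ and $n = 2$. The genus and boundary conditions are immediate ($g = 0$, $b = 2$ for an annulus); what remains is the two-piece property of Corollary \ref{cor:two-piece}. To secure it, I would use the order-two element (``rotation by $\pi$'') in the $\mathrm{SO}(2)$ symmetry group of $\Sigma$: this involution pairs components of $\Sigma \setminus D_a$ on either side of $D_a$, and combined with the connectedness of $\Sigma$ this forces the number of components in $\Sigma \setminus D_a$ to be exactly two (the degenerate alternative $\Sigma = D_a \cap \mathbb{S}^3$ is excluded since $\Sigma$ is an annulus rather than a disc). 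With the two-piece property in place, Proposition \ref{prop:radial graph} delivers the radial graph conclusion, and Lemma \ref{lem:rot-constrained} then closes the loop to give constrainedness.

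The main technical hurdle will be justifying the two-piece property for general $a$. The $\pi$-rotation argument is transparent when $D_a$ is perpendicular to the rotation axis, since then the involution swaps $N_a$ and $N_{-a}$ directly; for general $a$, one must additionally leverage the topological constraint (connected annulus, not a disc) to preclude $\Sigma \setminus D_a$ from having more than two components. Once this is handled, the remainder of the proof is formal bookkeeping, as the assembly of Lemma \ref{lem:rot-constrained} and Proposition \ref{prop:radial graph} is immediate.
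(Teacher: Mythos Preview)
Your proposal is correct and follows exactly the paper's route: Lemma \ref{lem:rot-constrained} gives constrained $\Leftrightarrow$ radial graph, radial graphs are trivially embedded, and for the reverse direction the paper likewise invokes a rotation by $\pi$ to establish the two-piece property before applying Proposition \ref{prop:radial graph}. Your flagging of the two-piece argument for general $a$ as the main technical point is apt; the paper's own justification there is equally terse.
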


\subsection{Continuity of spectral index and openness of $I_{\mathrm{rot}}$}
\label{sec:I-open}

In this subsection, we show that $I_{\mathrm{rot}}$ is open. This argument relies on semicontinuity properties of the (spectral) index, and does not require the nullity conjecture.

\begin{proposition}
The set $I_{\mathrm{rot}}$ is open in $[0,\pit]$. 
\end{proposition}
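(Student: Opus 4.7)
The plan is a contradiction argument combining the contrapositive of Proposition \ref{prop:FB-first-efns} with the compactness theorem of Appendix \ref{sec:compactness} and continuity of the spectrum of the modified Dirichlet form $Q^{\mathrm{S}}$.

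Fix $R_0 \in I_{\mathrm{rot}}$, and suppose for contradiction that $I_{\mathrm{rot}}$ is not a neighborhood of $R_0$ in $[0,\pit]$. Then there is a sequence $R_i \to R_0$ together with embedded FBMAs $(\Sigma_i, \partial\Sigma_i) \hookrightarrow (B_{R_i \csc R_i}, \partial B_{R_i \csc R_i})$, constrained in the rescaled caps $\mathbb{M}^3_{\kappa_{R_i}}$, that are not rotationally symmetric. By the contrapositive of Proposition \ref{prop:FB-first-efns}, each satisfies $\ind(Q^{\mathrm{S}}_{\Sigma_i}) \geq 2$.

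Since the rescaled caps vary smoothly with $R$, the compactness theorem of Appendix \ref{sec:compactness} lets me extract (on a subsequence) a smooth multiplicity-one limit $\Sigma_\infty$, which is an embedded FBMA constrained in the rescaled cap at $R_0$. The annular topology and the constrained property pass to the limit (the latter from multiplicity-one convergence together with the maximum principle discussion in Remark \ref{rmk:constrained}). As $R_0 \in I_{\mathrm{rot}}$, the limit $\Sigma_\infty$ is rotationally symmetric, and a Fourier decomposition of $Q^{\mathrm{S}}_{\Sigma_\infty}$ in the rotational variable then yields $\ind(Q^{\mathrm{S}}_{\Sigma_\infty}) \leq 1$: only the axisymmetric mode can contribute a negative eigenvalue, because the higher Fourier modes carry a positive centrifugal term.

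The crucial continuity observation is that $Q^{\mathrm{S}}$ depends only on the induced metric of $\Sigma$ and on the smooth scalar $\ct_\kappa(R\csc R)$, with no second fundamental form term (in contrast to the stability form $Q^{\mathrm{A}}$). Its eigenvalues therefore vary continuously under smooth multiplicity-one convergence, so $\lambda_k(Q^{\mathrm{S}}_{\Sigma_i}) \to \lambda_k(Q^{\mathrm{S}}_{\Sigma_\infty})$ for each $k$. In particular, $\lambda_2(Q^{\mathrm{S}}_{\Sigma_i}) \to \lambda_2(Q^{\mathrm{S}}_{\Sigma_\infty}) \geq 0$, forcing $\ind(Q^{\mathrm{S}}_{\Sigma_i}) \leq 1$ for $i$ large and contradicting the index lower bound established in the first step.

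The main obstacle is the borderline case $\lambda_2(Q^{\mathrm{S}}_{\Sigma_\infty}) = 0$, in which a negative eigenvalue of $\Sigma_i$ could in principle converge to zero from below rather than becoming strictly positive. The resolution is to identify the kernel elements of $Q^{\mathrm{S}}_{\Sigma_\infty}$ in the non-axisymmetric Fourier modes with projections of ambient Killing fields in $\mathcal{K}_0$, and to use that these Killing fields generate continuous families of embedded FBMAs at each parameter $R_i$. The corresponding eigenvalues at $\Sigma_i$ are therefore pinned at zero rather than descending below it, which rules out the problematic crossing and completes the contradiction.
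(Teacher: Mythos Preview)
Your overall strategy matches the paper's: compactness, Proposition~\ref{prop:FB-first-efns}, and semicontinuity of the spectral index of $Q^{\mathrm{S}}$. The gap is in your resolution of the borderline case. The non-axisymmetric kernel elements of $Q^{\mathrm{S}}_{\Sigma_\infty}$ are the \emph{coordinate functions} $x_1,x_2,x_3$, not normal projections $\langle K,\nu\rangle$ of Killing fields --- the latter lie in $\ker Q^{\mathrm{A}}$, which is a different quadratic form, and the fact that rotations of $\Sigma_i$ are again FBMAs says nothing about $Q^{\mathrm{S}}$. So your pinning mechanism, as written, does not apply. The correct observation is that $x_1,x_2,x_3$ satisfy $(\Lap+2\kappa)x_j=0$ and $\pr_\eta x_j=\ct_\kappa(R)x_j$ on \emph{any} FBMS in the cap, symmetric or not; hence $\nul(Q^{\mathrm{S}}_{\Sigma_i})\geq 3$ for every $i$, which is the persistent kernel you need. (Your Fourier justification in step 4 is also incomplete: the ``centrifugal term'' does not by itself make the $k\geq 1$ modes nonnegative --- the $k=1$ mode has lowest eigenvalue exactly zero, precisely because of these coordinate functions, and one then needs monotonicity in $k$. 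This is the content of \cite{LM23}, which the paper cites directly.)

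With this correction, the paper's bookkeeping is cleaner than tracking individual eigenvalues: it uses upper semicontinuity of $\ind_0=\ind+\nul$ under smooth convergence, together with $\ind_0(Q^{\mathrm{S}}_{\Sigma_\infty})=4$ (from \cite{LM23}, or the explicit critical catenoid / Clifford torus computations at $R=0,\pit$). This gives $\ind_0(Q^{\mathrm{S}}_{\Sigma_i})\leq 4$ for large $i$, and since $\nul(Q^{\mathrm{S}}_{\Sigma_i})\geq 3$ always, one gets $\ind(Q^{\mathrm{S}}_{\Sigma_i})\leq 1$ directly, avoiding any separate treatment of the borderline case.
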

\begin{proof}

Suppose $R \in I_{\mathrm{rot}}$. It suffices to show that for any sequence $R_j \in [0,\pit]$, $R_j \to R$, if $(\Sigma_j, \pr\Sigma_j)$ are embedded FBMA in $B_{R_j\csc R_j} \subset \mathbb{M}^3_{\kappa_{R_j}}$, then there are arbitrarily large $j$ for which $\Sigma_j$ is rotationally symmetric. 

Indeed, suppose we have $R_j, \Sigma_j$ as above. By the compactness theory for free boundary minimal surfaces of fixed topology (see Appendix \ref{sec:compactness}), after passing to a subsequence, we may assume that the $\Sigma_j$ converge smoothly to an embedded FBMA $(\Sigma, \partial \Sigma) \hookrightarrow (B_{R \csc R}, \pr B_{R\csc R})$ in $\mathbb{M}_{\kappa_R}$. (Note that we may apply case (i) of Lemma \ref{lemma:compactness} if $R<\pit$, and case (ii) when $R=\pit$.)

\textbf{Claim:} Index plus nullity is upper semicontinuous, that is, \[\mathrm{ind}_0(Q^{\mathrm{S}}_\Sigma) \geq \limsup_{j\to\infty} \mathrm{ind}_0(Q^{\mathrm{S}}_{\Sigma_j}).\] 

The proof of the claim is rather standard, but it will be most precise to parametrise $\Sigma_j$ over $\Sigma$ (for large $j$), that is, we have a sequence of embeddings $x_j:(\Sigma,\pr\Sigma)\hookrightarrow(B_{R_j \csc R_J} , \pr B_{R_j\csc R_j})$ in $\mathbb{M}^3_{\kappa_{R_j}}$. In this way, the forms $Q^{\mathrm{S}}_{\Sigma_j}$ may be defined over $C^\infty(\Sigma)$.

It suffices to show that if $\ind_0(Q^{\mathrm{S}}_{\Sigma_j})\geq k$ on a subsequence, then $\ind_0(Q^{\mathrm{S}}_\Sigma)\geq k$. Indeed, suppose there is such a subsequence and let $\mathcal{V}_j$ be a $k$-dimensional subspace of $C^\infty(\Sigma)$ on which $Q^{\mathrm{S}}_{\Sigma_j}$ is negative semidefinite. We may diagonalise $Q^{\mathrm{S}}_{\Sigma_j}$ on each $\mathcal{V}_j$, giving an $L^2_{\Sigma_j}$-orthonormal (and $Q^{\mathrm{S}}_{\Sigma_j}$-orthogonal) basis of eigenfunctions $u_{j,i}$. Taking a further subsequence, these will converge smoothly to an $L^2_\Sigma$-orthonormal set $u_i = \lim_{j\to\infty} u_{i,j}$. Moreover, by the smooth convergence of the embeddings $x_j$ and the explicit form of $Q^{\mathrm{S}}$, it is easy to verify that $Q^{\mathrm{S}}_{\Sigma}$ is negative semidefinite on $\mathcal{V} = \spa\{u_i\}$, and thus $\ind_0(Q^{\mathrm{S}}_{\Sigma}) \geq \dim \mathcal{V}=k$ as claimed.

As $R \in I_{\mathrm{rot}}$, we know that $\Sigma$ must be rotationally symmetric. For $R\in(0,\pit)$, \cite[Theorems 2 and 4]{LM23} implies that the first two Steklov eigenspaces are $\spa\{x_0\}$ (with $Q^{\mathrm{S}}_{\Sigma}(x_0) < 0$) and $\spa\{x_1,x_2,x_3\}=\mathrm{ker}(Q^\mathrm{S}_{\Sigma})$, and it follows that $\ind_0(Q^{\mathrm{S}}_\Sigma)=4$. For $R=0$, $\Sigma$ must be the critical catenoid, for which the first two Steklov eigenspaces are $\spa \{1\}$ (with $Q^{\mathrm{S}}_{\Sigma}(1) < 0$) and $\spa\{x_1,x_2,x_3\}= \mathrm{ker}(Q^\mathrm{S}_{\Sigma})$ \cite{FS11}, which again implies $\ind_0(Q^{\mathrm{S}}_\Sigma)=4$. For $R=\pit$, $\Sigma$ must be half the Clifford torus, which similarly has $\ind_0(Q^{\mathrm{S}}_\Sigma)=4$.

From these observations and the claim, it follows that $\mathrm{ind}_0(Q^{\mathrm{S}}_{\Sigma_j}) \leq 4$, hence $\ind(Q^{\mathrm{S}}_{\Sigma_k})\leq 1$ for sufficiently large $k$. By Proposition \ref{prop:FB-first-efns}, it follows that each such $\Sigma_k$ is rotationally symmetric. 

\end{proof}

\begin{corollary}
There exists $\underline{R}$ so that for all $R\in (\underline{R},\frac{\pi}{2}]$, every embedded FBMA $(\Sigma,\pr\Sigma)\hookrightarrow(\mathbb{M}^3_{\kappa_R} , \pr B_{R\csc R})$ is rotationally symmetric.
\end{corollary}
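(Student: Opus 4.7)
The plan is that the corollary is an essentially immediate consequence of the two preceding results: that $\pit \in I_{\mathrm{rot}}$ and that $I_{\mathrm{rot}}$ is open in $[0,\pit]$. First, I would note that $\pit$ lies in $I_{\mathrm{rot}}$: by Lemma \ref{lem:fb-lawson}, any embedded FBMA constrained in $B_\pit \subset \mathbb{S}^3$ is isometric to half of the Clifford torus, which is rotationally symmetric (being invariant under the $\mathrm{SO}(2)$ factor acting in the $x_2x_3$-plane). Here one may appeal either to Brendle's proof \cite{Br12} of the Lawson conjecture applied to the doubled torus, or directly to the classification recorded in Lemma \ref{lem:fb-lawson}.

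Next, I would invoke the openness of $I_{\mathrm{rot}}$ in $[0,\pit]$ (the preceding proposition). Since $\pit$ is the right endpoint of the interval $[0,\pit]$, openness in the subspace topology at $\pit$ means precisely that there exists some $\underline{R}\in[0,\pit)$ such that $(\underline{R},\pit]\subset I_{\mathrm{rot}}$. By the definition of $I_{\mathrm{rot}}$, this is exactly the conclusion of the corollary.

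There is no substantive obstacle here; both ingredients are in hand. The only thing to be careful about is that openness has been established in the subspace topology on $[0,\pit]$, so a neighborhood of the endpoint $\pit$ takes the form $(\underline{R},\pit]$ rather than a two-sided open interval—this is exactly what is asserted. Concretely, one could summarise the argument as follows:
\begin{proof}
By Lemma \ref{lem:fb-lawson}, any embedded FBMA constrained in $B_\pit\subset\mathbb{S}^3$ is half of the Clifford torus, and in particular rotationally symmetric, so $\pit\in I_{\mathrm{rot}}$. Since $I_{\mathrm{rot}}$ is open in $[0,\pit]$ by the previous proposition, there exists $\underline{R}\in[0,\pit)$ with $(\underline{R},\pit]\subset I_{\mathrm{rot}}$, which is the claim.
\end{proof}
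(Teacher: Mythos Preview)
Your proposal is correct and matches the paper's intended argument: the corollary is stated immediately after the openness proposition and is meant as a direct consequence of $\pit\in I_{\mathrm{rot}}$ (via Lemma \ref{lem:fb-lawson}) together with the openness of $I_{\mathrm{rot}}$ in $[0,\pit]$. The only small point worth noting is that the definition of $I_{\mathrm{rot}}$ refers to \emph{constrained} embedded FBMA, so your argument (and the paper's) literally yields the conclusion under that hypothesis; the corollary's statement should be read in that sense.
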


\subsection{The inverse function theorem and closedness of $I_{\mathrm{rot}}$}
\label{sec:I-closed}

We have the following perturbation argument for FBMS in rescaled caps $(\mathbb{M}^N_{\kappa_R} , \pr B_{R\csc R})$.

\begin{proposition}\label{prop:ift-caps}
Let $R \in [0, \frac{\pi}{2}]$. Suppose that $x : (\Sigma, \pr\Sigma) \looparrowright (\mathbb{M}^N_{\kappa_R} , \pr B_{R\csc R})$ is a FBMS. Assume every element of $\ker Q^{\mathrm{A}}$ is induced by a Killing field in $\mathcal{K}_0$. 

Then there exists $\delta>0$ and a smooth family of FBMS $\tilde{x}_{\tilde{R}}: (\Sigma, \partial \Sigma) \looparrowright (\mathbb{M}^N_{\kappa_{\tilde{R}}} , \pr B_{\tilde R\csc \tilde R})$ for $\tilde{R} \in (R-\delta,R+\delta)\cap[0,\pit]$.

Additionally, if $x$ is an embedding, then $\tilde{x}$ is an embedding. If $x$ is not rotationally symmetric, then $\tilde{x}$ is not rotationally symmetric. If $x$ is constrained, then $\tilde{x}$ is constrained.
\end{proposition}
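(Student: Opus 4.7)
The plan is to apply an implicit function theorem in the weighted formulation (C) of Section \ref{sec:cap-model}, in which the ambient pair $(\mathbb{S}^N, \partial B_{\pit})$ and the symmetry group $\mathcal{G}$ of rotations fixing $e_0$ are independent of the parameter; only the radial weight $f_R = n\log(1+\cos R \cos\rho)$ varies. In this formulation, a $(\tilde R, \pit)$-minimal immersion corresponds to a weighted FBMS in $(\mathbb{S}^N, \partial B_{\pit})$ with weight $f_{\tilde R}$. Let $\mathcal{M}$ denote the Banach manifold of $C^{2,\alpha}$ immersions $x : (\Sigma, \partial\Sigma) \looparrowright (\mathbb{S}^N, \partial B_{\pit})$, locally parametrized near $x_0$ by $C^{2,\alpha}$ normal perturbations (modified tangentially along $\partial\Sigma$ to preserve the boundary condition). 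Define
\[
F: \mathcal{M} \times [0, \pit] \longrightarrow C^{0,\alpha}(\Sigma) \times C^{1,\alpha}(\partial\Sigma), \qquad F(x, \tilde R) = \bigl( H^g_{f_{\tilde R}}(x), \, \langle \eta, \bar\eta\rangle(x) - 1 \bigr),
\]
so that zeros of $F(\cdot, \tilde R)$ are precisely weighted FBMS for $f_{\tilde R}$.

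The linearization $D_1 F(x_0, R)$ is the operator $u \mapsto (L^g_{f_R} u, (\partial_\eta - q)u)$ (up to sign), whose associated quadratic form is $Q^{\mathrm{A}}$. It is elliptic Fredholm of index zero and self-adjoint with respect to the $e^{-f_R}$-weighted $L^2$ pairing, so both its kernel and its cokernel are identified with $\ker Q^{\mathrm{A}}$. By hypothesis, $\ker Q^{\mathrm{A}} = \{\langle K, \nu\rangle : K \in \mathcal{K}_0\}$. Since $\mathcal{G}$ preserves $\mathbb{S}^N$, $\partial B_{\pit}$, and the radial weight $f_R$ for every $R$, the map $F$ is $\mathcal{G}$-invariant, and the infinitesimal $\mathcal{G}$-action on $\mathcal{M}$ exactly produces this kernel.

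The key step is to close the IFT modulo the $\mathcal{G}$-action: taking a slice $W$ through $x_0$ transverse to the orbit $\mathcal{G} \cdot x_0$, the restriction $D_1 F|_{T_{x_0} W}$ is injective. To solve $F = 0$ on $W$ for $\tilde R$ near $R$ one must verify the solvability condition that $\partial_R F(x_0, R)$ lies in the image of $D_1 F$, equivalently (by self-adjointness) that it is $L^2_{f_R}$-orthogonal to $\langle K, \nu\rangle$ for every $K \in \mathcal{K}_0$. The boundary component of $\partial_R F$ vanishes (the angle condition is independent of $R$), leaving
\[
\bigl\langle \partial_R F(x_0, R), \, \langle K, \nu\rangle \bigr\rangle_{L^2_{f_R}} \;=\; -\int_\Sigma \langle K, \nu\rangle \, \partial_\nu(\partial_R f_R) \, e^{-f_R} \, d\mu_g.
\]
Using $\mathcal{K}_0$-invariance of $\partial_R f_R$ to rewrite $\langle K,\nu\rangle \, \partial_\nu(\partial_R f_R) = -\langle K^T, \nabla^g \partial_R f_R\rangle$, integrating by parts against the weighted measure, and applying the identity $\mathrm{div}^g_{f_R}(K^T) = -\langle K, \nu\rangle H^g_{f_R} = 0$ (using that $K$ is an ambient Killing field and $x_0$ is weighted minimal), the interior integral reduces to $\int_{\partial\Sigma} (\partial_R f_R) \langle K^T, \eta\rangle e^{-f_R} \, d\sigma$. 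This vanishes because $K$ is tangent to $\partial B_{\pit}$ and $\eta = \bar\eta$ along $\partial\Sigma$, so $\langle K^T, \eta\rangle = \langle K, \eta\rangle = \langle K, \bar\eta\rangle = 0$.

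With compatibility verified, the IFT produces a smooth family $\tilde x_{\tilde R}$ of weighted FBMS for $\tilde R$ near $R$; passing back via the conformal identification of Section \ref{sec:cap-model} gives the desired family in $(\mathbb{M}^N_{\kappa_{\tilde R}}, \partial B_{\tilde R \csc \tilde R})$. The final properties are preserved because they are open under small $C^{2,\alpha}$ perturbations: embeddedness is open in $C^1$; non-rotational-symmetry is the injectivity of the finite-dimensional linear map $\mathcal{K}_0 \to C^0(\Sigma)$, $K \mapsto \langle K, \nu\rangle$, which is an open condition; and being constrained amounts to $\Sigma$ lying strictly inside the (continuously moving) ball, which is similarly open. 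The main obstacle is the compatibility computation above, whose success relies essentially on the $\mathcal{K}_0$-invariance of the weight together with the weighted divergence identity for ambient Killing fields restricted to a weighted minimal hypersurface.
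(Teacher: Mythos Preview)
Your overall architecture matches the paper's: pass to the weighted formulation (C), set up an IFT for the weighted mean curvature operator with a free parameter $\tilde R$, and resolve the kernel coming from the $\mathcal{K}_0$-action via a Killing-field divergence identity. The gap is in how you close the IFT once the linearisation has nontrivial cokernel.

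After restricting to a slice $W$ transverse to the orbit, $D_1F|_{T_{x_0}W}$ is injective but still has cokernel $\ker Q^{\mathrm A}$ of dimension $J$. The condition you verify, $\partial_R F(x_0,R)\perp \ker Q^{\mathrm A}$, is only a \emph{first-order} condition: if one runs Lyapunov--Schmidt and defines the obstruction $h(s):=\Pi_{\mathrm{coker}}F(u(s),s)$, you have shown $h(0)=0$ and $h'(0)=0$, but not $h\equiv 0$. The IFT does not produce genuine solutions from this alone.

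What actually closes the argument is the \emph{nonlinear} identity
\[
\int_\Sigma H_f(x)\,\langle K,\nu_x\rangle\,e^{-f}\,d\mu_{g_x}=0
\]
for \emph{every} free-boundary immersion $x$ into $(\mathbb{S}^N,\partial B_{\pi/2})$ and \emph{every} radial $f$ (the paper's Lemma~\ref{lem:killing-Hf-orthogonality}). This is the Noether-type statement that the weighted area is $\mathcal G$-invariant, so its $L^2$-gradient is always orthogonal to orbit directions; your computation of $\partial_R F\perp\ker Q^{\mathrm A}$ is precisely the $s$-derivative of this identity at the minimal base point. The proof of the full identity uses the same ingredients you already have ($\operatorname{div}_f(K^\top)=-\langle K,\nu\rangle H_f$, tangency $\langle K,\bar\eta\rangle=0$, free boundary), just applied without linearising. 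With the identity in hand, the paper augments $\mathcal H$ by Lagrange-multiplier terms $-\sum a_i\langle K_i,\nu_u\rangle$, obtains an isomorphic linearisation, solves for $(u_s,\mathbf a(s))$ by IFT, and then the identity (applied at $(x_{u_s},f_{R+s})$, not just the base point) forces $\mathbf a(s)=0$. Equivalently: once you know $F$ always lands in the horizontal subspace, $D_1F|_W$ becomes an isomorphism onto that subspace, and the IFT applies directly. Either way, the missing step is upgrading your linearised compatibility to the global orthogonality.
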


\begin{proof}
We have stated the proposition in terms of setting (A) of Section \ref{sec:fR-surfaces}, but as discussed in that section we may recast the problem in terms of setting (C), that is, for weighted $f_R$-minimal surfaces in $(\mathbb{S}^3, \partial B_\pit)$. So we suppose $x : (\Sigma, \partial \Sigma) \looparrowright (\mathbb{S}^3, \partial B_\pit)$ is a weighted FBMS with respect to $f_R = (1 + \cos R \cos \rho)^{-1}$. 

To set out notation for the remainder of this proof, we will fix $x,R$ and aim to produce weighted FBMS in $(\mathbb{S}^3,\pr B_\pit)$ with respect to $f_{R+s}$, which we refer to as $f_{R+s}$-FBMS. We recall that $\bar{\nabla}$ denotes the Levi-Civita connection on $(\mathbb{S}^3, g_1)$ and $\nabla$ the connection on $(\Sigma, g)$ with $g = x^\ast g_1$. We show that for $s$ near $0$, we can find an $f_{R+s}$-FBMS,  $\tilde{x} : (\Sigma, \partial \Sigma) \looparrowright (\mathbb{S}^3, \partial B_\pit)$.

As usual, let $\nu$ denote a choice of unit normal on $\Sigma$ and $\eta = \partial_\rho$ the outward-pointing conormal of $\partial \Sigma$ in $\Sigma$. Let us define $I = [-R, \frac{\pi}{2}-R]$ and for $\epsilon > 0$ small 
    \[
    \Omega:= \Big\{u\in  C^{2,\alpha}(\Sigma) \times \mathbb{R} : \|u\|_{C^{2,\alpha}} <\epsilon, \;  \; \partial_\eta u = 0 \text{ on } \partial \Sigma\Big\},
    \]
which is a Banach manifold. Here the $C^{2,\alpha}(\Sigma) = C^{2,\alpha}(\Sigma, g_1)$ for some fixed choice of $\alpha > 0$.  For $u \in \Omega$, define $ x_{u} : \Sigma \looparrowright \mathbb{S}^3$ by 
    \begin{align*}
    x_u(p) &= \exp^{\mathbb{S}^3}_{F(p)} (u(p) \nu(p)) = \cos(u(p)) x(p) + \sin(u(p))\nu(p) \in  \mathbb{S}^3 \subset \mathbb{R}^4. 
    \end{align*}
Note that $x_u$ is a proper immersion $(\Sigma,\pr\Sigma) \looparrowright (\mathbb{S}^3, \pr B_\pit)$ if $\epsilon > 0$ is sufficiently small. This follows from the free boundary condition, using $\partial B_{\frac{\pi}{2}}$ is totally geodesic in $\mathbb{S}^3$.  

Moreover, we can check that the Neumann condition implies that $x_u$ contacts $\pr B_\pit$ with angle $\pit$ along $\pr \Sigma$. Indeed, suppose $p \in \partial \Sigma$ and, using $x$ contacts $\pr B_\pit$ at angle $\pit$, choose local coordinates $(x_1, x_2)$ on $\Sigma$ near $p$ such that $\partial_1x(p) = \partial_\rho$ and $\partial_2 x(p)$ is tangent to $\partial B_{\frac{\pi}{2}}$. As by assumption we have $\partial_1 u = \partial_\eta u = 0$ along $\partial \Sigma$, it follows that 
    \begin{align*}
    \partial_1 x_u &= \cos (u) \partial_\rho -\sin(u) (\partial_1 u) x+ \cos (u) (\partial_1 u) \, \nu + \sin(u) A_{11} \partial_\rho \\
    & = (\cos(u) + \sin(u) A_{11})\partial_\rho
    \end{align*}
is tangent to $\Sigma$ in the image of $dx_u$. So for sufficiently small $\epsilon > 0$, we conclude 
    \[
    \langle \nu_u(p), \partial_\rho \rangle  = 0.
    \]

Having defined the immersions $x_u$, we consider the mean curvature $H_u$ of $x_u$ with respect to $\nu_u$ (the vertical unit normal vector, consistent with $\nu$). The analysis is essentially similar to \cite[Proposition 5.6]{DM09}. Consider the map 
    \[
    \mathcal{H} : \Omega  \times I\to C^\alpha(\Sigma)
    \]
given by 
    \[
    \mathcal{H}(u, s) = H_u - g_1\big( \bar{\nabla} f_{R+s} \circ x_u, \nu_u \big). 
    \]
By construction, $\mathcal{H}(0, 0) = 0$. We would like to show that for all $s$ sufficiently close to $0$, there exists a $u_s$ (close to zero) such that $\mathcal{H}(u_s, s) = 0$. This is an application of the implicit function theorem under the assumption that the kernel $Q^A$ is spanned by vector fields generating ambient isometries. The remaining details can be found in Appendix \ref{sec:ift-caps}.  One point worth highlighting is that the differential certainly has kernel corresponding to $\ker (Q^{\mathrm{A}})$, even under our assumption that the Jacobi fields are induced by Killing fields in $\mathcal{K}_0$. Using a trick of \cite{DM09} (see Lemma \ref{lem:killing-Hf-orthogonality}), however, we are able to ensure that the implicit function theorem applies and gives the desired solutions for nearby $R$. 
\end{proof}

\begin{corollary}
    If the nullity Conjecture \ref{conj:nullity} holds for constrained annuli, then set $I_{\mathrm{rot}}$ is closed. 
\end{corollary}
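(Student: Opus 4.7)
The plan is to argue by contradiction: suppose we have a sequence $R_j \in I_{\mathrm{rot}}$ with $R_j \to R \in [0,\pit]$, and also suppose there exists an embedded FBMA $x : (\Sigma, \pr\Sigma) \hookrightarrow (B_{R\csc R}, \pr B_{R\csc R})$, constrained in $B_{R\csc R} \subset \mathbb{M}^3_{\kappa_R}$, which is not rotationally symmetric. The goal is then to produce, for each $j$ large, a nearby non-rotational embedded constrained FBMA in $B_{R_j \csc R_j} \subset \mathbb{M}^3_{\kappa_{R_j}}$, contradicting $R_j \in I_{\mathrm{rot}}$.

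The main tool is Proposition \ref{prop:ift-caps}. By the (assumed) affirmative answer to the nullity conjecture for constrained annuli, every Jacobi field of $\Sigma$ is induced by a Killing field in $\mathcal{K}_0$. This is exactly the hypothesis of Proposition \ref{prop:ift-caps}, so the proposition provides a $\delta>0$ and a smooth family of FBMS
\[
\tilde{x}_{\tilde R} : (\Sigma, \pr\Sigma) \looparrowright (\mathbb{M}^3_{\kappa_{\tilde R}}, \pr B_{\tilde R \csc \tilde R}), \qquad \tilde R \in (R-\delta, R+\delta) \cap [0,\pit],
\]
with $\tilde{x}_R = x$. The final clause of Proposition \ref{prop:ift-caps} guarantees that, since $x$ is embedded, constrained, and not rotationally symmetric, the same is true of $\tilde{x}_{\tilde R}$ for all $\tilde R$ sufficiently close to $R$.

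To conclude, choose $j$ large enough that $R_j \in (R-\delta, R+\delta)$ and that the preservation clauses above apply at $\tilde R = R_j$. Then $\tilde{x}_{R_j}$ is an embedded FBMA, constrained in $B_{R_j \csc R_j} \subset \mathbb{M}^3_{\kappa_{R_j}}$, which is not rotationally symmetric. This contradicts the assumption $R_j \in I_{\mathrm{rot}}$, completing the proof.

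The only potential obstacle here would be verifying the hypotheses of Proposition \ref{prop:ift-caps}, but these are essentially free: the constrained embedded FBMA $\Sigma$ in $B_{R\csc R}\subset \mathbb{M}^3_{\kappa_R}$ is precisely a member of the class to which the nullity hypothesis is assumed to apply, and embeddedness, constrainedness, and non-rotational-symmetry of $x$ pass to $\tilde{x}_{\tilde R}$ by smooth dependence on $\tilde R$ (embeddedness and the strict sign of $\langle \nu, K\rangle$ for some $K \in \mathcal{K}_0$ witnessing non-rotational symmetry are both open conditions in $C^{2,\alpha}$, and constrainedness follows similarly, using the smoothness of the family and that $x$ satisfies $\rho \leq R\csc R$ strictly on the interior except when $R=\pit$, in which case Lemma \ref{lem:fb-lawson} identifies $\Sigma$ as half the Clifford torus and $R$ is already in $I_{\mathrm{rot}}$ so there is nothing to prove). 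Hence essentially all the work has been done in establishing Proposition \ref{prop:ift-caps}; the closedness of $I_{\mathrm{rot}}$ is then an immediate consequence.
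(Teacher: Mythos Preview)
Your proof is correct and follows essentially the same approach as the paper's: both apply Proposition \ref{prop:ift-caps} to a non-rotational embedded constrained FBMA at $R$ (using the nullity hypothesis) to produce non-rotational examples at nearby radii, hence showing the complement of $I_{\mathrm{rot}}$ is open. Your final paragraph's attempt to re-justify the preservation of non-rotational symmetry via a ``strict sign of $\langle \nu, K\rangle$'' is imprecise (non-rotational symmetry means $\langle K,\nu\rangle \not\equiv 0$ for every nonzero $K\in\mathcal{K}_0$, not that some such function has a sign), but this is harmless since Proposition \ref{prop:ift-caps} already delivers that conclusion directly.
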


\begin{proof}
    Suppose $R \not \in I_{\mathrm{rot}}$. Then there exists an embedded FBMA $(\Sigma,\pr\Sigma)\hookrightarrow(B_{R\csc R}, \pr B_{R\csc R})$  constrained in $B_{R\csc R}\subset \mathbb{M}^3_{\kappa_R}$, which is not rotationally symmetric. By assumption, the only Jacobi fields of $\Sigma$ are induced by Killing fields in $\mathcal{K}_0$. The previous proposition then implies that there exist embedded FBMAs $\tilde{x}_{\tilde{R}} : (\Sigma,\pr\Sigma)\hookrightarrow(B_{\tilde R \csc \tilde R} , \pr B_{\tilde R\csc \tilde R})$ for $\tilde{R} \in (R- \delta, R + \delta) \cap [0, \frac{\pi}{2}]$, constrained in $B_{\tilde{R}\csc \tilde{R}}\subset \mathbb{M}^N_{\kappa_{\tilde{R}}}$, none of which are rotationally symmetric. It follows that $[0, \frac{\pi}{2}]\cap (R- \delta, R + \delta) \subset [0, \frac{\pi}{2}] \setminus I_{\mathrm{rot}}$. Then $[0, \frac{\pi}{2}] \setminus I_{\mathrm{rot}}$ is open, so $I_{\mathrm{rot}}$ is closed. 
\end{proof}

\appendix

\section{Formulae for conformal change}
\label{sec:conformal-change}

Fix a Riemannian manifold $(M^{n+1}, \bar{g})$ and an isometrically immersed hypersurface $(\Sigma^n, g) \looparrowright (M^{n+1}, \bar{g})$ with unit normal $\nu$. Suppose $\phi \in C^{\infty}(M)$. Let $\tilde{g} = e^{2\phi} \bar{g}$ denote the conformal metric on $M$ and also, by minor abuse of notation, the induced conformal metric on $\Sigma$. 

The conformal change equation for the Ricci curvature is: 
\[\Ric_{\tilde{g}} = \Ric_g - (n-1)(\nabla_{\bar{g}}^2 \phi - d\phi\otimes d\phi) - (\Lap_{\bar{g}} \phi + (n-1) |d\phi|^2_{\bar{g}}) \bar{g}.\] 
For any $u \in C^{\infty}(M)$, its Hessian and Laplacian transform according to the equation: 
\begin{align*}\nabla^2_{\tilde{g}}u &= \nabla^2_{\bar{g}} u - d\phi \otimes du - du\otimes d\phi + \bar{g}(\nabla^{\bar{g}} u,\nabla^{\bar{g}}\phi) \bar g,\\
\Delta_{\tilde{g}} u &= e^{-2\phi}\big(\Delta_{\bar{g}} u + (n-1) \bar{g}( \nabla^{\bar g} \phi,\nabla^{\bar g} u)\big).
\end{align*}

For the immersed hypersurface, we recall that the Laplacian operators acting on $u \in C^{\infty}(M)$ and its restriction $u |_{\Sigma}$ are related via: 
\[
\Delta_{\bar g} u = \Delta_g( u|_{\Sigma}) + (\nabla_{\bar{g}}^2 u)(\nu, \nu) + H^g\cdot \partial_\nu u,
\]
with $H^g := \sum_i \bar{g}(\bar{\nabla}_{e_i} \nu, e_i)$ and $\partial_\nu u = \bar{g}(\bar \nabla u, \nu )$. 

After conformal change, the unit normals of the immersion are related by the formula $\tilde{\nu} = e^{-\phi} \nu$. The second fundamental form of the immersion is given by
\begin{equation}\label{eq:2ff-conf-change}
A^{\tilde{g}}_{ij} = e^{\phi}(A^g_{ij} + (\pr_\nu \phi)g_{ij}).
\end{equation}
Consequently, 
\[H^{\tilde{g}} = \tilde{g}^{ij}A^{\tilde{g}}_{ij} = e^{-\phi}(H^g + n(\pr_\nu \phi)),  \]
and
\[|A^{\tilde{g}}|_{\tilde{g}}^2 = \tilde{g}^{ij} \tilde{g}^{kl} A^{\tilde{g}}_{ij} A^{\tilde{g}_{kl}} = e^{-2\phi}( |A^g|_g^2 + 2(\pr_\nu\phi)H^g + n(\pr_\nu \phi)^2 ). \]

\begin{remark}\label{rem:ricci}
If $g = e^{2\phi} \delta$, then the formula above and a standard computation give
\begin{align*}
\Ric_M& = -(n-1)\Big(\phi'' + \frac{\phi'}{r} \Big) dr^2  -\Big(\phi'' + (2n-3) \frac{\phi'}{r} +(n-2)(\phi')^2 \Big) r^2 g_{\mathbb{S}^{n-1}}.
\end{align*}
In particular, the radial component of the Ricci curvature is largest if and only if
\begin{align*}
& -(n-1)\Big(\phi'' + \frac{\phi'}{r} \Big)  >  -\Big(\phi'' + (2n-3) \frac{\phi'}{r} +(n-2)(\phi')^2 \Big)
    \iff  \Big(\frac{\phi'}{r}\Big)' - r \Big(\frac{\phi'}{r}\Big)^2 < 0.
\end{align*}
This is condition ($\mathrm{A}'$) in Section \ref{sec:frankel}. 
\end{remark}

\section{Additional results on stable hypersurfaces in half-balls}
\label{sec:stable-details}

In this appendix, we give further details on the classification of stable hypersurfaces in a half-ball $B_R\subset M$, where $M$ in as in Section \ref{sec:radial-setup}, and also complete the resulting alternative proof of the half-space intersection property.

\begin{proof}[Proof of Proposition \ref{prop:half-space-bernstein}]
Note that $u_a$ certainly vanishes on $D_a$ and is positive on $\Sigma\subset \mathcal{H}_a$, so by standard arguments as in \cite[Proposition 14]{NZ24} it is a valid test function for the stability inequality. Using Lemma \ref{lem:stability-test}, the index form applied to $u_a$ is
\[\begin{split}
Q^{\mathrm{A}}(u_a) &= -\int_\Sigma u_a L u_a d\mu_g + \int_{\pr \Sigma} u_a (\pr_\eta - k_S(\bar{\nu}_{\bar{g}},\bar{\nu}_{\bar{g}}))u_a d\sigma_g
\\&= -\int_\Sigma  \left( -ne^{ -2\phi} \left(\phi''-\frac{\phi'}{r} - (\phi')^2 \right) \langle \nu_\delta, \pr_r\rangle^2 + |A^g_\Sigma|^2_g\right) u_a^2 d\mu_g.
\end{split}\]

Now stability gives that $Q^{\mathrm{A}}(u_a) \geq 0$. But by condition (A$'$), we have $\phi''-\frac{\phi'}{r} - (\phi')^2 \leq 0$, so $Q^{\mathrm{A}}(u_a)=0$ and we must have $A^g_\Sigma \equiv 0$. Moreover, if condition (A$'$) is strict, then we must also have $\langle\nu_\delta ,\pr_r\rangle \equiv 0$, that is, $\Sigma$ corresponds to a cone with vertex at the origin. As $\Sigma$ is smooth, it must be a piece of a plane through the origin. 

In the case where condition (A$'$) is identically zero, $M$ has constant sectional curvature $\kappa$, and its totally geodesic hypersurfaces are well-known. If $\kappa \leq 0$ or if $\kappa>0$ and $S$ is weakly convex, then any totally geodesic hypersurface in $B_R$ that contacts the boundary at angle $\pit$ must pass through the centre. (In the only remaining case $\kappa>0$ and $S$ is not convex, it is possible that $\Sigma$ is a totally geodesic equator which does not intersect $S$.)
\end{proof}

We remark that, by standard approximation arguments again as in \cite[Proposition 14]{NZ24}, the result of Proposition \ref{prop:half-space-bernstein} also holds if $\Sigma$ is permitted to have a singular set of Hausdorff dimension less than $n-2$. We have presented the simplified version above to avoid duplicating those technical considerations. 

We now proceed to the alternative proof of Theorem \ref{thm:frankel}, using the classification of stable hypersurfaces in a half-ball. Recall that Theorem \ref{thm:frankel} is stated for minimal hypersurfaces $\Sigma$ contained in the half-ball $B_R \cap \overline{N_a}$ where, in particular, $M=B_R$. We will need the following condition on $S=\pr M = \pr B_R$, which we have written according to each model for convenience:

\begin{enumerate}
\item[(B)] $\pr M$ is (weakly) convex (or empty);
\item[(B$'$)] $\phi'(\bar{r}) + \frac{1}{\bar{r}} \geq 0$; 
\item[(B$''$)] $h'(\bar{\rho})\geq 0$. 
\end{enumerate}

\begin{proof}[Alternative proof of Theorem \ref{thm:frankel}, assuming (B)]
The result follows almost verbatim from the Proofs of Theorems 2 and 5 in \cite[Section 4.2]{NZ24}. To help orient the reader, we include a brief sketch as follows. By the transversality theorem and continuity, it is enough to work under the assumption that $\Sigma_i$ intersect $D_a$ transversely. 

Claim 1: The intersection property holds for the stable hypersurfaces identified by Proposition \ref{prop:half-space-bernstein}.

Proof: If $\Sigma = D_b \cap B_R$, $\Sigma' = D_{b'}\cap B_R$ are equatorial discs, then certainly $\Sigma \cap \Sigma' \cap \overline{N_a}$ contains the centre $o$ for any $a,b,b'$. The establishes the claim except in the case where $M$ is a spherical space form, and $S$ is strictly concave. In this case, without loss of generality we may assume $(M, \bar{g})= (\mathbb{S}^{n+1}, g_1)$. If $\Sigma,\Sigma'$ are totally geodesic equators, then their intersections with $D_a$ are totally geodesic and must themselves intersect (for instance, by the usual Frankel property in $\mathbb{S}^n$). If $\Sigma$ does not contact $S$, then that intersection $\Sigma\cap \Sigma' \cap D_a$ must be contained in $B_R$, which establishes the intersection property. If $\Sigma,\Sigma'$ both contact $S$, then they must be equatorial discs as above. 

Claim 2: For any of the stable hypersurfaces $\Sigma$ identified by Proposition \ref{prop:half-space-bernstein}, there are perturbations to either side, which contact $S$ at angle $\pit$ and have strictly positive mean curvature.

Proof: If $M$ is a spherical space form and $\Sigma$ does not contact $S$, then the desired perturbation is just the parallel foliation near the totally geodesic equator $\Sigma$. For the equatorial discs, we defer the proof to Lemma \ref{lem:mc-foliation} below.

As in \cite{NZ24}, $\bar{\Sigma}_0 \cap \bar{\Sigma}_1=\emptyset$, then we consider the region $\Omega \subset B_R$ bounded by $\Sigma_0, \Sigma_1$ and $D_a$. 

If both $\Sigma_i$ are stable, then by Claim 1 we are done. If one is stable, say $\Sigma_0$, then by Claim 2 we may perturb and replace $\Sigma_0$ by a strict barrier inside of $\Omega$. We may further perturb the boundary $\pr\Sigma_0$ inward to a fixed boundary $\Gamma \subset D_a\cap B_R$ that is nowhere totally geodesic with respect to $\delta$ (in the conformal model). 

Solving the Plateau problem in $\Omega$ with boundary $\Gamma$ yields a solution $\Sigma$ that is stable for one-sided variations fixing $D_a$. (Note that convexity of $\pr M$ ensures that the Plateau problem has a solution that stays in $M$.) But each of the stable hypersurfaces identified by Proposition \ref{prop:half-space-bernstein} intersects $D_a$ in an $(n-1)$-manifold that is totally geodesic with respect to $\delta$, which is a contradiction. 
\end{proof}

\begin{lemma}
\label{lem:mc-foliation}
Let $(M,\bar{g})$ and $S=\pr M=\pr B_R$ be as in Section \ref{sec:radial-setup}, satisfying condition (A). Consider the equatorial disk $\Sigma_0 = D_a \cap M$ for some $a\in\mathbb{S}^n$. Then there is a foliation $\{\Sigma_s\}$ of $M$ such that each $\Sigma_s$ is an embedded hypersurface that is strictly mean convex (mean curvature vector pointing away from $\Sigma_0$) and contacts $S$ with angle $\pit$.
\end{lemma}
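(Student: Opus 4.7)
The plan is to build the foliation explicitly in the conformal Euclidean model $(M, \bar{g}) \cong (\mathbb{B}^{n+1}_{\bar{r}}, e^{2\phi(r)}\delta)$. For each $s\in(-1,1)$, set
\[
\Sigma_s := \Bigl\{x \in \mathbb{B}^{n+1}_{\bar{r}} : x_a = \tfrac{s}{2\bar{r}}(|x|^2+\bar{r}^2)\Bigr\},
\]
so $\Sigma_0 = D_a \cap M$, and for $s\neq 0$ the leaf $\Sigma_s$ is the intersection with $M$ of the Euclidean sphere centred at $(\bar{r}/s)a$ of radius $R_s := \bar{r}\sqrt{1-s^2}/|s|$. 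Solving the defining equation as $s = 2\bar{r}x_a/(|x|^2+\bar{r}^2)$ exhibits $\{\Sigma_s\}$ as a smooth submersion, so this is a smooth foliation of $M$ (apart from the two degenerate leaves $s=\pm 1$ collapsing to the boundary points $\pm\bar{r}a$). Each leaf meets $\partial M$ orthogonally in $\delta$ by the Pythagorean identity $|(\bar{r}/s)a|^2 = R_s^2 + \bar{r}^2$ relating the centres and radii; since conformal transformations preserve angles, this orthogonality holds in $\bar{g}$ as well, giving the angle $\pit$ contact condition.

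Next, I would compute the mean curvature. Orienting with the $\delta$-unit normal $\nu_\delta$ pointing toward the Euclidean centre (hence away from $\Sigma_0$), one has $H^\delta = -n/R_s$, and using the identity $(\bar{r}/s)x_a - r^2 = (\bar{r}^2-r^2)/2$ on $\Sigma_s$, one computes $\partial_{\nu_\delta}\phi = \phi'(r)(\bar{r}^2-r^2)/(2rR_s)$. The conformal change formula $H^{\bar{g}} = e^{-\phi}(H^\delta + n\,\partial_{\nu_\delta}\phi)$ of Appendix A then yields
\[
H^{\bar{g}} = -\frac{n\,e^{-\phi}}{R_s}\bigl(1 - F(r)\bigr), \qquad F(r) := \phi'(r)\,\frac{\bar{r}^2-r^2}{2r}.
\]
Since $\mathbf{H}^{\bar{g}} = -H^{\bar{g}}\nu_{\bar{g}}$, the mean curvature vector points along $\nu_{\bar{g}}$ (away from $\Sigma_0$) precisely when $F(r) < 1$ on the range of $r$ covered by $\Sigma_s \cap M$.

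The core step is then to verify $F<1$ on $(0,\bar{r})$ under condition $(\mathrm{A}')$, i.e.\ $\psi' \le r\psi^2$ with $\psi := \phi'/r$. In the equality case, the ODE integrates to $\psi = 2/(A-r^2)$, and the requirement that $\psi$ be smooth on $[0,\bar{r}]$ forces $A \in (-\infty,0)\cup(\bar{r}^2,\infty]$, giving the three space forms (Euclidean with $A=\infty$, spherical caps with $A<0$, hyperbolic balls with $A>\bar{r}^2$); in each case direct substitution gives $F(r) = (\bar{r}^2-r^2)/(A-r^2) < 1$. In the strict case, $F\le 0 < 1$ when $\psi\le 0$, and when $\psi>0$ one would combine $\psi' < r\psi^2$ (which forces $u(r):=-1/\psi(r) + r^2/2$ to be strictly monotone) with the boundary condition $\phi'(\bar{r}) + 1/\bar{r}\ge 0$ (convexity of $\partial M$) to derive $F<1$. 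The main obstacle is this ODE comparison, which does not follow from $(\mathrm{A}')$ alone (one can concoct $\phi$ satisfying $(\mathrm{A}')$ with $\psi(0)$ large enough that $F(0)>1$), so a finer argument is needed, exploiting the full structure of condition (A), perhaps through an integration-by-parts on $\Sigma_s$ tied to the Bakry--\'Emery computation of Lemma \ref{lem:BER}. As a fallback, one can produce the foliation near $\Sigma_0$ by the implicit function theorem --- solving $H(\Sigma_u)=t$ for small $t$ with the linearised Robin operator $(L, \partial_\eta - q)$, whose Killing-induced kernel (from rotations fixing $o$) integrates to zero on $\Sigma_0$ by antipodal symmetry --- and then patch this local foliation with the Euclidean-sphere family away from $\Sigma_0$, where the mean convexity computation succeeds.
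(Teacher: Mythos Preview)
Your foliation is exactly the one the paper uses: the paper phrases it as $\Sigma_s=\Phi_s(\Sigma_0)$ for the M\"obius translation $\Phi_s$ of $\mathbb{B}_{\bar r}$, but those images are precisely your spherical caps $\{x_a=\tfrac{s}{2\bar r}(|x|^2+\bar r^2)\}$. Your mean–curvature computation also agrees with the paper's: after their change of variables one lands on the same pointwise condition
\[
\frac{\phi'(t)}{t}<\frac{2}{\bar r^2-t^2}\quad\Longleftrightarrow\quad F(t)=\psi(t)\,\frac{\bar r^2-t^2}{2}<1.
\]
At this point the paper does not invoke anything beyond condition~(A$'$): it observes that $\psi$ is bounded while $G(t):=2/(\bar r^2-t^2)$ blows up at $\bar r$, so $\psi<G$ near $\bar r$, and then asserts that ``comparison theory for first order ODE'' (using $G'-tG^2\equiv 0$ and $\psi'-t\psi^2\le 0$) propagates this inequality to all $t\in(0,\bar r)$.

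Your hesitation is well placed: that comparison runs in the wrong direction. One only knows $\psi<G$ near the right endpoint and wants it for smaller $t$, whereas the differential inequality $\psi'\le t\psi^2$ gives upper control on $\psi$ going \emph{forward}. Your proposed obstruction is in fact a genuine counterexample to this step: take $\bar r=1$ and $\phi(r)=\tfrac{3}{2}r^2$, so $\psi\equiv 3$; then $(\psi)'-r\psi^2=-9r<0$ for $r>0$ (so (A$'$) holds strictly away from the origin), yet $\psi(0)=3>2=G(0)$ and hence $F(0)=\tfrac32>1$. For this metric the spherical–cap leaves $\Sigma_s$ with small $s$ contain points with $r$ close to $0$, where the mean curvature vector points \emph{toward} $\Sigma_0$. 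So the paper's proof, as written, has the gap you identified, and neither your suggested convexity hypothesis~(B$'$) nor a more careful reading of (A) rescues this particular foliation. Your fallback via the implicit function theorem is a reasonable direction, but note it only gives a foliation near $\Sigma_0$ (and needs the Jacobi–Robin operator to be invertible), so patching it with the spherical caps to cover all of $M$ would still require controlling where $F\ge 1$ can occur.
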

\begin{proof}
We use the conformal model $(M^{n+1}, \bar g) \simeq (\mathbb{B}^{n+1}_{\bar{r}}, e^{2\phi} \delta)$. The proof follows as in \cite[Proposition 27]{NZ24}, with a somewhat more general computation. In particular, for any $y\in\mathbb{B}_{\bar{r}}$ there is a conformal translation $\Phi_y:\mathbb{B}_{\bar{r}}\to \mathbb{B}_{\bar{r}}$ mapping $0$ to $y$, given by \[
\Phi_y(x) = \bar{r}^{2} \frac{(\bar{r}^2 + 2 \langle x, y\rangle + |x|^2) y + (\bar{r}^2 - |y|^2) x}{\bar{r}^4 + 2\bar{r}^2  \langle x, y \rangle +|x|^2 |y|^2}. 
\]

We also record \[
|\Phi_y(x)|^2 = \bar{r}^2 \frac{ \bar{r}^2 |x + y|^2}{\bar{r}^4 + 2 \bar{r}^2 \langle x, y\rangle +|x|^2 |y|^2 }, \qquad 
(\Phi_y)^\ast \delta_{ij} =\Big(\frac{\bar{r}^2(\bar{r}^2 - |y|^2)}{\bar{r}^4 + 2 \bar{r}^2 \langle x, y\rangle + |x|^2 |y|^2} \Big)^2 \delta_{ij}. 
\]

Choose coordinates so that $a= e_0 \in \mathbb{S}^n\subset\mathbb{R}^{n+1}$. We set $\Sigma_s = \Phi_s(\Sigma_0)$, for $\Phi_s = \Phi_y$, where $y=se_0$. Then the family $\{\Sigma_s\}$ foliates $M$. As $\Phi_s$ is conformal, each $\Sigma_s$ still contacts $\pr\mathbb{B}_{\bar{r}}$ at angle $\pit$. To show that the mean curvature is strictly positive, without loss of generality we can consider $s\geq 0$, and note that the mean curvature of $\Sigma_s$ with respect to $g = e^{2\phi}\delta$ will be the same as the mean curvature of $\Sigma_0$ with respect to $\Phi_s^* g = e^{2u}\delta$, where 

\[
u(x) = u_s(x):=  \phi(|\Phi_s(x)|) + \log\Big(\frac{\bar{r}^2(\bar{r}^2 - s^2)}{\bar{r}^4 + 2 \bar{r}^2 s x_0 + |x|^2 s^2} \Big).
\]

By the formula for the mean curvature under conformal change (cf. Appendix \ref{sec:conformal-change}), since $\Sigma_0$ is totally geodesic (with respect to $\delta$), it suffices to show that  $\left.(\partial_\nu u)(x) = \frac{\pr u}{\pr x_0}\right|_{x_0=0} <0$ for all $x\in \Sigma_0$ and all $s>0$. Using $|\Phi_s(x)|^2 = \frac{\bar{r}^4 (|x|^2 + 2 x_0 s+ s^2)}{\bar{r}^4 + 2 \bar{r}^2 x_0 s+ |x|^2 s^2}$, we find that 
\[
\left.\frac{\pr u}{\pr x_0}\right|_{x_0=0} = -\frac{2\bar{r}^2 s}{\bar{r}^4+s^2r^2} + \bar{r}^2 \sqrt{\frac{r^2+s^2}{\bar{r}^4+s^2r^2}}\phi'\left(\bar{r}^2 \sqrt{\frac{r^2+s^2}{\bar{r}^4+s^2r^2}}\right) \left( \frac{s}{r^2+s^2} - \frac{\bar{r}^2 s}{\bar{r}^4+s^2r^2}\right),
\] 
where $r=|x|$. Let $t= \bar{r}^2\sqrt{\frac{r^2+s^2}{\bar{r}^4 + s^2r^2}} \in [s,\bar{r}]$. Then the above simplifies to 
\[
\begin{split}
\left.\frac{\pr u}{\pr x_0}\right|_{x_0=0} &= -\frac{2 s}{\bar{r}^2(r^2+s^2)}t^2 +  t \phi'(t) s \frac{\bar{r}^2-t^2}{\bar{r}^2(r^2+s^2)}
\\&= \frac{2 s t^2}{\bar{r}^2(r^2+s^2)}\left( -1  +  \frac{\phi'(t)}{t} \frac{\bar{r}^2-t^2}{2}\right).
\end{split}
\]
Note that certainly $\frac{\phi'(t)}{t} < \frac{2}{\bar{r}^2-t^2}$ for $r$ close to $\bar{r}$, hence $t$ close to $\bar{r}$ (as $\phi$ is smooth and $M$ is compact). But $F(t):=\frac{2}{\bar{r}^2-t^2}$ satisfies $F'(t) - tF(t)^2\equiv 0$, so by condition (A$'$) and comparison theory for first order ODE, we must have $\frac{\phi'(t)}{t} < F(t)$ for all $t$. It follows that $\left.\frac{\pr u}{\pr x_0}\right|_{x_0=0} <0$ for all $s>0$, as desired. 

\end{proof}

\section{Additional details for the proof of Proposition \ref{prop:emb-dual}}\label{sec:ros-details}

Proceeding with the notation introduced in the proof of Proposition \ref{prop:emb-dual}, we can argue as Ros does in \cite{Ros95} to show: \\

\textbf{Claim:} \textit{Each surface $U_\pm'$ is a topological disk.}\\

Indeed, $\Sigma'$ is a torus and so $\chi(\Sigma') = 0$. On the other hand, let $K$ denote the Gauss curvature of $\Sigma'$ and $k_\pm$ the geodesic curvature of $\partial U'_{\pm}$ (with respect to the inward-pointing conormal). Of course $k_- = - k_+$ along $Z'$. 

Let $\theta^\pm_i$ denote the \textit{sum} of exterior angles ($\pi$ minus the interior angle) of $\partial U_\pm'$ at each node $p_i' \in \partial U_\pm'$. Since $\partial U_\pm' = Z'$ consists of smooth arcs meeting orthogonally at each $p_i'$, we must have $\theta^\pm_i = \pi$ (the sum two exterior angles, each $\pi/2$). In fact, since the arcs of $Z'$ meet orthogonally, we must have $\theta^\pm_i = \frac{\pi}{2}$ for $i =1, 2$.

Now Gauss-Bonnet gives that 
\[
\int_{U_\pm'} K \, = 2\pi \chi(U_\pm') - \int_{Z'} k_\pm  - \sum_{i} \theta^\pm_i =2\pi \chi(U_\pm') - \int_{Z'} k_\pm  - 2\pi. 
\]
Adding these two equations gives 
\[
0 = 2\pi \chi(\Sigma') = \int_{\Sigma'} K = \int_{U_+'} K + \int_{U_-'} K  = 2\pi\big(\chi(U_+') + \chi(U_-') \big) -4\pi.
\]
We conclude 
\[
\chi(U_+') + \chi(U_-') = 2.
\]
On the other hand, a surface with boundary has Euler characteristic at most 1 and equality only holds for a disk. Thus, we conclude $U_+'$ and $U_-'$ are topologically disks. 

\section{Compactness for FBMS}
\label{sec:compactness}

In this section, we discuss how the compactness theory of Fraser-Li \cite{FL14} may be extended to a slightly more general setting - essentially, so long as there is a sufficient source of positive curvature to rule out stable surfaces.

First, we need the following version of Reilly's formula, which is the formula in \cite[Lemma 2.6]{FL14} with slightly weakened hypotheses, as in \cite[Theorem 2.1]{JXZ}:

\begin{proposition}[\cite{FL14, JXZ}]
\label{prop:reilly}
Let $\Omega\subset M$ be a bounded domain with piecewise smooth boundary $\pr\Omega = \bigcup_i {\Sigma_i}$, and set $\Gamma = \bigcup_i \pr\Sigma_i$. Assume that $u \in C^\infty(\bar{\Omega}\setminus\Gamma)$, and that $\bar{\nabla}^2 u \in L^2(\Omega)$. Then

\[ \int_{\Omega} \left( |\bar{\nabla}^2 u|^2 - (\bar{\Lap}u)^2 + \Ric_M(\bar{\nabla}u,\bar{\nabla}u)\right) = \sum_i J_{\Sigma_i}(u),\]
where 
\[J_{\Sigma_i}(u)=\int_{\Sigma_i}\left( -\pr_{\nu_i}u \Lap_{\Sigma_i} u + \langle \nabla^{\Sigma_i} u , \nabla^{\Sigma_i} \pr_{\nu_i}u\rangle - A^{\Sigma_i}(\nabla^{\Sigma_i}u, \nabla^{\Sigma_i}u) + H^{\Sigma_i}(\pr_{\nu_i}u)^2\right)\]
and the quantities on the right are defined with respect to the outer unit normal $\nu_i$ on $\Sigma_i$. 
\end{proposition}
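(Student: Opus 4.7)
The plan is to combine the Bochner formula, the divergence theorem, and the standard splitting of the ambient Laplacian on a boundary hypersurface, then to justify the boundary integration by parts by an approximation argument near the corner set $\Gamma$.

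I would begin from the Bochner identity
\[
\tfrac{1}{2}\bar\Lap |\bar\nabla u|^2 = |\bar\nabla^2 u|^2 + \langle \bar\nabla u, \bar\nabla \bar\Lap u\rangle + \Ric_M(\bar\nabla u, \bar\nabla u).
\]
Integrating over $\Omega$, the divergence theorem converts $\bar\Lap|\bar\nabla u|^2$ into $\partial_{\nu_i}|\bar\nabla u|^2$ on each $\Sigma_i$, while integration by parts on $\langle \bar\nabla u, \bar\nabla \bar\Lap u\rangle$ produces $-(\bar\Lap u)^2$ in the interior together with a boundary contribution $(\bar\Lap u)(\partial_{\nu_i} u)$ on each $\Sigma_i$. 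These combine to give the advertised interior integrand on the left-hand side, and a boundary integrand
\[
\tfrac{1}{2}\partial_{\nu_i}|\bar\nabla u|^2 - (\bar\Lap u)(\partial_{\nu_i} u)
\]
on each $\Sigma_i$, still to be rewritten as $J_{\Sigma_i}(u)$.

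For the rewriting, on a fixed $\Sigma_i$ I would decompose $\bar\nabla u = \nabla^{\Sigma_i} u + (\partial_{\nu_i} u)\nu_i$ and expand
\[
\tfrac{1}{2}\partial_{\nu_i}|\bar\nabla u|^2 = \bar\nabla^2 u(\nabla^{\Sigma_i}u, \nu_i) + (\partial_{\nu_i} u)\,\bar\nabla^2 u(\nu_i,\nu_i).
\]
The mixed Hessian term is handled via the identity $\bar\nabla^2 u(X, \nu_i) = X(\partial_{\nu_i} u) - A^{\Sigma_i}(X, \nabla^{\Sigma_i} u)$ (valid for $X$ tangent to $\Sigma_i$, since $\bar\nabla_X \nu_i$ is tangent to $\Sigma_i$), producing exactly $\langle \nabla^{\Sigma_i} u, \nabla^{\Sigma_i}\partial_{\nu_i}u\rangle - A^{\Sigma_i}(\nabla^{\Sigma_i}u, \nabla^{\Sigma_i}u)$. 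The remaining normal-normal piece $\bar\nabla^2 u(\nu_i,\nu_i)$ is eliminated using the trace splitting $\bar\Lap u = \Lap_{\Sigma_i} u + H^{\Sigma_i}\partial_{\nu_i} u + \bar\nabla^2 u(\nu_i,\nu_i)$, which (with the paper's sign convention for $H^{\Sigma_i}$) cancels against the $(\bar\Lap u)(\partial_{\nu_i}u)$ term and leaves the remaining $-(\partial_{\nu_i}u)\Lap_{\Sigma_i}u + H^{\Sigma_i}(\partial_{\nu_i}u)^2$ contributions in $J_{\Sigma_i}(u)$.

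The main obstacle is justifying the boundary integration under the weakened regularity $u \in C^\infty(\bar\Omega \setminus \Gamma)$ with $\bar\nabla^2 u \in L^2(\Omega)$. Since $\Gamma$ has codimension two, I would fix $\eps > 0$, excise the tubular $\eps$-neighborhood $N_\eps(\Gamma)$, and run all of the manipulations above on $\Omega_\eps := \Omega \setminus N_\eps(\Gamma)$, where the boundary is smooth apart from a thin auxiliary piece $\partial N_\eps(\Gamma)\cap \Omega$ of area $O(\eps)$. The interior integrands $|\bar\nabla^2 u|^2, (\bar\Lap u)^2, \Ric_M(\bar\nabla u, \bar\nabla u)$ are in $L^1(\Omega)$, so they converge by dominated convergence; the integrals over $\Sigma_i \cap \bar\Omega_\eps$ exhaust those over $\Sigma_i$ by smoothness away from $\Gamma$; and the auxiliary boundary integral is forced to vanish as $\eps \to 0$ by Cauchy-Schwarz combined with the $L^2$ bound on $\bar\nabla^2 u$ (via a trace inequality to control $\bar\nabla u$ on $\partial N_\eps$). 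Passing to the limit $\eps \to 0$ then yields the claimed identity.
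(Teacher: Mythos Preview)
The paper does not give its own proof of this proposition; it is stated with attribution to \cite{FL14} (their Lemma 2.6) and \cite{JXZ} (their Theorem 2.1) and then invoked as a black box in the proof of Proposition \ref{prop:choi-wang}. Your derivation --- Bochner, divergence theorem, then the tangential/normal splitting $\bar\Lap u = \Lap_{\Sigma_i}u + H^{\Sigma_i}\pr_{\nu_i}u + \bar\nabla^2 u(\nu_i,\nu_i)$ together with $\bar\nabla^2 u(X,\nu_i) = X(\pr_{\nu_i}u) - A^{\Sigma_i}(X,\nabla^{\Sigma_i}u)$ --- is exactly the standard route to Reilly's identity and is what the cited references do for smooth $u$ on a smooth domain.

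The one step to be careful with is the vanishing of the auxiliary boundary integral over $\pr N_\eps(\Gamma)\cap\Omega$. Knowing only that $|\bar\nabla^2 u|\,|\bar\nabla u|\in L^1(\Omega)$ does \emph{not} force $\int_{\pr N_\eps}|\bar\nabla^2 u|\,|\bar\nabla u|\to 0$ along any sequence: the function $F = 1/\operatorname{dist}(\cdot,\Gamma)$ is $L^1$ on the tube (codimension two) yet has $\int_{\pr N_\eps}F$ bounded away from zero. Cauchy--Schwarz on $\pr N_\eps$ with coarea on each factor separately only yields $o(1/\eps)$, not $o(1)$. What actually makes the argument go through in the paper's application (Proposition \ref{prop:choi-wang}) is that the relevant $u$ is not merely $W^{2,2}$ but also $C^{1,\alpha}(\bar\Omega)$, so $\bar\nabla u$ is bounded; then $\int_{\pr N_\eps}|\bar\nabla^2 u|\,|\bar\nabla u| \leq C\,|\pr N_\eps|^{1/2}\big(\int_{\pr N_\eps}|\bar\nabla^2 u|^2\big)^{1/2}$, and since $|\pr N_\eps|=O(\eps)$ while coarea gives a sequence with $\int_{\pr N_{\eps_k}}|\bar\nabla^2 u|^2 = o(1/\eps_k)$, the product is $o(1)$. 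Under the bare hypotheses stated, you would need to either appeal directly to \cite{JXZ} for the sharp version, or add a mild extra assumption such as $\bar\nabla u\in L^\infty$ near $\Gamma$.
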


Now recall that in \cite{CW83}, Choi-Wang showed that closed minimal hypersurfaces in an ambient space of positive Ricci curvature enjoy a lower bound on their first Laplace eigenvalue. In \cite[Theorem 3.1]{FL14}, Fraser-Li showed that there is an analogous lower bound on the first Steklov eigenvalue of a FBMS, when the ambient space $(M,\pr M)$ has nonnegative Ricci curvature and strictly convex boundary. We briefly observe that when $M$ has \textit{weakly} convex boundary but strictly positive Ricci curvature, we also have a lower bound on the first \textit{Neumann} eigenvalue:

\begin{proposition}\label{prop:choi-wang}
Let $M$ be a compact orientable Riemannian manifold with weakly convex boundary $\partial M$ and $\Ric_M \geq \kappa > 0$. Suppose that $(\Sigma, \partial \Sigma) \hookrightarrow (M, \partial M)$ is an embedded free boundary minimal hypersurface. Then the first Neumann eigenvalue of $\Delta_{\Sigma}$ satisfies $\lambda_1 \geq \kappa/2$. 
\end{proposition}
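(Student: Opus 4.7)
The strategy mirrors Choi--Wang's proof in the closed case and the Fraser--Li adaptation \cite{FL14}, with the extension now solving a mixed Zaremba problem in place of a Dirichlet problem. Let $u$ be a first Neumann eigenfunction on $\Sigma$, so that $\Delta_\Sigma u = -\lambda_1 u$ with $\partial_\eta u = 0$ on $\partial\Sigma$. Orientability of $M$ and embeddedness of $\Sigma$ give that $\Sigma$ is 2-sided; by passing to cut $M$ along $\Sigma$ if necessary, we may assume $\Sigma$ separates $M$ into two regions $\Omega^{\pm}$ with outer unit normals $\nu_\pm = \pm\nu$ along $\Sigma$. On each side, solve the mixed problem
\begin{equation*}
\Delta v_{\pm} = 0 \text{ in } \Omega^{\pm}, \qquad v_{\pm}|_\Sigma = u, \qquad \partial_{\bar\eta} v_{\pm} = 0 \text{ on } S^{\pm} := \partial M \cap \Omega^{\pm}.
\end{equation*}
The free boundary condition forces $\Sigma \perp \partial M$ along $\Gamma = \partial\Sigma$, so reflecting $\Omega^\pm$ across the Neumann portion $S^\pm$ converts the Zaremba problem to a Dirichlet problem on a smooth (doubled) domain; standard elliptic theory then gives $\bar\nabla^2 v_\pm \in L^2(\Omega^\pm)$ as required by Proposition~\ref{prop:reilly}.

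Apply Reilly's formula to $v_\pm$ on $\Omega^\pm$. With $\bar\Delta v_\pm = 0$ and $\Ric_M \geq \kappa$, the left-hand side dominates $\kappa \int_{\Omega^\pm}|\bar\nabla v_\pm|^2$. On $S^\pm$ the Neumann condition kills three of the four boundary terms, leaving
\begin{equation*}
J_{S^{\pm}}(v_\pm) = -\int_{S^\pm} k_{\partial M}(\nabla^{S^\pm} v_\pm, \nabla^{S^\pm} v_\pm) \leq 0
\end{equation*}
by weak convexity of $\partial M$. On $\Sigma$, substituting $v_\pm|_\Sigma = u$, using $H^\Sigma = 0$ and $\Delta_\Sigma u = -\lambda_1 u$, and integrating by parts the mixed term $\int_\Sigma\langle\nabla^\Sigma u, \nabla^\Sigma \partial_{\nu_\pm}v_\pm\rangle$ (the $\Gamma$-boundary contribution vanishes because $\partial_\eta u = 0$), one obtains
\begin{equation*}
J_\Sigma(v_\pm) = 2\lambda_1 \int_\Sigma u\, \partial_{\nu_\pm} v_\pm - \int_\Sigma A^\Sigma_{\nu_\pm}(\nabla u, \nabla u).
\end{equation*}
A further application of Green's identity, using harmonicity of $v_\pm$ and the Neumann condition on $S^\pm$, gives $\int_\Sigma u\,\partial_{\nu_\pm}v_\pm = \int_{\Omega^\pm}|\bar\nabla v_\pm|^2$.

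Combining these observations yields, for each sign,
\begin{equation*}
\kappa \int_{\Omega^\pm} |\bar\nabla v_\pm|^2 \leq 2\lambda_1 \int_{\Omega^\pm} |\bar\nabla v_\pm|^2 - \int_\Sigma A^\Sigma_{\nu_\pm}(\nabla u, \nabla u).
\end{equation*}
Summing over $\pm$ cancels the shape operator terms since $\nu_+ = -\nu_-$ gives $A^\Sigma_{\nu_+} = -A^\Sigma_{\nu_-}$. The combined Dirichlet energy is strictly positive: if $\bar\nabla v_\pm \equiv 0$ on both sides then $u$ would be constant on $\Sigma$, contradicting $\lambda_1 > 0$. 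Dividing through yields $\lambda_1 \geq \kappa/2$.

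The principal technical point is the corner regularity for the mixed Zaremba problem along $\Gamma$, needed to apply the Reilly identity in Proposition \ref{prop:reilly}. This is precisely where the free boundary (orthogonality) hypothesis is used; everything else is bookkeeping with Reilly's formula and the sign conventions.
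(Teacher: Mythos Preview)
Your proof is correct and follows essentially the same Choi--Wang/Reilly strategy as the paper. The only substantive difference is in how the second fundamental form term is handled: the paper solves the mixed problem on a \emph{single} side $\Omega$, chosen so that $\int_\Sigma A^\Sigma(\nabla^\Sigma\phi,\nabla^\Sigma\phi)\geq 0$ (possible since this integral flips sign upon switching sides), and then simply discards that term. You instead solve on both sides and sum, cancelling the $A^\Sigma_{\nu_\pm}$ terms against each other. Both devices are standard and yield the same inequality; your version trades a sign choice for a second extension.

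One point to tighten: your justification of corner regularity by ``reflecting $\Omega^\pm$ across the Neumann portion $S^\pm$'' is imprecise, since $\partial M$ is only weakly convex, not totally geodesic, so there is no global isometric reflection available. The paper handles this by observing that $\tfrac{\pi}{2}$ is an exceptional angle for the mixed (Zaremba) problem in a wedge, and cites the regularity theory in \cite{PT20,GX19} (reflection applied locally after straightening the corner) to obtain $v\in W^{2,2}\cap C^{1,\alpha}(\bar\Omega)$. Your conclusion that $\bar\nabla^2 v_\pm\in L^2$ is correct, but the argument as written should point to this local mechanism rather than a global reflection.
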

\begin{proof}
Let $\phi$ be a nonconstant Neumann eigenfunction of $\Lap_\Sigma$, so that $\Lap_\Sigma \phi = -\lambda \phi$ on $\Sigma$, and $\pr_\eta \phi =0$ on $\pr\Sigma$. 

Note that $\Sigma$ separates $M$ into two connected components and denote one of them by $\Omega$, so that $\pr\Omega = \Sigma \cup S$, where $S$ is the appropriate portion of $\pr M$. We may choose $\Omega$ so that $\int_\Sigma A^\Sigma(\nabla^\Sigma \phi, \nabla^\Sigma \phi)\geq 0$ (since it changes sign upon taking the opposite component). 

As $\Sigma$ is minimal, if $u$ is any $C^1$ extension of $\phi$, we will have 
\[
\begin{split}
 J_\Sigma(u) &\leq \int_{\Sigma}\left( -\pr_{\nu}u \Lap_{\Sigma} \phi + \langle \nabla^{\Sigma} \phi , \nabla^{\Sigma} \pr_{\nu}u\rangle \right) \\&= -2 \int_\Sigma \pr_\nu u \Lap_\Sigma \phi + \int_{\pr\Sigma} \pr_\nu u \pr_\eta\phi=2\lambda \int_\Sigma \phi\pr_\nu u.
 \end{split}
 \]

We wish to extend $\phi$ according to the following mixed boundary value problem:
\begin{equation}
\label{eq:mixed-bvp-1}
 \begin{cases}
\bar{\Lap}u = 0 &, \text{ on } \Omega,\\
u = \phi &, \text{ on } \Sigma,\\
\pr_{\bar{\eta}}u = 0 &, \text{ on } S.
\end{cases}
\end{equation}

The key is that $\pit$ is an exceptional angle for the mixed boundary value problem in a domain with corners. For instance, the function $\phi$ has a smooth extension to $\bar{\Omega}$ precisely because it satisfies the compatibility condition $\pr_\eta \phi = 0$ (and $\eta=\bar{\eta}$ along $S$). Moreover, by a reflection argument as in \cite[Section 6]{PT20}, there is a solution of (\ref{eq:mixed-bvp-1}) which is smooth away from the corners, and $W^{2,2}$ (and $C^{1,\alpha}$) on $\bar{\Omega}$ (see also \cite[Proposition 3.5]{GX19}). In particular, Proposition \ref{prop:reilly} applies.

Continuing with the proof, as $\bar{\Lap}u =0$, by the Ricci bound and the discussion above we have
\[
\begin{split}
\kappa \int_{\Omega} |\bar{\nabla}u|^2 &\leq J_\Sigma(u) + J_S(u)  \leq 2\lambda \int_\Sigma \phi\pr_\nu u + J_{S}(u). 
\end{split}
\]

Since $\pr_{\bar{\eta}}u=0$ along $S$, we have \[J_{S}(u) = -\int_{S} A^{S}(\nabla^{S} u, \nabla^{S} u) \leq 0.\]

As $u$ is $W^{2,2}$, integration by parts gives \[\int_{\Omega} |\bar{\nabla}u|^2 = \int_\Sigma u\pr_\nu u + \int_{S} u\pr_{\bar{\eta}}u = \int_\Sigma \phi\pr_\nu u.\] 

Thus \[0\leq (2\lambda-\kappa)  \int_{\Omega} |\bar{\nabla}u|^2.\]

As $\phi$ is not constant we certainly have $\int_{\Omega} |\bar{\nabla}u|^2 >0$, so we conclude that $2\lambda \geq \kappa$ as claimed. 
\end{proof}

\begin{remark}
We pause to remark, as in \cite[Remark 2.7]{FL14}, that the regularity of the solution to (\ref{eq:mixed-bvp-1}) is a crucial and somewhat delicate issue. If $\Omega$ had interior angles $\omega<\pit$, the regularity theory should give that $u\in C^{1,\alpha}(\bar{\Omega})$, with a growth estimate up to the boundary that is sufficient to apply Proposition \ref{prop:reilly} (see \cite[Theorem 3.1]{JXZ}). On the other hand, if the interior angle $\omega\in(\pit,\pi)$ then we only expect a solution of (\ref{eq:mixed-bvp-1}) to be $C^{0,\alpha}$, which may not be sufficient to apply Reilly's formula. (Indeed, consider the region $\Omega$ in the Euclidean $\mathbb{B}^3$ bounded by two flat discs on either side of the origin, so that $\Omega$ has interior angles given by $\omega>\pit$. If Reilly's formula could be applied in this setting, then the alternative proof of the intersection property in \cite[Theorem 2.4]{FL14} would seem to give a contradiction.) 

A helpful local heuristic is the solution of the homogeneous problem in a (2D) sector of opening angle $\omega\in(0,\pi)$. In particular, for mixed Dirichet-Neumann conditions one has the harmonic function $u = r^{\alpha} \sin (\alpha\theta)$ where $\alpha = \frac{\pi}{2\omega}$. For \textit{generic} $\omega$, the exponent $\alpha$ dictates the H\"{o}lder exponent. In the exceptional case $\alpha=1$, of course, this $u$ is \textit{smooth}.
The reader may also consult \cite{AK80, AK82, Li86, Li89} and the discussions in \cite[Appendix A]{JXZ}, \cite[Remark 1.13]{tang2013mixed} for more on the regularity for these mixed boundary value problems. 
\end{remark}

From the work of Fraser-Li \cite{FL14} and the above Neumann eigenvalue estimate, we may deduce the following compactness result: 

\begin{lemma}
\label{lemma:compactness}
Let $(M_i^3, \partial M_i)$ be a sequence of compact, orientable Riemannian $3$–manifolds with boundary that converges smoothly to a compact, orientable Riemannian manifold $(M, \partial M)$. 
Let $\Ric_{M_i}$ and $k_{\partial M_i}$ denote the Ricci curvature of $M_i$ and the second fundamental form of $\partial M_i$, respectively. 
Fix $\kappa > 0$, and suppose that either:
\begin{enumerate}[(i)]
    \item $\Ric_{M_i} \geq 0$ and $k_{\partial M_i} \geq \kappa$ for all $i$; or
    \item $\Ric_{M_i} \geq \kappa$ and $k_{\partial M_i} \geq 0$ for all $i$.
\end{enumerate}
If $(\Sigma_i, \partial \Sigma_i) \hookrightarrow (M_i, \partial M_i)$ is a sequence of embedded FBMS, then after passing to a subsequence, $(\Sigma_i, \partial \Sigma_i)$ converges smoothly to an embedded FBMS $(\Sigma, \partial \Sigma) \hookrightarrow (M, \partial M)$.
\end{lemma}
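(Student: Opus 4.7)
The plan is to follow the compactness strategy of Fraser--Li \cite{FL14} and extract, from any sequence of FBMS of uniformly bounded topology, a subsequence with uniform area and uniform second fundamental form bounds, after which standard smooth convergence arguments produce the limit. (As in \cite{FL14}, we implicitly assume the $\Sigma_i$ have uniformly bounded genus and number of boundary components; without such a bound, standard examples show compactness fails.) The entire scheme goes through verbatim in case (i), while case (ii) requires replacing the Steklov eigenvalue input of Fraser--Li by the Neumann eigenvalue lower bound of Proposition \ref{prop:choi-wang}.

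First, I would establish uniform area bounds $\sup_i \operatorname{area}(\Sigma_i) \leq C$. In case (i) this is precisely \cite[Theorem 2.8]{FL14}, whose proof combines the Steklov eigenvalue bound with an integration-by-parts identity. In case (ii), the argument is analogous after replacing the Steklov setup by a Neumann one: extend a Neumann eigenfunction $\phi$ on $\Sigma_i$ to a component of $M_i \setminus \Sigma_i$ via the mixed boundary value problem (\ref{eq:mixed-bvp-1}), then apply Reilly's formula (Proposition \ref{prop:reilly}) in the spirit of the proof of Proposition \ref{prop:choi-wang} to obtain an area bound in terms of the topology of $\Sigma_i$ and the ambient constants. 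Smooth convergence of $M_i\to M$ supplies uniform control on all ambient quantities appearing in the estimate.

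Next, uniform second fundamental form bounds would be obtained by a standard blow-up argument. If $\|A_{\Sigma_i}\|_\infty \to \infty$, point-picking gives $p_i \in \Sigma_i$ with $r_i := |A|(p_i) \to \infty$ at which rescaling by $r_i$ produces, in the limit, a complete non-flat two-sided minimal surface $\Sigma_\infty$ in either $\mathbb{R}^3$ (if $r_i \operatorname{dist}(p_i, \partial M_i) \to \infty$) or in the flat half-space $\mathbb{R}^3_+$ with totally geodesic boundary (otherwise, as a FBMS). The quadratic area growth of $\Sigma_\infty$ is inherited from Step 1, and combined with stability (inherited as a blow-up limit in an ambient space of vanishing curvature), classical Bernstein-type rigidity---extended to the half-space case by doubling across $\partial \mathbb{R}^3_+$ to obtain a complete stable minimal surface in $\mathbb{R}^3$---forces $\Sigma_\infty$ to be a plane, contradicting $|A|(p_\infty)=1$. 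With uniform area and curvature in hand, the $\Sigma_i$ converge smoothly to an embedded FBMS $\Sigma$ in $(M,\partial M)$; embeddedness is preserved by unique continuation, as a tangential self-intersection in the limit would propagate back to non-embeddedness of $\Sigma_i$ for large $i$.

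The principal technical obstacle is the area bound in case (ii). Unlike Steklov eigenfunctions, which extend naturally as harmonic functions on the ambient region, Neumann eigenfunctions on $\Sigma$ must be extended by solving the mixed boundary value problem (\ref{eq:mixed-bvp-1}). The regularity of such solutions at the corners $\partial \Sigma \subset \partial M$ is delicate in general, but the $\frac{\pi}{2}$ angle arising from the free boundary condition is precisely the exceptional case for which the extension is smooth enough to justify the application of Reilly's formula, as noted in the remark following Proposition \ref{prop:choi-wang}. A secondary subtlety is verifying stability of the half-space blow-up limit with the correct sign convention for the free boundary, but this is routine once the reflection trick is invoked.
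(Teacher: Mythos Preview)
Your proof has a genuine gap in Step 2. The blow-up limit $\Sigma_\infty$ is \emph{not} stable, and there is no mechanism in your argument for it to be. Stability is a statement about the second variation of area, i.e.\ the inequality $\int |\nabla\phi|^2 \geq \int (|A|^2+\Ric(\nu,\nu))\phi^2$; the vanishing of the ambient Ricci curvature after rescaling does nothing to control the $|A|^2$ term, which is precisely what survives the rescaling with $|A|(0)=1$. Stability of blow-up limits is typically obtained either from a uniform Morse index bound on the $\Sigma_i$ (there is none here---indeed, under either hypothesis (i) or (ii) the $\Sigma_i$ are unstable), or from a sheeting argument producing a positive Jacobi field on the limit. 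You invoke neither, so the Bernstein step fails and you have no curvature bound.

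The paper's argument is structurally different and avoids this issue. Following Fraser--Li, one first obtains area and boundary length bounds, then a \emph{total} curvature bound via Gauss--Bonnet, and then the Choi--Schoen $\epsilon$-regularity lemma (total curvature small $\Rightarrow$ pointwise curvature bounded) gives smooth convergence away from a finite set $\mathcal{S}$ where curvature may concentrate. The eigenvalue estimate enters only at the \emph{final} step, to rule out higher multiplicity (equivalently, $\mathcal{S}\neq\emptyset$): if the $\Sigma_i$ converge with multiplicity $k\geq 2$, Fraser--Li construct Lipschitz test functions $\phi_i$ on $\Sigma_i$ (roughly $\pm 1$ on different sheets) with $\int|\nabla\phi_i|^2\to 0$ but $\int\phi_i^2$ and $\int_{\pr\Sigma_i}\phi_i^2$ bounded below. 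In case (i) this contradicts the Steklov bound; the paper's only new observation is that in case (ii) the same test functions contradict the Neumann bound of Proposition~\ref{prop:choi-wang}. Your proposal misplaces the role of the eigenvalue estimate---it is the tool for excluding multiplicity, not for bounding curvature---and your description of the area bound in case (ii) (``apply Reilly's formula \ldots to obtain an area bound'') conflates the eigenvalue lower bound with the Yang--Yau-type upper bound $\lambda_1\cdot\area(\Sigma)\leq C(\text{topology})$ that would actually be needed.
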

\begin{proof}
Case (i) is precisely the compactness result of Fraser-Li \cite{FL14} for free boundary minimal surfaces of fixed topology (which applies for varying metrics, see \cite[Theorem 2.3]{MNS17}).

For case (ii), we can still use the theory developed in \cite{FL14}, with a modification to the final step. Indeed, by following the proof of \cite[Theorem 1.2]{FL14}, we have in general that (up to a subsequence) the $\Sigma_i \hookrightarrow M_i$ converge smoothly to some $\Sigma \hookrightarrow M$ away from a finite set of points $\mathcal{S}\subset \Sigma$. When $\mathcal{S} \neq \emptyset$, then for $i$ sufficiently large, $\Sigma_i$ is locally the disjoint union $k \geq 2$ (independent of $i$) graphs over $\Sigma$. In this setting, Fraser-Li construct a sequence of Lipschitz functions $\phi_i \in C^{0, 1} (\Sigma_i)$ such that 
\[
\int_{\partial \Sigma_i} \phi_i = 0, \qquad \lim_{j \to \infty} \int_{\Sigma_i} |\nabla \phi_i |^2 = 0, \qquad \lim_{j \to \infty} \int_{\Sigma_i} \phi^2_i >  0, \qquad \lim_{j \to \infty} \int_{\partial \Sigma_i} \phi_i^2  >  0. 
\]
(Up to shifting by a constant so that the first condition holds, the function $\phi_i$ is essentially $1$ on one of the graphical sheets and $-1$ on the remaining sheets, away from a small neighborhood of $\mathcal{S}$.) 

In \cite{FL14}, corresponding to case (i), this sequence of functions contradicted an estimate for the first Steklov eigenvalue \cite[Theorem 3.1]{FL14}. In our case (ii), the same sequence of functions will instead contradict the Neumann eigenvalue estimate Proposition \ref{prop:choi-wang}, as $\frac{\int_{\Sigma_i} |\nabla \phi_i|^2}{\int_{\Sigma_i}\phi_i^2}\to 0$. This implies that $\mathcal{S}=\emptyset$, which completes the proof of compactness.
\end{proof}

\begin{remark}
As for smooth compactness results in other settings \cite{CM12, SH17},  one may also rule out multiplicity by arguing that the limit surface would have to be stable; indeed, applying the variation by a constant function shows that there are no stable FBMS under the conditions of Lemma \ref{lemma:compactness}.
\end{remark}

\section{Implicit Function Theorem in the Proof of Proposition \ref{prop:ift-caps}}\label{sec:ift-caps}

We begin with the following lemma (compare \cite[Lemma 5.5]{DM09}). 

\begin{lemma}\label{lem:killing-Hf-orthogonality}
Let $\rho$ be the distance to a fixed point $o \in \mathbb{S}^3$ and let $f = f(\rho)$ be a smooth radial function on $\mathbb{S}^3$. Suppose $x : (\Sigma, \partial \Sigma)\looparrowright (\mathbb{S}^3, \partial B_\frac{\pi}{2})$ is any compact, smoothly immersed, $2$-sided surface in $\mathbb{S}^3$ that contacts $\partial B_{\frac{\pi}{2}}$ at angle $\frac{\pi}{2}$ along $\partial \Sigma$. 

Let $K$ be a Killing field on $\mathbb{S}^3$ that generates an isometry of $B_{\frac{\pi}{2}}$ (that is, a rotation which preserves $\partial B_{\frac{\pi}{2}}$). Then 
\[
\int_{\Sigma} H_f \langle K, \nu \rangle\,  e^{-f} d\mu_g = 0 .
\]
\end{lemma}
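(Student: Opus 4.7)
The plan is to recognise the claimed identity as the first variation of weighted area along the flow of $K$, which must vanish because $K$ preserves every piece of data in sight: the metric $g_1$, the boundary barrier $\partial B_{\pi/2}$, and (by the radial structure of $f$) the weight $e^{-f}$. Let $\Phi_s$ denote the flow of $K$ on $\mathbb{S}^3$. Since $K$ generates rotations preserving the equator $\partial B_{\pi/2}\cong \mathbb{S}^2$, these rotations fix the centre $o$ and hence preserve both $\rho = d(o,\cdot)$ and $f = f(\rho)$. Consequently, the weighted area of $x_s := \Phi_s \circ x$ is constant in $s$, so its first variation at $s=0$ vanishes identically.

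Next, I would unpack this first variation. Decompose $K|_\Sigma = u\nu + K^\top$ with $u = \langle K, \nu\rangle$; the paper's conventions $A(X,Y) = \langle \bar\nabla_X \nu, Y\rangle$ and $H = \tr_g A$ yield $\tfrac{d}{ds}\bigl|_{s=0}\omega_s = (uH + \div_\Sigma K^\top)\omega$, while $\tfrac{d}{ds}\bigl|_{s=0} e^{-f\circ x_s} = -e^{-f}(u\,\partial_\nu f + \langle \nabla^\Sigma f, K^\top\rangle)$. Combining these expressions and integrating the tangential piece by parts gives
\[
0 = \int_\Sigma (H - \partial_\nu f)\, u\, e^{-f}\, d\mu_g + \int_\Sigma \div_\Sigma(e^{-f} K^\top)\, d\mu_g = \int_\Sigma H_f\, u\, e^{-f}\, d\mu_g + \int_{\partial\Sigma} \langle K, \eta\rangle\, e^{-f}\, d\sigma_g,
\]
where we have used $\langle K^\top,\eta\rangle = \langle K,\eta\rangle$ along $\partial\Sigma$.

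Finally, I would show that the boundary integral vanishes. Along $\partial\Sigma$, the contact angle hypothesis gives $\eta = \bar\eta = \partial_\rho$. On the other hand, $\Phi_s(o) = o$ implies $K(o) = 0$, and since $K$ is Killing this forces $K$ to be tangent to every geodesic sphere about $o$, i.e.\ $\langle K, \partial_\rho\rangle \equiv 0$ off of $\{\pm o\}$. Hence the boundary term drops out and we recover $\int_\Sigma H_f\, \langle K, \nu\rangle\, e^{-f}\, d\mu_g = 0$, as claimed. I do not foresee any substantial obstacle; the computation is a standard first-variation bookkeeping, and the only geometric input beyond that is the orthogonality $\langle K,\partial_\rho\rangle = 0$, which is immediate from $K$ being a Killing field fixing $o$.
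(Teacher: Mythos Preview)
Your proof is correct and is essentially the same argument as the paper's: both compute the weighted tangential divergence $\div_f(K^\top) = -H_f\langle K,\nu\rangle$ and then integrate by parts, killing the boundary term via $\eta = \partial_\rho$ together with $\langle K,\partial_\rho\rangle = 0$. Your framing of the identity as the first variation of weighted area along the flow of $K$ is a nice conceptual gloss (explaining \emph{a priori} why the result must be zero), but once you unpack that first variation the computation coincides line-for-line with the paper's direct divergence argument.
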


(Note in particular that we do not have any assumption on the mean curvature.)

\begin{proof}
As $K$ is Killing, in any local orthonormal frame $e_1, e_2$ along $\Sigma$, we have $\langle \bar{\nabla}_{e_i} K, e_i \rangle = 0$. Since $K$ corresponds to an ambient isometry of $B_{\frac{\pi}{2}}$, we note that $\langle K, \partial_\rho \rangle = 0$ and hence $\langle K, \bar{\nabla} f \rangle = 0$ since $f$ is assumed to be radial. Finally, recall that $H_f = H - \langle \nabla f, \nu \rangle$. 

Using these observations, the standard computation gives
    \begin{align*}
   \mathrm{div}_f(K^\top) &= \mathrm{div}_f(K) -  \mathrm{div}_f(K^\perp) \\
   & = \sum_{i =1}^2 \langle \bar{\nabla}_{e_i} K, e_i \rangle  - \langle K, \bar{\nabla} f \rangle -\langle K, \nu \rangle H  + \langle K, \nu \rangle \langle \nu, \bar{\nabla} f \rangle \\
   & = - \langle K, \nu \rangle H_f. 
    \end{align*}
Recall that we let $\eta$ denote the outward pointing conormal of $\partial \Sigma$ in $\Sigma$. Since $x(\Sigma)$ contacts $\partial B_{\frac{\pi}{2}}$ at angle $\frac{\pi}{2}$, we have $\eta = \pm \rho$. Moreover, since $x(\partial\Sigma) \subset \partial B_{\frac{\pi}{2}}$ and $K$ is tangent to $\partial B_{\frac{\pi}{2}}$, we have that $K^\top$ is also tangent to $\partial B_{\frac{\pi}{2}}$. Therefore, $\langle K^\top, \eta \rangle = 0$ along $\partial \Sigma$. Now the divergence theorem gives
    \begin{align*}
     \int_{\Sigma} H_f \langle K, \nu \rangle e^{-f} d\mu_g  = -   \int_\Sigma \mathrm{div}_f(K^\top) e^{-f} d\mu_g = -\int_{\partial \Sigma} \langle K^\top, \eta\rangle d\sigma_g = 0. 
    \end{align*}
This completes the proof. 
\end{proof}

\begin{proof}[Remainder of Proof of Proposition \ref{prop:ift-caps} ]
Let $x, \Sigma, R, f, I, \Omega$ and $\mathcal{H}$ be as before. Let $C^{2, \alpha}_{\mathrm{Neu}}(\Sigma)= \big\{ u \in  C^{2,\alpha}(\Sigma) : \partial_\eta u = 0\big\}$. We denote the linearization of $\mathcal{H}$ in the first component by
    \[
    (\delta_1 \mathcal{H})_{(0,0)} : C^{2, \alpha}_{\mathrm{Neu}}(\Sigma) \to C^{\alpha}(\Sigma),
    \]
and in particular
    \begin{align*}
    L^g_{f} u:= (\delta_1\mathcal{H})_{(0,0)}(u) &:= \frac{d}{dt}\Big|_{t=0}\Big(H_{tu} - g_1\big( \bar{\nabla} f \circ x_{tu}, \nu_{tu} \big)\Big).
    \end{align*}
As in Section \ref{sec:conf-weight-corres}, this is precisely the weighted stability operator
    \begin{align*}
      L^g_{f} u   & := \Delta u - \langle \bar{\nabla} f, \nabla u \rangle + (2 + \bar{\nabla}^2 f (\nu, \nu) + |A|^2) u.    \end{align*}
Note that, as $\pr B_\pit$ is totally geodesic, the weighted index form is precisely $-\int_\Sigma u L^g_f u d\mu_g$, and its kernel is precisely the Neumann kernel $\mathcal{K}:=\ker(L^g_f)\subset C^{2,\alpha}_{\mathrm{Neu}}(\Sigma)$. By our assumption on the kernel, every element of $\mathcal{K}$ is of the form $\langle K,\nu\rangle$, where $K\in \mathcal{K}_0$ is a Killing field that generates rotation of $\mathbb{S}^3$ which fixes $\pr B_\pit$. In particular, we may take an orthonormal basis $\{\tilde{u}_i = \langle K_i, \nu \rangle\}_{i=1}^J$, of $\mathcal{K}$, where each $K_i$ is as above. 

Note that on $\partial \Sigma$,  we have 
    \[
    \partial_\eta \tilde{u}_i = \langle \bar{\nabla}_{\partial_\rho} K_i, \nu \rangle + \langle K_i, \bar{\nabla}_{\partial_{\rho}} \nu \rangle = -\langle \bar{\nabla}_{\nu} K_i, \partial_\rho \rangle + A_{11}\langle K_i, \partial_\rho \rangle = 0,
    \]
using that $\bar{\nabla}_{\nu} K_i$ and $K_i$ are orthogonal to $\partial_\rho$ (the former orthogonality holds because $\Sigma$ contacts $\pr B_\pit$ at angle $\pit$, and the flow of $K_i$ preserves $\partial B_{\frac{\pi}{2}}$). 
Now $L^g_f$ is symmetric in the sense that if $u, v \in C^{2,\alpha}_{\mathrm{Neu}}(\Sigma)$. So, 
    \[
    \int_{\Sigma} (L^g_f u) v \; e^{-f} d\mu_{g} =  \int_{\Sigma} u(L^g_f v) \; e^{-f} d\mu_{g}.
    \]
In particular, if we let 
    \[
    \mathcal{I}^\perp= \Big\{v \in C^\alpha(\Sigma) : \int_\Sigma \tilde{u}_i v \, e^{-f} d\mu_g = 0 \;\; \text{for} \;\; 1\leq i\leq J \Big\}
    \]
denote the $L^2_{e^{-f}}$-orthocomplement of $\mathcal{I} := \mathrm{span}\{\tilde{u}_1, \dots, \tilde{u}_J\} \subset C^\alpha(\Sigma)$, then $\mathrm{Im}(L^g_f) = \mathcal{I}^\perp$.

Define 
    \[
    \Omega^\perp  = \big\{ u \in \Omega :  \int_\Sigma \tilde{u}_i \, u  \, e^{-f} d\mu_g = 0 \;\; \text{for} \;\; 1\leq i\leq J \Big\}
    \]
and 
    \[
   \mathcal{K}^\perp  = \big\{ u \in C^{2,\alpha}_{\mathrm{Neu}}(\Sigma) :  \int_\Sigma \tilde{u}_i \, u  \, e^{-f} d\mu_g = 0 \;\; \text{for} \;\; 1\leq i\leq J \Big\}. 
    \]
To apply the implicit function theorem in the presence of the nontrivial kernel, we consider the map 
    \[
    \mathcal{G} : \Omega^\perp \times \mathbb{R}^J \times I \to C^{\alpha}(\Sigma)
    \]
given by 
    \[
    \mathcal{G}(u, \mathbf{A}, s) := \mathcal{H}(u, s) - \sum_{i=1}^J a_i \langle K_i, \nu_u\rangle ,
    \]
where $\mathbf{A} = (a_1,\cdots, a_J)$. Then, the variation in the first two components
    \[
    (\delta_{12} \mathcal{G})_{(0,\vec{0},0)} : \mathcal{K}^\perp\times \mathbb{R}^J \to C^{\alpha}(\Sigma)
    \]
is given by 
    \[
    (\delta_{12} \mathcal{G})_{(0,\vec{0}, 0)}(u, \vec{b})= \hat{L}(u, \vec{b}) = L^g_f u -\sum_{i=1}^J b_i \tilde{u}_i. 
    \]
The linearization is isomorphism since if $v \in \mathcal{I} \subset C^\alpha(\Sigma)$, then $v = \hat{L}(0, \vec{b})$ for some unique $\vec{b}$ and if $v \in \mathcal{I}^\perp = \mathrm{Im}(L^g_f) \subset C^\alpha(\Sigma)$, then $v = L^g_f u = \hat{L}(u, \vec{0})$ for some  unique $u \in \mathcal{K}^\perp$.

It follows from the implicit function theorem that there exists $\delta > 0$ and a neighborhood $U \subset \Omega^\perp$ such that for $|s| < \delta$, we can find unique $u_s \in U$ and $\{a_i(s)\}_{i=1}^J$ (depending smoothly on $s$) so that 
    \[
    H_{u_s} - g_1(\bar{\nabla} f_{R+s}, \nu_{u_s}) - \sum_{i=1}^J a_i(s) \langle K_i, \nu_{u_s}\rangle = 0. 
    \]
Now integrating this expression against $\langle K_i, \nu_{s} \rangle$ for each of $i$ and applying Lemma \ref{lem:killing-Hf-orthogonality}, we conclude $a_i(s) = 0$ for each $i$. Thus
    \[
    H_{u_s} = g_1(\bar{\nabla} f_{R+s}, \nu_{u_s}).
    \]
In particular, $x_{u_s}$ will be our desired $f_{R+s}$-FBMS. 
    
Finally, the conditions of embeddedness and non-rotational symmetry are open conditions. Thus the deformations $x_{u_s} : \Sigma \looparrowright B_{\frac{\pi}{2}}$ preserve them for all sufficiently small $s$. For the constraint property, we simply note that the exponential map $\exp^{\mathbb{S}^3}$ preserves the hemisphere $B_\pit$, as the latter is geodesically convex.

\end{proof}
\bibliography{low-genus}

\end{document}